\definecolor{lgray}{gray}{0.82}
\definecolor{rredd}{rgb}{120,0,0}
\newtheorem{thm}{Theorem}[section]
\newtheorem{prop}[thm]{Proposition}
\newtheorem{lem}[thm]{Lemma}
\newtheorem{cor}[thm]{Corollary}
\newcommand{\theoremname}{Theorem:}
\newtheorem*{main*}{Main Result}
\newcommand{\Q}{\mathbb{Q}}
\newcommand{\C}{\mathbb{C}}
\newcommand{\R}{\mathbb{R}}
\newcommand{\Z}{\mathbb{Z}}
\newcommand{\tZ}{\tilde{Z}}
\newcommand{\calA}{\mathcal{A}}
\newcommand{\tcalA}{\tilde{\mathcal{A}}}
\newcommand{\A}{\mathcal{A}}
\newcommand{\calB}{\mathcal{B}}
\newcommand{\tA}{\tilde{\mathcal{A}}}
\newcommand{\calK}{{\mathcal{K}}}
\newcommand{\tcalK}{\tilde{\mathcal{K}}}
\newcommand{\calT}{{\mathcal{T}}}
\newcommand{\calC}{\mathcal{C}}
\newcommand{\calI}{\mathcal{I}}
\newcommand{\tcalC}{\tilde{\mathcal{C}}}
\newcommand{\ctT}{\mathbb{C}\tilde{\mathcal{T}}}
\newcommand{\tT}{\tilde{\mathcal{T}}}
\newcommand{\tTnab}{\tilde{\mathcal{T}}_{\scriptscriptstyle\nabla}}
\newcommand{\nab}{{\scriptscriptstyle\nabla}}
\newcommand{\calD}{\mathcal{D}}
\newcommand{\D}{\mathcal{D}}
\DeclareMathOperator{\As}{FA}
\newcommand{\bb}{\bar{\beta}}
\newcommand{\Cp}{\C\pi}
\newcommand{\Cpa}{\lvert \C\pi\rvert}
\newcommand{\hAs}{\widehat{\As}}
\newcommand{\hAsa}{\lvert\widehat{\As}\rvert}
\newcommand{\Ctp}{\C\tilde{\pi}}
\newcommand{\Ctpa}{\lvert\C\tilde{\pi}\rvert}
\newcommand{\tbeta}{\tilde{\beta}}
\newcommand{\cl}{\mathbb{1}}
\newcommand{\Cpba}{|\overline{\C\pi}|}
\newcommand{\tCp}{{\C\tilde{\pi}}}
\newcommand{\rot}{\operatorname{rot}}
\newcommand{\upsidein}{\mathbin{\rotatebox[origin=c]{-90}{$\in$}}}
\newcommand{\tpi}{\tilde{\pi}}
\newcommand{\gr}{\operatorname{gr}}
\newcommand{\id}{\operatorname{id}}
\newcommand{\im}{\operatorname{im}}
\newcommand{\Tql}{T \sqcup_q \uparrow}
\def\yellowm#1{{\setlength{\fboxsep}{0pt}\colorbox{yellow}{$#1$}}}
\def\glosm#1#2{{\label{g:#1}\yellowm{#2}}}
\def\glosi#1#2#3#4{{\item[{#2}] #3~\hfill #4}}
\newcommand{\poskink}{\hspace{1mm}\begin{tikzpicture}[baseline=0.7ex,scale=1.5, >=stealth]
\draw[thick, ->] (0,1.4ex) -- (0,2.2ex);
\draw[thick] (0, 0.4ex) .. controls (0, 1.4ex) and (0.3em, 1.4ex) .. (0.3em, 0.9ex);
\draw[thick] (0.1em, 0.8ex) .. controls (0.12em, 0.4ex) and (0.3em, 0.4ex) .. (0.3em, 0.9ex);
\draw[thick] (0, 0) -- (0, 0.4ex);
\end{tikzpicture}\hspace{1mm}
}
\newcommand{\negkink}{\hspace{1mm}\begin{tikzpicture}[baseline=0.7ex,scale=1.5, >=stealth]
\draw[thick,->] (0,1.4ex) -- (0,2.2ex);
\draw[thick] (0, 1.4ex) .. controls (0, 0.4ex) and (0.3em, 0.4ex) .. (0.3em, 0.9ex);
\draw[thick] (0.1em, 1ex) .. controls (0.12em, 1.4ex) and (0.3em, 1.4ex) .. (0.3em, 0.9ex);
\draw[thick] (0, 0) -- (0, 0.4ex);
\end{tikzpicture}\hspace{1mm}
}
\newcommand{\doublekink}{\hspace{1mm}\begin{tikzpicture}[baseline=0.7ex,scale=1.5, >=stealth]
\draw[thick, ->] (0,1.4ex) -- (0,2ex);
\draw[thick] (0, 1.4ex) .. controls (0, 0.4ex) and (0.3em, 0.4ex) .. (0.3em, 0.9ex);
\draw[thick] (0, 0.4ex) .. controls (0, 1.4ex) and (0.3em, 1.4ex) .. (0.3em, 0.9ex);
\draw[thick] (0, 0) -- (0, 0.4ex);
\draw[fill=black] (0.03em, 0.9ex) circle (0.1em);
\end{tikzpicture}\hspace{1mm}
}
\newcommand{\frchange}{
\begin{tikzpicture}[baseline=0.7ex,scale=1.5, >=stealth]
\draw[thick, ->] (0,0) -- (0,2ex);
\draw[thick] (-0.15em, 1.1ex) -- (0.15em, 0.8ex);
\end{tikzpicture}
}
\newcommand{\circarrow}{\hspace{1mm}\begin{tikzpicture}[scale=1.5, baseline=0.7ex, >=stealth]
\draw[->, thick] (0,1.25ex) -- (0,2.5ex);
\draw[thick] (0,1ex) circle (0.25ex);
\draw[thick] (0, 0) -- (0, 0.75ex);
\end{tikzpicture}\hspace{1mm}
}
\newcommand{\arrowup}{\hspace{1mm}\begin{tikzpicture}[scale=1.5, baseline=0.7ex, >=stealth]
\draw[->, thick] (0,0ex) -- (0,2.2ex);
\end{tikzpicture}\hspace{1mm}
}
\newcommand{\shortchord}{\hspace{.6mm}\begin{tikzpicture}[baseline=0.7ex, >=stealth, scale=1.5]
\draw[->, thick] (0,0) -- (0,2.5ex);
\draw[thick, densely dotted] (0,0.5ex) arc (-90:90:0.6ex);
\end{tikzpicture}\hspace{.4mm}
}
\newcommand{\circleSkel}{
 \begin{tikzpicture}[scale=.2, baseline=.0mm  ]
\draw[thick] (0,.5) circle (.7cm);
\end{tikzpicture} 
}
\newcommand{\dncom}{
\begin{tikzpicture}[scale=.3, baseline=.0mm  ]
\put(1.5,0) {\rotatebox[origin=c]{90}{$\circlearrowleft$}}
\draw[ - ](.1,-.4)--(1.2,1.2);
\end{tikzpicture} 
}
\newcommand{\botSkel}{
\begin{tikzpicture}[scale=.2, baseline=.0mm  ]
\draw[thick](0,0) .. controls (0,1.5) and (1.75,1.5) .. (1.75,0);

\end{tikzpicture} 
}
\newcommand{\swap}{
\begin{tikzpicture}[baseline=-2.75, scale=.2]
\draw[-{Stealth[ length=1.25mm, width=1.25mm]},thick ](-1,-1)--(1,1);
\draw[-{Stealth[ length=1.25mm, width=1.25mm]},thick ](1,-1)--(-1,1);
\end{tikzpicture}}
\newcommand{\smoothchord}{
\begin{tikzpicture}[baseline=-2.75, scale=.2]
\draw[-{Stealth[ length=1.25mm, width=1.25mm]},thick ](-0.75,-1)--(-0.75,1);
\draw[-{Stealth[ length=1.25mm, width=1.25mm]},thick ](0.75,-1)--(0.75,1);
\end{tikzpicture}}
\newcommand{\pcross}{
\begin{tikzpicture}[baseline=-2.75, scale=.2]
\draw[-{Stealth[ length=1.25mm, width=1.25mm]},thick ](-1,-1)--(1,1);
\draw[-{Stealth[ length=1.25mm, width=1.25mm]},thick ](-.25,.25)--(-1,1);
\draw[thick ](.3,-.3)--(1,-1);
\end{tikzpicture}}
\newcommand{\ncross}{
\begin{tikzpicture}[baseline=-2.75,scale=.2]
\draw[-{Stealth[ length=1.25mm, width=1.25mm]},thick ](1,-1)--(-1,1);
\draw[-{Stealth[ length=1.25mm, width=1.25mm]},thick ](.25,.25)--(1,1);
\draw[thick ](-.3,-.3)--(-1,-1);
\end{tikzpicture}}
\newcommand{\dpcross}{
\begin{tikzpicture}[baseline=-2.75,scale=.2]
\draw[-{Stealth[ length=1.25mm, width=1.25mm]},thick ](1,-1)--(-1,1);
\draw[-{Stealth[ length=1.25mm, width=1.25mm]},thick ](-1,-1)--(1,1);
\draw[fill=black] (0,0) circle (.2);
\end{tikzpicture}}
\newcommand{\smooth}{
\begin{tikzpicture}[baseline=-2.75,scale=.2]
\draw[-{Stealth[ length=1.25mm, width=1.25mm]},thick ] (-.8,-1) to[out=45,in=-45] (-1,1);
\draw[-{Stealth[ length=1.25mm, width=1.25mm]},thick ] (.8,-1) to[out=135,in=215] (1,1);
\end{tikzpicture}}
\newcommand{\sschordsame}{
\begin{tikzpicture}[baseline=-2.75,scale=.2]
\draw[-{Stealth[ length=1.25mm, width=1.25mm]},thick ] (-.8,-1) to[out=45,in=-45] (-1,1);
\draw[-{Stealth[ length=1.25mm, width=1.25mm]},thick ] (1.4,-1) to[out=135,in=215] (1.6,1);
\draw[thick, densely dotted](-.5,0)--(1.1,0);
\end{tikzpicture}}
\newcommand{\sschordop}{
\begin{tikzpicture}[baseline=-2.75,scale=.2]
\draw[-{Stealth[ length=1.25mm, width=1.25mm]},thick ] (-.8,-1) to[out=45,in=-45] (-1,1);
\draw[-{Stealth[ length=1.25mm, width=1.25mm]},thick ] (1.4,1) to[out=215,in=135] (1.6,-1);
\draw[thick, densely dotted](-.5,0)--(1.1,0);
\end{tikzpicture}}
\newcommand{\smoothop}{
\begin{tikzpicture}[baseline=-2.75,scale=.2]
\draw[-{Stealth[ length=1.25mm, width=1.25mm]},thick ] (1.4,1.5)--(1.4,.3)--(-1,.3)--(-1,1.5);
\draw[-{Stealth[ length=1.25mm, width=1.25mm]},thick ] (-1,-1.5)--(-1,-.3)--(1.4,-.3)--(1.4,-1.5); 
\end{tikzpicture}}
\newcommand{\smoothsame}{
\begin{tikzpicture}[baseline=-2.75,scale=.2]
\draw[-{Stealth[ length=1.25mm, width=1.25mm]},thick ] (1.4,-1.5)--(1.4,-.6)--(-1,.6)--(-1,1.5);
\draw[-{Stealth[ length=1.25mm, width=1.25mm]},thick ] (-1,-1.5)--(-1,-.6)--(1.4,.6)--(1.4,1.5); 
\end{tikzpicture}}
\newcommand{\chordwithswap}{
\begin{tikzpicture}[baseline=-3.5,scale=.2]
\draw[-{Stealth[ length=1.25mm, width=1.25mm]},thick ] (1.4,-2)--(1.4,-.6)--(-1,.6)--(-1,1.5);
\draw[-{Stealth[ length=1.25mm, width=1.25mm]},thick ] (-1,-2)--(-1,-.6)--(1.4,.6)--(1.4,1.5); 
\draw[thick, densely dotted](-1,-1.3)--(1.4,-1.3);
\end{tikzpicture}}
\newcommand{\swapswap}{
\begin{tikzpicture}[baseline=-3.5,scale=.2]
\draw[-{Stealth[ length=1.25mm, width=1.25mm]},thick ] 
(1.4,-2)--(1.4,-1.5)--(-1.4,-.4)--(1.4,.5)--(1.4,1.5);
\draw[-{Stealth[ length=1.25mm, width=1.25mm]},thick ] (-1.4,-2)--(-1.4,-1.5)--(1.4,-.4)--(-1.4,.5)--(-1.4,1.5);
\end{tikzpicture}}
\newcommand{\powerchord}{
\begin{tikzpicture}[baseline=-2.75,scale=.2]
\draw[-{Stealth[ length=1.25mm, width=1.25mm]},thick ] (-2,-4)--(-2,4);
\draw[-{Stealth[ length=1.25mm, width=1.25mm]},thick ] (2,-4)--(2,4);
\draw[thick, densely dotted](-2,-2)--(2,-2);
\draw[thick, densely dotted](-2,-1)--(2,-1);
\draw[thick, densely dotted](-2,-0)--(2,0);
\draw[thick, densely dotted](-2,1)--(2,1);
\draw [decorate,decoration={brace,amplitude=2pt,mirror}](2.5,-2.5) -- (2.5,1.5) node [black,midway, right] {$k$};
\end{tikzpicture}}
\newcommand{\powerswap}{
\begin{tikzpicture}[baseline=-2.75,scale=.2]
\draw[-{Stealth[ length=1.25mm, width=1.25mm]},thick ] (-2,-5)--(-2,-3)--(2,-1.5)--(-2,0)--(2,1.5)--(-2,3)--(-2,5);
\draw[-{Stealth[ length=1.25mm, width=1.25mm]},thick ] (2,-5)--(2,-3)--(-2,-1.5)--(2,0)--(-2,1.5)--(2,3)--(2,5);
\draw [decorate,decoration={brace,amplitude=4pt,mirror}](3,-3) -- (3,3) node [black,midway, right, xshift=1mm] { $k$};
\end{tikzpicture}}
\newcommand{\dpointwithxarrowsup}{
\begin{tikzpicture}[baseline=-2.75, scale=.2]
\draw[-{Stealth[ length=1.25mm, width=1.25mm]},thick ](-.8,-1.5) to[out=45,in=-90] (.3,0)to[out=90,in=-30] (-1,1.5);
\draw[thick] (.8,-1.5) to[out=155,in=-90] (-.5,-.2)to[out=90,in=190] (-.3,.4);
\draw[-{Stealth[ length=1.25mm, width=1.25mm]},thick ](.3,1)--(.9,1.5);
\draw[thick] (.7,-1.45)--(.8,-1.5);
\draw[fill=black] (-.1,-.9) circle (.2);
\end{tikzpicture}}
\newcommand{\bigslant}[2]{{\raisebox{.2em}{$#1$}\left/\raisebox{-.2em}{$#2$}\right.}}
\theoremstyle{definition}
\newtheorem{definition}[thm]{Definition}
\newtheorem{example}[thm]{Example}
\theoremstyle{remark}
\newtheorem{remark}[thm]{Remark}
\numberwithin{equation}{section}
\begin{document}

\title{Goldman-Turaev formality from the Kontsevitch integral}
\author[D. Bar-Natan]{Dror~Bar-Natan}
\address{
  Department of Mathematics\\ University of Toronto\\ Toronto, Ontario, Canada
}
\email{drorbn@math.toronto.edu}
\urladdr{http://www.math.toronto.edu/~drorbn}

\author[Z. Dancso]{Zsuzsanna Dancso}
\address{School of Mathematics and Statistics\\ The University of Sydney\\ Sydney, NSW, Australia}
\email{zsuzsanna.dancso@sydney.edu.au}

\author[T. Hogan]{Tamara Hogan}
\address{Department of Mathematics \\ University of Toronto \\ Toronto, Ontario, Canada}
\email{tamara.hogan@utoronto.ca}
\urladdr{https://www.tamaramaehogan.com/}

\author[J. Liu]{Jessica Liu}
\address{Department of Mathematics\\ University of Toronto\\ Toronto, Ontario, Canada}
\email{chengjin.liu@mail.utoronto.ca}

\author[N. Scherich]{Nancy Scherich}
\address{Department of Mathematics and Statistics\\ Elon University\\ Elon, North Carolina}
\email{nscherich@elon.edu}
\urladdr{http://www.nancyscherich.com}

\keywords{knots, links in a handlebody, expansions, finite type invariants, Lie algebras
}


\begin{abstract}
We present a new solution to the formality problem for the framed Goldman--Turaev Lie bialgebra in genus zero, constructing Goldman-Turaev homomorphic expansions (formality isomorphisms) from the Kontsevich integral.
Our proof uses a three dimensional derivation of the Goldman-Turaev Lie biaglebra arising from a low-degree Vassiliev quotient -- the {\em emergent} quotient -- of tangles in a thickened punctured disk, modulo a Conway skein relation. This is in contrast to Massuyeau's 2018 proof using braids. A feature of our approach is a general conceptual framework which is applied to prove the compatibility of the homomorphic expansion with both the Goldman bracket and the technically challenging Turaev cobracket. 

\end{abstract}
\maketitle

\tableofcontents

\section{Introduction}

In 1986, Goldman defined a Lie bracket \cite{Gol}  on the space of homotopy classes of free loops on a compact oriented surface.
Shortly after, in 1991, Turaev defined a cobracket 
(\cite{Tur}, improved by \cite{AKKN_formality}) on the same space.  
This bracket and cobracket make the space of free loops into a Lie bialgebra -- known as the Goldman-Turaev Lie bialgebra -- which forms the basis for the field of string topology \cite{CS} and has been an object of study from many perspectives. 

In this paper we describe a new solution to the {\em formality problem}\footnote{The term {\em formality} is inspired by rational homotopy theory \cite{SW}, though we do not use any specific rational homotopy theory tools here.} for the Goldman-Turaev Lie bialgebra in genus zero. Given a filtered algebraic structure $\calB$ 
$$\calB=F_0\supseteq F_1\supseteq F_2 \cdots$$ the {\em formality problem for $\calB$} is to find a {\em formality isomorphism} between the completion $\varprojlim_i \calB/F_i$ and the degree completed associated graded structure $\calA:= \prod F_i/F_{i+1}$.  Finding a formality isomorphism is equivalent to finding a {\em homomorphic expansion}\footnote{In the sense of \cite{BND-KTGS, BND-WKO1}.} for $\calB$, that is, a filtered homomorphism $$Z:\calB\to \calA$$ with the {\em universality} property that the associated graded map of $Z$ is the identity: $\gr Z=\id_\calA$.

To construct a homomorphic expansion for the Goldman-Turaev Lie bialgebra, we use a 3-dimensional structure on a space of tangles in a handlebody, and recover the Goldman bracket and Turaev cobracket maps as ``shadows'' of certain natural operations on tangles. More precisely, these tangle operations induce the Goldman--Turaev operations as {\em connecting homomorphisms} in a specific sense explained in Section~\ref{sec:conceptsum}. 
We show the Kontsevich integral is homomorphic with respect to these tangle operations.
 Our main result is summarised as follows:

\begin{main*}
    Let $\C\tT$ denote the  space of formal linear combinations of tangles in  a thickened punctured disc $D_p \times I$. Modding out by crossing changes, the Kontsevich integral on $D_p\times I$ descends to a homomorphic expansion for the Goldman--Turaev Lie bialgebra of homotopy curves in $D_p$.
\end{main*}

In more detail, consider the space  $\C\tT$ of formal linear combinations of framed tangles in the handlebody $\D_p\times I$, where $\D_p$ denotes a disc with $p$ punctures, and $I$ is the unit interval. We call the vertical lines above the punctures {\em poles}, and the connected components of the tangle {\em strands}, as in Figure \ref{fig:3D_polestudio}. 
\begin{figure}
    \centering
    \includegraphics[width=0.3\linewidth]{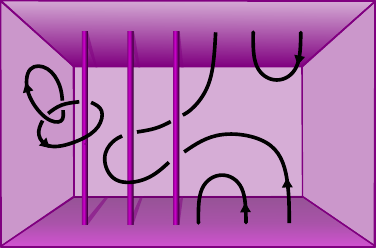}
    \caption{A tangle in $D_p\times I$. The lines above the punctures (poles) are the vertical (purple) lines on the left.}
    \label{fig:3D_polestudio}
\end{figure}
There are two Vassiliev filtrations on $\C\tT$: one which considers both strand-strand and strand-pole crossing changes, called the {\em total} filtration; and one which only allows strand-strand crossing changes, called the {\em strand} filtration (also studied in \cite{HM}). The quotient by the first step of the strand filtration leaves only homotopy classes of curves in $D_p$: this is where the Goldman--Turaev Lie bialgebra lives.

Everything important in this paper already appears, in fact, in the quotient by the second step of the strand filtration. We call this quotient the space of {\em emergent} tangles, situated in between homotopy classes of curves (where there is no notion of over or under strands) and classical tangles, where unlimited knotting can exist. In emergent tangles almost all knotting is eliminated save for the dependence on at most a single crossing change. The term emergent knotting is first used in \cite{Kuno25}, where Kuno studies a related infinitesimal question: the pentagon equations which describe associators in the emergent quotient. The {\em emergent} terminology is explained in detail in the Appendix by Bar-Natan to \cite{Kuno25}.

The two relevant operations on $\C\tT$ and the emergent quotient are the commutator associated to the stacking product of $\tT$, and the difference between a tangle and its vertical mirror image.
Passing to a Conway skein quotient of $\tT$, (Section~\ref{sec:Conway}), the Goldman bracket is induced by the commutator as a ``connecting homomorphism''. The Turaev cobracket is induced by the difference between a tangle and its flip, save for some technical details.

Our main result is to show that the Kontsevich integral is a homomorphic expansion with respect to tangle operations and the induced Goldman bracket and Turaev cobracket: in other words, it intertwines these operations with their associated graded counterparts. To do so, we use the naturality of connecting homomorphisms (Section~\ref{sec:conceptsum}) as a guiding principle.  

\medskip

One of our future goals is to extend this result to the framed Goldman--Turaev Lie bialgebras in higher genera. We expect that the derivation of the Goldman--Turaev operations from tangle operations following Section~\ref{sec:conceptsum} extends to higher genus with minor modifications. For addressing formality in higher genus, the missing link is the Kontsevich integral. In the case of the torus, one would replace this with a homomorphic expansion built from {\em elliptic associators}, as in \cite{En, Hum, CEE}. 

\subsection*{Motivation: Kashiwara--Vergne theory}
 The Kashiwara--Vergne (KV) equations originally arose from the study of convolutions on Lie groups and Lie algebras \cite{KV}. The equations were reformulated algebraically in terms of automorphisms of free Lie algebras in \cite{AT}, in this form they are a refinement of the Baker-Campbell-Hausdorff formula for products of exponentials of non-commuting variables. Shortly thereafter, \cite{AET} gave an explicit formula, constructing Kashiwara--Vergne solutions from Drinfel'd associators. The reverse direction -- whether all KV solutions arise from associators in this way -- is a significant open problem raised in \cite{AT} known as the Alekseev--Torossian conjecture. 
 
Kashiwara--Vergne theory has multiple topological interpretations in which Kashiwara--Vergne solutions correspond to  {\em homomorphic expansions} -- equivalently, formality isomorphisms -- of topological objects.

One of these topological interpretations is due to the first two authors \cite{WKO2}, who showed that homomorphic expansions of  welded foams -- a class of 4-dimensional tangles -- are in one to one correspondence with solutions of the KV-equations. More recently, a series of papers by Alekseev, Kawazumi, Kuno and Naef \cite{AKKN_formality,akkn_g0,AKKN_highergenus} drew an analogous connection between KV solutions and homomorphic expansions for the Goldman-Turaev Lie bialgebra for the disc with two punctures (up to non-negligible technical differences). This latter correspondence was used to generalise the Kashiwara--Vergne equations by considering different surfaces, including those of higher genus \cite{AKKN_highergenus}. 

In other words, there is an intricate algebraic connection between four dimensional welded foams and the Goldman--Turaev Lie bi-algebra, which strongly suggests that there is a topological connection as well. Beyond the inherent interest in tangles and formality, this paper has two further goals:
\begin{enumerate}
\item To work towards finding a connection between the two-dimensional Goldman--Turaev Lie bialgebra and four-dimensional welded foams, by constructing a three-dimensional derivation of the Goldman-Turaev Lie bialgebra, with homomorphic expansions which descend to Goldman-Turaev expansions.
\item To better understand the relationship between Drinfel'd associators and Kashiwara--Vergne solutions: homomorphic expansions for (parenthesised) tangles are indexed by Drinfel'd associators, and homomorphic expansions for the Goldman--Turaev Lie bialgebra are closely tied to Kashiwara--Vergne solutions. 

\end{enumerate}

\subsection*{Related work}
Two further previous results address Goldman--Turaev formality in genus zero. The first is due to Massuyeau \cite{Mas}, posted in 2015, who used braids and the Kontsevich integral to show the formality of the Turaev cobracket. In comparison, the new contribution of our approach is two-fold: the derivation of the cobracket via tangles is conceptually simpler, and Section~\ref{sec:conceptsum} offers a general conceptual framework, which we expect will provide insight into the higher genus case and the relationship to welded foams \cite{WKO2}, in order to link the different topological interpretations of Kashiwara--Vergne theory described above. 

Another approach to Goldman--Turaev formality in genus zero, due to Alekseev and Naef \cite{AN}, constructs formality isomorphisms from the Knizhnik-Zamolodchikov connection. This proof uses a direct argument showing that the formality isomorphism is compatible with the Goldman bracket and Turaev cobracket, in place of a 3-dimensional topological derivation of these maps.

The paper \cite{Kuno25} inverstigates emergent versions of the pentagon equations, studying the infinitesimal (associated graded) side of the formality problem. In parallel to our paper, this is also a step towards better understanding the relationships between Drinfel'd associators, Kashiwara--Vergne solutions, emergent tangles, and welded foams. 

Further related work includes the recent paper \cite{ANR}, which derives explicit formulas for the Kirillov-Kostant-Souriau coaction maps of open path regularized holonomies of the Knizhnik-Zamolodchikov equation (close cousins of the Turaev cobrackets). The paper \cite{Ren} introduces a reduced coaction equation on the associated graded side, and studies the corresponding Lie algebra.

\subsection*{Organisation} Section~\ref{sec:conceptsum} gives a general algebraic framework for how the Goldman--Turaev operations are induced by tangle operations. In Section \ref{sec:Prelims} we give a brief overview of the Kontsevich integral and the Goldman Turaev Lie bialgebra.
In Section \ref{sec:TangleSetUp}, we define tangles in handlebodies, relevant operations and Vassiliev
filtrations. We  identify the associated graded space of tangles as a space of chord diagrams, and introduce the Conway skein quotient. 
In Section \ref{sec:IdentifyingGTinCON}, we identify the Goldman--Turaev Lie bialgebra in a low filtration degree, and prove the main result in two theorems: Theorem \ref{thm:Cube_for_bracket} and \ref{thm:cobrackethomomorphic}. 

\vspace{5mm}\noindent \emph{Acknowledgements.} We are truly grateful to the Anonymous Referee for many insightful comments and suggestions which substantially improved on the first version of this paper. We thank Anton Alekseev, Gwena\"el Massuyeau, and Yusuke Kuno for many fruitful conversations.  DBN was supported by NSERC RGPIN 262178 and RGPIN-2018-04350, and by The Chu Family Foundation (NYC). ZD was partially supported by the ARC DECRA Fellowship DE170101128. NS was supported by the National Science Foundation under Grant No. DMS-1929284 while in residence at the Institute for Computational and Experimental Research in Mathematics in Providence, RI, during the Braids Program. NS was also supported by the National Science Foundation under Grant No. DMS-2532699. We thank the Sydney Mathematical Research Institute and the University of Sydney for their hospitality and funding for multiple research visits.


\section{Conceptual summary}\label{sec:conceptsum}

We induce the genus zero Goldman-Turaev operations from tangle operations, in the spirit of ``connecting homomorphisms'': this Section is a summary of the basic approach. We provide some proofs which are not immediate and use the words {\em homomorphic expansions}, and {\em Goldman-Turaev operations} without definition, only mentioning their basic properties which make this conceptual outline coherent; the definitions follow in Section~\ref{sec:Prelims}.

The idea formalises the following basic algebra fact. Consider a homomorphism of modules $\lambda: B \to E$ and two submodules $A\subseteq B$ and $D\subseteq E$.
If $\lambda(A)=0$ and $\lambda(B) \subset D$, then $\lambda$ induces a map $\eta:B/A\to D$, by setting $[b] \mapsto \lambda(b)$. We now restate this as a commutative diagram \eqref{eq:inducedconnhom} which makes sense in any Abelian category.

The top and bottom rows of \eqref{eq:inducedconnhom} are exact and the right and left vertical maps are zero, and therefore, by minor diagram chasing, the middle vertical map $\lambda$ induces a unique map $\eta: C \to D$. In other words, there is a unique map $\eta$ which fits into a commutative square with the quotient map $B\to C$, $\lambda: B\to E$, and the inclusion $D\to E$. The induced map $\eta$ can also be seen as a degenerate case of a connecting homomorphism. In our applications $\lambda$ is a difference of two maps $\lambda_1$ and $\lambda_2$, whose values differ in $E$ but coincide in the quotient $F$. 
\begin{equation}\label{eq:inducedconnhom}
\begin{tikzcd}
&A \arrow[rr]\arrow[d, "0",swap]  && B \arrow[rr]\arrow[d,"\lambda=\lambda_1-\lambda_2"] &&C\arrow[d,"0"] \arrow[r] \arrow[dllll, dashed, "\eta", near start,swap, controls={+(-1.7,1.3) and +(.5,0.8)}]&0\\
0\arrow[r]& D \arrow[rr]
&& E \arrow[rr]
&& F
\end{tikzcd}
\end{equation}

In Section~\ref{sec:IdentifyingGTinCON} we present two constructions which produce the Goldman bracket and the Turaev cobracket, respectively, as induced homomorphisms $\eta$, from corresponding tangle operations $\lambda_1$ and $\lambda_2$. 
The following example is a schematic version of what will become the argument for the Goldman bracket.
\begin{example} 
Let $A$ be an associative algebra, and let $\{L_i\}$ denote the lower central series of $A$. That is, $L_0:=A$, and $L_{i+1}:=[L_i,A]$. Then the $L_i$ are Lie ideals, and let $M_i=AL_i=L_iA$ denote the two-sided ideal generated by $L_i$. The quotient $A/M_1$ is the abelianization of $A$, denoted by $A^{ab}$. Then we have the following diagram:\begin{equation}\label{eq:SnakeExample}
\begin{tikzcd}
0\arrow[r] &K \arrow[rr]\arrow[d, "0",swap]  && 
\frac{A}{M_2} \otimes \frac{A}{M_2}
\arrow[r] \arrow{d}{[\cdot, \cdot]} & A^{ab} \otimes A^{ab} \arrow[d,"0"] \arrow[r] \arrow[dlll, dashed, "\eta", near start,swap, out=120, in=70, looseness=.6]&0\\
0\arrow[r]& 
\frac{M_1}{M_2} \arrow[rr]
&& \frac{A}{M_2} \arrow[r]
& A^{ab} \arrow[r] & 0
\end{tikzcd}
\end{equation} 
Here $\lambda$ is the algebra commutator, which is indeed the difference between two maps: the multiplication ($\lambda_1$) and the multiplication in the opposite order $(\lambda_2)$. The kernel $K$ of the projection to $A^{ab}\otimes A^{ab}$ is generated by the subalgebras $\left\{\frac{M_1}{M_2} \otimes \frac{A}{M_2}, \frac{A}{M_2} \otimes \frac{M_1}{M_2}\right\}$ in $\frac{A}{M_2} \otimes \frac{A}{M_2}$.
The map $\eta$ is a well defined commutator map $A^{ab}\otimes A^{ab} \to \frac{M_1}{M_2}$, given by $\eta(x\otimes y)=[x,y]$  mod $M_2.$ \qed
\end{example}

The goal of this paper is to construct homomorphic expansions (aka formality isomorphisms) for the Goldman-Turaev Lie bialgebra from the Kontsevich integral.
In outline, this follows from the naturality property of the construction above, under the associated graded functor, as follows. 

Given a short exact sequence 
\[\begin{tikzcd}
	0 & A & B & C & 0,
	\arrow[,from=1-1, to=1-2]
	\arrow["\iota",hook, from=1-2, to=1-3]
	\arrow["\pi",two heads, from=1-3, to=1-4]
	\arrow[from=1-4, to=1-5]
\end{tikzcd}\]
and a descending filtration on $B$ 
\begin{align*}
    B=B^0 \supseteq B^1 \supseteq B^2 \supseteq \dots \supseteq B^n \supseteq \dots, 
\end{align*}
there is an induced filtration on $A$ given by
\begin{align*}
    A=A^0 \supseteq A^1 \supseteq A^2 \supseteq \dots \supseteq A^n \supseteq \dots, 
\end{align*}
where $A^i = \iota^{-1}(\iota A \cap B^i)$. Similarly, there is an induced filtration on $C$ given by 
\begin{align*}
    C=C^0 \supseteq C^1 \supseteq C^2 \supseteq \dots \supseteq C^n \supseteq \dots 
\end{align*}
where $C^i = \pi(B^i)$. 

\begin{lem}
    If the rows of the diagram \eqref{eq:inducedconnhom} are exact and filtered so that the filtrations on the left and right are induced from the filtration in the middle, then the induced map $\eta$ is also filtered.
\end{lem}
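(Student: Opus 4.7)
The plan is to unpack the diagram-chase definition of $\eta$ and then verify that each step in that chase respects the filtrations, using the fact that the filtrations on $A$, $C$, $D$, $F$ are \emph{induced} from those on $B$ and $E$. Throughout, I will take as implicit in the hypothesis that the vertical map $\lambda$ is itself filtered, i.e.\ $\lambda(B^n) \subseteq E^n$ (this is what makes the word ``filtered'' meaningful for the diagram as a whole).

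First I would recall the construction of $\eta$. Given $c \in C$, exactness of the top row lets us pick a lift $b \in B$ with $\pi(b) = c$. Since the right square commutes with right vertical map $0$, we have that $\lambda(b)$ lies in $\ker(E \to F)$, which by exactness of the bottom row equals $\iota_D(D)$; since $\iota_D$ is injective, there is a unique $d \in D$ with $\iota_D(d) = \lambda(b)$, and one sets $\eta(c) := d$. Independence of the choice of lift follows from the fact that the left vertical map is $0$, so that any two lifts differ by an element of $\iota_A(A)$ which is killed by $\lambda$. This is the standard degenerate snake-lemma construction.

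Next, to check filteredness, I would fix $c \in C^n$ and produce a lift in $B^n$. By the definition of the induced filtration $C^n = \pi(B^n)$, such a lift $b \in B^n$ exists. Because $\lambda$ is filtered, $\lambda(b) \in E^n$. Writing $\lambda(b) = \iota_D(d)$ as above, the definition of the induced filtration $D^n = \iota_D^{-1}(\iota_D(D) \cap E^n)$ immediately gives $d \in D^n$, i.e.\ $\eta(c) \in D^n$. Since $c \in C^n$ was arbitrary, $\eta(C^n) \subseteq D^n$ for every $n$, proving that $\eta$ is filtered.

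No step is especially delicate; the only thing to watch is that the lift of $c \in C^n$ can be chosen inside $B^n$ (not merely in $B$), which is precisely the content of saying that the filtration on $C$ is \emph{induced} rather than merely compatible. Dually, the fact that $D^n$ is defined by pulling back $E^n$ along $\iota_D$ is what makes the bottom-row extraction of $d$ from $\lambda(b)$ automatically land in the correct filtration piece. So the whole statement should be read as an instance of the functoriality of the snake construction with respect to induced filtrations on subobjects and quotients.
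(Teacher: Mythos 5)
Your proof is correct and follows essentially the same diagram chase as the paper's: lift $c\in C^n$ to $b\in B^n$ using $C^n=\pi(B^n)$, apply filteredness of $\lambda$ to land in $E^n$, and use $D^n=\iota^{-1}(\iota(D)\cap E^n)$ to conclude $\eta(c)\in D^n$. The extra remarks on the construction and well-definedness of $\eta$ are consistent with the paper's setup and do not change the argument.
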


\begin{proof}
    Basic diagram chasing: given $c\in C^n$, since $C^n=\pi(B^n)$, there is a $b\in B^n$ such that $\pi(b)=c$. Since $\lambda$ is filtered, $\lambda(b)\in E^n$, and $\lambda(b)\in \iota(D)$ by exactness. Since $D^n=\iota^{-1}(\iota(D)\cap E^n)$, we have that $\lambda(b)=\iota(d)$ for a $d\in D^n$. By uniqueness of the induced map, $d=\eta(c)$.
\end{proof}

The associated graded functor is a functor from the category of filtered algebras (or vector spaces) to the category of graded algebras (or vector spaces). For a filtered algebra 
\[
    A=A^0 \supseteq A^1 \supseteq A^2 \supseteq \dots \supseteq A^n \supseteq \dots, 
\]
the (degree completed) associated graded algebra is defined to be 
\[
\gr A= \Pi_{n=0}^\infty A^{n}/A^{n+1}.
\]
The associated graded map of a filtered map is defined in the natural way (as in the proof of Lemma~\ref{lem:ExaxtGr} below). In general, $\gr$ is not an exact functor, but it does preserve exactness for the special class of filtered short exact sequences where the filtrations on $A$ and $C$  are induced from the filtration on $B$:
\begin{lem}\label{lem:ExaxtGr}
    If in the filtered short exact sequence
\[\begin{tikzcd}
	0 & A & B & C & 0
	\arrow[,from=1-1, to=1-2]
	\arrow["\iota",hook, from=1-2, to=1-3]
	\arrow["\pi",two heads, from=1-3, to=1-4]
	\arrow[from=1-4, to=1-5]
\end{tikzcd}\]
the filtrations on $A$ and $C$ are induced from the filtration on $B$, then the associated graded sequence is also exact:
\[\begin{tikzcd}
	0 & \gr A & \gr B & \gr C & 0.
	\arrow[from=1-1, to=1-2]
	\arrow["\gr \iota", hook, from=1-2, to=1-3]
	\arrow["\gr \pi",two heads, from=1-3, to=1-4]
	\arrow[from=1-4, to=1-5]
\end{tikzcd}\]

\end{lem}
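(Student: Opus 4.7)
The plan is to verify exactness of the graded sequence one graded piece at a time. For each $n\geq 0$, the induced maps are
\[
\gr^n\iota\colon A^n/A^{n+1}\to B^n/B^{n+1},\quad [a]\mapsto[\iota(a)],\qquad \gr^n\pi\colon B^n/B^{n+1}\to C^n/C^{n+1},\quad [b]\mapsto[\pi(b)],
\]
both of which are well-defined because $\iota$ and $\pi$ are filtered. I need to check (i) $\gr^n\iota$ is injective, (ii) $\gr^n\pi$ is surjective, and (iii) $\ker(\gr^n\pi)=\im(\gr^n\iota)$. Each is a short diagram chase, where the hypothesis that the filtrations on $A$ and $C$ are induced from that on $B$ is exactly what is needed to keep the chase inside the correct filtration level.

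For injectivity of $\gr^n\iota$, suppose $\iota(a)\in B^{n+1}$ for some $a\in A^n$; then $\iota(a)\in\iota(A)\cap B^{n+1}$, so by the defining formula $A^{n+1}=\iota^{-1}(\iota(A)\cap B^{n+1})$ we conclude $a\in A^{n+1}$, i.e.\ $[a]=0$. For surjectivity of $\gr^n\pi$, given $c\in C^n$, the induced filtration $C^n=\pi(B^n)$ yields some $b\in B^n$ with $\pi(b)=c$, and then $\gr^n\pi([b])=[c]$.

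For exactness in the middle, the inclusion $\im(\gr^n\iota)\subseteq\ker(\gr^n\pi)$ is immediate from $\pi\circ\iota=0$. Conversely, suppose $[b]\in B^n/B^{n+1}$ lies in $\ker(\gr^n\pi)$, so $\pi(b)\in C^{n+1}$. Using $C^{n+1}=\pi(B^{n+1})$ again, pick $b'\in B^{n+1}$ with $\pi(b')=\pi(b)$. Then $b-b'\in\ker\pi=\iota(A)$, so $b-b'=\iota(a)$ for a unique $a\in A$. Since $b,b'\in B^n$, we have $\iota(a)\in\iota(A)\cap B^n$, hence $a\in A^n$ by the induced filtration. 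Finally $\gr^n\iota([a])=[\iota(a)]=[b-b']=[b]$ in $B^n/B^{n+1}$, completing exactness.

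There is no real obstacle here: the entire argument is a routine diagram chase, and the induced-filtration hypothesis is precisely tailored so that the lifts produced by the chase at the unfiltered level automatically land in the correct filtration degree. The only point requiring any care is to notice that both uses of the hypothesis --- pulling lifts back into $B^n$ (resp.\ $B^{n+1}$) along $\pi$, and pushing elements of $\iota(A)\cap B^n$ back into $A^n$ along $\iota$ --- occur, and that each use is matched by the correct half of the definitions $C^n=\pi(B^n)$ and $A^n=\iota^{-1}(\iota(A)\cap B^n)$.
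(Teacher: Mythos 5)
Your proof is correct and follows essentially the same graded-piece-by-graded-piece diagram chase as the paper, using $C^{n+1}=\pi(B^{n+1})$ to produce the lift $b'\in B^{n+1}$ and exactness of the original sequence to write $b-b'=\iota(a)$. You are in fact slightly more careful than the paper: you also verify injectivity of $\gr\iota$ and surjectivity of $\gr\pi$, and you explicitly check that $a\in A^n$ (via $A^n=\iota^{-1}(\iota(A)\cap B^n)$) so that $[a]$ is a legitimate element of $\gr^n A$ --- a point the paper's proof leaves implicit.
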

\begin{proof}
    Since $\gr$ is a functor, we know that $\gr \pi \circ \gr \iota = 0$, hence $\im \gr \iota \subseteq \operatorname{ker}\gr\pi$. It remains to show that $\operatorname{ker}\gr\pi\subseteq \operatorname{im}\gr\iota$. 

    Let $[b] \in B^n/B^{n+1}$, and assume that $\gr\pi([b]) = 0$. Since $\gr\pi([b]) = [\pi(b)]\in C^n/C^{n+1}$, we have $\gr\pi([b]) = 0$ if and only if $\pi(b)\in C^{n+1}$. 
    As the filtration on $C$ is induced from $B$, we know that $C^{n+1} = \pi(B^{n+1})$. Thus, $\pi(b) \in \pi(B^{n+1})$. Or in other words, there exists $x \in B^{n+1}$ such that $\pi(b) = \pi(x)$. This implies that $\pi(b-x) = 0$ and hence that $b-x \in \iota(A)$ by exactness.  

    Therefore, $b = x+\iota(a)$ for some $x \in B^{n+1}$ and $a \in A$. It follows that $[b] = [\iota(a)] = \gr\iota([a])$ in $B^n/B^{n+1}$ and hence $\operatorname{ker}\gr\pi \subseteq \operatorname{im}\gr\iota$ as required. 
\end{proof}

\begin{cor}\label{cor:gr_induced_is_unique}
    If the rows of the diagram in Equation~\ref{eq:inducedconnhom} are exact, and the filtrations on the left and right are induced from the filtration in the middle, then the rows of the associated graded diagram are also exact, and the unique connecting homomorphism is $\gr \eta$. 
\begin{equation}
\begin{tikzcd}
0\arrow[r] &\gr A \arrow[r]\arrow[d, "0",swap]  & 
\gr B
\arrow[r] \arrow{d}{\gr \lambda} & \gr C \arrow[d,"0"] \arrow[r] \arrow[dll, dashed, "\gr \eta", near start,swap, out=120, in=60, looseness=.9]&0\\
0\arrow[r]& 
\gr D\arrow[r]
& \gr E\arrow[r]
& \gr F \arrow[r] & 0
\end{tikzcd}
\end{equation} 
\end{cor}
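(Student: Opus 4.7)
The plan is to combine Lemma~\ref{lem:ExaxtGr} with the preceding lemma showing that $\eta$ is filtered, together with the uniqueness already established for the induced map in diagram~\eqref{eq:inducedconnhom}.

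First I would apply Lemma~\ref{lem:ExaxtGr} to both the top and bottom rows of \eqref{eq:inducedconnhom}. The hypothesis is satisfied by assumption, so both associated graded rows are short exact. Since the associated graded functor sends zero maps to zero maps, the outer vertical maps in the graded diagram vanish. Thus the diagram-chase argument that produced $\eta$ from \eqref{eq:inducedconnhom} applies verbatim in the graded setting, yielding a unique induced map $\eta': \gr C \to \gr D$.

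It then remains to identify $\eta'$ with $\gr \eta$. The preceding lemma guarantees that $\eta$ is filtered, so $\gr \eta$ is defined. I would verify the defining property of $\eta'$ by a direct chase: for $[c] \in C^n/C^{n+1}$ pick a lift $c \in C^n$, and then $b \in B^n$ with $\pi(b) = c$. In $E$ we have $\lambda(b) = \iota(\eta(c))$ by commutativity of \eqref{eq:inducedconnhom}, and by the preceding lemma $\eta(c) \in D^n$. Passing to associated graded pieces gives $(\gr \lambda)([b]) = (\gr \iota)([\eta(c)])$ in $E^n/E^{n+1}$, which is precisely the defining relation used to construct the induced connecting homomorphism on $[c]$. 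By uniqueness in the graded diagram, $\gr \eta = \eta'$.

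I do not anticipate a serious obstacle: the argument is essentially bookkeeping of graded representatives under chosen lifts, and the degeneracy of the situation (two of the outer vertical maps being zero) removes the sign and higher-order complications that normally accompany a snake-lemma style argument. The only care required is to confirm that the graded outcome is independent of the choice of lifts, which is immediate from the exactness of the graded rows and the vanishing of the outer vertical maps.
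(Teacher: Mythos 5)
Your argument is correct and follows essentially the same route as the paper: exactness of the graded rows via Lemma~\ref{lem:ExaxtGr}, then identification of the graded connecting map with $\gr\eta$ by uniqueness. The paper simply invokes functoriality of $\gr$ to see that $\gr\eta$ makes the graded diagram commute, whereas you verify the same commutativity by an explicit chase of graded representatives; the two are interchangeable.
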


\begin{proof}
    The exactness of the rows is Lemma~\ref{lem:ExaxtGr}. The induced map is $\gr \eta$ as $\gr \eta$ makes the diagram commute, and the induced map is unique.
\end{proof}

An expansion for an algebraic structure $X$ is a filtered homomorphism $Z: X \to \gr X$ (with special properties as explained in Section \ref{subsec:FramedKon}). Thus, if expansions exist for each of the spaces $A$ through $F$, we obtain a multi-cube:

\begin{equation}\label{eq:Cube}
\begin{tikzcd}[row sep=scriptsize, column sep=small]
& & A \arrow[dl] \arrow[rr] \arrow[dd,"Z_A",near start] & & B \arrow[dl,"\lambda"]\arrow[rr] \arrow[dd,"Z_B",near start] && C\arrow[r]\arrow[dd,"Z_C"]\arrow[dl] \arrow[dlllll, "\eta",swap, dashed, controls={+(-1.7,1.3) and +(0,2)}]  &0\\
0\arrow[r]&D \arrow[rr, crossing over] \arrow[dd,"Z_D"] & & E\arrow[rr,crossing over]\ &&F \\
& & \gr A \arrow[dl] \arrow[rr] & & \gr B \arrow[dl,"\gr \lambda", near start]\arrow[rr] && \gr C \arrow[r]\arrow[dl]\arrow[dlllll, "\gr \eta", dashed, out=-110,in=-50 , ,looseness=0.7]&0\\
0\arrow[r]& \gr D \arrow[rr] & & \gr E \arrow[rr]\arrow[from=uu, "Z_E",near start, crossing over]&& \gr F \arrow[from=uu, "Z_F",near start, crossing over]\\
\end{tikzcd}
\end{equation}


\begin{lem}\label{lem:Naturality} If, in the multi-cube~\eqref{eq:Cube} all vertical faces commute, then so does the square:
\begin{equation}\label{eq:HomExp}
\begin{tikzcd}
D \arrow[d, "Z_D"] && C \arrow[ll, dashed, "\eta"] \arrow[d, "Z_C"]  \\
\gr D && \gr C \arrow[ll,dashed, "\gr \eta"]
\end{tikzcd}
\end{equation}
\end{lem}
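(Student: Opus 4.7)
The plan is a diagram chase using only the definition of $\eta$ as the map induced by the top half of diagram~\eqref{eq:inducedconnhom}, together with commutativity of the four side faces of the cube~\eqref{eq:Cube}. The main point is that $\eta$ and $\gr\eta$ are \emph{characterised} by the condition $\iota\circ\eta=\lambda\mid_{\text{lift}}$ (respectively $\gr\iota\circ\gr\eta=\gr\lambda\mid_{\text{lift}}$), so it suffices to check that $Z_D\circ\eta$ satisfies the defining equation of $\gr\eta\circ Z_C$.

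First I would fix $c\in C$ and choose any lift $b\in B$ with $\pi(b)=c$. By the definition of $\eta$ we have $\lambda(b)=\iota(\eta(c))$. Applying $Z_E$ to both sides, the face of the cube involving $\lambda$ gives
\[
Z_E(\lambda(b))=\gr\lambda\,(Z_B(b)),
\]
while the face involving $\iota$ gives
\[
Z_E(\iota(\eta(c)))=\gr\iota\,(Z_D(\eta(c))).
\]
Hence $\gr\iota\,(Z_D(\eta(c)))=\gr\lambda\,(Z_B(b))$ in $\gr E$.

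Next I would show that $Z_B(b)$ is a lift of $Z_C(c)$ in the associated graded diagram. This follows from commutativity of the face involving $\pi$: $\gr\pi\,(Z_B(b))=Z_C(\pi(b))=Z_C(c)$. By Corollary~\ref{cor:gr_induced_is_unique}, $\gr\eta(Z_C(c))$ is the unique element of $\gr D$ whose image under $\gr\iota$ equals $\gr\lambda$ applied to any lift of $Z_C(c)$ in $\gr B$. Combined with the previous paragraph, this forces $Z_D(\eta(c))=\gr\eta\,(Z_C(c))$, which is exactly commutativity of~\eqref{eq:HomExp}.

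There is no real obstacle here beyond keeping the diagram chase organised: the argument reduces to the two facts that (i) $\eta$ is uniquely determined by the equation $\iota\eta=\lambda$ on lifts, and (ii) the expansions intertwine each of the four maps $\iota$, $\pi$, $\lambda$, $\gr\lambda$ with their graded counterparts, which is exactly the hypothesis that all vertical faces commute. Injectivity of $\gr\iota$ (which follows from the exactness of the associated graded rows, Lemma~\ref{lem:ExaxtGr}) is implicitly used to conclude uniqueness in the last step.
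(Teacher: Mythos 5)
Your proof is correct and is essentially the paper's argument: the paper disposes of this lemma in one line (``follows from the uniqueness of the induced maps''), and your diagram chase is precisely the expansion of that uniqueness argument, correctly invoking the three commuting vertical faces for $\iota$, $\pi$, $\lambda$ and the injectivity of $\gr\iota$ supplied by Lemma~\ref{lem:ExaxtGr}. No gaps.
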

\begin{proof}
    Follows from the uniqueness of the induced maps.
\end{proof}

In Section \ref{sec:identifybracketinCON}, we will show how the Goldman bracket and Turaev cobracket each arise as induced maps $\eta$, where $\lambda=\lambda_1-\lambda_2$ is a difference of tangle operations.
Therefore the Kontsevich integral induces an expansion for the Goldman--Turaev operations, and 
the commutativity of the square \eqref{eq:HomExp} for each operation is -- by definition -- the homomorphicity property of the expansion. This homomorphicity is our main result. The non-trivial vertical face of the multi-cube is the one containing $\lambda$, and the commutativity of this for each Goldman-Turaev operation will follow from homomorphicity properties of the Kontsevich integral. Namely, the Kontsevich integral (standing in for $Z_B$ and $Z_E$) intertwines the appropriate tangle operations $\lambda_0$ and $\lambda_1$ with their associated graded counterparts. This is the idea behind the approach of this paper. 

\section{Preliminaries: Homomorphic expansions and the Goldman-Turaev Lie bialgebra}\label{sec:Prelims}

\subsection{Homomorphic expansions and the framed Kontsevich integral}\label{subsec:FramedKon}
The Kontsevich integral is the knot theoretic prototype of a {\em homomorphic expansion}. Homomorphic expansions (a.k.a. formality isomorphisms, well-behaved universal finite type invariants) provide a connection between knot theory and quantum algebra/Lie theory.
We begin with a short review of homomorphic expansions from an algebraic perspective, which is outlined -- in a slightly different, finitely presented case -- in \cite[Section 2]{WKO2}.

Kontsevich's original construction gives an invariant of unframed links; for a detailed introduction, we recommend \cite[Section 8]{CDM_2012}, or \cite{Kon, BN1, Da}. In this paper we work primarily with framed links and tangles, thus we briefly review the framed versions of the Vassiliev filtration and Kontsevich integral; for more detail see \cite[Sections 3.5 and 9.1]{CDM_2012} and \cite{LM96}. We also note that this paper uses Kontsevich's analytic construction of the Kontsevich integral, rather than the subsequent combinatorial construction from Drinfel'd associators. The reason is one of convenience: we limit the technical complexity of our arguments by avoiding parenthetisations. Nonetheless, our construction could be re-stated in terms of associators with limited effort.

\subsubsection{Homomorphic expansions.}\label{sec:hom_exp} Let $\glosm{caLK}{\calK}$ denote a given set of knots, links or tangles in $\R^3$ (e.g., oriented knots), and allow formal linear combinations with coefficients in $\C$. For links and tangles, allow only linear combinations of embeddings of the same skeleton\footnote{The {\em skeleton} of a knotted object is the underlying combinatorial object. For example: the skeleton of a link is the number of components; the skeleton of a braid is the underlying permutation; the skeleton of a tangle is the number of strands, connectivity, and number of circle components. In these contexts $\C\calK$ is a disjoint union of vector spaces, rather than a single vector space.}. The {\em Vassiliev filtration} (defined in terms of resolutions of double points $\doublepoint= \overcrossing - \undercrossing$) is a decreasing filtration on this linear extension: 
$$\C\calK = \calK_0 \supseteq \calK_1 \supseteq \calK_2 \supseteq ...$$

The degree completed associated graded space of $\C\calK$ with respect to the Vassiliev filtration is
$$\calA:= \prod_{n \geq 0} \calK_n/\calK_{n+1}.$$

An {\em expansion} is a filtered linear map  $Z: \C\calK \to \calA$, such that the associated graded map of $Z$ is the identity
$ \gr Z= id_\calA$. 

Usually, $\calK$ is equipped with additional operations: examples are knot connected sum, tangle composition, strand orientation reversal, etc. Homomorphic expansions are compatible with these operations, and thus allow for a study of $\calK$ via the more tractable associated graded spaces.

Specifically, an expansion is {\em homomorphic} with respect to an operation $m$, if it intertwines $m$ with its associated graded operation on $\calA$. That is, $Z \circ m = \gr m \circ Z$. 
A crucial step towards making effective use of this machinery is to get a handle on the space $\calA$ in concrete terms: for example, in classical knot theory, $\calA$ has a combinatorial description as a space of {\em chord diagrams} \cite[Chapter 4]{CDM_2012}. 

\begin{figure}
\centering
\begin{picture}(200,60)
\put(-10,0){\includegraphics[scale=.7]{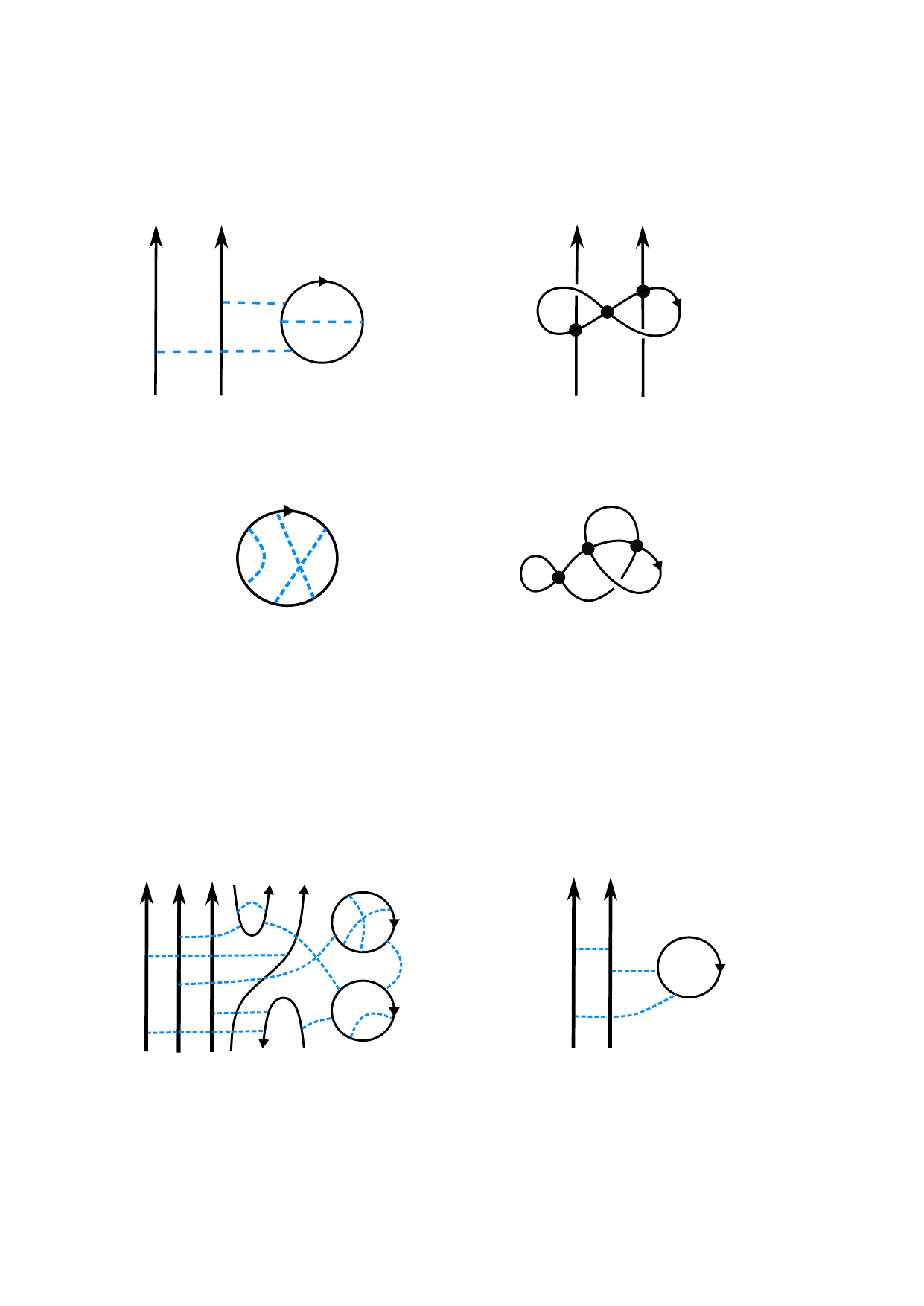}}
\put(80,20){\Large{$\mapsto$}}
\put(84,30){{$\psi$}}
\end{picture}
\caption{Example of $\psi$ mapping a chord diagram to a knot with double points by contracting the chords. The right-hand side represents a well-defined element in $\calK_3/\calK_{4}$.}
\label{fig:psionchord}
\end{figure}

There is a natural map $\psi$ from chord diagrams with $i$ chords to $\calK_i/\calK_{i+1}$, defined by ``contracting chords'' as in Figure~\ref{fig:psionchord}. It is not difficult to establish that $\psi$ is surjective. 
In the case of classical (oriented, unframed) knots, there are two relations in the kernel of $\psi$: the 4-Term (4T) and Framing Independence (FI) relations, shown in Figure~\ref{fig:4TFI}.   In fact, these two relations generate the kernel, and $\psi$ descends to an isomorphism on the quotient; this, however, is significantly harder to prove.

\begin{figure}[ht]
\centering
\begin{picture}(300,70)
\put(-45,0){\includegraphics[scale=.7]{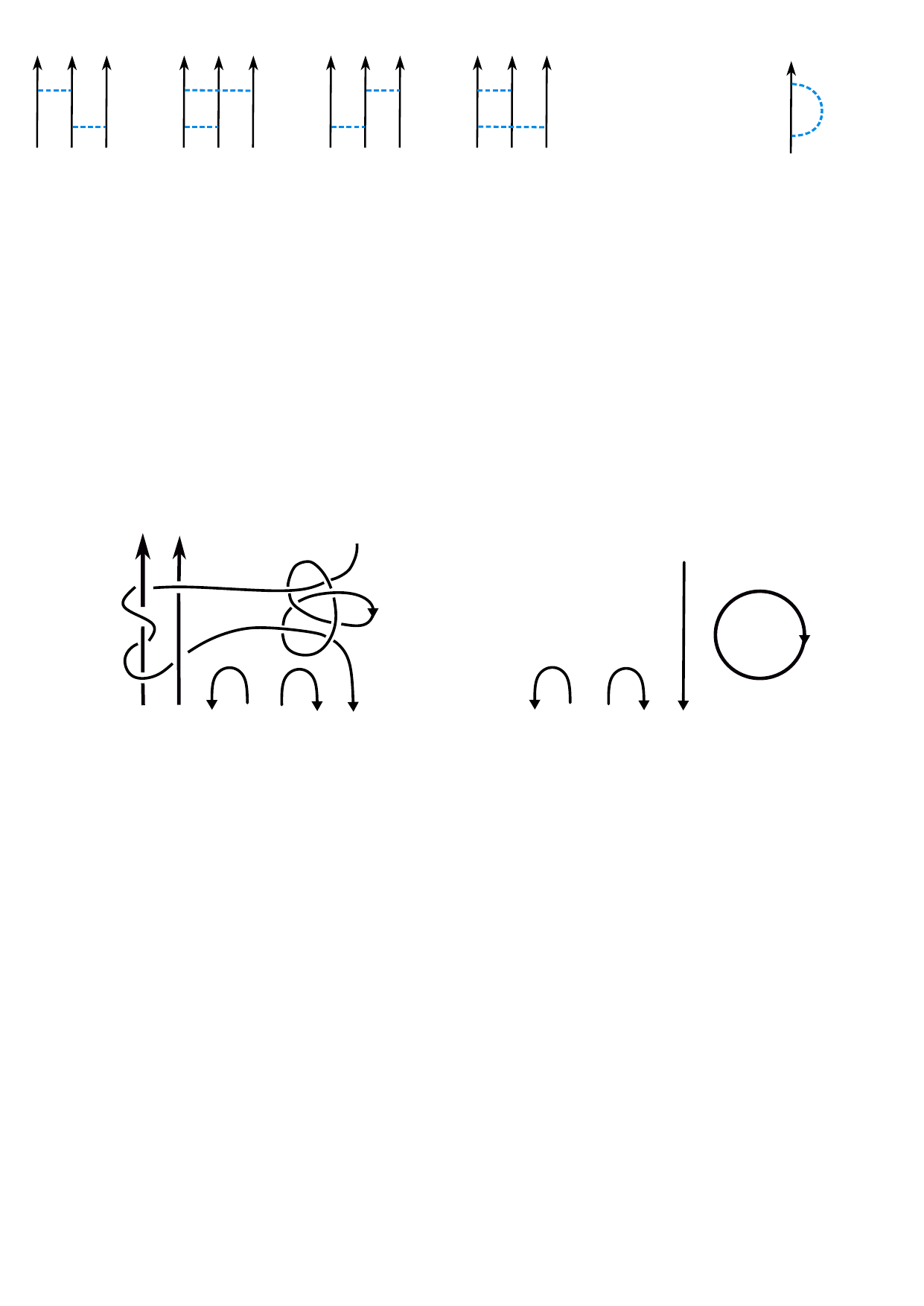}}
\put(340,23){\Large{$0$}}
\put(322,33){\large{FI}}
\put(324,23){\Large{$=$}}
\put(220,23){\Large{$0$}}
\put(198,33){\large{4T}}
\put(200,23){\Large{$=$}}
\put(136,23){\Large{$-$}}
\put(70,23){\Large{$+$}}
\put(5,23){\Large{$+$}}
\put(-53,23){\Large{$-$}}
\end{picture}
\caption{The 4T and FI relations, understood as local relations: the strand(s) are part(s) of the skeleton circle, and the skeleton may support additional chords outside the picture shown.}
\label{fig:4TFI}
\end{figure}

The key technique is to construct an expansion as in the following Lemma, \cite[Proposition 2.7]{WKO2}: 

\begin{lem}\cite{WKO2} \label{lem:assocgradyoga}
Let $\C\calK$ be a filtered vector space (or union of vector spaces), and $\calA$ the associated graded space of $\C\calK$. Let $\calC$ be a ``candidate model'' for $\calA$: a graded linear space equipped with a surjective homogeneous map $\psi: \calC \to \calA$. If there exists a filtered map $Z: \C\calK \to \calC$, such that $\gr Z \circ \psi = id_{\calC}$, then $\psi$ is an isomorphism and $\psi\circ Z$ is an expansion for $\calK$.

\[ \begin{tikzcd}
\C\calK \arrow{r}{Z} & \calC \arrow[dd,"\psi"] &&&   & \calC \arrow[dd,"\psi"]\arrow[ddrr,dashed, "\gr Z\circ \psi=id_{\calC}" right]\\
&&\text{ }\arrow[r,Mapsto, "\gr"] &\text{ }&&\\
& \calA&&&& \calA\arrow[rr,"\gr Z"] &&\calC
\end{tikzcd}
\]

\end{lem}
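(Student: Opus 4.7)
The plan has two parts. First, verify that $\psi\circ Z$ is an expansion. Second, deduce that $\psi$ is an isomorphism. Both follow from unpacking the hypothesis $\psi\circ\gr Z = \id_{\calA}$.

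For the expansion claim, I would view $\calC$ as filtered by $\calC^n := \bigoplus_{m\ge n}\calC_m$. The map $Z$ is filtered by hypothesis, and $\psi$ is graded of degree zero, hence automatically filtered. Therefore the composite $\psi\circ Z \colon \C\calK \to \calA$ is filtered, and its associated graded is $\gr(\psi\circ Z) = \psi\circ \gr Z = \id_{\calA}$, which is precisely the universality condition in the definition of an expansion. So $\psi\circ Z$ is an expansion as soon as we know it is filtered, which is immediate.

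For the isomorphism claim, note that $\gr Z \colon \calA \to \calC$ is a well-defined graded map of degree zero: for $[x]\in \calK_n/\calK_{n+1}$, the element $Z(x)$ lies in $\calC^n$ by filteredness of $Z$, and its degree-$n$ component depends only on $[x]$. The hypothesis $\psi\circ\gr Z = \id_{\calA}$ then exhibits $\gr Z$ as a graded linear section of the surjection $\psi$, giving in each degree a splitting $\calC_n \cong \gr Z(\calA_n)\oplus \ker\psi_n$. To conclude that $\psi$ is an iso, we must show $\ker\psi = 0$.

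The remaining step is the main obstacle: a priori a split surjection need not be injective. In the candidate-model setting, however, this is forced by the combinatorial structure of $\calC$. Each degree-$n$ generator of $\calC$ is a chord diagram $D$, and $D$ is realised by $\gr Z([K_D])$, where $K_D\in\calK_n$ is any singular representative whose double points contract to $D$; this is the defining property of the Kontsevich-type integral, namely $Z(K_D) = D + (\text{higher degree terms})$. Hence $\gr Z$ is surjective, and since it is already injective as a section, it is a graded isomorphism. It follows that $\psi = (\gr Z)^{-1}$ is an isomorphism, completing the proof.
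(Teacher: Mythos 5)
Your first half is fine: $\psi$ is homogeneous hence filtered, $Z$ is filtered by hypothesis, and $\gr(\psi\circ Z)=\psi\circ\gr Z=\id_{\calA}$ is exactly the universality condition in the paper's definition of an expansion, so $\psi\circ Z$ is an expansion with no further input. You are also right to flag that the isomorphism claim is where the content lies, and that the splitting $\calC_n\cong\gr Z(\calA_n)\oplus\ker\psi_n$ does not by itself force $\ker\psi=0$: the displayed hypothesis $\psi\circ\gr Z=\id_{\calA}$ only makes $\gr Z$ a section of $\psi$, and a surjection admitting a section need not be injective. (Concretely: $\calC=\Q^2$ in degree $0$, $\calA=\Q$, $\psi$ the first projection, $\C\calK=\Q$ with the trivial filtration, $Z(x)=(x,0)$; every stated hypothesis holds and $\psi$ is not injective.) So no purely formal argument from the hypotheses as displayed can finish the proof, and your instinct that something extra is needed is correct.

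The issue is with how you supply that extra ingredient. Realizing each degree-$n$ generator $D$ of $\calC$ by a singular representative $K_D$ with $\psi(D)=[K_D]$ and invoking $Z(K_D)=D+(\text{higher degree terms})$ amounts to verifying $\gr Z\circ\psi=\id_{\calC}$, i.e.\ that $\psi$ has a \emph{left} inverse; together with the assumed surjectivity this immediately makes $\psi$ an isomorphism with $\gr Z=\psi^{-1}$. This is indeed the argument of the cited source, and it is what the present paper actually checks in its applications (Lemma~\ref{thm:Zwelldefined} verifies the condition via \cite{BN1}, whose Theorem~1(3) is precisely the statement about $Z$ of singular objects). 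But it is an \emph{additional hypothesis} --- universality of $Z$ on singular representatives, equivalently $\gr Z\circ\psi=\id_\calC$ --- not a consequence of $\psi\circ\gr Z=\id_{\calA}$, and it moreover uses that $\psi$ is the ``contract chords'' map rather than an arbitrary surjection, which is not part of the abstract setting of the lemma. Your phrase ``this is the defining property of the Kontsevich-type integral'' smuggles in the needed assumption. As written, you have correctly proved the intended variant of the lemma whose hypothesis is the composition in the other order; to be honest about the logic you should either state that stronger hypothesis explicitly or note that the lemma as displayed cannot be established from its own hypotheses.
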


In other words, once one finds a candidate model $\calC$ for $\calA$, finding an {\em expansion valued in} $\calC$ also implies that $\psi$ is an isomorphism.
In classical Vassiliev theory, $\calK$ is the space of oriented knots, $\calC$ is the space of chord diagrams, and a $\calC$-valued expansion is the Kontsevich integral \cite{Kon}.

\subsubsection{Framed theory.}\label{subsubsec:Framing} In this paper we work with {\em framed} links and tangles, so we give a brief introduction to the framed version of the general theory summarised in the previous section. For simplicity, we consider links for now.

Let $\glosm{claK}{\tcalK}$ denote the set of {\em framed} links in $\mathbb{R}^3$: that is, links along with a non-zero section of the normal bundle. A link diagram is interpreted as a framed link using the blackboard framing. The Reidemeister move R1 move changes the blackboard framing, and by ommitting it, one obtains a Reidemeister theory for framed links. In analogy with a double point, a {\em framing change} is defined to be the difference
$$\frchange:= \poskink -\arrowup.$$
The framed Vassiliev filtration is the descending filtration
$$\tcalK=\tcalK_0 \supseteq \tcalK_1 \supseteq \tcalK_2\supseteq ...$$
where $\glosm{tcalKi}{\tcalK_i}$ is linearly generated by knots with at least $i$ double points {\em or framing changes}. The degree completed associated graded space of $\tcalK$ with respect to the framed Vassiliev filtration is
$$\tcalA:= \prod_{n \geq 0} \tcalK_n/\tcalK_{n+1}.$$

A natural first guess for a combinatorial description of $\tilde{\calA}$ is in terms of chord diagrams with ``framing change markings'' $\circarrow$ on the skeleton, graded by the number of chords and markings. There is a natural surjective graded map $\tilde{\psi}$ from marked chord diagrams onto $\tilde{\calA},$ which contracts chords as in the classical case, and which replaces each marking $\circarrow$ with a framing change $\frchange$. The kernel of $\tilde{\psi}$
includes the $4T$ relation as before. 

In place of the $FI$ relation (\shortchord=0), a weaker relation arises from the equality $\poskink -\negkink = \doublekink$ in $\tcalK$. In fact, $\doublekink=\poskink-\negkink=(\poskink-\arrowup)+(\arrowup-\negkink)$, and $\arrowup-\negkink = \poskink -\arrowup$ modulo $\tcalK_2$. In other words, the following relation is in the kernel of $\tilde{\psi}$:
$$\shortchord=2\circarrow.$$
Therefore, it is not necessary to have dedicated notation for the framing change markings, since $\circarrow=\frac{1}{2}\shortchord$. The candidate model for the associated graded space is simply chord diagrams modulo the $4T$ relation, and no $FI$ relation. We denote this space by $\tcalC$.

To show that $\tilde{\psi}: \tcalC \to \tcalA$ is an isomorphism, it is enough to construct a $\tcalC$-valued expansion and use Lemma~\ref{lem:assocgradyoga}. This $\tcalC$-valued expansion is the framed version $\tZ$ of the Kontsevich integral. For details of this construction see \cite[Section 9.1]{CDM_2012}, or \cite{LM96,Gor}.

\subsection{The Goldman-Turaev Lie bialgebra}\label{subsec:IntroGT} In order to define the Goldman-Turaev Lie bialgebra, we need to recall some basic definitions and notation.

Let $\glosm{Dp}{D_p}$ denote $p$-punctured disc, with $p+1$ circle boundary components $\partial _0, \partial_1,...,\partial_p$, embedded in the complex plane so that $\partial_0$ is the outer boundary, as in Figure~\ref{fig:DP}. In particular, the plane-embedding specifies a framing (trivialisation of the tangent bundle) on $D_p$, and thus immersed loops in $D_p$ are equipped with a notion of {\em rotation number}.

Let $\glosm{pi}{\pi=\pi_1(D_p,*)}$ denote the fundamental group of $D_p$ with basepoint $*\in \partial_0$. We denote by $\glosm{Cpi}{\C\pi}$ the group algebra of $\pi$. 

\begin{figure}
    \centering
    \begin{tikzpicture}[scale=1]
    \draw (0, 0) node {\includegraphics[width=6cm]{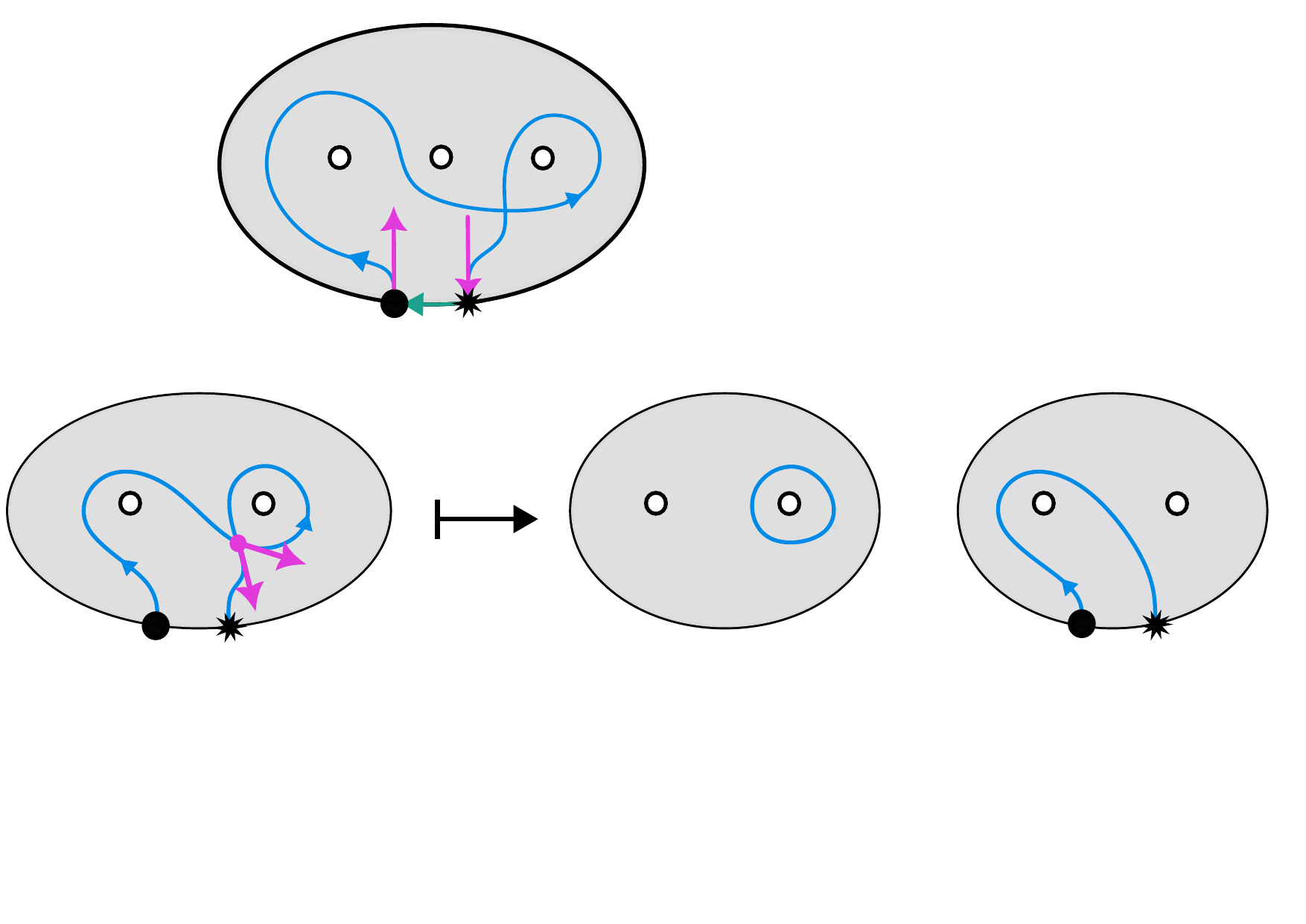}};
    \draw (-3.1, 0.2) node {$\partial_0$};
    \draw (-1,-.185) node {$\partial_1$};
    \draw (.4,-.185) node {$\partial_2$};
    \draw (1.8, -.13) node {$\partial_3$};
    \draw (-.3, -1.2) node {$\xi$};
    \draw (0,-2.1) node {$\nu$};
    \end{tikzpicture}
    \caption{$D_3$ with an immersed loop from $\bullet$ to $*$ with initial tangent vector $\xi$ and terminal tangent vector $-\xi$. The path along the boundary from $*$ to $\bullet$ is $\nu$.}
    \label{fig:DP}
\end{figure}

\begin{figure}
\begin{tikzpicture}
\draw (0, 0) node {\includegraphics[width=13cm]{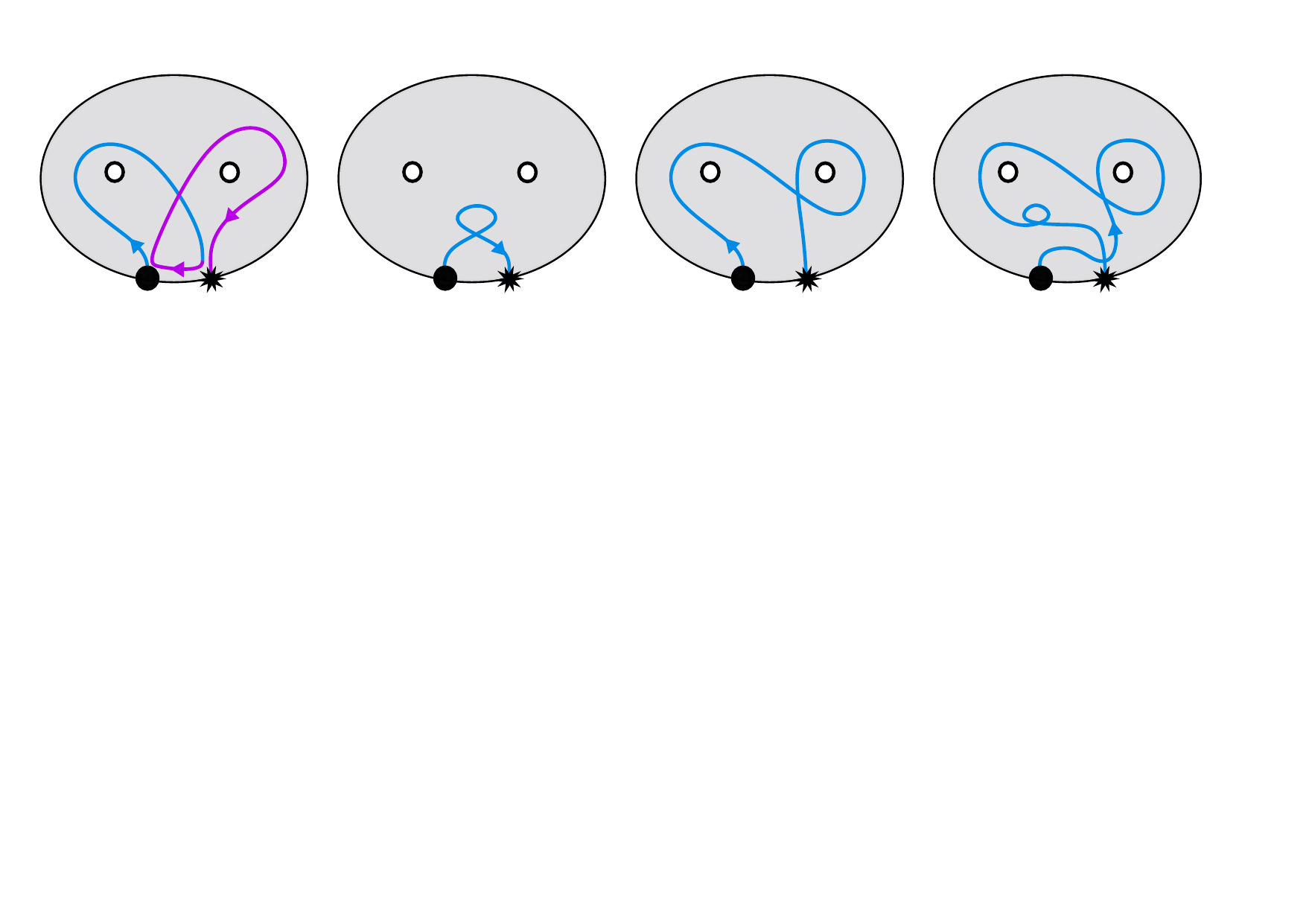}};
\draw(-4.8,-1.5) node {$\gamma_1\cdot \gamma_2$};
\draw(-5.5,.7) node {$\gamma_1$};
\draw(-4.5,.7) node {$\gamma_2$};
\draw(-1.6,-1.5) node {$1\in \tpi$};
\draw(1.7,-1.5) node {$\gamma$};
\draw(5,-1.5) node {$\gamma^{-1}$};
\end{tikzpicture}
\caption{The group structure on $\tpi$.}\label{fig:DPGroup}
\end{figure}

We also need to consider based paths.
Let $\bullet$ and $*$ be two ``nearby'' basepoints on $\partial_0$ and $\glosm{xi}{\xi}$ be the direction of the inward pointing normal vector to $\partial_0$ at $\bullet$ and $*$. Let $\tpi=\tpi_{\bullet*}$ denote the set of regular homotopy classes of immersed curves $\gamma: ([0,1],0,1) \to (D_p, \bullet,*)$, so that $\dot{\gamma}(0)=\xi$, and $\dot{\gamma}(1)=-\xi$, as shown in Figure~\ref{fig:DP}. Note that the rotation number is invariant under regular homotopy.  Recall that $\glosm{tpi}{\tpi}$ is in fact a group, illustrated in Figure~\ref{fig:DPGroup} and defined as follows: 
\begin{enumerate} 
\item Let $\glosm{nu}{\nu}$ denote the path from $*$ to $\bullet$ along $\partial_0$ (in the negative direction). The group product $\gamma_1 \cdot \gamma_2$ is the smooth concatenation of $\gamma_1$ with $\nu$ followed by $\gamma_2$. 
\item The group identity is the class of paths which, when composed with $\nu$, become contractible loops of rotation number zero.
\item The inverse of $\gamma$ is the concatenation $\overline{\nu} \, \overline{\gamma} \,\nu^*$ where the overline denotes the reverse path, and $\nu^*$ includes a negative twist (to ensure that the rotation number of $\gamma \cdot \gamma^{-1}$ is 0). The beginning and end of the path is adjusted in an epsilon neighbourhood of the base points to have inward and outward pointing tangent vectors, as in Figure~\ref{fig:DPGroup}.
\end{enumerate}
Denote by $\glosm{Ctpi}{\C\tpi}$ the group algebra of $\tpi$. There is a forgetful map $\tpi \to \pi$ which maps $\gamma$ to the (non-regular) homotopy class of $\gamma \, \nu$. This linearly extends to a forgetful map $\C\tpi \to \C \pi$.

For an algebra $A$ we denote by $|A|$ the \emph{linear}\footnote{Not to be confused with the abelianisation of $A$. In particular, $|A|$ does not inherit an algebra structure from $A$.} quotient $A/[A,A]$, where $[A,A]$ denotes the subspace spanned by commutators $[x,y]=xy-yx$ for $x,y \in A$. 
We denote the quotient (trace) map by $|\cdot|:A\to |A|$. In our context,  $\glosm{aCpi}{|\Cp|}$ has an explicit description as the $\C$-vector space generated by homotopy classes of free loops in $D_p$. In a similar but more subtle fashion, $\glosm{abstCpi}{|\tCp|}$ is spanned by {\em regular} homotopy classes of immersed free loops, where $|\gamma|$ denotes the class of $\gamma\nu$ as a free immersed loop.

The Goldman--Turaev Lie bialgebra comes in two flavours: {\em original} and {\em enhanced}. The original construction of the Goldman bracket is a Lie bracket on $|\Cp|$. However, the original Turaev cobracket is only well-defined on $\glosm{Cpba}{\Cpba=|\Cp|/\C\cl}$, the linear quotient by the homotopy class of the constant loop. The space $\Cpba$ is a Lie bialgebra with this cobracket and the Goldman bracket, which descends from $|\Cp|$.
There is an enhancement of the cobracket (\cite{akkn_g0, Mas} following earlier work of Turaev \cite{Tur78}), which promotes the cobracket to $|\Cp|$, thereby making $|\Cp|$ a Lie bialgebra under the Goldman bracket and the enhanced cobracket. In \cite{akkn_g0} this enhancement is necessary in order to establish the relationship between the Goldman-Turaev Lie bialgebra and Kashiwara--Vergne theory. To define the enhanced cobracket, a curve in $|\Cp|$ is lifted to an immersed curve with a fixed rotation number. Below we review the definitions of the Goldman bracket and the enhanced version of the Turaev cobracket.


The Goldman Bracket sums over smoothing intersections between two free loops.
For a free loop $\alpha$ in $\Cpa$ and a point $q$ on $\alpha$, denote by $\alpha_q$ the loop $\alpha$ based at $q$.

\begin{definition}[The Goldman bracket]\label{def:bracket}
 Let $\alpha, \beta \in |\C\pi|$ be free loops with homotopy representatives chosen so that there are only finitely many transverse double intersections between $\alpha$ and $\beta$. The Goldman bracket 
 $\glosm{Gbrack}{[\cdot,\cdot]_G}:|\C\pi|\otimes |\C\pi|\rightarrow |\C\pi|$ is given by
$$
    [\alpha,\beta]_G \coloneqq -\sum_{q \in \alpha \cap \beta} \varepsilon_q |\alpha_q\beta_q|,
$$
where $\varepsilon_q=\varepsilon(\dot\alpha_q,\dot\beta_q)\in\{\pm 1\}$ is the local intersection number of $\alpha$ and $\beta$ at $q$, $\alpha_q\beta_q$ is the concatenation of $\alpha_q$ and $\beta_q$, and the extension to $|\Cp|$ is linear. Then one easily checks that $[\cdot, \cdot]_G$ is a Lie bracket on $|\Cp|$. \end{definition}

The original definition of the Turaev cobracket is similar, but uses self intersections of a curve in place of the intersections between two curves. Unfortunately, it is not well-defined with respect to the Reidemeister 1 relation for free homotopy curves, hence the need for the enhancement. We construct the (enhanced) cobracket via a self-intersection map for \emph{based} curves: this map was first described by Turaev in \cite{Tur78}, and is explained in \cite[Section 3.2]{Mas} and in \cite[Section 5.2]{akkn_g0}; this definition lends itself well to direct comparison with the three-dimensional operations of Section~\ref{sec:IdentifyingGTinCON}. For a based curve $\gamma$ in $\Cp$, the idea is to ``snip off'' portions of $\gamma$ at self intersection points to get two curves, one of which is based and the other free.
Figure \ref{fig:defmu} shows an example.

\begin{figure}
 \centering
    \begin{tikzpicture}
    \draw (-2, 0) node {\includegraphics[scale=.4]{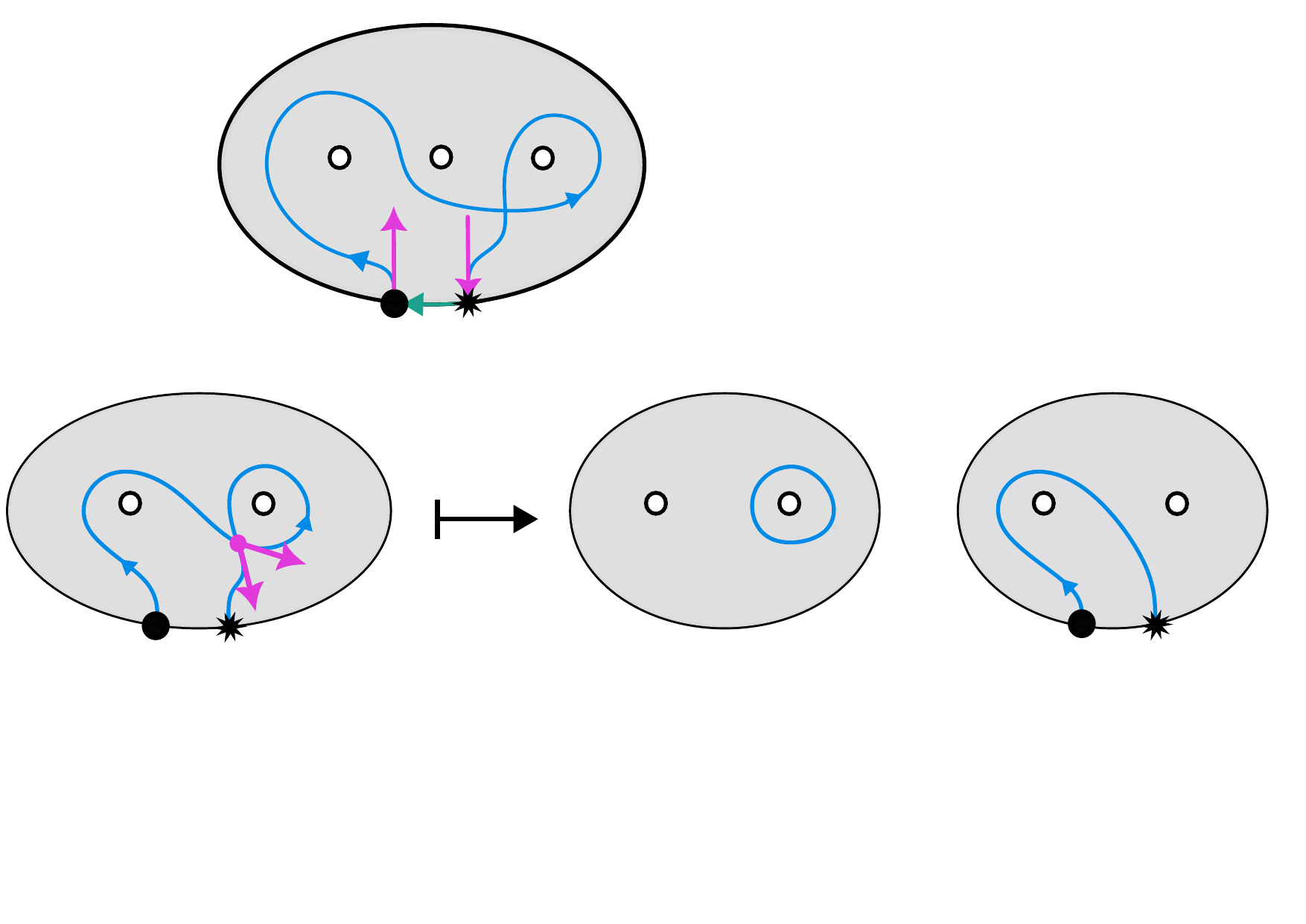}};
    \draw (-5.9, 0) node {$p$};
    \draw (-4.84, -.4) node {$\dot{\gamma}_1^p$};
    \draw (-6,-.6) node {$\dot{\gamma}_2^p$};
    \draw (-3.5,.4) node {$\mu$};
    \draw (.5, 0) node {\Large{$\otimes$}};
    \end{tikzpicture}
    \caption{Example of the self intersection map $\mu$ where $\epsilon_p=-1$.}
    \label{fig:defmu}
\end{figure}

\begin{definition}[The self-intersection map]\label{def:mu} For $\gamma \in \Cp$, let $\tilde{\gamma}\in \Ctp$ denote a path such that $\tilde{\gamma}\nu$ is homotopic to $\gamma$; and such that $\tilde{\gamma}$ has only transverse double points, and $\rot(\tilde{\gamma})=1/2$ (hence, $\rot(\tilde\gamma\nu)=0$). Let $\tilde{\gamma} \cap \tilde{\gamma}$ denote the set of double points. 
The self intersection map $\glosm{mu}{\mu}$ is defined as follows:  
$$\mu:{\C\pi}\to \Cpa\otimes {\C\pi}$$ 
\[\mu(\gamma)=-\sum_{p\in \tilde\gamma\cap\tilde\gamma} \varepsilon_p|\tilde\gamma_{t^p_1t_2^p}|\otimes \tilde\gamma_{0t_1^p}\tilde\gamma_{t_2^p1},\]
where $t_1^p$ and $t_2^p$ are the first and second time parameter in $[0,1]$  where $\tilde \gamma$ goes through $p$; where $\tilde\gamma_{rs}$ denotes the path traced by $\tilde\gamma$ from $t=r$ to $t=s$; the sign $\varepsilon_p=\varepsilon\left(\dot{\tilde\gamma}(t_1^p),\dot{\tilde\gamma}(t_2^p)\right)\in \{\pm 1\}$ is the local self-intersection number; and the formula extends to $\Cp$ linearly.
\end{definition}

The Turaev cobracket is obtained from $\mu$ by closing off the path component and making the tensor product alternating: this descends to a map on $\Cpa$, as follows.

\begin{definition}\label{def:cobrac}(The Turaev co-bracket)
The Turaev cobracket $\glosm{del}{\delta}$ is the unique linear map which makes the following diagram commute, where $\glosm{alt}{\operatorname{Alt}}(x\otimes y)= x\otimes y - y \otimes x=x\wedge y$, and $\glosm{deltilde}{\tilde{\delta}}$ denotes the composition of $\mu$ with closure and alternation, as shown:
\[
\begin{tikzcd}[ every label/.append style = {font = \normalsize}]
 & 
{\C\pi}
\arrow[rr, "\mu"]
\arrow[dd, "|\cdot|"] 
\arrow[ddrrrr, "\tilde{\delta}"]
&&
\protect{\Cpa \otimes {\C\pi}} \arrow[rr, "1\otimes |\cdot|"]
&&
\protect{\Cpa \otimes \Cpa} \arrow[dd, "\operatorname{Alt}"] 
\\
&&&&& 
\\
&\protect{\Cpa} \arrow[rrrr, dashed, "\delta"] 
& & & &
\protect{\Cpa \wedge \Cpa}
\end{tikzcd}
\]

\end{definition}

\begin{remark}\label{rem:MuFraming}
    The definition of the self-intersection map $\mu$ in \cite[Section 5.2]{akkn_g0} is slightly different from ours in that it requires the curve $\tilde{\gamma}$ to have rotation number $-1/2$. Hence, in order to obtain a rotation number 0 curve when closing $\tilde{\gamma}$, a positive bigon needs to be inserted, which results in a framing correction term $1\wedge |\gamma|$ when $\delta$ is constructed from $\mu$. Since our definition of $\mu$ uses a different choice of rotation number, this correction term doesn't appear.
\end{remark}

\subsection{Associated graded Goldman-Turaev Lie bialgebra}\label{sec:gr_bialgebra}

The I-adic filtration on $\Cp$ is the filtration by powers of the augmentation ideal $\glosm{I}{\calI}=\langle \{\alpha - 1\}_{\alpha \in \pi} \rangle$:
$$\C\pi = \calI^0 \supseteq \calI \supseteq \calI^2 \supseteq...$$ 
By the 1930's work of Magnus \cite{Mag}, the associated graded algebra of $\Cp$ with respect to this filtration is the degree completed free algebra $\glosm{FA}{\As }= \As\langle x_1, \cdots, x_p\rangle$:

\begin{prop}
    Given the set of standard generators $\{\gamma_i\}_{i=1}^p$ for $\pi$, there is an isomorphism of algebras $\gr\Cp\to \As$ and the exponential expansion $\varphi(\gamma_i^{\pm 1})=e^{\pm x_i}$ is a homomorphic expansion.
\end{prop}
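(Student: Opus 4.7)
The plan is to apply Lemma~\ref{lem:assocgradyoga} with $\C\calK = \C\pi$, $\calA = \gr \C\pi$, candidate model $\calC = \As$, and $Z = \varphi$ the exponential expansion. Since $D_p$ deformation retracts onto a wedge of $p$ circles, $\pi$ is the free group on $\gamma_1,\dots,\gamma_p$, so $\C\pi$ has no relations beyond the group algebra structure on a free group. This freeness is what will allow $\varphi$ to be defined and will match the algebraic freeness of $\As$.

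First I would set up the surjection $\psi: \As \to \gr \C\pi$ by $\psi(x_i) := [\gamma_i - 1] \in \calI/\calI^2$, extended as an algebra map; this is well-defined because $\As$ is free on the $x_i$'s. To see $\psi$ is surjective, one shows that $\calI^n/\calI^{n+1}$ is spanned by products of the form $(\gamma_{i_1}-1)\cdots(\gamma_{i_n}-1)$. This follows by induction from the two identities
\[
\gamma\delta - 1 = (\gamma-1) + (\delta-1) + (\gamma-1)(\delta-1), \qquad \gamma^{-1}-1 = -\gamma^{-1}(\gamma-1),
\]
which let one rewrite any product of $n$ generators of $\calI$ as a sum of monomials in $(\gamma_i - 1)$'s plus terms of higher $\calI$-order. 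Next I would define $\varphi:\C\pi \to \As$ by $\varphi(\gamma_i) := e^{x_i}$, extended as an algebra homomorphism; this is well-defined by freeness of $\pi$ together with the fact that $e^{x_i}$ is invertible in $\As$ with inverse $e^{-x_i}$. The map $\varphi$ is filtered since $\varphi(\gamma_i - 1) = e^{x_i}-1 \in \As^{\geq 1}$, and then $\varphi(\calI^n) \subseteq \As^{\geq n}$ follows because $\calI$ is generated by such elements.

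Finally I would verify $\psi \circ \gr \varphi = \id_{\gr \C\pi}$: both maps are graded algebra homomorphisms, so it suffices to check equality on the degree-one generators $[\gamma_i - 1]$. Since $\varphi(\gamma_i - 1) = x_i + \tfrac{1}{2}x_i^2 + \cdots$ has leading term $x_i$, we get $\gr\varphi([\gamma_i-1]) = x_i$, and then $\psi(x_i) = [\gamma_i - 1]$ by construction. By the surjectivity of $\psi$ noted above, the $[\gamma_i - 1]$ generate $\gr \C\pi$ as an algebra, so the identity extends to all of $\gr \C\pi$. Lemma~\ref{lem:assocgradyoga} then delivers both that $\psi$ is an isomorphism (whose inverse is the claimed algebra isomorphism $\gr \C\pi \to \As$) and that $\varphi$ is a homomorphic expansion. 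The main obstacle here is the surjectivity of $\psi$ — equivalently, the fact that the $I$-adic filtration is controlled at each degree by products of the $(\gamma_i - 1)$; everything else is a straightforward verification, with the exponential expansion playing the role of providing an explicit filtered splitting that pins down the isomorphism.
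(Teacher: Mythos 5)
Your proposal is correct, but it is worth noting that the paper does not actually prove this proposition at all: it is stated as a consequence of Magnus's classical work and supported only by a citation to \cite{Mag}. What you have written is essentially the standard Magnus-expansion argument, recast in the paper's own ``candidate model'' framework (Lemma~\ref{lem:assocgradyoga}): the spanning of $\calI^n/\calI^{n+1}$ by monomials in the $(\gamma_i-1)$ gives surjectivity of $\psi$, freeness of $\pi$ gives the well-definedness and filteredness of $\varphi$, and the expansion supplies the splitting that forces $\psi$ to be injective. That is exactly the yoga the paper applies later to the Kontsevich integral, so your route is very much in the spirit of the text even though the text itself defers to the literature here. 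One fine point to be careful about: as literally stated, the hypothesis $\psi\circ\gr\varphi=\id$ only exhibits $\gr\varphi$ as a section of $\psi$, which by itself does not rule out a kernel for $\psi$; what actually kills the kernel is the identity in the other order, $\gr\varphi\circ\psi=\id_{\As}$. Your generator computation delivers this stronger identity for free --- $\gr\varphi\circ\psi(x_{i_1}\cdots x_{i_n})=\gr\varphi\bigl([\gamma_{i_1}-1]\cdots[\gamma_{i_n}-1]\bigr)=x_{i_1}\cdots x_{i_n}$ since both maps are graded algebra morphisms --- so your argument is complete; I would just state that composition explicitly rather than leaning on the one-sided identity. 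Homomorphicity of the expansion with respect to the product is then immediate from $\varphi$ being an algebra map, as you implicitly use.
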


The I-adic filtration of $\Cp$ descends to a filtration on $|\Cp|$: $$|\C\pi|=|\calI^0|\supseteq |\calI| \supseteq |\calI^2| \supseteq...$$ 
The completed associated graded vector space for $|\C\pi|$ with respect to this filtration is, by definition
$$\yellowm{\gr |\C\pi|}=\prod_{n=0}^\infty |\calI^n|/|\calI^{n+1}|.$$ 
There is an isomorphism 
$\gr|\Cp| \cong |\As|,$ where $\glosm{absFA}{|\As|}$ denotes the linear quotient $|\As|=\As/[\As,\As]$, and the exponential expansion descends to a homomorphic expansion for $|\Cp|$. The vector space $|\As|$ is spanned by cyclic words in letters $x_1, \cdots, x_p$, that is, words modulo cyclic permutations of the letters. 

Therefore, $|\As|$ carries the structure of a Lie bialgebra under $\gr[\cdot,\cdot]_G$ and $\gr \delta$ \cite[Section 3]{AKKN_highergenus}. This is also known as the {\em necklace} Lie bialgebra; the cobracket was originally found by \cite{Schedler}; the fact that the two coincide was observed by Massuyeau \cite[Sections 5.4 and 7]{Mas}.

Note that the Goldman bracket and the Turaev co-bracket are not strictly filtered maps, as they both shift filtered degree down by one\footnote{In \cite[Sections 3.3, 3.4]{AKKN_highergenus} the down-shifts are by up to two filtered degrees, as the generating curves around genera and those around boundary components carry different weights. In our genus zero setting this translates to a degree shift of $-1$.}. For example, if $x\in |\calI^r|$ and $y\in |\calI^s|$, then $[x,y]_G \in |\calI^{r+s-1}|$. Correspondingly, the associated graded operations are maps of degree $-1$.

Figure~\ref{fig:grbracket} shows a schematic calculation of the graded Goldman bracket, with cyclic words represented diagrammatically as letters along a circle. The graded Goldman bracket sums over matching pairs of letters in the two words $z$ and $w$ which the bracket is being applied to, joins the circles at the matching letter, and takes the difference of the two ways of including only one copy of the letter in the new cyclic word. Stated algebraically, this is summarised as follows:

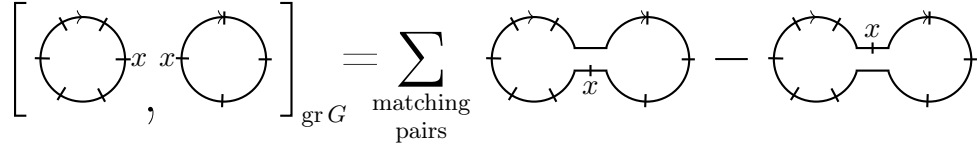
\begin{figure}
\begin{tikzpicture}[scale=.75]
\begin{scope}[xshift=.5cm]
\draw[thick] (0,0) circle (.75cm);
\draw[->] (0,.75)--(.01,.75);
\draw[thick, decorate,decoration= {border,amplitude=0.2cm, angle=90,segment length=pi*6}] (0,0) circle (.85);
\draw node at (1,0) {$x$};
\draw[thick] (-1,-1)--(-1.2,-1)--(-1.2,1)--(-1,1);
\end{scope}

\draw[thick] (3,0) circle (.75cm);
\draw[->] (3,.75)--(3.01,.75);
\draw[thick, decorate,decoration= {border,amplitude=0.175cm, angle=90,segment length=pi*9}] (3,0) circle (.85);
\draw node at (2,0) {$x$};
\draw[thick] (4,-1)--(4.2,-1)--(4.2,1)--(4,1);
\draw node[right] at (4.2,-1){\Small $\operatorname{gr}G$};
\draw node at (1.7,-1){\Huge ,};
\draw node at (6,0){\huge = $\sum$};
\draw node at (6.5, -.8){\Small matching };
\draw node at (6.5, -1.3){ \Small pairs  };
\draw node at (12,0){\huge  $-$};

\begin{scope}[xshift=8.5cm]
\draw[thick] (0,0) circle (.75cm);
\draw[->] (0,.75)--(.01,.75);
\draw[thick, decorate,decoration= {border,amplitude=0.2cm, angle=90,segment length=pi*6}] (0,0) circle (.85);
\draw[thick] (2,0) circle (.75cm);
\draw[->] (2,.75)--(2.01,.75);
\draw[thick, decorate,decoration= {border,amplitude=0.175cm, angle=90,segment length=pi*9}] (2,0) circle (.85);
\draw[line width=9, white] (0,0)--(2.25,0);
\draw[thick] (.7,.2)--(1.3,.2);
\draw[thick] (.7,-.2)--(1.3,-.2);
\draw[thick] (1,-.3)--node[below]{$x$}(1,-.1);
\end{scope}

\begin{scope}[xshift=13.5cm]
\draw[thick] (0,0) circle (.75cm);
\draw[->] (0,.75)--(.01,.75);
\draw[thick, decorate,decoration= {border,amplitude=0.2cm, angle=90,segment length=pi*6}] (0,0) circle (.85);
\draw[thick] (2,0) circle (.75cm);
\draw[->] (2,.75)--(2.01,.75);
\draw[thick, decorate,decoration= {border,amplitude=0.175cm, angle=90,segment length=pi*9}] (2,0) circle (.85);
\draw[line width=9, white] (0,0)--(2.25,0);
\draw[thick] (.7,.2)--(1.3,.2);
\draw[thick] (.7,-.2)--(1.3,-.2);
\draw[thick] (1,.3)--node[above]{$x$}(1,.1);
\end{scope}

\end{tikzpicture}
\caption{A schematic diagrammatic example of the graded Goldman bracket.}\label{fig:grbracket}
\end{figure}

\begin{prop}\cite[Section 3.3]{AKKN_highergenus}\label{prop:gr_bracket_def}
 Let $z = |z_1\cdots z_l|$ and $w = |w_1\cdots w_m|$ be two cyclic words in $|\As|$. The graded Goldman bracket $$\gr\left([-,-]_G\right)=\glosm{grGbrac}{[-,-]_{\operatorname{gr}G}}:|\As|\otimes |\As|\to |\As|$$ is given by:
\begin{align*}
        [z,w]_{\operatorname{gr}G} = \sum_{j,k} \delta_{z_j,w_k} (|w_1&\dots w_{k-1}z_{j+1}\dots z_l z_1\dots z_j w_{k+1}\dots w_m| -\\& |w_1\dots w_{k-1}z_j \dots z_lz_1\dots z_{j-1}w_{k+1} \dots w_m|),
\end{align*} 
where $\delta_{z_j,w_k}$ is the Kronecker delta.
\end{prop}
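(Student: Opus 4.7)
The plan is to derive the formula from the ungraded Goldman bracket on $|\Cp|$ via the exponential expansion $\varphi:|\Cp|\to|\As|$. For a cyclic word $z=|z_1\cdots z_l|$ with each $z_j\in\{x_1,\ldots,x_p\}$, I would fix a representative free loop $\tilde z$ in $D_p$ that visits the punctures $\partial_{i_1},\ldots,\partial_{i_l}$ in order, winding once around each in the direction corresponding to $z_j=x_{i_j}$, and otherwise retracing a standard spine. By the Magnus isomorphism and the exponential homomorphic expansion of Proposition (the statement just before this one), such a loop maps modulo $|\calI^{l+1}|$ to the cyclic word $z$. Hence computing $[z,w]_{\operatorname{gr}G}$ in $|\calI^{l+m-1}|/|\calI^{l+m}|$ reduces to computing $[\tilde z,\tilde w]_G$ modulo lower-degree terms.

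Next I would arrange $\tilde z$ and $\tilde w$ in general position so that all of their intersections are concentrated in a small annular neighbourhood of each puncture. In such a configuration, each pair of letter occurrences $(j,k)$ with $z_j=w_k=x_i$ contributes exactly two transverse intersection points $q_+,q_-$ near $\partial_i$, with opposite local intersection signs $\varepsilon_{q_\pm}=\pm 1$. Pairs with $z_j\ne w_k$ also produce a pair of intersections near a single puncture, but their two contributions cancel out on the nose (the two concatenations are freely homotopic), so they do not enter the formula.

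At each matching pair, I would apply the definition $[\tilde z,\tilde w]_G=-\sum_{q}\varepsilon_q|\tilde z_q\tilde w_q|$ locally. Basing the loops at $q_\pm$ and performing the concatenation, the resulting free loop reads $w_1\cdots w_{k-1}$, then traverses the cyclic portion of $\tilde z$ beginning at one side of $\partial_i$ and ending at the other, then resumes with $w_{k+1}\cdots w_m$. The two sides of the puncture correspond to whether the shared letter $z_j=w_k$ is included at the start or at the end of the inserted $z$-arc, yielding precisely the words
\[
|w_1\cdots w_{k-1}\,z_{j+1}\cdots z_l z_1\cdots z_j\,w_{k+1}\cdots w_m| \quad\text{and}\quad |w_1\cdots w_{k-1}\,z_j\cdots z_l z_1\cdots z_{j-1}\,w_{k+1}\cdots w_m|.
\]
The opposite signs $\varepsilon_{q_\pm}$ combined with the overall minus sign in Definition~\ref{def:bracket} produce the minus between these two terms, summed over all $(j,k)$ with $\delta_{z_j,w_k}=1$.

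The main technical obstacle will be the sign and cyclic-word bookkeeping: one has to fix a convention for the standard loops representing $x_i$ (relative to the orientation of $D_p$), verify that a generator lift of $z$ under $\varphi$ truly has leading term $z$ in $|\As|$ modulo lower degree, and check that rotating the cyclic word so that $w_{k-1}$ appears at the start matches the formula exactly. The cleanest way to manage this is to do the calculation for two single-letter words first, confirming that $[|x_i|,|x_j|]_{\operatorname{gr}G}=0$, and then for a single letter against an arbitrary cyclic word, from which the general case follows by a Leibniz-style expansion along $w$.
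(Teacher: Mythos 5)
You should first note that the paper does not actually prove this proposition: it is quoted from \cite[Section 3.3]{AKKN_highergenus}, with only the schematic picture of Figure~\ref{fig:grbracket} offered as motivation. Your sketch does capture exactly the geometric mechanism that picture encodes --- matching letters of $z$ and $w$ correspond to intersection points concentrated near a common puncture, smoothing merges the two cyclic words there, and the two oppositely signed intersections account for the two positions (before or after the shared letter) at which the words are joined, while intersections along the connecting arcs cancel in pairs up to free homotopy. So the intended content is right.

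However, there is a genuine gap in the reduction step. A single free loop $\tilde z$ visiting the punctures in the order prescribed by $z=|z_1\cdots z_l|$ does \emph{not} represent the cyclic word $z$ modulo $|\calI^{l+1}|$: as an element of $|\Cp|$ it lies in filtration degree $0$, and under the exponential expansion it maps to $1+(\text{lower-degree terms})+x_{i_1}\cdots x_{i_l}+\cdots$, so its symbol in $\gr_0|\Cp|$ is the empty word. The element of $|\calI^l|$ whose symbol is $z$ is the alternating sum $\left|(\gamma_{i_1}-1)\cdots(\gamma_{i_l}-1)\right|$, a combination of $2^l$ loops. This is not mere bookkeeping: the individual brackets $[\gamma_S,\gamma_T]_G$ appearing when you expand lie only in $|\calI^{m-1}|$ or worse, far below the target filtration $|\calI^{l+m-1}|$, so you cannot compute termwise and then ``reduce modulo lower-degree terms''; the whole point is a large cancellation across the $2^{l+m}$ terms. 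The geometric picture can be rescued by interpreting each factor $(\gamma_{i_j}-1)$ as the \emph{difference} of the two arcs passing on either side of the $j$-th puncture (exactly as in Figure~\ref{fig:BetaFiltered}) and localizing the intersection computation to these difference-arcs, but that is the missing idea rather than a verification to be deferred. Separately, your closing suggestion of a ``Leibniz-style expansion along $w$'' is not available: cyclic words do not concatenate, and $[-,-]_{\operatorname{gr}G}$ is not a derivation in either slot in any sense that would let the single-letter case bootstrap the general one.
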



\begin{figure}
    \centering
    \begin{picture}(300,70)
    \put(0,10){\includegraphics[scale=.5]{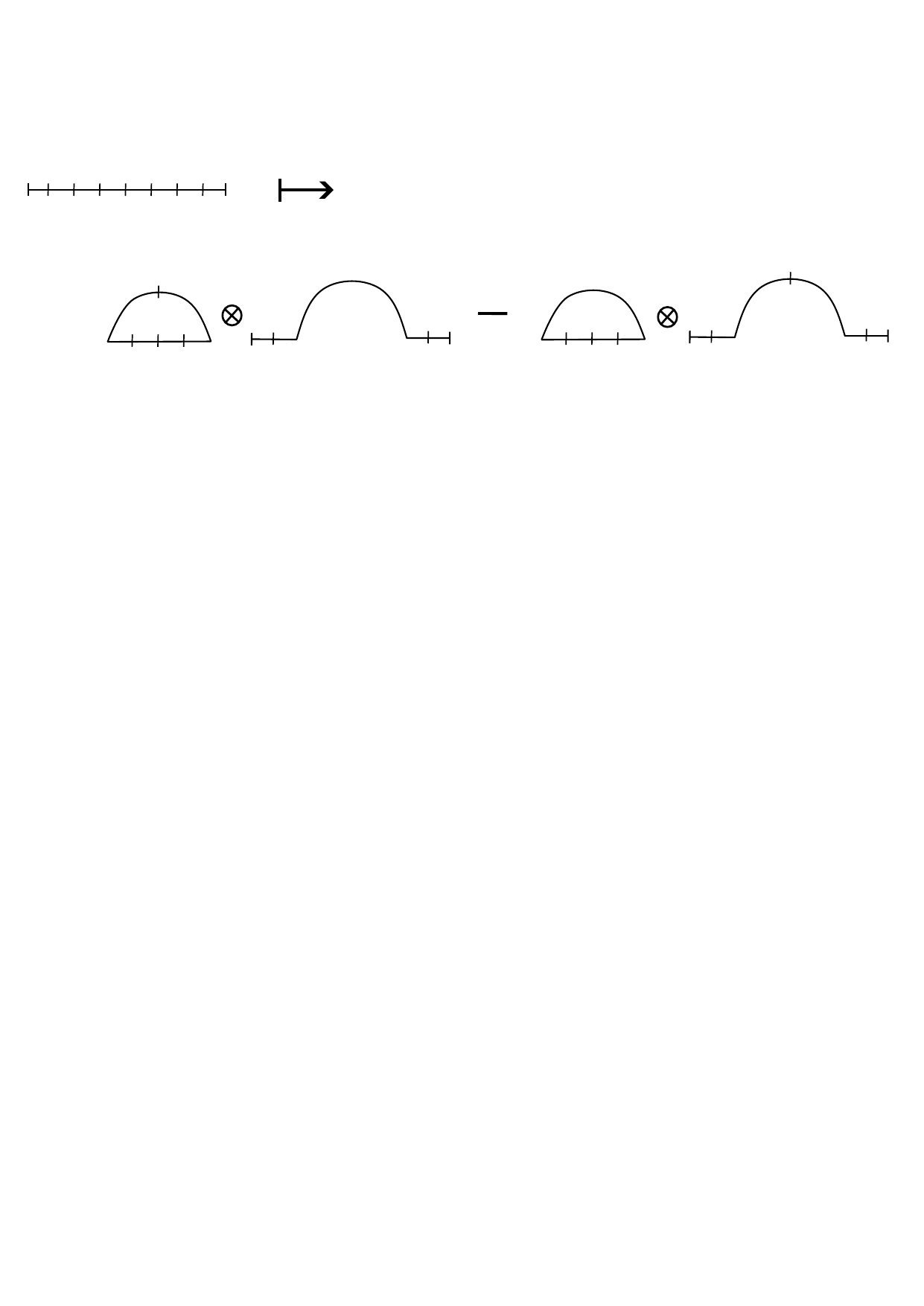}}
    \put(13,54){$x$}
    \put(47,54){$x$}
    \put(41,33){$x$}
    \put(245,37){$x$}
    \put(82,70){$\mu_{\gr}$}
    \put(0,18){\huge  $\sum$}
    \put(-3, 4){\Small pairing }
    \put(0, -4){ \Small cuts  }
    \end{picture}
    \caption{A schematic diagrammatic example of the graded Self-intersection map, $\gr\mu$.}
    \label{fig:grmu}
\end{figure}

Figure~\ref{fig:grmu} shows a schematic diagrammatic calculation of the graded self-intersection map $\mu_{\gr}$, as a sum over \emph{pairing cuts}. A pairing cut identifies two matching letters in a word, and splits the word along a chord connecting these matching letters. The graded self-intersection map outputs the tensor product of the resulting cyclic word and the remainder of the associative word. The choice of rotation number in the definition of $\mu$ (Remark~\ref{rem:MuFraming}) makes no difference in the associated graded $\mu_{\gr}$, since the framing correction term is in filtration degree 0. In summary:

\begin{prop}\label{prop:gr_mu}\cite[Section 3.4]{AKKN_highergenus}
Let $w=w_1\dots w_m \in FA$. The graded self-intersection map
$$\gr(\mu)=\glosm{grmu}{\mu_{\gr}}: \As\rightarrow |\As|\otimes \As$$ 
 is given by:
\begin{align*}
    \mu_{\operatorname{gr}}(w) = \sum_{j<k} \delta_{w_j,w_k}( |w_j&\dots w_{k-1}|\otimes  w_1 \dots w_{j-1} w_{k+1}...w_m - \\& |w_{j+1}\dots w_{k-1}| \otimes w_1 \dots w_jw_{k+1}\dots w_m), 
\end{align*}
where $\delta_{w_j,w_k}$ denotes the Kronecker delta.
\end{prop}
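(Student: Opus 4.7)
The plan is to compute $\gr\mu$ by a direct geometric evaluation on specific lifts from $\As$ to $\Cp$, paralleling \cite[Section 3.4]{AKKN_highergenus}. For a word $w = x_{i_1} \cdots x_{i_m} \in \As$ of length $m$, we use the lift $\gamma = \gamma_{i_1} \gamma_{i_2} \cdots \gamma_{i_m} \in \pi \subset \Cp$, which under the exponential expansion satisfies $\varphi(\gamma) = e^{x_{i_1}} \cdots e^{x_{i_m}} = w + (\text{other homogeneous components})$; the point is that only the top-degree component of $\mu(\gamma)$ contributes to $\mu_{\gr}(w)$, since $\mu$ is filtered of degree $-1$ and all lower-degree summands of $\varphi(\gamma)$ project to zero in the graded piece of interest.

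To evaluate $\mu(\gamma)$, I would choose a geometric representative $\tilde\gamma$ consisting of a concatenation of small ``lollipop'' loops, the $j$-th encircling puncture $i_j$, arranged generically with a small kink enforcing $\rot(\tilde\gamma) = 1/2$. Lollipops around distinct punctures are arranged disjointly, so the only self-intersections of $\tilde\gamma$ are between lollipops encircling the same puncture: in generic position, each such pair meets in exactly two transverse double points of opposite local intersection signs.

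For each pair $j < k$ with $w_j = w_k$, the two intersection points between the $j$-th and $k$-th lollipops produce the two summands of the claimed formula. At the first intersection, the inner segment $\tilde\gamma_{t_1^p t_2^p}$ traces a full winding around puncture $i_j$ followed by the lollipops at positions $j+1, \ldots, k-1$, giving the free loop $|w_j \cdots w_{k-1}|$ in top filtered degree, tensored with the outer path realizing $w_1 \cdots w_{j-1} w_{k+1} \cdots w_m$. At the second intersection, the inner arc does not complete a full winding around puncture $i_j$, so the inner segment contributes only $|w_{j+1} \cdots w_{k-1}|$, while the outer absorbs one additional $\gamma_{i_j}$-winding to produce $w_1 \cdots w_j w_{k+1} \cdots w_m$. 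The opposite local signs $\varepsilon_p$, combined with the overall minus sign in Definition~\ref{def:mu}, yield the minus sign between the two terms; summing over all matching pairs gives the formula.

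The main technical obstacle is the top-degree bookkeeping: subleading corrections from expanding $\varphi(\gamma_i) = e^{x_i} = 1 + x_i + \tfrac12 x_i^2 + \cdots$, contributions from the required kink (which has only a single local self-contribution of degree zero), and any residual sub-generic intersection configurations must all be verified to land in filtered degree strictly less than $m-1$ and so vanish under projection. Sign accounting at each lollipop-lollipop intersection---tracking the local orientation, the cyclic convention for $|\cdot|$, and the orientation of $D_p$---requires a careful but routine case analysis, which is essentially the framed, genus-zero specialization of the computation in \cite[Section 3.4]{AKKN_highergenus}.
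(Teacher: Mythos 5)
First, note that the paper does not actually prove this proposition: it is quoted from \cite[Section 3.4]{AKKN_highergenus}, accompanied only by the schematic ``pairing cut'' picture of Figure~\ref{fig:grmu}. So you are supplying an argument where the paper supplies a citation. Your geometric core is the right one and matches the pairing-cut picture: representing $\gamma_{i_1}\cdots\gamma_{i_m}$ by concatenated lollipops, the only self-intersections come from pairs $j<k$ with $i_j=i_k$, each such pair contributes two crossings of opposite sign, and the two inner arcs differ by exactly one full winding around the puncture, producing the two terms $|w_j\cdots w_{k-1}|$ and $|w_{j+1}\cdots w_{k-1}|$ with a relative minus sign.

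The genuine gap is in your reduction step. The element $\gamma=\gamma_{i_1}\cdots\gamma_{i_m}$ is a group element, hence lies in $\calI^0$ and is \emph{not} a representative of $w\in\calI^m/\calI^{m+1}$; the correct representative is $(\gamma_{i_1}-1)\cdots(\gamma_{i_m}-1)$. Consequently ``only the top-degree component of $\mu(\gamma)$ contributes'' is not a meaningful application of the associated graded construction, and the justification you give is false as stated: the lower-degree components of $\varphi(\gamma)$ do not ``project to zero'' — they feed into the degree $m-1$ part of $\varphi^{\otimes}(\mu(\gamma))$ through the non-leading homogeneous components of the conjugated map $\varphi\circ\mu\circ\varphi^{-1}$. (That these non-leading components are nonzero is exactly the non-formality of $\mu$ that Section~\ref{sec:cobracketinCON} of the paper is at pains to handle; you cannot assume them away here.) Likewise ``filtered degree strictly less than $m-1$ ... vanish under projection'' has the inequality backwards: to die in $\calI^{m-1}/\calI^{m}$ a term must have filtration degree at least $m$. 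The repair is standard but is real work: apply $\mu$ to $(\gamma_{i_1}-1)\cdots(\gamma_{i_m}-1)=\sum_S(-1)^{m-|S|}\gamma_S$, compute each $\mu(\gamma_S)$ on the corresponding sub-collection of lollipops, and check that after the alternating sum only the pairs using every letter exactly once survive modulo $\calI^{m}$; alternatively, use the product formula for $\mu$ from \cite[Section 5]{akkn_g0} to reduce to the quadratic case. With that replacement the rest of your argument (two intersection points per matching pair, opposite signs, the identification of the inner and outer arcs) goes through.
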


Figure~\ref{fig:paircut}(A.) shows a schematic diagrammatic definition of the graded Turaev co-bracket, again as a sum over \emph{pairing cuts}. A pairing cut in a cyclic word identifies a pair of coinciding letters, and cuts the cycle into two cycles along the chord connecting the matching letters. To obtain the cobracket, one takes a sum of wedge products of the resulting split cyclic words, adding one copy of the coinciding letter to either side, as shown in Figure~\ref{fig:paircut}(B.) and expressed in formulas below:

\begin{figure}
\begin{picture}(275,70)
    \put(0,10){\includegraphics[scale=.5]{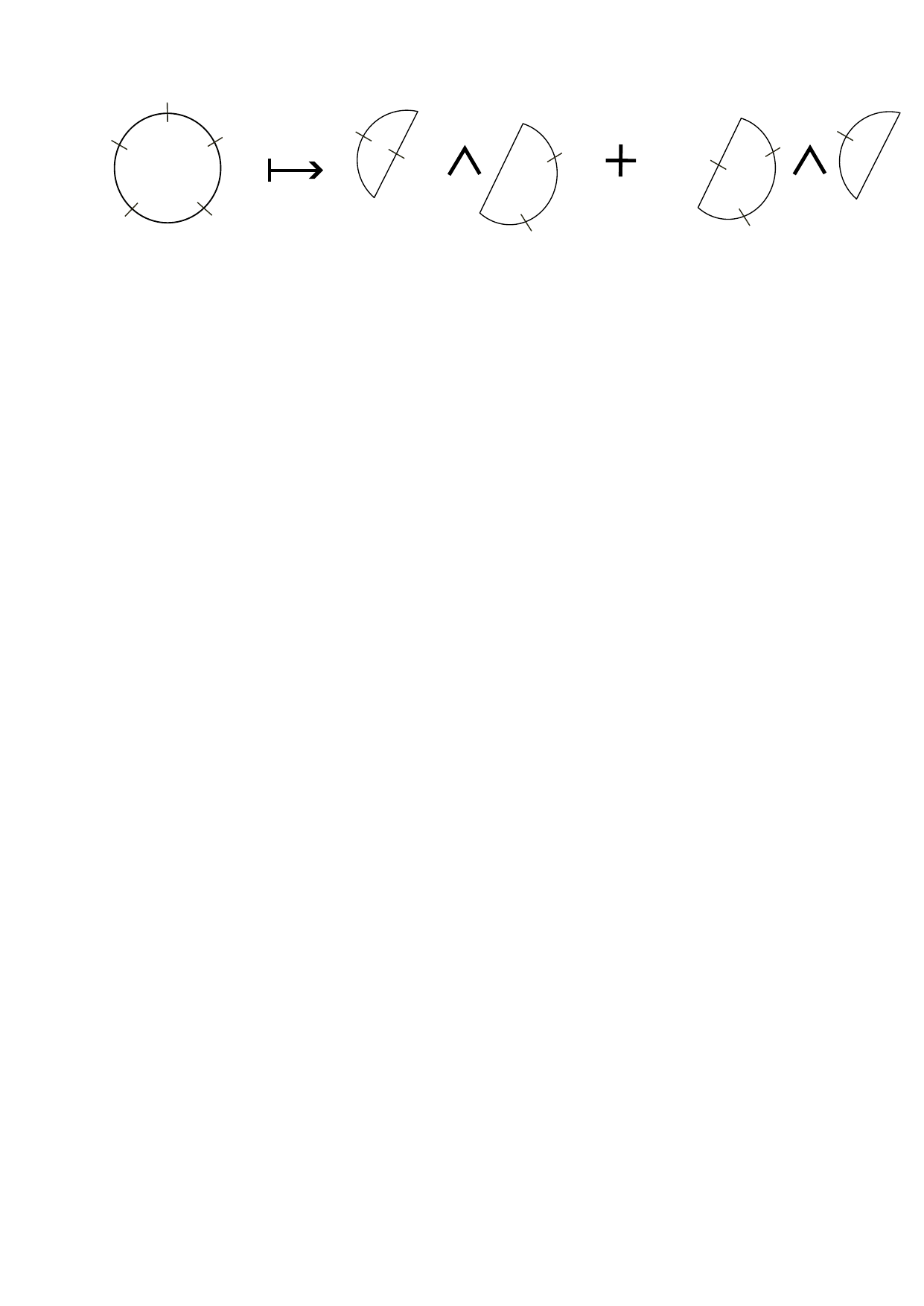}}
    \put(-12,0){A.}
    \put(20,56){$x$}
    \put(0,10){$x$}
    \put(50, 20){\Small pairing }
    \put(52, 12){ \Small cut  }
    \put(102,33){$x$}
    \put(190,35){$x$}
    \end{picture}
    \begin{picture}(270,70)
    \put(0,10){\includegraphics[scale=.47]{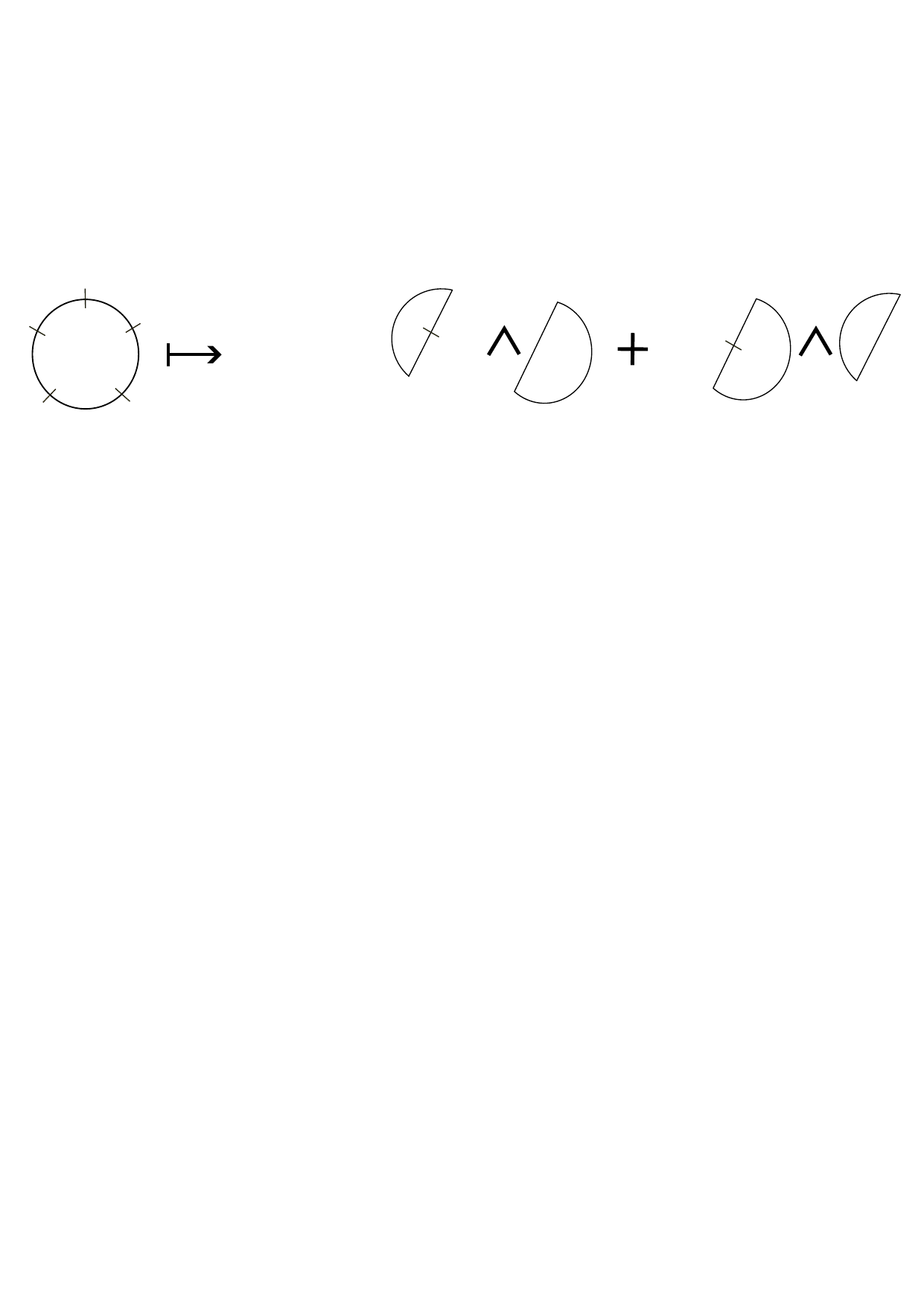}}
    \put(-12,0){B.}
    \put(18,54){$x$}
    \put(0,10){$x$}
    \put(45,38){$\delta_{\gr}$}
    \put(70,26){\huge  $\sum$}
    \put(68, 12){\Small pairing }
    \put(70, 4){ \Small cuts  }
    \put(128,32){$x$}
    \put(205,34){$x$}
    \end{picture}
\caption{(A.) An example pairing cut of a cyclic word. (B.) An example of the graded Turaev cobracket as a sum over pairing cuts of a cyclic word.}\label{fig:paircut}
\end{figure}

\begin{prop}\label{prop:gr_del} \cite[Section 3.4]{AKKN_highergenus}
Let $w=w_1\dots w_m \in |FA|$. The graded Turaev cobracket  $$\gr(\delta)=\glosm{grdel}{\delta_{\operatorname{gr}}}:
|\As|\rightarrow |\As|\wedge |\As|$$ 
is given by
\begin{align*}
    \delta_{\operatorname{gr}}(w) = \sum_{j<k} \delta_{w_j,w_k}( |w_j&\dots w_{k-1}|\wedge  |w_{k+1}\dots w_m w_1 \dots w_{j-1}| +\\ &|w_k\dots w_m w_1 \dots w_{j-1}|\wedge |w_{j+1}\dots w_{k-1}|), 
\end{align*}
where $\delta_{w_j,w_k}$ denotes the Kronecker delta\footnote{Apologies for the notation clash.}.
\end{prop}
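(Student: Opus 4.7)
The plan is to derive the formula for $\delta_{\gr}$ directly from Definition~\ref{def:cobrac} by taking the associated graded of each ingredient in the factorization of $\tilde{\delta}$ and tracking how they compose. By construction, $\tilde{\delta}$ equals $\mu$ followed by $1\otimes|\cdot|$, then $\operatorname{Alt}$, plus the framing correction $+|\gamma|\wedge 1$; passing to $\delta$ applies the trace $|\cdot|$ on the first tensor factor. On the $I$-adic associated graded, the trace and Alt are strictly graded, while the correction $|\gamma|\wedge 1$ is a map of filtered degree $0$ (and $\delta$ is of filtered degree $-1$), so it contributes nothing to $\delta_{\gr}$.

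Hence it suffices to apply $1\otimes|\cdot|$ followed by $\operatorname{Alt}$ to the formula for $\mu_{\gr}$ supplied by Proposition~\ref{prop:gr_mu}, and then project the first tensor factor along $|\cdot|$ (which is already a cyclic word there). Concretely, starting from
\[
\mu_{\gr}(w)=\sum_{j<k}\delta_{w_j,w_k}\bigl(|w_j\dots w_{k-1}|\otimes w_1\dots w_{j-1}w_{k+1}\dots w_m-|w_{j+1}\dots w_{k-1}|\otimes w_1\dots w_j w_{k+1}\dots w_m\bigr),
\]
I would replace the second factor of each summand by its class in $|\As|$ and then apply $\operatorname{Alt}$. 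For the first summand, cyclic invariance gives $|w_1\dots w_{j-1}w_{k+1}\dots w_m|=|w_{k+1}\dots w_m w_1\dots w_{j-1}|$, producing the first term of the claimed formula. For the second summand, I would use $w_j=w_k$ and cyclic invariance to rewrite $|w_1\dots w_j w_{k+1}\dots w_m|=|w_k w_{k+1}\dots w_m w_1\dots w_{j-1}|$; then the minus sign in $\mu_{\gr}$ combines with the antisymmetry of $\wedge$ to yield $|w_k\dots w_m w_1\dots w_{j-1}|\wedge|w_{j+1}\dots w_{k-1}|$, matching the second term.

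The main obstacle is purely bookkeeping: carefully tracking signs and cyclic reorderings, since $|\As|$ is only a \emph{linear} quotient by commutators, so each rewrite demands an explicit appeal to cyclic invariance and to the antisymmetry of $\wedge$, combined correctly with the minus sign built into $\mu_{\gr}$. A small secondary point to verify is that the framing correction $+|\gamma|\wedge 1$ is indeed of filtered degree $0$ and therefore dropped under $\gr$; this follows because multiplying by $1$ preserves $I$-adic degree while $\delta_{\gr}$ is homogeneous of degree $-1$, so no degree-$0$ summand can appear.
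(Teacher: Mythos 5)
Your derivation is correct, but note that the paper does not actually prove this proposition: it is quoted from \cite[Section 3.4]{AKKN_highergenus} and only illustrated schematically (Figure~\ref{fig:paircut}), so there is no in-paper proof to compare against line by line. What you supply is the computation the paper leaves implicit, and it checks out: postcomposing $\mu_{\gr}$ from Proposition~\ref{prop:gr_mu} with $1\otimes|\cdot|$ and $\operatorname{Alt}$, using cyclic invariance in $|\As|$ and the identification $w_j=w_k$ enforced by the Kronecker delta, reproduces both terms of the stated formula with the correct signs (the minus sign in $\mu_{\gr}$ is absorbed by antisymmetry of $\wedge$ exactly as you say). Your treatment of the framing correction also agrees with the paper's own parenthetical remark before Proposition~\ref{prop:gr_mu}: since $\delta$ shifts $I$-adic degree by $-1$ while $|\gamma|\wedge 1$ preserves it, the correction term lives in strictly higher filtration and vanishes in the associated graded. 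Two small points you gloss over: you implicitly use that $\gr$ of the composite equals the composite of the $\gr$'s, which is fine here because $1\otimes|\cdot|$ and $\operatorname{Alt}$ are strictly graded surjections; and you do not verify that the resulting formula is invariant under cyclic permutation of $w$ (i.e.\ well defined on $|\As|$ rather than $\As$), which requires observing that relabelling swaps the roles of the two summands. Neither is a genuine gap, but the second is worth a sentence if you write this up.
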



\section{Expansions for tangles in handlebodies}\label{sec:TangleSetUp}

\subsection{Framed oriented tangles}\label{sec:framed_tangles}
This section introduces the space $\glosm{ctT}{\ctT}$ of framed, oriented tangles in a genus $p$ handlebody, with formal linear combinations. Our main result -- proven in Section~\ref{sec:IdentifyingGTinCON} -- is that homomorphic expansions on $\ctT$ induce  homomorphic expansions on the Goldman-Turaev Lie biagebra.

Let $\glosm{Mp}{M_p}$ denote the manifold $ D_p\times I$ where $D_p$ is a disc in the complex plane with $p$ points removed.  While $M_p$ is not a compact manifold, knot theory in $M_p$ is equivalent to knot theory in a genus $p$ handlebody. For the purpose of the Kontsevich integral, we identify $D_p$ with a unit square $[0,1]+[0,i]$ in the complex plane with $p$ points removed, so $M_p$ can be drawn as a cube with $p$ vertical lines removed; we call these lines {\em poles}, as shown in the middle in Figure \ref{fig:polestudio}. We refer to $D_p \times \{0\}$ as the ``floor'' or ``bottom'', and $D_p\times \{1\}$ as the ``ceiling'' or ``top''. The ``back wall'' is the face $[i, i+1]\times [0,1]$. We refer to the $i\in \C$ direction as North. 

\begin{figure}
    \centering
    \includegraphics[scale=.7]{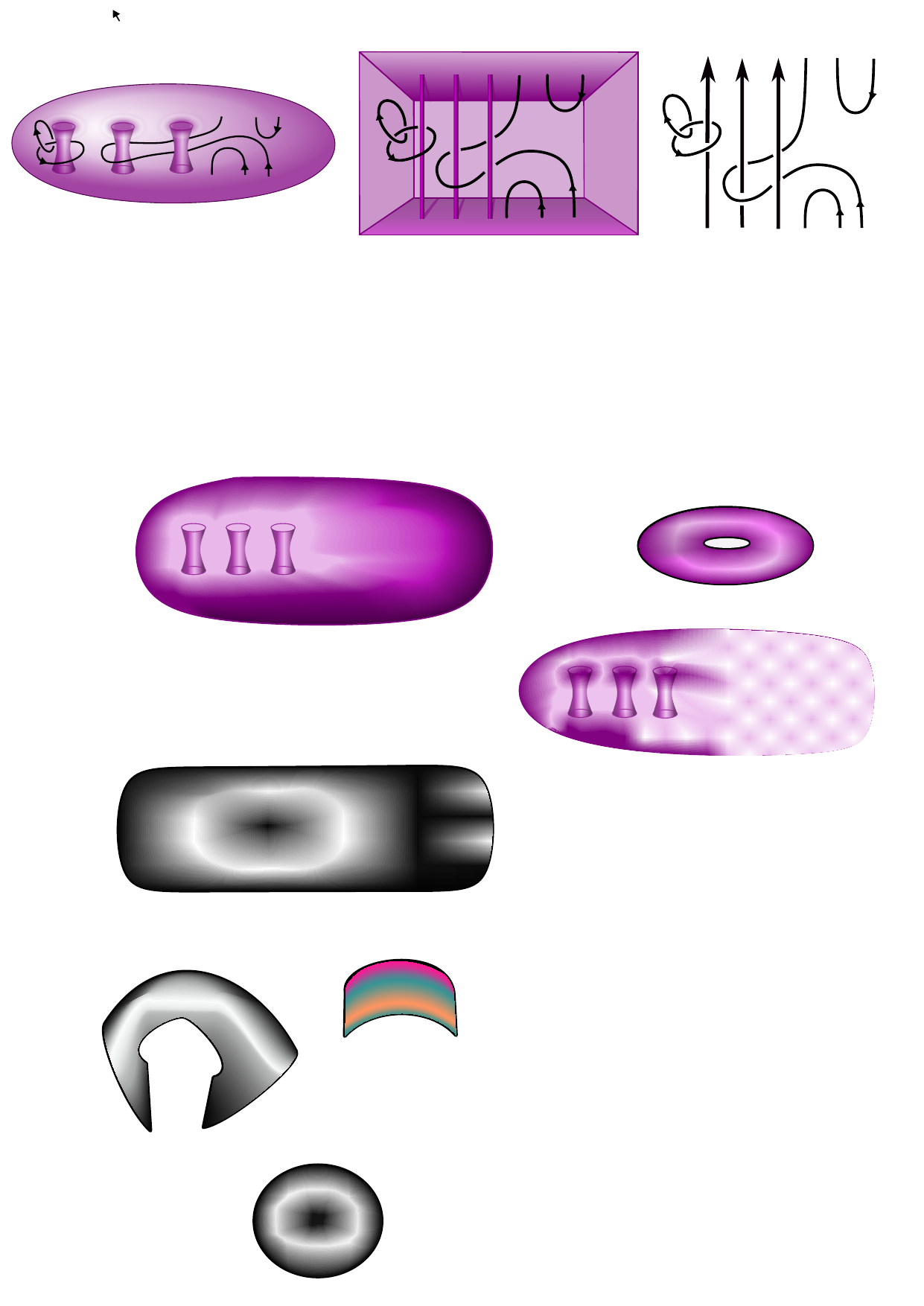}
    \caption{An example of a tangle in $M_3$, drawn first in a handlebody, then in a cube with poles, and lastly as a tangle diagram projected to the back wall of the cube.}
    \label{fig:polestudio}
\end{figure} 

\begin{definition}\label{def:tangle}
An oriented tangle $T$ in $M_p$ is an embedding of an oriented compact 1-manifold 
$$(S, \partial S) \hookrightarrow (M_p, D_p\times\{0\}\cup D_p\times\{1\}).$$
The interior of $S$ lies in the interior of $M_p$, and the boundary points of $S$ are mapped to the top or bottom. 
Oriented tangles in $M_p$ are considered up to ambient isotopy fixing the boundary. We denote the set of isotopy classes by $\calT$. An example is shown in Figure~\ref{fig:polestudio}.
\end{definition}

\begin{definition}\label{def:framed_tangles}
A \emph{framing} for an oriented tangle $T$ in $M_p$ is a continuous choice of unit normal vector at each point of $T$, which is fixed pointing North at the boundary points.
{\em Framed oriented tangles} in $M_p$ are also considered up to ambient isotopy fixing the boundary. We denote the set of isotopy classes of framed oriented tangles by $\glosm{tT}{\tT}$.
\end{definition}

Henceforth, any tangle is assumed to be framed and oriented unless otherwise stated. The skeleton of a tangle is the underlying combinatorial information with the topology forgotten:

\begin{definition}\label{def:skeleton}
 The {\em skeleton} $\sigma(T)$ of a tangle $T= (S \hookrightarrow M_p)$ -- see Figure~\ref{fig:skeleton} -- is the set of tangle endpoints $P_{bot}\subseteq D_p\times\{0\}$ and $P_{top} \subseteq D_p\times\{1\}$, along with 
 \begin{enumerate}
     \item A partition of $P_{bot}\cup P_{top}$ into ordered pairs given by the oriented intervals of $S$.
     \item A non-negative integer $k$: the number of circles in $S$. 
 \end{enumerate}
\end{definition}

\begin{figure}
    \centering
    \begin{picture}(300,90)
\put(0,10){\includegraphics[scale=.7]{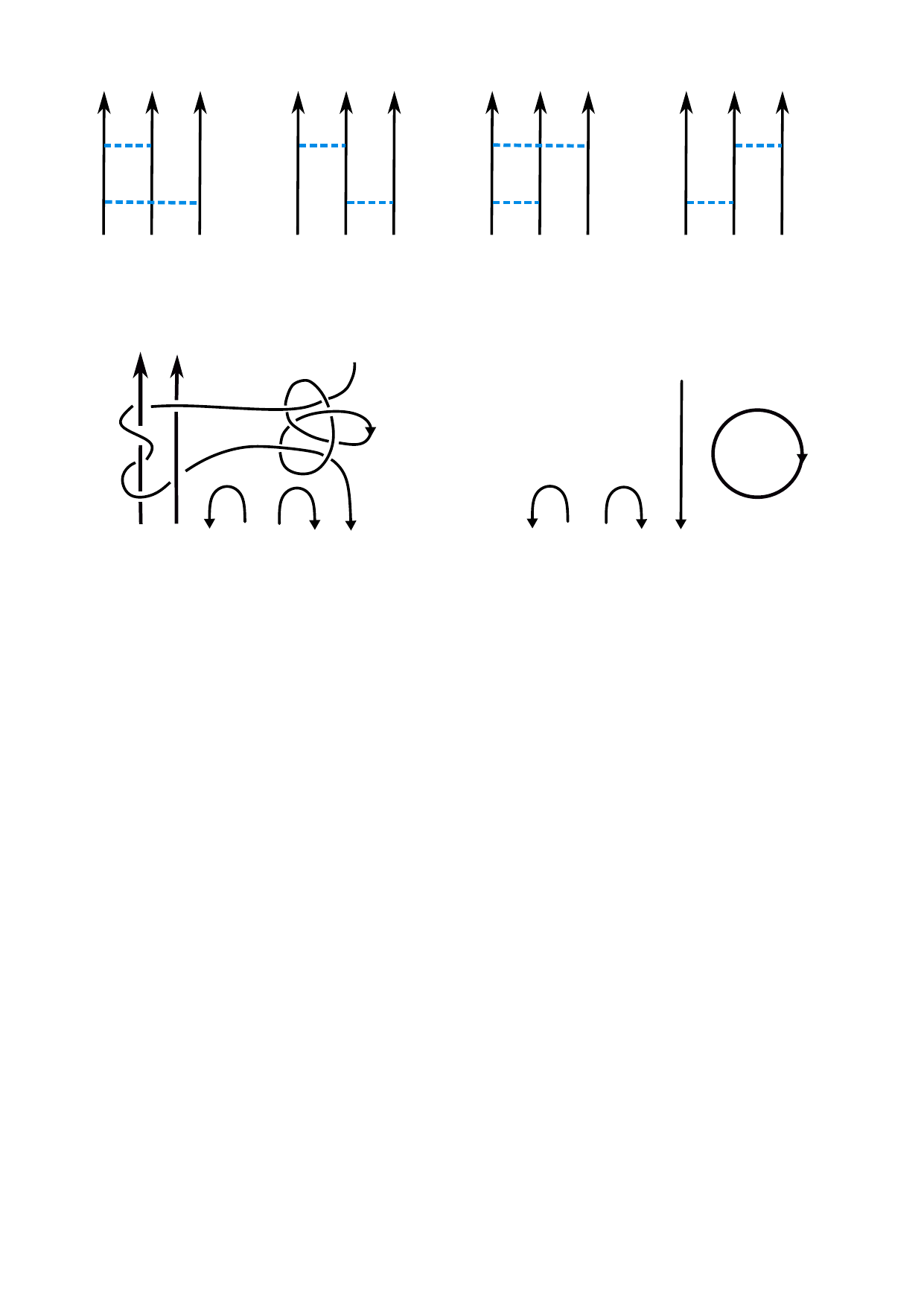}}
\put(38,5){\Small{$(1,0)$}}
\put(99,5){\Small{$(5,0)$}}
\put(180,5){\Small{$(1,0)$}}
\put(248,5){\Small{$(5,0)$}}
\end{picture}
    \caption{The left tangle is in $M_2$, and on the right is a schematic diagram of the skeleton of the tangle. The skeleton of the tangle is the combinatorial data given by 
    the following set of order pairs and the integer 1: $\{[((2,0),0),((1,0),0))],[((3,0),0),(4,0),0))],[((5,0),1),(5,0),0))], 1\}$}.
    \label{fig:skeleton}
\end{figure}

The skeleton of a framed tangle is the same as the skeleton of the underlying unframed tangle. The set of framed tangles in $M_p$ with skeleton $S$ is denoted $\tT(S)$. For example, $\tT(\circleSkel)$ is the set of framed knots in $M_p$.

The linear extension of $\tT(S)$, denoted $\ctT(S)$, is the vector space of $\mathbb{C}$-linear combinations of tangles in $\tT(S)$. We denote by $\ctT$ the disjoint union $\sqcup_S \tT(S)$ over all skeleta $S$. Tangles with different skeleta cannot be linearly combined.

One may represent tangles in $M_p$ using tangle diagrams in (at least) two different ways: by projecting to the back wall of $M_p$ or to the floor.

Projecting to the back wall, an $\ell$-component tangle in $M_p$ can be diagrammatically represented as a tangle diagram with $p$ straight vertical {\em poles}, and $\ell$ {\em tangle strands} of circle and interval components. The strands pass over (in front of) and under (behind) the poles and other strands, as shown on the right in Figure~\ref{fig:polestudio}. The poles are oriented upwards. By Reidemeister's theorem, $\tT$ is in bijection with such diagrams modulo the Reidemeister moves R2 and R3, and the framed version of R1.

By projecting instead to the floor $D_p \times \{0\}$, a tangle in $M_p$ is represented by a tangle diagram in $D_p$. The R2 and R3 moves continue to apply. The endpoints of the tangle are fixed: bottom endpoints are denoted by dots, top endpoints are denoted by stars. Strands of the tangle diagram can pass over bottom endpoints, or under top endpoints, as shown in Figure~\ref{fig:BottomDiagram}. However, the strands cannot pass across the punctures in $D_p$. 

\begin{figure}
    \centering
     \begin{picture}(250,140)
    \put(0, 0){\includegraphics[scale=.60]{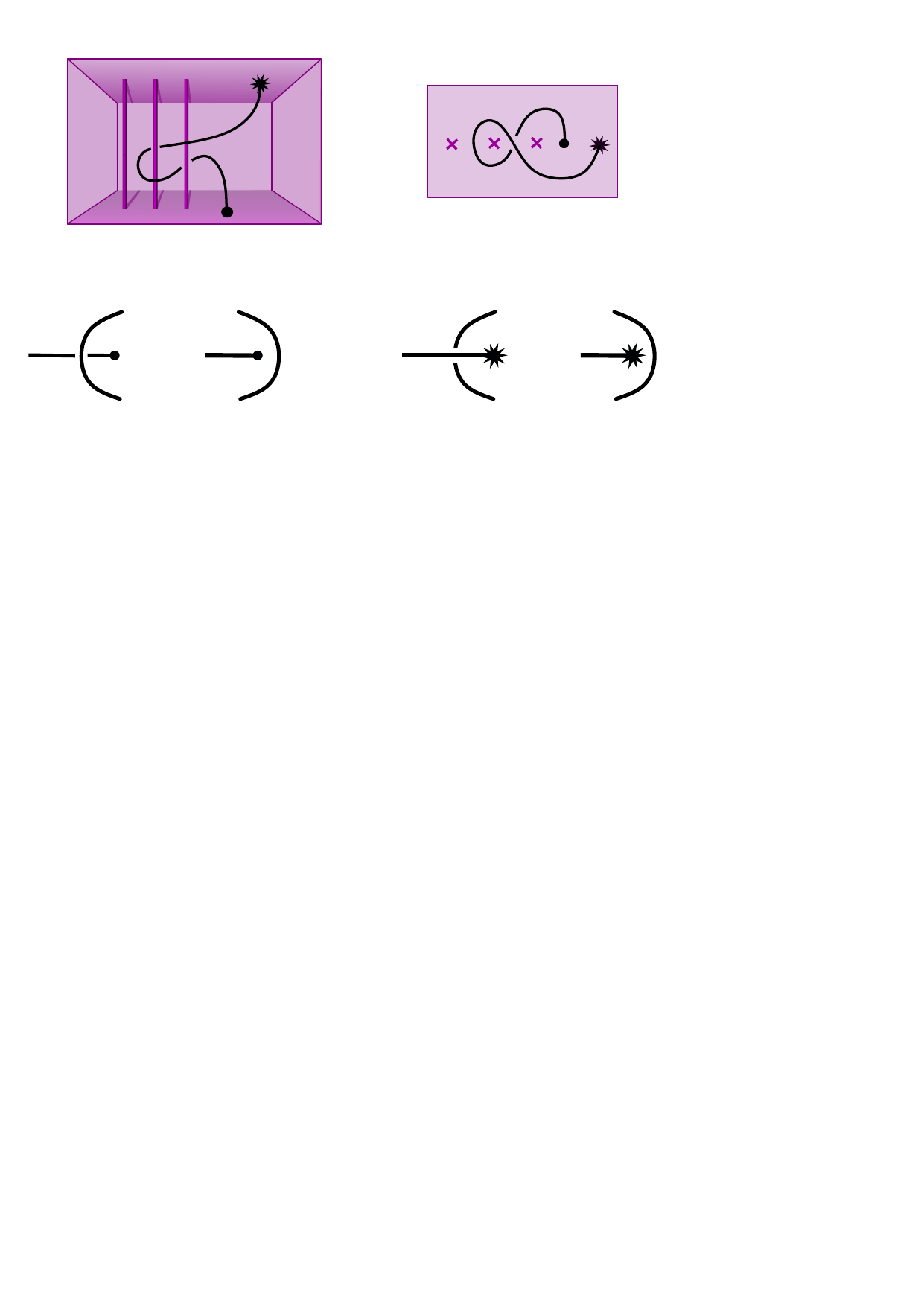}}
    \put(125,100){$\longrightarrow$} 
    \put(50,18){\Large $=$}
    \put(195,18){\Large $=$}
    \end{picture}
    \caption{An example of a tangle in $M_3$ projected to the bottom floor of the cube. Strands of a tangle diagram can pass over bottom endpoints (dot) or under top endpoints (star).}\label{fig:BottomDiagram}
\end{figure}

\subsection{Operations on $\tT$}\label{sec:opsonT} There are several useful operations defined on $\tT$. These operations extend linearly to $\ctT$, and are used in Section~\ref{sec:IdentifyingGTinCON} to relate quotients of $\ctT$ to the Goldman-Turaev Lie bialgebra.
\begin{itemize}
\item \emph{Stacking product:} Given tangles $T_1, T_2 \in M_p$, if the top endpoints of $\sigma(T_1)$ coincide with the bottom endpoints of $\sigma(T_2)$ in $D_p$, and the orientations on the strands of $T_1$ and $T_2$ agree, then the product $T_1T_2$ is the tangle obtained by stacking $T_2$ on top of $T_1$.

\item \emph{Strand addition:} The \textit{strand addition} operation adds a non-interacting additional strand to a tangle $T$ at a point $q \in D_p$ to get a new tangle $\Tql$. More precisely, pick a contractible $U\subseteq D_p$ such that $T$ is contained entirely in $U\times [0, 1]$ and a point $q \in D_p$ outside of $U$. The tangle $\Tql$ is $T$ together with an upward-oriented vertical strand $q\times I$ at $q$.

\item \emph{Strand orientation switch:} This operation reverses the orientation of a given strand of the tangle.

\item \emph{Flip:} Given a tangle $T$ in $M_p$, the flip of a tangle $T$ in $M_p$, denoted $\glosm{fli}{T^\sharp}$, is the mirror image of $T$ with respect to the ceiling, as shown in Figure \ref{fig:flip}. When $T$ is flipped, each top boundary point $(q, 1)$ becomes a bottom boundary point $(q, 0)$, and vice versa. The orientations and framing of the strands of $T$ are reflected along with the strands. However, the orientations of the poles remain ascending. Equivalently, the flip operation can be defined as reversing the parametrisation of $I$ in $M_p \cong D_p \times I$. This, in effect, flips the orientation of the poles but changes nothing else. 

\end{itemize}

\begin{figure}
    \centering
     \begin{tikzpicture}
    \draw (0, 0) node {\includegraphics[scale=.3]{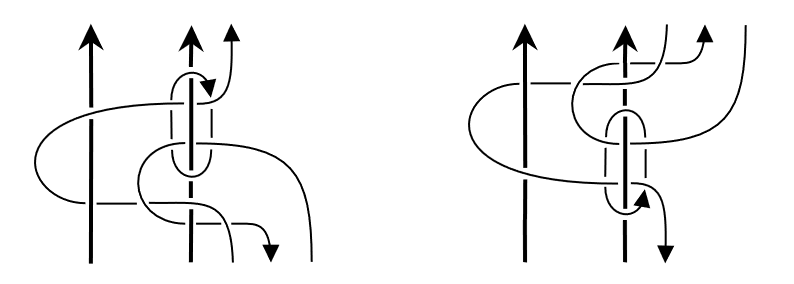}};
    \draw (0,0) node {\Large $\xrightarrow{\sharp}$};
    \end{tikzpicture}
    \caption{A tangle in $M_2$ and its flip}
    \label{fig:flip}
\end{figure}

In Section~\ref{sec:identifybracketinCON}, we show that the stacking commutator of tangles, given by $[T_1, T_2]=T_1T_2-T_2T_1$, induces to the Goldman bracket in the sense of Section~\ref{sec:conceptsum}. In Section~\ref{sec:cobracketinCON} a similar but more subtle argument relates the flip operation to the Turaev cobracket.

\subsection{The $t$-filtration on $\tT$ and the associated graded $\tA$}\label{sec:t-filtration}
In setting up a theory of Vassiliev invariants for $\tT$, there are different filtrations to consider. 
In line with classical notation of Vassiliev invariants, we denote by a double point the difference between an over-crossing and an under-crossing: $$\dpcross=\pcross - \ncross.$$ Double points, however, come in two varieties: \emph{pole-strand}, if the crossing occurs between a pole and a tangle strand, and \emph{strand-strand}, if the crossing occurs between two tangle strands. As the poles are fixed, they never cross each other, hence, there are no pole-pole double points.

The main filtration we consider on $\ctT$ is the filtration by the total number of double points of either type, as well as strand framing changes (as in Section~\ref{subsec:FramedKon}). We call this the \emph{\underline{t}otal filtration}, or $t$-filtration for short, and write it as
$$\ctT=\tT_0\supseteq \tT_1\supseteq \tT_2 \supseteq \tT_3\supseteq \cdots $$ where $\glosm{tTt}{\tT_t}$ is the set of linear combinations of framed tangle diagrams with at least $t$ total double points and strand framing changes. In spirit, this filtration comes from the diagrammatic view of projecting to the back wall of the cube.

The associated graded space of $\ctT$ with respect to the total filtration is
$$\glosm{tA}{\tA}:= \gr\ctT=\prod_{t\geq 0} \tT_t/\tT_{t+1}.$$ The degree $t$ component of $\tA$ is $\glosm{tAt}{\tA_t} :=\tT_t/\tT_{t+1}$.

As in classical Vassiliev theory (cf. section~\ref{subsec:FramedKon}), the associated graded space $\tA$ has a combinatorial description in terms of {\em chord diagrams}. 
\begin{definition}
A {\em chord diagram} on a tangle skeleton is an even number of marked points on the poles and skeleton strands, up to orientation preserving diffeomorphism, along with a perfect matching on the marked points -- that is, a partition of marked points into unordered pairs. In diagrams, the pairs are connected by a {\em chord}, indicated by a dashed line, as in Figure ~\ref{fig:AdmissibleNonAdmissible}.
\end{definition}

\begin{definition}\label{def:admissible}
A chord diagram is {\em admissible} if all chords connect strands to strands, or strands to poles, but there are no pole-pole chords. See Figure~\ref{fig:AdmissibleNonAdmissible} for examples.
\end{definition}

\begin{figure}
\centering
\includegraphics[scale=.6]{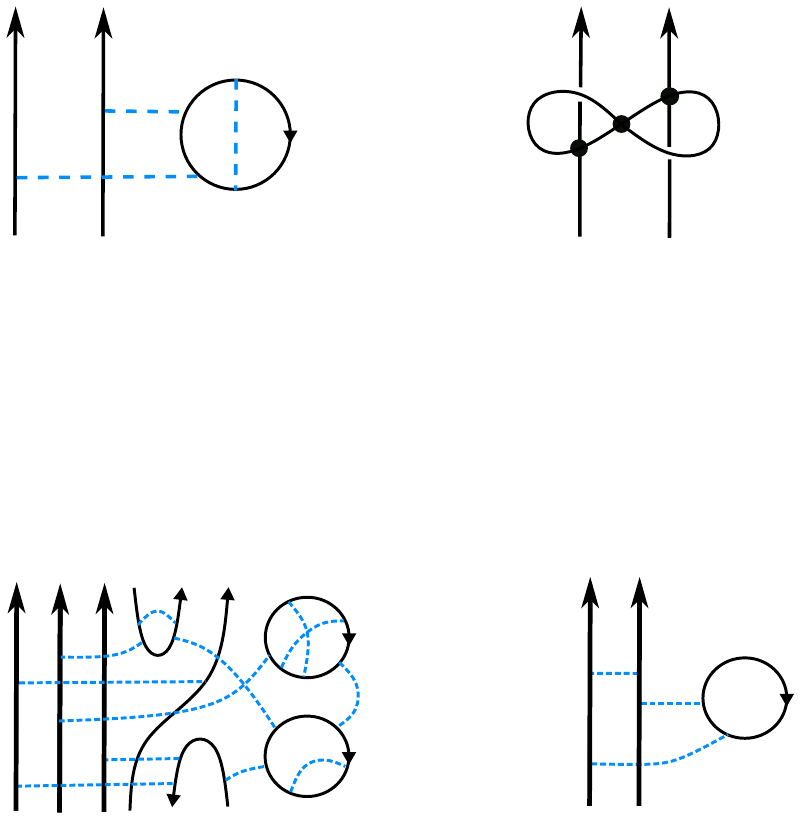}
\caption{Two chord diagrams: an admissible one (left), and a non-admissible one (right) that does contain a pole-pole chord.}
\label{fig:AdmissibleNonAdmissible}
\end{figure}

\begin{figure}
\centering
\begin{picture}(200,50)
\put(0,0){\includegraphics[scale=.4]{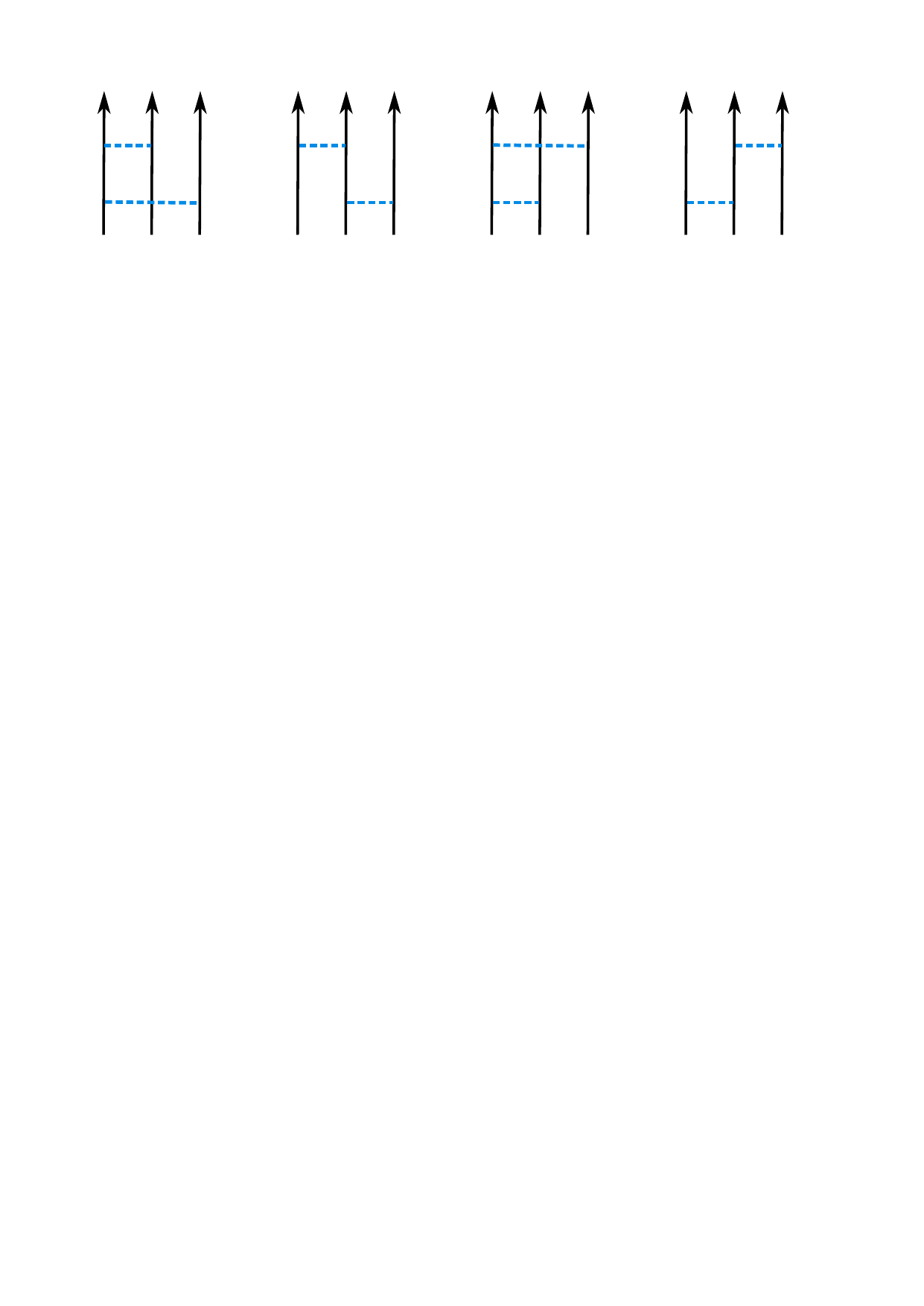}}
\put(190,15){\Large{$=0$}}
\put(35,15){\Large{+}}
\put(85,15){\Large{$-$}}
\put(135,15){\Large{$-$}}
\end{picture}
\caption{The 4T relation, which is admissible if at most one of the three skeleton components is a pole.}
\label{fig:Admissible 4T}
\end{figure}

\begin{definition}\label{def:cdspace}
The space $\glosm{Ds}{\D(S)}$ of \emph{admissible chord diagrams on a skeleton $S$} is the space of $\C$-linear combinations of admissible chord diagrams on the skeleton $S$, modulo {\em admissible} $4T$ relations, shown in Figure~\ref{fig:Admissible 4T}. Admissible $4T$ relations are $4T$ relations where all four terms are admissible\footnote{Equivalently, a 4T relation is admissible if at most one of the three skeleton components involved is a pole.}. 
That is, $$\D(S)=\frac{\C\langle\text{admissible chord diagrams on $S$}\big\rangle}{\big\{\text{admissible $4T$ relations}\}}$$
The space $\D(S)$ is a graded vector space, where the degree is given by the number of chords. Denote the degree $t$ component of $\D(S)$ by $\glosm{Dst}{D_t(S)}$. Let $\glosm{D}{\D}$ denote the disjoint union $\sqcup_S\D(S)$, and denote the degree $t$ component of $\D$ by $\glosm{Dt}{\D_t}=\sqcup_S\D_t(S)$.
\end{definition}

The well-known map $\glosm{psi}{\psi}:\D\rightarrow \tA$ from classical Vassiliev theory is defined as follows.
In degree $t$, $\psi_t: \D_t \to \tT_t/\tT_{t+1}$, ``contracts'' the $t$ chords to double points, as shown in Figure~\ref{fig:psi}. This may create other crossings, but modulo $\tT_{t+1}$ the over/under information at these crossings does not matter.

\begin{lem}\label{lem:psi}
The map $\psi$ is well-defined and surjective.
\end{lem}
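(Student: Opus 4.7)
The map $\psi_t \colon \D_t \to \tT_t/\tT_{t+1}$ is defined by a ``contract the chords'' recipe: given an admissible chord diagram $D$ with $t$ chords, embed its skeleton as a tangle (including the vertical poles), bring the two endpoints of each chord together so that the chord collapses to a double point $\doublepoint = \pcross-\ncross$, resolve each resulting double point by the sign convention above, and interpret short chords on a single strand as framing changes via $\circarrow = \tfrac{1}{2}\shortchord$. The outcome is a signed combination of $2^t$ tangle diagrams, each with $t$ double points or framing changes, hence an element of $\tT_t$. I would organize the proof of well-definedness around two claims: (a) the class in $\tT_t/\tT_{t+1}$ is independent of the auxiliary choices made during contraction, and (b) admissible $4T$ relations lie in the kernel.

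For claim (a), the contraction of chords may create additional crossings that are not themselves chord endpoints. Changing the over/under datum at any such crossing replaces a tangle by one differing by a single extra double point, and that difference lies in $\tT_{t+1}$. Since all auxiliary crossings in the contraction of a $t$-chord diagram behave this way, any two contraction procedures yield the same class modulo $\tT_{t+1}$. An analogous argument handles the half-chord implementation of framing changes, using the identity $\shortchord = 2\circarrow$ modulo $\tT_{t+1}$ noted in the discussion of the framed theory. This is the easy part of the argument.

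Claim (b) is the standard Vassiliev ``slide'' calculation, which I expect to be the main obstacle because one has to keep track of the combinatorics of which endpoint crosses which strand while respecting admissibility. The idea is to isolate a single chord endpoint on a skeleton component $c$ near a chord endpoint of a second chord landing on $c$, and slide the first endpoint from one side of the second to the other. After contracting, this slide takes a double point $\doublepoint$ past a third strand; the two ways of performing the slide (passing over vs.\ under) differ by an element of $\tT_{t+1}$ which, when written in terms of chord diagrams with one endpoint moved, gives precisely the four-term alternating sum of Figure~\ref{fig:Admissible 4T}. The admissibility hypothesis is exactly the condition that none of the three skeleton strands involved in this local picture consists of two poles, so the four terms that appear are all admissible. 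Pole-pole chords never arise as outputs of contraction because the vertical poles do not cross one another, so $\psi$ lands in the admissible subspace and descends to the admissible $4T$ quotient.

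For surjectivity, any class in $\tT_t/\tT_{t+1}$ is represented by a linear combination of tangle diagrams with exactly $t$ defects (double points and/or framing changes). To each such representative I would assign the chord diagram obtained by placing a chord at each double point, using its two strand locations as endpoints, and a short chord on a single strand for each framing change. Because double points occur between a strand and a strand, or between a strand and a pole, but never between two poles, the result is admissible. By construction $\psi_t$ sends this diagram back to the original class, proving surjectivity.
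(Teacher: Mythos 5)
Your proposal is correct and follows essentially the same route as the paper: the paper's proof likewise reduces well-definedness to checking that admissible $4T$ relations lie in the kernel via the standard ``lasso'' (sliding) computation of Figure~\ref{fig:psicomputation}, notes that auxiliary crossings created by contraction only matter modulo $\tT_{t+1}$, and obtains surjectivity by observing that both double points and framing changes (the latter as half-chords) are in the image. The only cosmetic difference is that you spell out the independence-of-choices and admissibility bookkeeping in more detail than the paper does.
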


\begin{proof} 
To show $\psi$ is well-defined, it suffices to show that admissible $4T$ relations in $\calD_t$ are in the kernel of $\psi$. This is the standard ``lasso trick'' recalled in Figure~\ref{fig:psicomputation}.
For surjectivity, recall from Section~\ref{subsubsec:Framing} that a framing change in $\tA$ is half of chord.
So, both framing changes and double points are in the image of $\psi$, and thus $\psi$ is surjective.
\end{proof}

\begin{figure}
\centering
\begin{picture}(200,100)
\put(-25,0){\includegraphics[scale=.7]{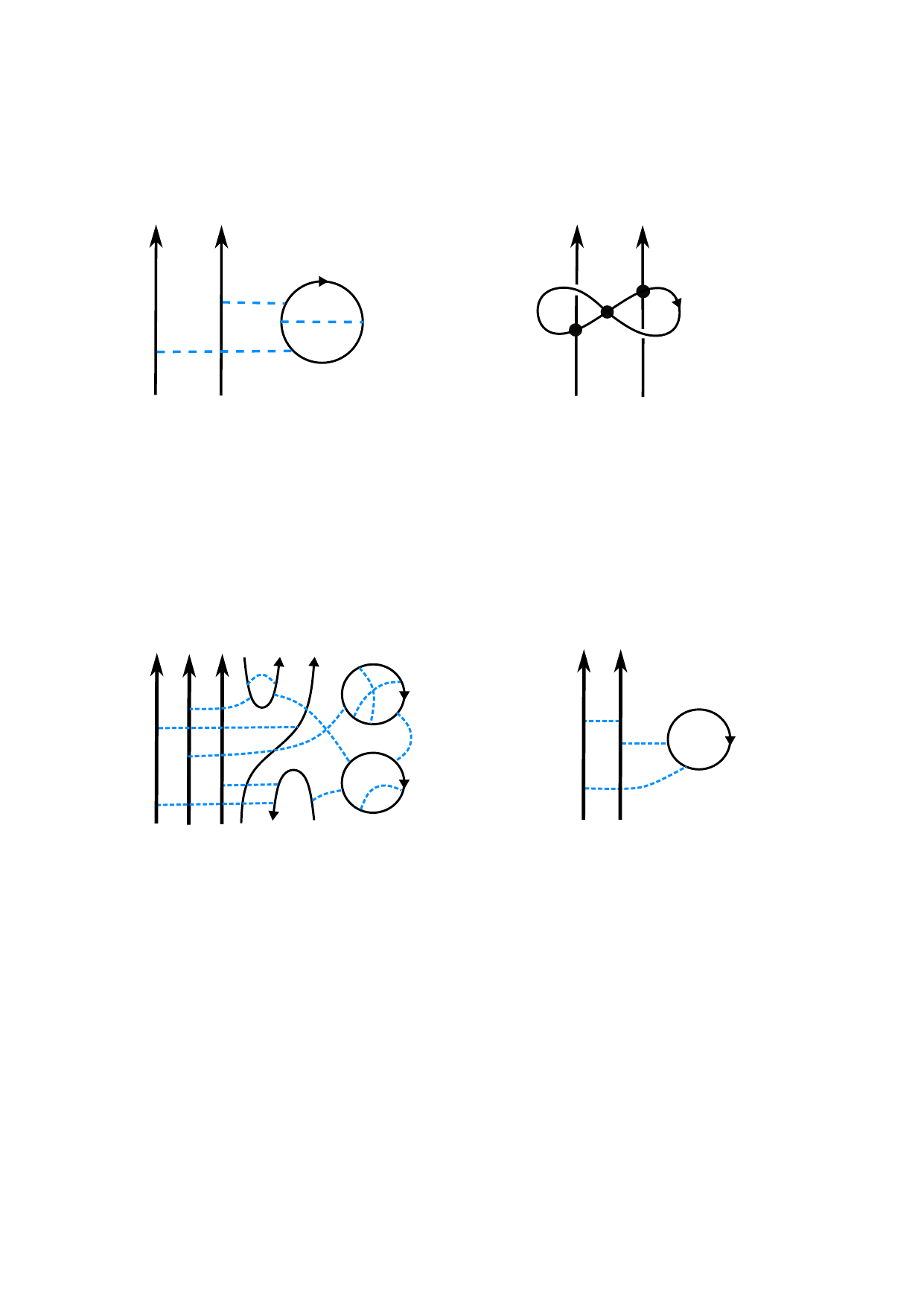}}
\put(100,40){\Large{$\mapsto$}}
\put(103,50){{$\psi$}}
\end{picture}
\caption{Example of $\psi$ with the right hand side viewed as an element of $\tT_3/\tT_4$. Different choices of over or under crossings with the poles only differ by elements of $\tT_4$.}
\label{fig:psi}
\end{figure}

\begin{figure}
\centering
\begin{picture}(290,100)
\put(-40,0){\includegraphics[scale=.8]{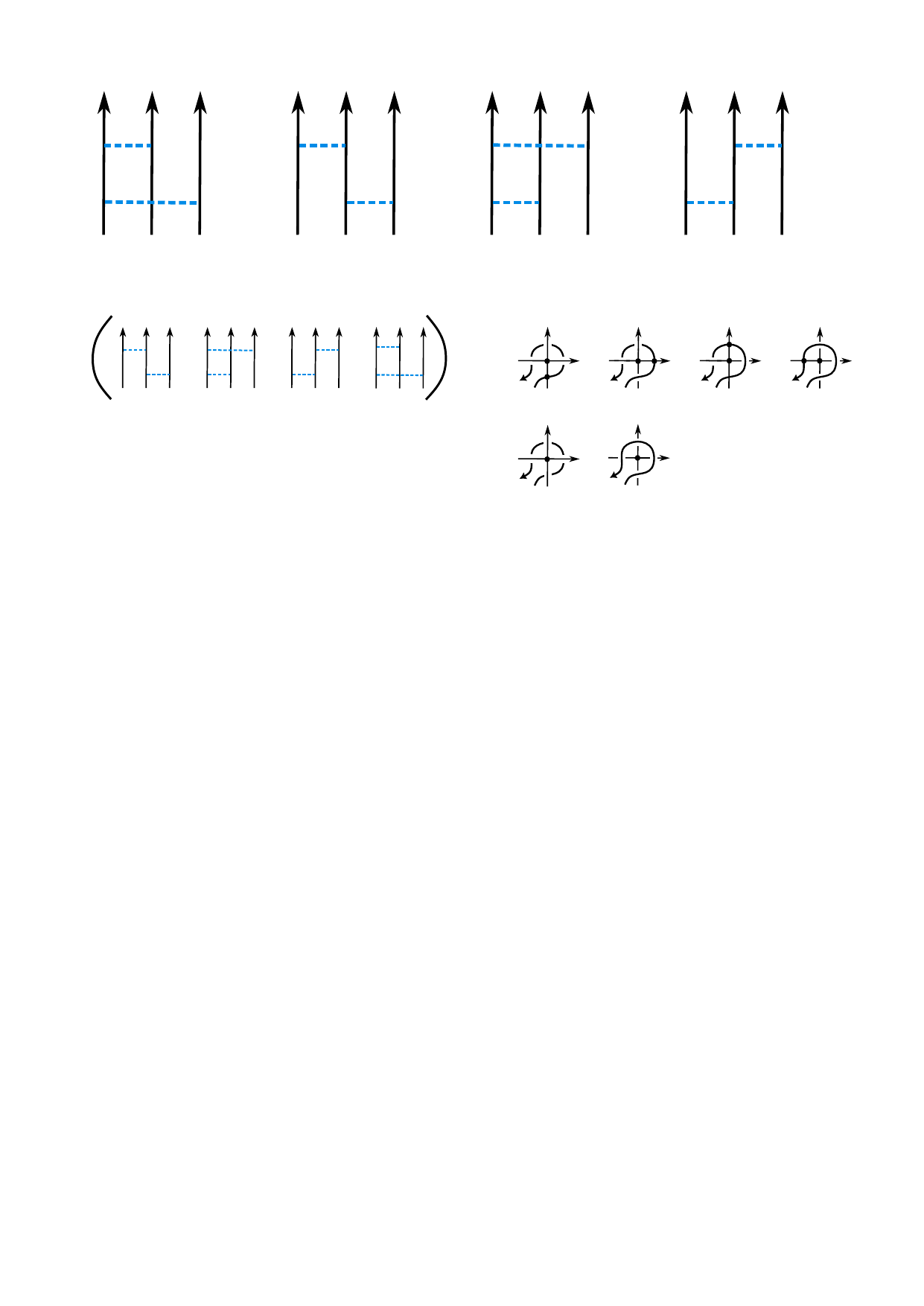}}
\put(-57,65){\Huge{$\psi$}}
\put(-35,65){\Large{$-$}}
\put(8,65){\Large{$+$}}
\put(50,65){\Large{$+$}}
\put(95,65){\Large{$-$}}
\put(153,65){\Large{$=$}}
\put(170,65){\Large{$-$}}
\put(215,65){\Large{$+$}}
\put(265,65){\Large{$+$}}
\put(310,65){\Large{$-$}}
\put(153,15){\Large{$=$}}
\put(215,15){\Large{$-$}}
\put(265,15){\Large{$=$}\Large{ 0}}
\end{picture}
\caption{Showing that $\psi: \D \to \tA$ is well defined. The figure is understood locally: in degree $t$ the chord diagrams have $t-2$ other chords elsewhere, and correspondingly the tangles have $t-2$ other double points elsewhere. }
\label{fig:psicomputation}
\end{figure}




According to Lemma ~\ref{lem:assocgradyoga}, in order to show that $\psi$ is an isomorphism, one needs to find an expansion valued in $\D$.


\begin{lem}\label{thm:Zwelldefined}
The framed Kontsevich integral $Z: \ctT \to \D$ satisfies the conditions of Lemma~\ref{lem:assocgradyoga}: it is filtered, and $\gr Z\circ \psi = \id_{\D}$.
\end{lem}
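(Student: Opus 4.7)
The plan is to extend the framed Kontsevich integral of Section~\ref{subsubsec:Framing} from classical framed tangles to tangles in $M_p$ by viewing the $p$ poles as additional straight vertical components of the skeleton. Embedding $D_p \hookrightarrow \C$ realises a framed oriented tangle $T \in \tT$ together with its poles as a framed oriented tangle in $\C \times I$, to which the framed Kontsevich integral $Z$ applies and produces a $\C$-linear combination of chord diagrams on the combined skeleton $\sigma(T)$.

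The first step is to check that the image of $Z$ lies in the admissible subspace $\D(\sigma(T))$. Each pole is a straight vertical line with constant complex coordinate, so the Kontsevich integrand $d(z_i-z_j)/(z_i-z_j)$ vanishes identically whenever both strands $i$ and $j$ are poles. Hence no pole-pole chords appear in $Z(T)$. The $4T$ relations satisfied by $Z$ on the ambient space of tangles in $\C\times I$ restrict, for the same reason, to admissible $4T$ relations on $\D(\sigma(T))$, so $Z : \ctT \to \D$ is well defined.

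Filteredness follows from the standard multiplicativity of the Kontsevich integral under stacking together with the local calculations $Z(\dpcross) = \shortchord + (\text{higher order})$ and $Z(\frchange) = \tfrac{1}{2}\shortchord + (\text{higher order})$. This is exactly how filteredness is established for the classical framed Kontsevich integral (see \cite{CDM_2012,LM96}), and the argument transports verbatim because the double points and framing changes defining $\tT_t$ occur on tangle strands or between strands and poles, never between two poles. It follows that $Z(\tT_t) \subseteq \prod_{n \geq t} \D_n(\sigma(T))$.

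For the universality statement $\psi \circ \gr Z = \id_{\tA}$, a class in $\tA_t = \tT_t/\tT_{t+1}$ is represented by a tangle $T$ carrying exactly $t$ resolved double points and/or framing changes. By the local formulas above, the degree $t$ component of $Z(T)$ is precisely the chord diagram obtained by replacing each double point by a chord and each framing change by half a chord, and applying $\psi$ (as in the proof of Lemma~\ref{lem:psi}) recovers $T$ modulo $\tT_{t+1}$. The main point requiring care is the admissibility argument in the second paragraph and the matching of the framing-change/half-chord convention with Section~\ref{subsubsec:Framing}; the rest consists of importing standard properties of the classical framed Kontsevich integral with a minor relativisation to skeleta containing poles.
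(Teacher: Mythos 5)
Your proposal is correct and takes essentially the same approach as the paper: both hinge on the observation that the poles are parallel vertical lines, so the Kontsevich integrand vanishes on pole--pole pairs and no pole--pole chords arise, while filteredness and the identity $\psi\circ\gr Z=\id_{\tA}$ are imported from the standard theory of the framed Kontsevich integral. No gaps.
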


\begin{proof} 
This is a variant of a standard fact \cite{Kon}; one detailed explanation is in \cite[Section 4.3]{BN1}. A small point to verify is that the image of $Z$ on an element of $\ctT$ is an admissible chord diagram. This is immediate from the definition of the Kontsevich integral: the poles are parallel, hence the coefficient of a chord diagram with a pole-pole chord is computed by integrating zero. The main part, that $\gr Z\circ \psi=id_{\D}$, is done as in \cite[Section 4.4.2, Thm 1 part (3)]{BN1}.
\end{proof}

The next corollary is then immediate from Lemma~\ref{lem:assocgradyoga}:

\begin{cor}\label{cor:grcd}
The map $\psi: \D \to \tA$ is an isomorphism, and $Z$ is an expansion for $\tT$.
\end{cor}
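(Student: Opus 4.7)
The plan is to invoke Lemma~\ref{lem:assocgradyoga} directly, using $\C\calK = \ctT$, $\calA = \tA$, $\calC = \D$, with the chord-contraction map $\psi$ and with $Z$ taken to be the framed Kontsevich integral. Once the hypotheses of that lemma are verified, the conclusions that $\psi$ is an isomorphism and that $\psi \circ Z$ (hence, up to the identification $\D \cong \tA$, $Z$ itself) is an expansion follow immediately.

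First, I would observe that $\psi: \D \to \tA$ is a graded (homogeneous) linear map by construction, since chord contraction of a $t$-chord diagram produces a tangle with $t$ double points, landing in $\tA_t = \tT_t/\tT_{t+1}$. Lemma~\ref{lem:psi} supplies the surjectivity of $\psi$, so $\D$ genuinely plays the role of a ``candidate model'' for $\tA$ in the sense of Lemma~\ref{lem:assocgradyoga}.

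Next, I would appeal to Lemma~\ref{thm:Zwelldefined} to get both remaining ingredients: that $Z: \ctT \to \D$ is filtered with respect to the $t$-filtration and that $\psi \circ \gr Z = \id_{\tA}$. At this point every hypothesis of Lemma~\ref{lem:assocgradyoga} is in place, and invoking it produces simultaneously the isomorphism $\psi: \D \xrightarrow{\sim} \tA$ and the statement that $\psi \circ Z$ is an expansion. Finally, since $\psi$ is an isomorphism and $\gr Z = \psi^{-1}$ on each graded piece, identifying $\D$ with $\tA$ via $\psi$ shows that $Z$ itself qualifies as an expansion for $\tT$.

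There is essentially no obstacle to overcome here: the corollary is a bookkeeping consequence of the abstract schema of Lemma~\ref{lem:assocgradyoga} combined with the two lemmas just preceding it. The only real content has already been discharged, namely the analytic/combinatorial work establishing that $Z$ lands in $\D$ (admissibility, via vanishing of the pole-pole integrand) and that $\psi \circ \gr Z = \id_{\tA}$ (the standard ``Kontsevich recovers the symbol'' argument of \cite{BN1}).
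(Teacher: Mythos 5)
Your proposal is correct and matches the paper's own argument exactly: the paper states that this corollary is immediate from Lemma~\ref{lem:assocgradyoga} together with Lemma~\ref{lem:psi} (surjectivity of $\psi$) and Lemma~\ref{thm:Zwelldefined} (that $Z$ is filtered and $\psi\circ\gr Z=\id_{\tA}$). Nothing further is needed.
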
 
After identifying $\tA$ with $\D$, the degree $t$ component of $\tA$, $\tA_t =\tT_t/\tT_{t+1}$,  consists of all admissible chord diagrams in $\tA$ with exactly $t$ chords.

For a skeleton $S$, we denote by $\glosm{tAS}{\tA(S)}$ the space of admissible chord diagrams on the skeleton $S$, so $\tA(S)$ is the associated graded vector space of $\ctT(S)$. For example, $\tA(\circleSkel)$ is the associated graded vector space of the space of knots in $M_p$.

\subsection{Operations on $\tA$}

The tangle operations \textit{stacking}, {\em strand addition}, {\em strand orientation switch}, and \textit{flip} on $\tT$ induce associated graded operations denoted by the same names on $\tA$. In view of Corollary \ref{cor:grcd}, we give descriptions of these operations using chord diagrams. 

The operation \textit{stacking} is given by concatenating the skeleta of two chord diagrams (as long as they have the same number of poles, and the top endpoints of one match the bottom endpoints of the other, including orientations). 

The associated graded {\em strand addition} operation adds a vertical skeleton strand to a chord diagram. The new strand has no chord endings.

The associated graded {\em strand orientation switch} for strand $e$ switches the orientation of the strand $e$, and multiplies each chord diagram with $(-1)$ to the power of the number of chord endings on $e$. The sign arises from the fact that reversing the orientation of $e$ changes the signs of double points between $e$ and any other distinct strand or pole.

\begin{figure}
    \begin{picture}(330,115)
    \put(0,0){\includegraphics[width=0.75\textwidth]{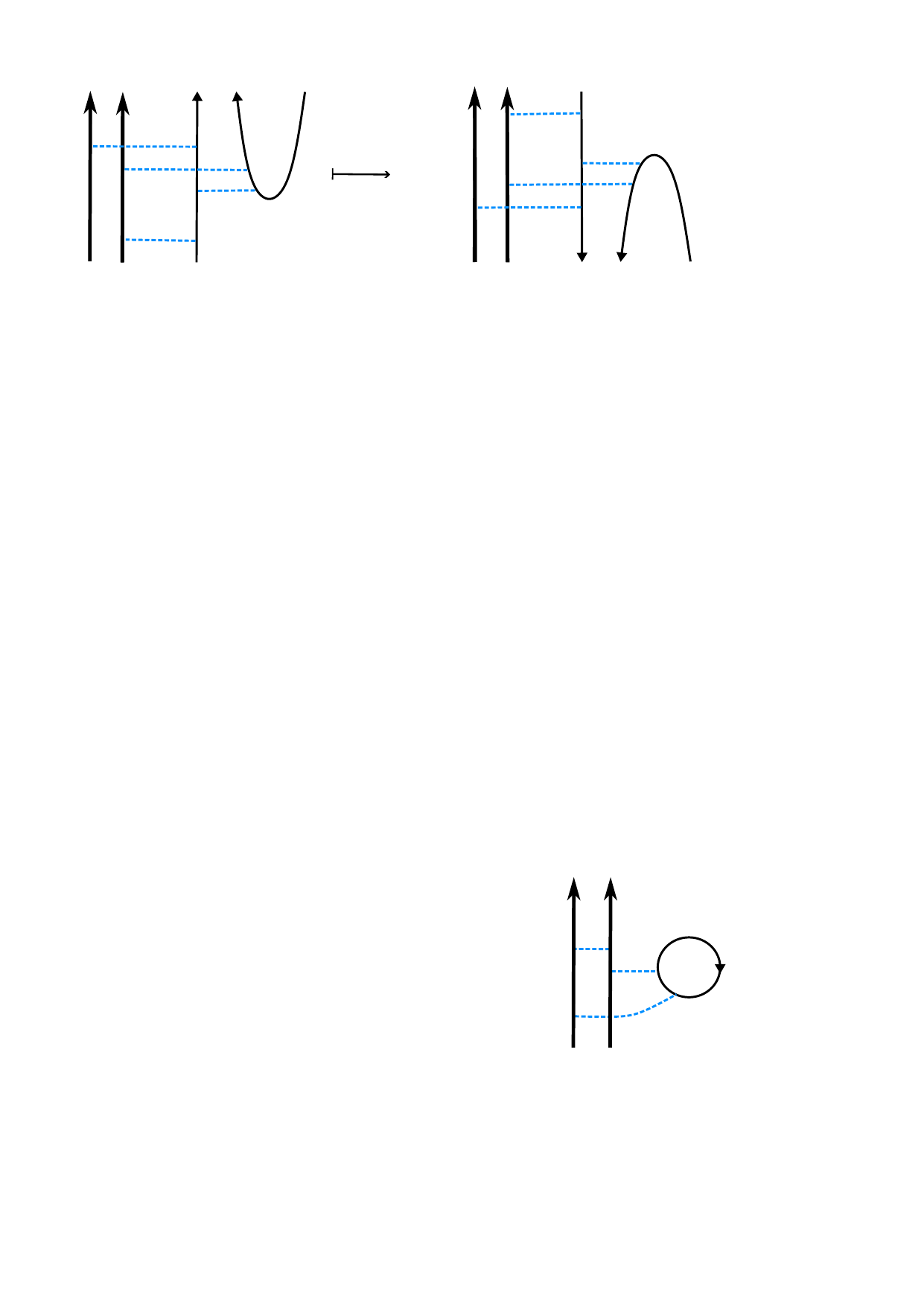}}
    \put(160,65){$\sharp$}
    \put(185,53){$(-1)^3$}
    \end{picture}
    \caption{An example chord diagram and its flip.}
    \label{fig:chorddiagoperations}
\end{figure}

The associated graded operation \textit{flip}, denoted by $\sharp$, reflects a chord diagram with respect to a ``mirror on the ceiling''; then reverses the orientations of the poles so that they are oriented upwards, as in Figure~\ref{fig:chorddiagoperations}; and multiplies by a factor of $(-1)^m$, where $m$ is the total number of chord endings on the poles. The factor of $(-1)^m$ arises from the pole orientation reversals, as this changes the signs of any pole-strand double points.

The following proposition is straightforward from the definition of $Z$.

\begin{prop}\label{prop:Zhomom}
The Kontsevich integral $Z$ intertwines stacking, strand additions, orientation reversals and flips with their associated graded operations. \qed
\end{prop}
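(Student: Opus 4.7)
The strategy is to verify each of the four intertwining identities $Z \circ m = \gr m \circ Z$ separately, using only the integral formula defining the framed Kontsevich integral $Z$ recalled in Section~\ref{subsec:FramedKon}. In each case the verification is coefficient-by-coefficient: the coefficient of a given admissible chord diagram in $Z(T)$ is an iterated integral over chord heights, and the intertwining reduces either to a multiplicativity property of the integral or to a direct change-of-variables in the integrand.

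For the stacking product I would invoke the standard vertical multiplicativity of $Z$. If $T_1T_2$ is a stacking, the chord configurations on $T_1T_2$ split into those lying entirely in the height range of $T_1$ and those lying entirely in the height range of $T_2$; the contributions from configurations straddling the shared boundary vanish in the usual limiting regularisation. Hence $Z(T_1T_2) = Z(T_1)\cdot Z(T_2)$, where the right-hand side is the concatenation of chord diagrams, which is exactly the graded stacking. For strand addition, the added strand $q\times I$ is straight and disjoint from $T$, so disjoint-union multiplicativity gives $Z(T\sqcup_q\uparrow) = Z(T)\sqcup_q Z(\uparrow)$; and since the horizontal position on $\uparrow$ is constant in $t$, any chord with an endpoint on $\uparrow$ integrates a vanishing 1-form (or is killed by the standard normalisation), so $Z(\uparrow) = \uparrow$ with no chords, matching the graded strand addition.

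For strand orientation reversal, the sign contributed to each term of $Z(T)$ by a given chord endpoint depends on the orientation of the strand at that endpoint. Reversing the orientation of a strand therefore flips the sign at each of its chord endpoints, producing a total factor $(-1)^k$ on any chord diagram with $k$ endpoints on that strand, exactly the graded operation described in Section~\ref{sec:TangleSetUp}. The flip requires the most care and I would decompose $\sharp$ into two steps: (i) the reparametrisation $t\mapsto 1-t$ on $M_p\cong D_p\times I$, which flips the orientations of all strands \emph{and} of the poles, and (ii) the orientation reversal of the $p$ poles, restoring them to upwards. Step (i) is a symmetry of the Kontsevich integral, as the integrand $d\log(z_i-z_j)$ and the height ordering of chords transform compatibly under simultaneous reversal of all heights and orientations, yielding a chord diagram that is the geometric mirror of the original with no net sign. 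Step (ii) then applies the orientation reversal formula to each pole, contributing $(-1)$ for each chord endpoint on a pole, hence $(-1)^m$ in total, which is exactly the graded flip.

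The step most prone to error, and the one I expect to be the main obstacle, is the sign accounting in the flip. One must verify honestly that the height-reversal $t\mapsto 1-t$ induces precisely the ``mirror on the ceiling'' operation on chord diagrams, with no extra reflection or sign beyond the $(-1)^m$ from pole reorientation, and that the resulting diagram is admissible (no pole-pole chords are created). Once this bookkeeping is checked, the homomorphicity follows uniformly from the integral representation.
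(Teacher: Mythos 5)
The paper gives no proof of this proposition --- it is asserted to be immediate from the definition of $Z$ --- so there is no argument of the authors' to compare yours against. Your verifications of the stacking and orientation-reversal cases are correct and are the standard arguments one would give.

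The strand-addition case has a genuine gap. You invoke disjoint-union multiplicativity of $Z$ as an exact identity and assert that any chord with an endpoint on the added strand integrates a vanishing $1$-form. Neither is true: the form attached to a chord joining the constant strand at $q$ to a moving strand at $z(t)$ is $d\log(z(t)-q)$, which is not zero, and the Kontsevich integral commutes with disjoint unions only asymptotically, as the separation tends to infinity (the failure at finite separation is exactly why associators are needed in the combinatorial construction of $Z$). In degree one the cross-terms do vanish, because the strands of $T$ have winding number zero about $q$; but already in degree two the two orderings, along a strand of $T$, of a chord to the new strand and a chord to a pole acquire opposite and generically nonzero coefficients, and their difference is a nontrivial element of $\D$ (by the admissible $4T$ relation it equals a commutator of admissible chords, which does not vanish). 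So the identity $Z(\Tql)=Z(T)\sqcup\uparrow$ does not follow from what you wrote; establishing it requires either a limiting convention for the placement of $q$ or a genuinely different argument that controls these cross-terms.

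The flip case has a sign error. The vertical mirror $t\mapsto 1-t$ is not a symmetry of $Z$ ``with no net sign'': each of the $c$ one-forms picks up a minus sign under the substitution while the factor $(-1)^{\downarrow P}$ is unchanged, so the mirror multiplies the degree-$c$ part of $Z$ by $(-1)^{c}$; topologically this is just the statement that a reflection reverses the sign of every crossing, so the associated graded of the mirror already carries $(-1)^{\deg}$. Combined with the $(-1)^{m}$ from re-ascending the poles, the total sign of the graded flip is $(-1)^{c+m}=(-1)^{s}$ with $s$ the number of strand-strand chords, as stated in Section~\ref{sec:cobracketinCON} (the $(-1)^{m}$ appearing in the description of the graded flip next to this proposition omits the mirror's contribution). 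Because you drop the same factor $(-1)^{c}$ from both $Z\circ\sharp$ and $\gr\sharp$, your final conclusion happens to be consistent, but the bookkeeping you yourself flag as ``the step most prone to error'' is indeed where the error sits, and the verification as written is not sound.
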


\subsection{The $s$-filtration on $\tT$ and $\tA$}\label{sec:s-sfiltration}

Recall from Section \ref{sec:t-filtration} that the total filtration on $\ctT$ is given by strand framing changes and double points between strands with poles and strands with strands. In this section we introduce a second filtration on $\ctT$, given by strand framing changes, and {\em only strand-strand} double points. We call this the \emph{\underline{s}trand filtration}, or simply $s$-filtration. 

We use superscripts for the $s$-filtration:
$$\ctT=\tT^0\supseteq \tT^1\supseteq \tT^2 \supseteq \tT^3\supseteq \cdots, $$ where $\glosm{tTs}{\tT^s}\subseteq \ctT$ is spanned by tangles with at least $s$ strand framing changes or strand double points.

\begin{remark}
The associated graded structure of $\ctT$ with respect to the $s$-filtration was studied by Habiro and Massuyeau in \cite{HM}, as part of their work on {\em bottom tangles}\footnote{Projecting to the bottom of the cube rather than the back wall makes the strand filtration the natural Vassiliev filtration to consider.}. Yet we do not apply the associated graded functor to the $s$-filtration, but rather, quotient only by $\tT^1$ and $\tT^2$ to identify the Goldman-Turaev spaces and operations in Section ~\ref{sec:IdentifyingGTinCON}. 
\end{remark}

\begin{remark}
    We recover homotopy classes of curves in the quotient  $\tT/\tT^1$,  as $\pcross-\ncross$=0, or $\dpcross=0$.   Emergent knots and tangles arise in the next quotient of $\tT$ by $\tT^2$.
Emergent knots are situated in between homotopy classes of curves (where there is no notion of over or under strands) and classical knots with the usual Reidemeister theory.
Emergent knots satisfy a familiar relation in finite type invariant theory  $\dpcross \dpcross =0$.
More concretely,  
modding out $\tT$ by $\dpcross \dpcross =0$ declares two tangles the same if they differ by two crossing changes. 
This quotient removes most, but not all, knotted information of the tangles and the slightest of knot theory \emph{emerges}.
(See \cite{Kuno25} Appendix: Emergent knotted objects).
\end{remark}

In turn, the $s$-filtration induces a filtration on $\tA$, as follows. 
Let $\glosm{tTst}{\tT_t^s}$ denote $\tT_t \cap \tT^s$: that is, the linear span of tangles in $\ctT$, which that have at least $t$ double points or framing changes, at least $s$ of which are strand-strand double points or framing changes.

\begin{definition}\label{def:filtrationQuotientNotation}
    Denote by $\glosm{tAtgeqs}{\tA^{\geq s}}$ the $s$-filtered component of $\tA$: $$\tA^{\geq s }:=\prod \tT_t^s/\tT_{t+1}^s.$$ Explicitly, $\tA^{\geq s}$ is spanned by chord diagrams with at least $s$ strand-strand chords.
\end{definition}

For strand-strand chords we will use the shorthand word $s$-chords. Note that the number of $s$-chords is only a filtration, not itself grading on $\tA$, as the 4T relation is not homogeneous with respect to the number of $s$-chords.

\begin{prop}\label{prop:ZrespectsS}
    The Kontsevich integral $Z$ is a filtered map with respect to the $s$-filtration. 
\end{prop}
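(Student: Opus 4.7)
The plan is to prove the filtration property $Z(\tT^s) \subseteq \tA^{\geq s}$ directly, by decomposing each generator of $\tT^s$ via a Morse/stacking factorization and applying a local computation of $Z$ at each strand singularity.

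After isotoping a tangle generator $T$ of $\tT^s$ so that its $s$ strand singularities (strand-strand double points and strand framing changes) lie in distinct horizontal slices, one can write $T$ as a stacking composition
\[
T = R_0 \cdot X_1 \cdot R_1 \cdot X_2 \cdots X_s \cdot R_s,
\]
where each $X_i$ is a local elementary singularity -- either $\dpcross = \pcross - \ncross$ or $\frchange = \poskink - \arrowup$ -- and each $R_j$ is a classical (non-singular) tangle slice. By Proposition~\ref{prop:Zhomom}, $Z$ is multiplicative under stacking, so
\[
Z(T) = Z(R_0) \cdot Z(X_1) \cdot Z(R_1) \cdots Z(X_s) \cdot Z(R_s).
\]
It therefore suffices to verify two things: that $Z(X_i) \in \tA^{\geq 1}$ for each elementary singularity, and that the stacking product on $\tA$ satisfies $\tA^{\geq a} \cdot \tA^{\geq b} \subseteq \tA^{\geq a+b}$.

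The local computation is the central step. For a strand-strand crossing, the framed Kontsevich integral gives $Z(\pcross) = \exp(c/2)$ and $Z(\ncross) = \exp(-c/2)$, where $c$ denotes a horizontal strand-strand chord between the two crossing strands; hence $Z(\dpcross) = 2\sinh(c/2)$ is a sum of odd powers of the strand-strand chord $c$, so every term lies in $\tA^{\geq 1}$. Similarly, the framed Kontsevich integral of a kink is $Z(\poskink) = \exp(c'/2)$ where $c'$ is a self-chord on the strand, so $Z(\frchange) = \exp(c'/2) - 1 \in \tA^{\geq 1}$ as every term contains at least one strand-strand self-chord. The multiplicativity statement $\tA^{\geq a} \cdot \tA^{\geq b} \subseteq \tA^{\geq a+b}$ is immediate: stacking concatenates skeleta and does not move chord endings between strands, so the total number of strand-strand chords in a product equals the sum of those in the factors.

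The primary subtlety -- rather than a serious obstacle -- is the bookkeeping of the Morse decomposition: one must verify that any tangle with $s$ strand singularities can be isotoped so that the singularities appear in distinct horizontal slices, each of the form $\dpcross$ or $\frchange$ stacked with non-singular identity strands. This is a standard perturbation argument in the Morse theory of tangles, analogous to the one used in the classical proof that $Z$ respects the $t$-filtration (Lemma~\ref{thm:Zwelldefined}).
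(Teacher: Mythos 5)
Your proof is correct and is essentially the argument the paper intends: the paper's own proof is a two-line deferral to the standard cancellation argument of \cite[Section 4.3]{BN1} (each strand singularity forces at least one strand-strand chord in every surviving term of $Z$), which is exactly what your Morse factorization plus the local computations $Z(\dpcross)=2\sinh(c/2)$ and $Z(\frchange)=(e^{c'/2}-1)\cdot Z(\arrowup)$ make explicit. The only caveat, shared with the paper's own use of multiplicativity elsewhere, is that the factorization of $Z$ across horizontal slices is an asymptotic statement (chords joining a slice to distant strands must be handled by the usual ``terms without a chord inside the crossing ball cancel in the difference'' argument), but this is the same standard point invoked in Lemma~\ref{thm:Zwelldefined}.
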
 
\begin{proof}
    This is a close analogue of Theorem \ref{thm:Zwelldefined}. As strand-strand double points correspond to strand-strand chords via the identification $\psi$ of the associated graded space with chord diagrams, the proof translates verbatim from \cite[Section 4.3]{BN1}. 
\end{proof}

\subsection{The Conway quotient}\label{sec:Conway}
In this section we introduce the last necessary ingredient for identifying the Goldman-Turaev operations: the Conway quotient of $\ctT$. This is essentially the Conway skein module of tangles in $M_p$, but without fixing the value of the unknot. We show that the Kontsevich integral descends to the Conway quotient.

\begin{definition}\label{def:conway}
The Conway quotient of $\ctT$, denoted $\glosm{Ctnab}{\C\tTnab}$, is given by 
$$\C\tTnab := \bigslant{\ctT\llbracket a \rrbracket }{\pcross-\ncross = (e^{\frac{a}{2}}-e^{-\frac{a}{2}})\smooth},$$ 
where $\glosm{a}{a}$ is a formal variable of $t$ and $s$ degree 1, and the skein relation is restricted to  strand-strand crossings. We use the shorthand $\glosm{b}{b}:=e^{\frac{a}{2}}-e^{-\frac{a}{2}}$.
\end{definition}

The $t$ and $s$ filtrations on $\ctT$ induce filtrations on $\ctT_\nab$. 
Let $\glosm{tTnabs}\tT_{\nab}^s$ denote the $s$-filtered component in the $s$-filtration of $\ctT_\nab$. 
Let $\glosm{tTnabt}\tT_{\nab, t}$ denote the $t$-filtered component in the total filtration of $\ctT_\nab$,
and $\glosm{tAnab}{\tA_\nab}:= \gr_t\ctT_\nab=\prod \tT_{\nab,t}/\tT_{\nab,t+1}$ denote the associated graded algebra of $\ctT_\nab$ with respect the total filtration. 
Let $\glosm{tanabt}{\tA_{\nab,t}}$ denote the degree $t$ component of $\tA_{\nab}$, $\glosm{tAnabs}{\tA_\nab^s}$ be the $s$-th filtered component, and $\glosm{tAnabts}{\tA_{\nab,t}^s}=\tA_{\nab,t}\cap \tA_\nab^s$.
We now show that $\tA_\nab$ has a chord diagrammatic description similar to Corollary \ref{cor:grcd}. Recall that $\D$ is the space of chord diagrams on tangle skeleta, modulo admissible 4T relations.

\begin{definition}\label{def:D_con} The conway quotient of $\D$ is given by
 $$\glosm{Dnab}{\D_\nab} := \bigslant{\D\llbracket a \rrbracket}{\sschordop = a \smoothop,\,\,\, \sschordsame = a\smoothsame}$$ where the new relations are restricted to chords on strand skeleton components (not poles). 
 \end{definition}

Note that the two new relations in $\D_\nab$ are equivalent, shown in both combinations of orientations for convenience.
Furthermore, the relations are homogeneous (respect the $t$-grading) on $\D$, and therefore $\D_\nab$ is also graded by the sum of the total number of chords and the exponent of $a$. The next theorem shows that $\tA_\nab\cong \D_\nab$: this essentially follows from the results of \cite{LM95}. For completeness we present a direct proof.

 \begin{thm}\label{thm:Z_conway}
 The isomorphism $\psi$ descends to an isomorphism $\psi_{\nab}:\tA_\nab\cong \D_\nab$, and the Kontsevich integral descends to an expansion $\glosm{Znab}{Z_\nab}: \ctT_\nab \to \D_\nab$. 
\end{thm}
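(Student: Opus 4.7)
The plan is to verify the hypotheses of Lemma~\ref{lem:assocgradyoga} with $\C\calK = \ctT_\nab$, candidate model $\calC = \D_\nab$, and associated graded $\calA = \tA_\nab$. I need to show: (i) that $Z$ descends to a filtered map $Z_\nab: \ctT_\nab \to \D_\nab$; (ii) that $\psi$ descends to a surjective graded map $\psi_\nab: \D_\nab \to \tA_\nab$; and (iii) that $\psi_\nab \circ \gr Z_\nab = \id_{\tA_\nab}$. The conclusion of the theorem then follows at once from Lemma~\ref{lem:assocgradyoga}.

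For (i), I will compute $Z$ on the two sides of the Conway skein relation. The essential local fact, which follows from the definition of the Kontsevich integral and is made explicit in \cite{LM95}, is that near a strand-strand crossing one has $Z(\pcross) = R \cdot Z(\smooth)$ and $Z(\ncross) = R^{-1} \cdot Z(\smooth)$, where $R$ is an exponential of horizontal strand-strand chords with leading term $\exp(\tfrac{1}{2}\sschordsame)$. In $\D_\nab$, the relation $\sschordsame = a\,\smoothsame$ collapses every strand-strand chord, and all their products, to scalar multiples of the smoothed diagram, so $R$ becomes the scalar $e^{a/2}$ and $R^{-1}$ becomes $e^{-a/2}$. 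Therefore $Z(\pcross) - Z(\ncross) = (e^{a/2} - e^{-a/2})\,Z(\smooth) = b\cdot Z(\smooth)$ in $\D_\nab$, which is exactly the defining Conway skein relation. The resulting $Z_\nab$ is filtered in both $t$ and $s$ because $a$ has degree $1$ in both filtrations and $Z$ already respects both by Lemma~\ref{thm:Zwelldefined} and Proposition~\ref{prop:ZrespectsS}.

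For (ii), I will verify $\psi(\sschordsame) = a\cdot \psi(\smoothsame)$ in $\tA_\nab$. Under $\psi$, the chord $\sschordsame$ represents the class of the double point $\dpcross = \pcross - \ncross$ in $\tT_{\nab,1}/\tT_{\nab,2}$. The Conway skein relation in $\ctT_\nab$ gives $\dpcross = b\smooth$, and since $b = a + O(a^3)$ with $a$ of total degree $1$, the class of $\dpcross$ in $\tA_{\nab,1}$ is $a\cdot \smooth = a \cdot \psi(\smoothsame)$; the opposite-orientation form of the Conway chord relation is handled identically. Thus $\psi$ descends to a well-defined graded map $\psi_\nab$, which inherits surjectivity from the surjectivity of $\psi$ (Lemma~\ref{lem:psi}).

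Condition (iii) is immediate from the identity $\psi \circ \gr Z = \id_{\tA}$ of Lemma~\ref{thm:Zwelldefined}, since both Conway quotients are taken compatibly and $\gr$ commutes with these quotients. Lemma~\ref{lem:assocgradyoga} then yields that $\psi_\nab$ is an isomorphism and $Z_\nab$ is an expansion. The main obstacle is step (i): rigorously establishing the ``exponential collapse'' of $R$ to the scalar $e^{a/2}$ in $\D_\nab$ requires verifying that the superposition of horizontal chords at varying heights comprising the Kontsevich $R$-matrix reduces uniformly under the Conway chord relation together with the $4T$ relation, uniformly in the surrounding diagrammatic context. This is precisely the direct computation the authors indicate they will carry out.
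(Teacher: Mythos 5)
Your overall architecture matches the paper's: verify that the Conway relations are respected on both the diagrammatic and topological sides, then invoke Lemma~\ref{lem:assocgradyoga} to obtain the isomorphism $\psi_\nab$ and the expansion $Z_\nab$ simultaneously. Your steps (ii) and (iii) agree with what the paper does. The problem is in step (i), and it is a genuine error rather than a presentational one.

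The relation in $\D_\nab$ is $\sschordsame = a\,\smoothsame$: a single strand--strand chord equals $a$ times the \emph{transposition} of the two strands, not $a$ times the parallel (identity) diagram. Writing $C$ for the chord, the powers $C^k = a^k(\swap)^k$ therefore alternate: $a^k\,\smoothchord$ for $k$ even and $a^k\,\swap$ for $k$ odd. Consequently $e^{C/2}$ does \emph{not} collapse to the scalar $e^{a/2}$ in $\D_\nab$; it collapses to $\cosh(a/2)\,\smoothchord + \sinh(a/2)\,\swap$, and hence $Z(\pcross) = e^{C/2}\cdot\swap = \cosh(a/2)\,\swap + \sinh(a/2)\,\smoothchord$. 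If your claim that ``$R$ becomes the scalar $e^{a/2}$'' were true, it would give $Z(\pcross) = e^{a/2}\,\smoothchord$ in $\D_\nab$, which is false: already in degree zero the left-hand side is the swap diagram and the right-hand side the parallel diagram, and these have different skeleton connectivity, so they are not identified in $\D_\nab$. The identity you actually need, $Z(\pcross)-Z(\ncross) = b\,\smoothchord = Z\bigl(b\smooth\bigr)$, is nevertheless correct, but only because the even ($\cosh$) terms --- the ones proportional to $\swap$ --- cancel in the difference, leaving $2\sinh(a/2)\,\smoothchord$. This parity analysis is exactly the computation the paper carries out, and it cannot be replaced by the ``exponential collapse to a scalar'' that you single out as your remaining obstacle: that statement is not merely hard to establish uniformly, it is false. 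Once step (i) is repaired in this way, the rest of your argument --- including the appeal to multiplicativity of $Z$ under stacking and its asymptotic behaviour under distant disjoint unions to pass from the local two-strand check to arbitrary tangles --- goes through as in the paper.
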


 \begin{proof}
 First we show that $\psi$ descends to a surjective graded map  $\psi_\nab:\D_\nab\rightarrow \tA_\nab$. 
 To show that $\psi_\nab$ is well-defined, we need to show that the Conway relations in $\D_\nab$ are in the kernel. We verify one of the two equivalent relations:
 $$\psi\left(\sschordsame - a\smoothsame\right)=\dpointwithxarrowsup -  a\pcross=a\pcross-a\pcross=0.$$


Next, we verify that the Kontsevich integral $Z$ descends to a map $\ctT_\nab\to \tA_\nab$ by verifying the relations in $\ctT_\nab$. We do this first at the level of tangles with two bottom and two top endpoints (directly above). Recall that
 the Kontsevich integral is invariant under both total horizontal and total vertical rescaling, and hence well-defined for such two-two tangles without specifying the distance between the endpoints. 

Recall that 
\[Z(\pcross) = \left(e^{\frac{C}{2}}\right)\cdot\swap, \quad \text{ and } \quad Z(\ncross) = \left(e^{-\frac{C}{2}}\right)\cdot\swap,\] where $C$ denotes a chord, the exponential is interpreted formally as a power series with the stacking multiplication, as shown in the first equality below. Using the Conway relation, we compute:
$$C^k = \powerchord\stackrel{\nab}{=}a^k \powerswap= a^k(\swap)^k= \begin{cases}
  a^k \smoothchord , & \text{ if } k \text{ is even} \\
  \\
  a^k \swap ,& \text{ if } k \text{ is odd}
\end{cases}$$

Now applying $Z$ to the left hand side of the Conway relation, we obtain 
\begin{align*}
    Z(\pcross)-Z(\ncross) &= (e^{\frac{C}{2}}-e^{\frac{-C}{2}})\swap\\
    &= \sum_{k = 0}^\infty \left(\frac{C^k}{2^kk!}-\frac{(-1)^kC^k}{2^kk!}\right)\swap =\sum_{k=0}^\infty \frac{C^{2k+1}}{2^{2k}(2k+1)!}\swap\\
    &\stackrel{\nab}{=} \sum_{k=0}^\infty \frac{a^{2k+1}\swap}{2^{2k}(2k+1)!}\swap = \sum_{k=0}^\infty \frac{a^{2k+1}}{2^{2k}(2k+1)!}\smoothchord\\
    &= (e^{\frac{a}{2}}-e^{-\frac{a}{2}})\smoothchord\\
    &= Z\left((e^{\frac{a}{2}}-e^{-\frac{a}{2}})\smooth\right).
\end{align*}

To see that the local verification above is sufficient, one needs to recall more about the Kontsevich integral. Namely, $Z$ is multiplicative with respect to the stacking composition of tangles (with fixed endpoints), and asympototically commutes with ``distant disjoint unions'', and these two facts imply the global equality (in fact, they lead to a combinatorial construction of $Z$ for {\em parenthesised} tangles). For details see \cite[Chapter 8]{CDM_2012}.

Therefore, by Lemma~\ref{lem:assocgradyoga}, $Z$ is a (homomorphic) expansion for $\ctT_\nab$ and $\psi: \calD_\nab \to \tA_\nab$ is an isomorphism.
\end{proof}

\medskip
Let $\glosm{i}{\iota}$ denote the composition of the natural embedding with the Conway quotient map 
$$\iota: \ctT \to \ctT\llbracket a \rrbracket \to \ctT_\nab.$$
The map $\iota$ is not injective, see for example Figure~\ref{fig:ConwaySkel}. However, it is surjective: all $\C$-linear combinations of tangles are in the image, and given a tangle $T$, $b^kT$ is equal in $\C\tTnab$ to a tangle with $k$ double points, which is, in turn, a $\C$-linear combination of tangles.

\begin{definition} \label{def:conway_skel}
    For skeleton $S$, let $\glosm{CtnabS}{\C\tTnab(S)}$ denote the image $\iota(\ctT(S))$.
\end{definition}

\begin{figure}
\begin{picture}(270,50)
    \put(0,20){\includegraphics[width=10cm]{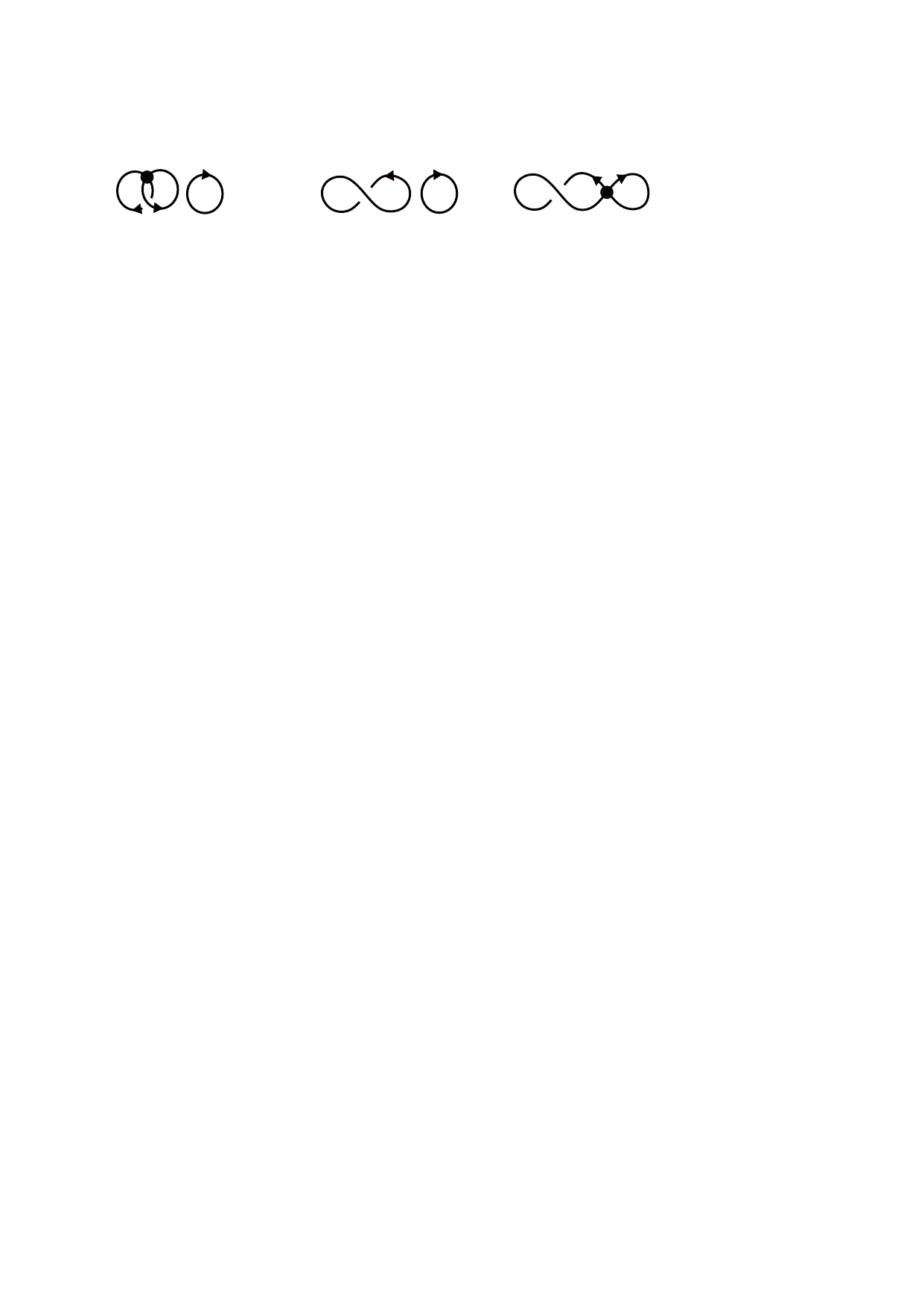}}
    \put(70,33){\Large $= b\cdot ($}
    \put(183,33){\Large $)=$}
    \put(30,15){$\upsidein$}
    \put(10,0){$\ctT_\nab(\circleSkel \, \circleSkel \, \circleSkel)$}
    \put(240,15){$\upsidein$}
    \put(220,0){$\in\ctT_\nab(\circleSkel)$}
\end{picture}

\caption{The map $\iota$ is not injective: The left hand side and the right hand side are both elements of $\ctT$, and equal in $\ctT_{\nab}$. }
\label{fig:ConwaySkel}
\end{figure}

Note that while the skeleton fibration of $\ctT$ is a partition into disjoint fibers $\ctT(S)$, this is no longer true in $\C\tTnab$ due to the non-injectivity of $\iota$.
For example, the middle term of the equality in Figure~\ref{fig:ConwaySkel} lies in both $\C\tTnab(\circleSkel)$ and $\C\tTnab(\circleSkel\,\circleSkel\,\circleSkel)$.

We will identify the Goldman-Turaev Lie bialgebra in low-degree quotients of the $s$-filtration of $\C\tTnab$. The next few propositions establish the necessary understanding of these quotients. Denote by $\glosm{tTn}{\tT^{/n}}$ the quotient $\C\tT/\tT^n$, and similarly for the Conway quotients, $\glosm{tTnabn}{\tTnab^{/n}}$ denotes $\C\tTnab/\tTnab^{n}$.

\begin{prop}\label{prop:nonabneeded}
    The map $\iota$ descends to a canonical isomorphism $\tTnab^{/1}\cong \tT^{/1}$.
\end{prop}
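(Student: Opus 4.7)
The plan is to establish the isomorphism by constructing explicit maps in both directions and verifying they are mutual inverses. In one direction we use $\iota$ itself; in the other direction we evaluate the formal variable $a$ at zero.

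First I would check that $\iota$ descends: since $\iota$ sends strand-strand double points and strand framing changes in $\tT^1$ to elements of $\tTnab^{1}$, it induces a well-defined map $\bar{\iota}: \tT^{/1} \to \tTnab^{/1}$. Surjectivity of $\bar{\iota}$ follows from the surjectivity of $\iota$ noted just after Theorem~\ref{thm:Z_conway}: any element of $\C\tTnab$ can be written as a $\C\llbracket a \rrbracket$-combination of tangles, and since $a$ has $s$-degree $1$, modulo $\tTnab^{1}$ only the constant-in-$a$ part survives, which is the image of an element of $\ctT$.

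For the inverse, I would build a map $\C\tTnab \to \tT^{/1}$ as follows. Start from the augmentation map $\ctT\llbracket a \rrbracket \to \ctT$ sending $a \mapsto 0$, composed with the projection $\ctT \twoheadrightarrow \tT^{/1}$. This composite kills the Conway relation: under $a \mapsto 0$ we have $b = e^{a/2} - e^{-a/2} \mapsto 0$, so $\pcross - \ncross - b \,\smooth$ is sent to $\pcross - \ncross$ at a strand-strand crossing, which vanishes in $\tT^{/1}$. Hence the map descends to a well-defined $\C\tTnab \to \tT^{/1}$. Now $\tTnab^{1}$ is (by definition of the induced filtration) spanned by the images of strand-strand double points, strand framing changes, and multiples of $a$; all three classes are sent to zero in $\tT^{/1}$, so we obtain an induced map $\tTnab^{/1} \to \tT^{/1}$.

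Finally I would check that the two maps are mutually inverse: both compositions are the identity on representatives coming from $\ctT$, which suffices since each side is generated as a vector space by images of $\ctT$. The main obstacle is being precise about the description of $\tTnab^{1}$ as the image of the filtration from $\ctT\llbracket a\rrbracket$, i.e. that it is really generated by the three classes above; once this is clear, the rest is a direct diagram chase using the fact that $a$ has $s$-degree $1$.
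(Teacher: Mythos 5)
Your proposal is correct and rests on the same observation as the paper's (one-line) proof: since $a$ has $s$-degree $1$ and a strand-strand crossing difference lies in $\tT^1$, both sides of the Conway relation live in $s$-filtered degree $\geq 1$ and so the relation has no effect on the degree-zero quotient. Your explicit two-sided inverse via the augmentation $a\mapsto 0$ is simply a more careful packaging of that same fact, and the identification of $\tTnab^1$ that you flag as the "main obstacle" does hold, because the filtration on $\ctT_\nab$ is the image filtration from $\ctT\llbracket a\rrbracket$.
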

\begin{proof}
The Conway relation applies only in $s$-filtered degree one and higher, and hence has no effect on $\tT^{/1}$.
\end{proof}

In light of this, we write only $\tT^{/1}$, rather than $\tTnab^{/1}$. Now let $\glosm{tT1/2}{\tT^{1/2}}$ denote $\tT^1/\tT^2$, and $\glosm{tTnab1/2}{\tTnab^{1/2}}$ denote $\tTnab^1/\tTnab^2$. 

Finally, we establish a key technical result about low $s$-degree quotients of the Conway quotient:

\begin{prop}\label{prop:divbybexists} 
The $\C$-linear map given by post-composing $\iota$ with multiplication\footnote{In physics, multiplication by a variable $b$ is denoted $\widehat{b}$.} by $b$, $$\glosm{bhat}{\widehat{b}}:\tT^{/1} \to \tTnab^{/2}$$ is injective, and its image is $\tTnab^{1/2}$. 
\end{prop}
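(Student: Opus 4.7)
Well-definedness of $\widehat{b}$ with image in $\tTnab^{1/2}$ is immediate: since $b = e^{a/2} - e^{-a/2}$ has $s$-degree $1$, multiplication by $b$ sends $\tT^1$ into $\tTnab^2$ and $\ctT$ into $\tTnab^1$, so the map descends to $\tT^{/1}$ and lands in $\tTnab^{1/2}$.

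\emph{Surjectivity.} The plan is to identify
\[
\tTnab^{1/2} \;=\; \ctT\llbracket a \rrbracket^1 \big/ \bigl(\ctT\llbracket a\rrbracket^2 + I\bigr),
\]
where $I$ is the Conway ideal; since $I\subseteq \ctT\llbracket a\rrbracket^1$, the numerator equals $\ctT\llbracket a\rrbracket^1$, and modulo $\ctT\llbracket a\rrbracket^2$ this gives $\tT^1/\tT^2 \oplus a(\ctT/\tT^1)$. Because $b - a \in \tTnab^3 \subseteq \tTnab^2$, elements of the summand $a(\ctT/\tT^1)$ are already of the form $[bT]$. It remains to move each generator of $\tT^1/\tT^2$ across to the other summand: for a strand-strand double point, Conway gives $\dpcross = b\smooth$ directly; for a framing change, one has $\frchange \equiv \tfrac{1}{2}(\poskink - \negkink) \pmod{\tT^2}$ (using that $\negkink - \arrowup \equiv -(\poskink - \arrowup) \pmod{\tT^2}$), and applying Conway at the kink yields $\poskink - \negkink = b(\arrowup \sqcup \circleSkel)$, so $\frchange \equiv \tfrac{b}{2}(\arrowup \sqcup \circleSkel)\pmod{\tTnab^2}$.

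\emph{Injectivity.} My plan is to construct an explicit left inverse $\tilde{\beta}\colon \tTnab^{1/2} \to \tT^{/1}$. Define a smoothing map $\sigma\colon \tT^1/\tT^2 \to \tT^{/1}$ on generators by $\sigma(T^+ - T^-) = [T^0]$ (the Conway smoothing of the distinguished strand-strand double point) and $\sigma(\frchange) = \tfrac{1}{2}[\arrowup \sqcup \circleSkel]$. Then put
\[
\beta\colon \tT^1/\tT^2 \oplus a(\ctT/\tT^1) \to \tT^{/1},\quad \beta(x_0, a x_1) := \sigma(x_0) + [x_1].
\]
A direct check shows $\beta$ vanishes on the two kinds of truncated Conway-relation representatives, $(\dpcross, -a\smooth)$ and $(\poskink-\negkink, -a(\arrowup \sqcup \circleSkel))$, so $\beta$ descends to $\tilde{\beta}\colon \tTnab^{1/2} \to \tT^{/1}$. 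Since $bT \equiv aT \pmod{\tTnab^2}$, we compute $\tilde{\beta} \circ \widehat{b}([T]) = \beta(0, aT) = [T]$, exhibiting the desired left inverse.

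The main obstacle is the well-definedness of $\sigma$: different representations of an element of $\tT^1$ as a linear combination of double-point differences must yield the same smoothing modulo $\tT^1$. I expect to resolve this by passing to the $t$-associated graded, where via the $s$-refinement of the chord-diagram isomorphism $\psi$ from Corollary~\ref{cor:grcd}, the relevant classes correspond to chord diagrams carrying exactly one strand-strand chord (plus pole-strand chords), and smoothing a strand-strand chord is manifestly canonical at the diagram level; the framing-change case is handled consistently via the identification $\frchange \equiv \tfrac{1}{2}(\poskink-\negkink) \pmod{\tT^2}$ established above.
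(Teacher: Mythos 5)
Your surjectivity argument is sound and, modulo bookkeeping, is the paper's own: decomposing the numerator of $\tTnab^{1/2}$ modulo $s$-degree two as $\tT^1/\tT^2\oplus a(\ctT/\tT^1)$ just makes explicit the paper's step of writing an arbitrary element as $\sum_i T_i + b\sum_j T_j'$ and applying the Conway relation to each $T_i$. The problem is the injectivity half, and it is exactly the point you flag as the main obstacle: the well-definedness of $\sigma$. The subspace $\tT^1$ is the span of formal differences $T^+-T^-$ (and framing changes), the linear relations among such differences inside $\ctT$ are not explicitly known, and so ``smooth the distinguished double point'' is not a definition until you prove that $\sum_i c_i(T_i^+-T_i^-)=0$ in $\ctT$ forces $\sum_i c_i T_i^0\in\tT^1$. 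Your proposed repair --- passing to the $t$-associated graded and smoothing the unique strand-strand chord there --- cannot close this gap: a linear map prescribed on the associated graded of a filtered space does not induce a map on the space itself (there is no canonical splitting, and the $t$-filtration on $\tT^1/\tT^2$ is not known to be separated), so canonicity of the smoothing at the chord-diagram level says nothing about well-definedness upstairs.

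The paper supplies exactly the missing idea: it defines division by $b$ on \emph{all} tangle diagrams rather than only on singular ones, setting $\widecheck{b}(D_T)=\tfrac12\sum_{x}\epsilon(x)\,D_T|_{x\to\text{smoothing}}$, where the sum runs over \emph{every} crossing $x$ of $D_T$, together with $bD_T\mapsto D_T$ and $b^kD_T\mapsto 0$ for $k\ge2$. Well-definedness then reduces to finite local checks (Reidemeister moves, the Conway relation, and $\tT^2_\nab$ all lie in the kernel), and your $\sigma$ comes out for free: applied to $T^+-T^-$, every term smoothing a non-distinguished crossing retains the double point and so dies in $\tT^{/1}$, while the distinguished crossing contributes $\tfrac12\bigl(T^0-(-1)T^0\bigr)=T^0$; similarly $\widecheck{b}$ applied to a framing change $\frchange$ yields half of the split-off circle, consistent with your assignment. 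With that global, all-crossings formula in place the remainder of your argument (the vanishing on Conway representatives and $\widecheck{b}\circ\widehat{b}=\mathrm{id}$) does go through, but as written the construction of the left inverse is incomplete.
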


\begin{proof}
We first prove that the image of $\widehat{b}$ is $\tTnab^{1/2}$. The quotient $\tT^{/1}$ is spanned by cosets of tangles $T$. 
It is immediate that the image of $\widehat{b}$ is contained in $\tT^{1/2}$, as $\widehat{b}(T)=bT$ represents an element in $\tT^{1}$. 

Conversely, any element $y\in\tT^{1/2}$ is (non-uniquely) represented as a sum of the form $\sum_{i=1}^k T_i + b\sum_{j=1}^l T_j'$, where $T_i$ are tangles with one double point each, and $T_j'$ are arbitrary tangles. Then, by the Conway relation, each $T_i=b\cdot T_i^C$, where $T_i^C$ denotes the tangle where the double point in $T_i$ has been smoothed. Thus, $y=b\left(\sum_{i=1}^k T^C_i + \sum_{j=1}^l T_j'\right)$, and therefore $y$ is in the image of $\widehat{b}$, and $\widehat{b}$ is surjective onto $\tTnab^{1/2}$.

To prove the injectivity of $\widehat{b}$, we construct a one-sided inverse: a ``division by $b$'' map $\glosm{bcheck}{\widecheck{b}}$ on $\tTnab^{/2}$, as follows.

For a tangle diagram $D_T$ (representing a tangle $T$) and a crossing $x$ of $D_T$, let $\epsilon(x) \in \{\pm 1\}$ be the sign of $x$, and $D_T\vert_{x \to \upupsmoothing}$ be the diagram $D_T$ with $x$ replaced by a smoothing. We first define a map $\widecheck{b}$ from the free $\C[b]$-module spanned by tangle diagrams, to $\tT^{/1}$, as the linear extension of the following: \begin{align*}b^k D_T &\stackrel{\widecheck{b}}{\mapsto} 0 \text{ if } k\geq 2, \\
b D_T &\stackrel{\widecheck{b}}{\mapsto} D_T, \\
D_T &\stackrel{\widecheck{b}}{\mapsto} \frac{1}{2}\sum_{x \text{ crossing of $T$}} \epsilon(x) D_T \vert_{x \to \upupsmoothing}.
\end{align*} 

 We claim that this descends to a well defined map $\widecheck{b}: \tTnab^{/2} \to \tT^{/1}$. It is straightforward to check that the Reidemeister moves are in the kernel of $\widecheck{b}$. We also need to verify that $\tT^2_\nab$ and the Conway relation are in the kernel.
 
 An element of $\tT^2_\nab$ can be represented as a sum of terms $b^kD_T\in \tT^2_{\nab}$, where $D_T$ is a tangle diagram with or without double points. If $k \geq 2$ then $\widecheck{b}(b^kD_T)=0$. If $k = 1$, then $D_T$ has a double point, so $\widecheck{b}(bD_T) =D_T$ is zero in $\tT^{/1}$. If $k=0$, then $D_T$ has at least two double points. Smoothing a crossing interferes with at most one of the double points, so
$\widecheck{b}(D_T)$ can be written as a sum of terms with at least one double point each. Hence $\widecheck{b}(D_T)\in \tT^1$ as well.

To show that the Conway relation vanishes, note that $\widecheck{b}(\doublepoint)=\widecheck{b}(\overcrossing - \undercrossing)$ is a sum with two types of terms: those which smooth a crossing that is a part of the double point, and those which smooth a crossing that is not.
In the latter case, the double points are unchanged, so these terms are in $\tT^1_\nab$. From the terms where the crossings forming the double point are smoothed, we get 
\begin{align*}\widecheck{b}\left(\pcross - \ncross\right)= \frac{1}{2}\smooth - (-1)\frac{1}{2}\smooth =\smooth = \widecheck{b}\left(b\smooth\right),\end{align*} as the Conway relation requires. Thus, $\widecheck{b}$ is well-defined on $\tT_\nab^{/2}$.

Finally, $\widecheck{b}$ is clearly a one-sided inverse for $\widehat{b}$, and therefore, $\widehat{b}$ is injective.
\end{proof} 

\begin{cor}\label{cor:divbyb}
The map $\widehat{b}:\tT^{/1} \to \tTnab^{1/2}$ is a $\C$-linear isomorphism with inverse $\widecheck{b}: \tTnab^{1/2} \to \tT^{/1}$.
\end{cor}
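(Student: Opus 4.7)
The plan is to observe that this corollary is essentially a repackaging of Proposition~\ref{prop:divbybexists}, which has already done all the substantive work. The proposition established three facts: $\widehat{b}:\tT^{/1}\to\tTnab^{/2}$ is injective; its image equals $\tTnab^{1/2}$; and the map $\widecheck{b}:\tTnab^{/2}\to\tT^{/1}$ is well-defined with $\widecheck{b}\circ\widehat{b}=\id_{\tT^{/1}}$ (this is precisely the content of the rule $bD_T\mapsto D_T$ in the definition of $\widecheck{b}$).

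First, from the injectivity of $\widehat{b}$ together with the identification of its image as $\tTnab^{1/2}$, we conclude that $\widehat{b}$, viewed as a map to $\tTnab^{1/2}$, is a $\C$-linear bijection, hence an isomorphism. Linearity is immediate from the definition of $\widehat{b}$ as multiplication by the scalar $b\in\C\llbracket a\rrbracket$.

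Next, I would restrict $\widecheck{b}$ to the subspace $\tTnab^{1/2}\subseteq \tTnab^{/2}$; this restriction is still $\C$-linear and still satisfies $\widecheck{b}\circ\widehat{b}=\id_{\tT^{/1}}$. A standard elementary argument then upgrades this one-sided inverse to a two-sided inverse: given $y\in\tTnab^{1/2}$, surjectivity of $\widehat{b}$ onto $\tTnab^{1/2}$ produces some $x\in\tT^{/1}$ with $\widehat{b}(x)=y$, and then
\[
\widehat{b}\bigl(\widecheck{b}(y)\bigr)=\widehat{b}\bigl(\widecheck{b}(\widehat{b}(x))\bigr)=\widehat{b}(x)=y,
\]
so $\widehat{b}\circ\widecheck{b}=\id_{\tTnab^{1/2}}$ as well.

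There is no real obstacle here, since all the hard work (well-definedness of $\widecheck{b}$ modulo Reidemeister moves, the relations in $\tT^2_\nab$, and the Conway skein relation; and the surjectivity/injectivity of $\widehat{b}$) is already packaged in Proposition~\ref{prop:divbybexists}. The only point to be careful about is that $\widecheck{b}$ as originally defined has domain $\tTnab^{/2}$, not just $\tTnab^{1/2}$, so one must explicitly restrict before claiming it is the two-sided inverse.
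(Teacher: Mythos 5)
Your proposal is correct and follows the same route the paper intends: the corollary is stated without a separate proof precisely because it is an immediate repackaging of Proposition~\ref{prop:divbybexists} (injectivity of $\widehat{b}$, identification of its image as $\tTnab^{1/2}$, and the one-sided inverse $\widecheck{b}$), and your elementary upgrade of the one-sided inverse to a two-sided one on the restricted codomain is exactly the missing bookkeeping. Your remark that $\widecheck{b}$ must be restricted from $\tTnab^{/2}$ to $\tTnab^{1/2}$ before calling it the inverse is a fair point of care, but introduces no new mathematical content.
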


Notice that both $\widehat{b}$ and $\widecheck{b}$ shift the filtered degrees. The Goldman-bracket and Turaev cobracket are also degree-shifting, and these shifts will be realised by $\widehat{b}$ and $\widecheck{b}$. The following fact in particular will be important in the construction of the Goldman bracket:

\begin{lem}\label{lem:mbOnCircle}
The map $\widehat{b}$ restricts to an injective $\C$-linear map $$\widehat{b}: \tT^{/1}(\circleSkel) \to \tTnab^{1/2}(\circleSkel\,\circleSkel).$$
\end{lem}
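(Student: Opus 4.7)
The plan is that injectivity of the restricted map is inherited immediately from the injectivity of the global map $\widehat{b}\colon \tT^{/1}\to\tTnab^{1/2}$ established in Corollary~\ref{cor:divbyb}, so the only substantive task is to verify the inclusion $\widehat{b}(\tT^{/1}(\circleSkel))\subseteq \tTnab^{1/2}(\circleSkel\,\circleSkel)$. Concretely, for a one-component framed tangle $T\in\ctT(\circleSkel)$, I would construct a two-component tangle $\tilde T\in \tT^1(\circleSkel\,\circleSkel)$ with $\iota(\tilde T) = bT$ in $\tTnab^{/2}$.

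The construction is local. Place a small framed unknot $O$ next to $T$ in a contractible region of $M_p$ disjoint from the poles, arranged (by a small isotopy) so that $T$ and $O$ have a strand-strand crossing $c$ in the chosen back-wall projection. Since two disjoint closed curves must cross an even number of times in any generic projection, one may in fact need two crossings between $T$ and $O$; the extra crossing becomes a homotopy-invisible self-crossing in $\tT^{/1}$ and plays no further role. Let $\tilde T$ be the tangle obtained from $T\sqcup O$ by promoting $c$ to a double point; then $\tilde T$ is a two-component tangle carrying one strand-strand double point, so $\tilde T\in \tT^1(\circleSkel\,\circleSkel)$.

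Applying the Conway relation at $c$ gives $\iota(\tilde T) = b\cdot T'$ in $\tTnab$, where $T'$ is the oriented smoothing of $c$ in $T\sqcup O$. The key geometric observation is that the oriented smoothing of a crossing between two distinct components of a tangle always merges them into a single component: tracing the orientation-preserving reconnection at $c$ (routing the local endpoints SW$\to$NW and SE$\to$NE) and following the global loops shows the new curve traverses an arc of $T$, detours through $O$, and returns to complete $T$. Thus $T'\in\ctT(\circleSkel)$, and because $O$ sits in a contractible region of $D_p$ the $O$-detour is contractible, so $T'$ is freely homotopic to $T$ in $D_p$. Using the identification of $\tT^{/1}(\circleSkel)$ with free homotopy classes of loops in $D_p$, this means $T' = T$ in $\tT^{/1}(\circleSkel)$. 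Since $b\cdot \tT^1\subseteq \tTnab^2$, multiplying by $b$ descends to a well-defined map on $\tTnab^{1/2}$, yielding $\iota(\tilde T) = bT' = bT = \widehat{b}(T)$, so $\widehat{b}(T)\in\tTnab^{1/2}(\circleSkel\,\circleSkel)$, as required.

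The main obstacle is the geometric setup: arranging $T\sqcup O$ with a legitimate strand-strand crossing and verifying that the oriented smoothing there genuinely merges the two components. Both are standard — the former by a generic isotopy of $O$, the latter by the orientation-tracing argument sketched above — and once they are checked the rest is routine bookkeeping with the Conway skein relation and the homotopy identification of $\tT^{/1}$.
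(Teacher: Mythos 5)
Your proof is correct, and the overall strategy matches the paper's: a single application of the Conway relation exhibits $\widehat{b}(K)$ as (the class of) a two-component link with one strand--strand double point, and injectivity is simply inherited from Corollary~\ref{cor:divbyb}. The local move, however, is the mirror image of the one the paper uses. The paper un-smooths a self-tangency of the knot $K$ itself: replacing two parallel strands of the single circle by a crossing splits it into two components, producing links $L_{\pm}$ whose oriented smoothing at that crossing is exactly $K$, so $bK = L_+ - L_-$ with nothing left to check. You instead adjoin an auxiliary unknot $O$ and smooth a mixed $K$--$O$ crossing, which merges the two circles into one; this works, but costs you the extra (easy, yet genuinely necessary) verifications that the parity of crossings forces a spectator crossing, that the merged curve $T'$ is freely homotopic to $K$ because the detour through $O$ is contractible, and that $T'=T$ in $\tT^{/1}(\circleSkel)$ implies $bT'=bT$ in $\tTnab^{/2}$. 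Both routes are valid; the paper's is marginally shorter because the smoothing of its un-smoothed link is $K$ on the nose rather than only up to homotopy, whereas yours has the mild advantage of never having to locate or create a self-tangency of $K$.
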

\begin{proof}
Elements of $\tT^{/1}$ are linearly generated by the cosets of knots. Given a knot $K$, $bK$ is equal in $\tT^{1/2}$ to a difference of two two-component links, by a single use of the Conway relation. Hence, the codomain is $\tT^{1/2}(\circleSkel\,\circleSkel)$. Injectivity is inherited from $\widehat{b}$ on $\tT^{/1}$.
\end{proof}

Note that this restriction of $\widehat{b}$ is not surjective to $\tT^{1/2}(\circleSkel\,\circleSkel)$, for example, two-component links with a double point involving only one component are not in the image.

We introduce the same notation on the associated graded side:
\begin{definition}\label{def:Anot} 
    $\tA_\nab$ is the quotient of $\tA\llbracket a \rrbracket$ by the chord diagram Conway relation. 
    For a skeleton $S$, let $\glosm{tAnabS}{\tA_{\nab}(S)}$ denote the image $\gr \iota (\tA(S))$. Also, let 
     $\glosm{Aslashs}{\tA^{/s}}:=\tA/\tA^{\geq s}$ and $\glosm{Anabslashs}{\tA_{\nab}^{/s}}:=\tA_{\nab}/\tA_{\nab}^{\geq s}$.

\end{definition}

By a straightforward calculation of the degree shifting associated graded maps we obtain:

\begin{prop}\label{rem:grdivbyb}
    The associated graded map of $\widehat{b}$ is an isomorphism $\gr \widehat{b}=\widehat{a}:\tA^{/1}\to \tA^{1/2}_\nab$, which multiplies chord diagrams by $a$. The inverse is the ``division by $a$'' isomorphism $\gr \widecheck{b}=\widecheck{a}:\tA^{1/2}_\nab\rightarrow \tA^{/1}$.  Furthermore, $\widehat{a}$ restricts to an injective map $\widehat{a}: \tA^{/1}(\circleSkel)\to \tA^{1/2}_\nab(\circleSkel\, \circleSkel)$. \qed
\end{prop}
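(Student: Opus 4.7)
The plan is to derive Proposition~\ref{rem:grdivbyb} as the $t$-associated graded of Corollary~\ref{cor:divbyb} together with Lemma~\ref{lem:mbOnCircle}, using the power series expansion
$$b = e^{a/2}-e^{-a/2} = a + \frac{a^3}{24} + \frac{a^5}{1920} + \cdots.$$
Since $a$ has total degree $1$ and the Conway relation $\sschordop = a\smoothop$ is homogeneous of $t$-degree $1$, the $t$-filtration descends to $\tA_\nab$ and its graded pieces $\tA_{\nab,t}^s$ are well defined, so it makes sense to speak of the leading term of $b$ at the associated graded level.

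First I would check that $\widehat{b}$ shifts the total degree by exactly $+1$. If $T$ represents an element of $\tT^{/1}$ of $t$-degree $n$, then $\widehat{b}(T)=bT = aT+\tfrac{a^{3}T}{24}+\cdots$, where $aT$ lies in $t$-degree $n+1$ and each subsequent summand lies in strictly higher $t$-degree. Passing to the $t$-associated graded therefore picks off the leading summand $aT$, so $\gr\widehat{b}=\widehat{a}$ as a map $\tA^{/1}\to\tA_\nab^{1/2}$, which is exactly multiplication by $a$ on chord diagrams with no strand-strand chords.

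Next, I would apply $\gr$ to the identities $\widecheck{b}\circ\widehat{b}=\id_{\tT^{/1}}$ and $\widehat{b}\circ\widecheck{b}=\id_{\tTnab^{1/2}}$ from Corollary~\ref{cor:divbyb}. Since $\widehat{b}$ shifts $t$-degree by $+1$, its inverse $\widecheck{b}$ necessarily shifts $t$-degree by $-1$, and $\gr\widecheck{b}$ is the degree $-1$ map that extracts the coefficient of $a$; denote it $\widecheck{a}$. Functoriality of $\gr$ then gives $\widecheck{a}\circ\widehat{a}=\id$ and $\widehat{a}\circ\widecheck{a}=\id$, so $\widehat{a}$ is an isomorphism with inverse $\widecheck{a}$.

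For the circle-skeleton statement, Lemma~\ref{lem:mbOnCircle} already establishes that $\widehat{b}$ restricts to an injection $\tT^{/1}(\circleSkel)\to\tTnab^{1/2}(\circleSkel\,\circleSkel)$, and taking $\gr_t$ yields the injective map $\widehat{a}\colon \tA^{/1}(\circleSkel)\to\tA_\nab^{1/2}(\circleSkel\,\circleSkel)$, with injectivity inherited from $\widehat{a}=\gr\widehat{b}$ having the left inverse $\gr\widecheck{b}=\widecheck{a}$. The only real subtlety throughout is careful bookkeeping with the three gradings in play ($t$, $s$, and the power of $a$) and the fact that the underlying skeleton can change under a single use of the Conway relation; once this bookkeeping is in place, the proposition is a direct consequence of the results above.
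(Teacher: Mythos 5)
Your computation is essentially the ``straightforward calculation of the degree shifting associated graded maps'' that the paper invokes without writing out, and its core is correct: since $a$ has $t$-degree $1$, the expansion $b=a+\tfrac{a^{3}}{24}+\tfrac{a^{5}}{1920}+\cdots$ shows that $\widehat{b}$ raises $t$-degree by exactly one with leading term multiplication by $a$, so $\gr\widehat{b}=\widehat{a}$; and deducing injectivity on the circle skeleton from the left inverse $\widecheck{a}$ on the full space (rather than trying to take $\gr$ of an injection, which is not automatic) is the right move.

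The one step you should not leave as stated is the claim that, because $\widehat{b}$ shifts $t$-degree by $+1$, its inverse ``necessarily'' shifts $t$-degree by $-1$. For general filtered isomorphisms this implication is false: a filtered bijection can send some element into a deeper filtration step than its nominal shift, in which case the set-theoretic inverse is not filtered and the associated graded map fails to be invertible (already the identity map from a space with a short filtration to the same space with a longer filtration is a counterexample). To apply $\gr$ to the identities $\widecheck{b}\circ\widehat{b}=\id$ and $\widehat{b}\circ\widecheck{b}=\id$ you must separately verify that $\widecheck{b}$ is $t$-filtered of degree $-1$. This does hold here, but it requires the explicit formula from Proposition~\ref{prop:divbybexists}: the assignments $b^kD_T\mapsto 0$ and $bD_T\mapsto D_T$ visibly drop $t$-degree by one, and in the term sending $D_T$ to the signed sum of its single-crossing smoothings, each smoothing destroys at most one double point, so each summand retains at least one fewer double point than $D_T$. (Equivalently, one can check that $\widehat{b}$ carries the $n$-th $t$-filtered piece of $\tT^{/1}$ \emph{onto} the $(n+1)$-st piece of $\tTnab^{1/2}$, which is what the surjectivity argument in that proposition actually produces.) With that verification inserted, the rest of your argument goes through.
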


\section{Identifying the Goldman-Turaev Lie bialgebra}\label{sec:IdentifyingGTinCON}

 In this section we establish our main results: we identify the Goldman-Turaev Lie bialgebra in the low $s$-filtered degree quotients of $\ctT$, and show that the Kontsevich integral induces a homomorphic expansion.
The arguments follow the outline summarized in Section \ref{sec:conceptsum}, and obtain the Goldman bracket and the self-intersection map $\mu$ as induced operations.  
In turn, the homomorphicity of the Kontsevich integral follows from the naturality of the construction.

\subsection{The Goldman bracket}\label{sec:identifybracketinCON} Recall from Section~\ref{subsec:IntroGT} that $D_p$ denotes the $p$-punctured disc,  $\pi$ is its  fundamental group, and $|\Cp|$ is the linear quotient $|\Cp|:=\Cp/[\Cp,\Cp]$, which is linearly generated by homotopy classes of free loops in $D_p$. 
The Goldman bracket (Definition \ref{def:bracket}) is a lie bracket
 $[\cdot,\cdot]_G:|\C\pi|\otimes |\C\pi|\rightarrow |\C\pi|$.
We start by identifying $\Cpa$ in a low degree quotient of $\ctT(\circleSkel)$ through a map $\beta$ induced by the bottom projection. 

\begin{figure}
\centering
\begin{picture}(100,100)
\put(0,0){\includegraphics[width=4.5cm]{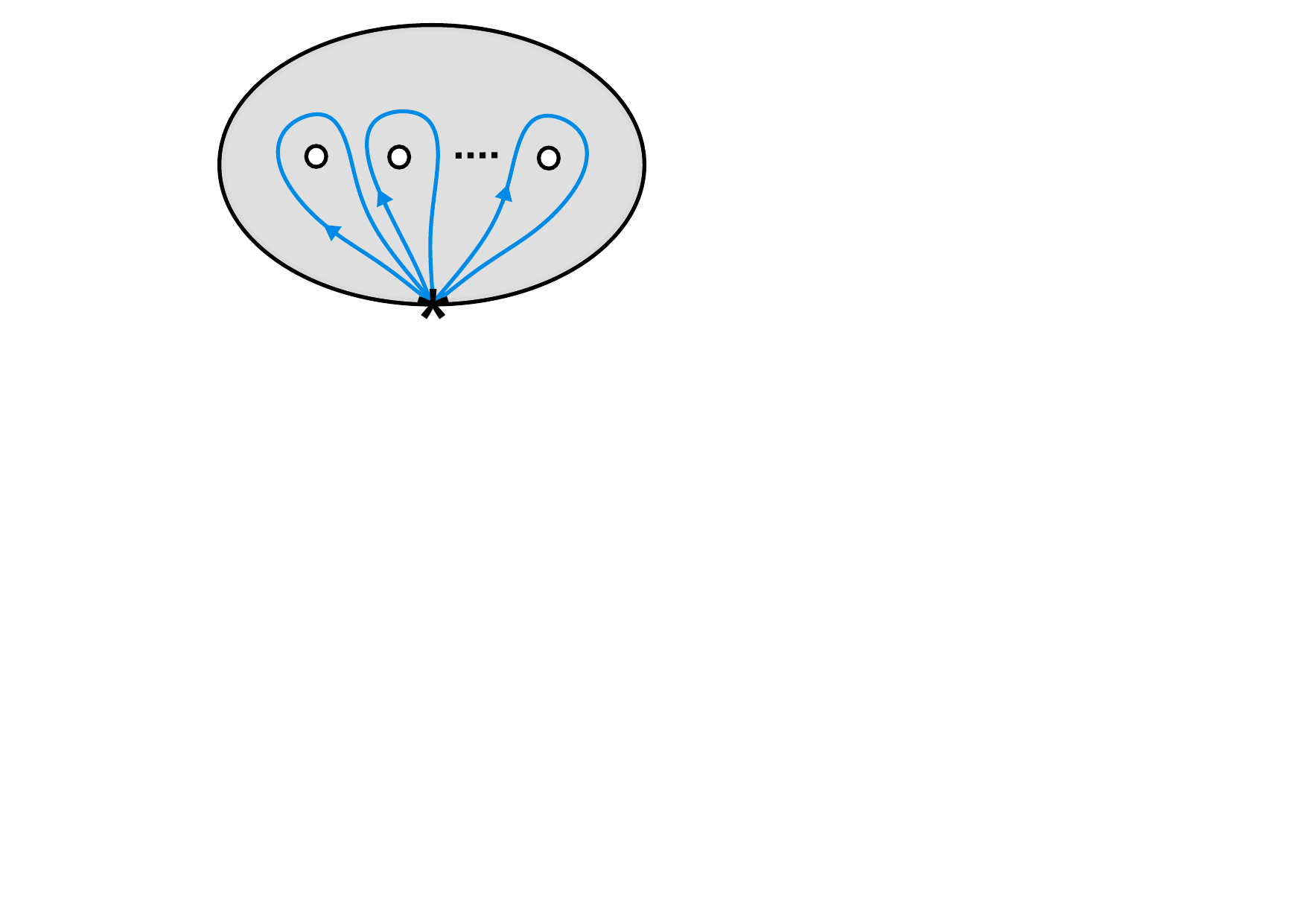}}
\put(20,68){\Large $\gamma_1$}
\put(47,68){\Large $\gamma_2$}
\put(95,68){\Large $\gamma_p$}
\end{picture}
    \caption{The standard generating curves of $\pi$.}
    \label{fig:GenCurves}
\end{figure}

\begin{figure}
\centering
\begin{picture}(350,150)
\put(0,50){\includegraphics[width=12cm]{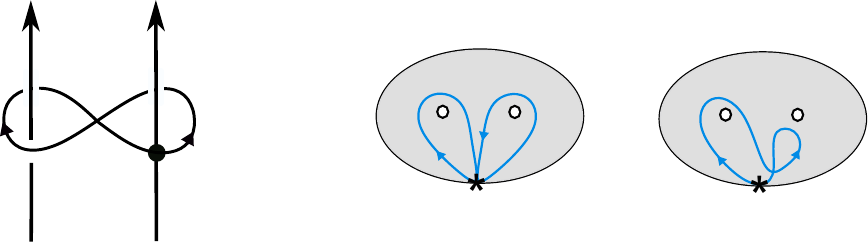}}
\put(10,40){\Small 1}
\put(60,40){\Small 2}
\put(172,90){\Small 1}
\put(200,90){\Small 2}
\put(284,89.5){\Small 1}
\put(317,89.5){\Small 2}
\put(105,101){\Large $\beta$}
\put(99.9,91){\Small |} 
\put(100,90){\Large $\longrightarrow$}
\put(235,90){\Large $-$}
\put(235,50){\Large $-$}
\put(175,50){\Large $\gamma_1\gamma_2^{-1}$}
\put(290,50){\Large $\gamma_1$}
\put(175,17){\Large $=\gamma_1(\gamma_2^{-1}-1)\in I$}
\end{picture}
    \caption{Example calculation demonstrating that $\beta$ is a filtered map.}
    \label{fig:BetaFiltered}
\end{figure}

\begin{prop}\label{prop:BotProj}
Consider the $t$-filtration of $\ctT(\circleSkel)$. With respect to this filtration, the bottom projection $M_p \to D_p\times\{0\}$ induces a surjective filtered map 
$$\glosm{beta}{\beta}: \ctT(\circleSkel) \to \Cpa.$$
\end{prop}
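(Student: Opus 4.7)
The plan is to define $\beta$ directly on framed knots $K \subset M_p$ by taking the floor projection of $K$ to $D_p$ and setting $\beta(K)$ to be the class in $|\C\pi|$ of the resulting free loop, then extending $\C$-linearly. Well-definedness would follow from the fact that the floor projection $M_p \to D_p \times \{0\} \cong D_p$ is a homotopy equivalence, so the free homotopy class of the projection depends only on the ambient isotopy class of $K$; the framing is a normal vector field and therefore does not affect the underlying curve.

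Surjectivity would follow from a standard lifting argument: any free homotopy class in $D_p$ admits a representative with only transverse self-intersections, which lifts to a knot in $M_p$ by placing it at some height in the $I$ direction, making arbitrary over/under choices at each self-intersection, and choosing any framing.

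The main content is the filteredness statement $\beta(\tT_n) \subseteq |\calI^n|$, with respect to the total filtration on $\ctT(\circleSkel)$ and the $I$-adic filtration on $|\C\pi|$. I would argue by analyzing the three types of Vassiliev generators. A strand-strand double point $\pcross - \ncross$ maps to zero, since both terms yield the same floor projection up to isotopy in $D_p$. A framing change $\poskink - \arrowup$, applied locally along a knot strand, also maps to zero, since adding a kink does not change the underlying free homotopy class. Finally, a pole-strand double point at the pole above puncture $i$ contributes a factor $\gamma_i^{\pm 1} - 1$ at the crossing site in a based-loop presentation: swapping over and under changes which side of puncture $i$ the floor-projected loop passes, and taking the trace places $\beta$ of such a double point in $|\calI|$.

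For a knot decorated with $n$ formal double points or framing changes, I would pick a basepoint on the knot, express $\beta(K)$ via a based loop in $\C\pi$, iterate the single-crossing analysis to produce an element of $\calI^n \subseteq \C\pi$, and then take the trace to land in $|\calI^n|$. The main obstacle is the pole-strand step: translating a crossing change into multiplication by $\gamma_i^{\pm 1} - 1$ in the based-loop representation, independently of the choice of basepoint and of the auxiliary paths used to cut the loop at the crossing site. This is standard but notationally intricate, and some care is required to verify compatibility when several crossings are resolved simultaneously; once the local picture is settled the global filtered property follows by induction on the number of Vassiliev generators.
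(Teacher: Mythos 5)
Your proposal is correct and follows essentially the same route as the paper: well-definedness and surjectivity via projection/lifting, then filteredness by observing that strand-strand double points and framing changes map to zero and that each pole-strand double point at pole $j$ contributes a factor of $(\gamma_j^{\pm 1}-1)$, so $k$ such double points land in $|\calI^k|$. The only cosmetic difference is that the paper phrases well-definedness via the framed Reidemeister moves rather than the homotopy equivalence $M_p \simeq D_p$, and it disposes of the basepoint issue by working with conjugacy classes of words in the standard generators $\gamma_1,\dots,\gamma_p$ from the outset.
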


\begin{proof}
By the framed Reidemeister Theorem, framed knots in $\ctT(\circleSkel)$ are faithfully represented by knot diagrams in $D_p\times \{0\}$ -- regular projections to the bottom with over/under information -- modulo the framed Reidemeister moves (weak R1, R2, and R3). Diagrammatically, the bottom projection forgets the over/under information, in other words, imposes the relation $\overcrossing = \undercrossing$. The images of the Reidemeister moves follow from the corresponding moves generating homotopies of immersed free loops, hence $\beta$ is well-defined. The projection is clearly surjective as any loop can be lifted to a framed knot by introducing arbitrary under/over information at the crossings and imposing the blackboard framing.

The statement that $\beta$ is filtered means that step $k$ of the the Vassiliev $t$-filtration in $\ctT(\circleSkel)$  projects to step $k$ of the filtration on $|\Cp|$ induced by the I-adic filtration of $\pi$.  Note that strand-strand double points and framing changes are in the kernel of $\beta$, thus, we only have something to prove for knots with $k$ strand-pole double points.

Let $\gamma_1, ..., \gamma_p$ denote the standard generators of $\pi$ as in Figure~\ref{fig:GenCurves}. A knot $K\in \ctT(\circleSkel)$ maps to a free loop in $\Cpa$, whose conjugacy class in $\pi$ is represented as a word in the generators $\gamma_i$. 
A pole-strand double point on pole $j$ maps to a difference between two curves passing on either side of the $j$'th puncture (as in Figure~\ref{fig:BetaFiltered}). Therefore, one of the words in $\C\pi$ representing these curves can be obtained from the other by inserting a single letter $\gamma_j^{\pm 1}$. The double point, which represents the difference, thus maps to a product with a factor of $(\gamma_j^{\pm 1} -1)$, and a knot with $k$ pole-strand double points maps to a product with $k$ factors of the form $(\gamma_j^{\pm 1}-1)$. This is by definition an element in $\calI^k$.  
\end{proof}


\begin{prop}\label{prop:kerbeta} The kernel of $\beta$ is $\tT^1(\circleSkel)$, and thus, $\beta$ descends to a filtered linear isomorphism $\beta: \tT^{/1}(\circleSkel) \to \Cpa$.
\end{prop}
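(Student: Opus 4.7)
The plan is to establish the set-theoretic equality $\ker\beta=\tT^1(\circleSkel)$ and then deduce the filtered isomorphism.

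First I would verify the easy inclusion $\tT^1(\circleSkel)\subseteq\ker\beta$. The subspace $\tT^1(\circleSkel)$ is spanned by knots containing at least one strand-strand double point $\dpcross=\pcross-\ncross$ or one framing change $\frchange=\poskink-\arrowup$. Since the bottom projection $\beta$ forgets over/under information, $\pcross$ and $\ncross$ project to identical tangle diagrams in $D_p$, giving $\beta(\dpcross)=0$; and $\poskink$, $\arrowup$ project to curves differing by a single R1 kink, hence freely homotopic, giving $\beta(\frchange)=0$. Isolating any one such marker inside a generator of $\tT^1$ and expanding then shows the entire element lies in $\ker\beta$.

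For the converse $\ker\beta\subseteq\tT^1(\circleSkel)$, my plan is to construct a $\C$-linear right inverse $\alpha\colon\Cpa\to\tT^{/1}(\circleSkel)$ to the induced map $\bar\beta\colon\tT^{/1}(\circleSkel)\to\Cpa$. Given a free homotopy class represented by a word $w\in\pi$ in the standard generators $\gamma_1,\dots,\gamma_p$, draw $w$ as a generic immersed free loop $c_w$ in $D_p$, lift to a framed knot $K_w\in\tT(\circleSkel)$ by choosing arbitrary over/under data at each self-crossing together with the blackboard framing, and set $\alpha(|w|)$ to be the coset of $K_w$ in $\tT^{/1}$. The main obstacle is well-definedness. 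Independence from the over/under choices is immediate because switching a crossing alters the lift by $\dpcross\equiv 0\pmod{\tT^1}$, and independence from the framing follows from $\frchange\equiv 0\pmod{\tT^1}$. The substantive step is independence from the immersed representative $c_w$ and from the chosen word representative of the free homotopy class. For this I would appeal to a Reidemeister theorem for non-regular free homotopies of immersed loops in $D_p$: any two such representatives are related by a sequence of ambient isotopies and the local moves R1, R2, R3 on curves. R2 and R3 lift to isotopies of framed knots, modulo the strand-strand crossing choices absorbed into $\tT^1$; R1 lifts to a framing change, also in $\tT^1$; and cyclic rearrangements and conjugation of the word representative realise as ambient isotopies in $M_p$.

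Once $\alpha$ is well-defined, $\bar\beta\circ\alpha=\id_{\Cpa}$ holds tautologically from the construction, so $\bar\beta$ is a bijection, yielding $\ker\beta=\tT^1(\circleSkel)$. Filteredness of $\bar\beta$ is inherited from Proposition~\ref{prop:BotProj}. For filteredness of $\alpha$, any element of $\calI^k$ is a sum of products $\prod_{j=1}^k(\gamma_{i_j}^{\pm 1}-1)$, and each factor lifts, via the computation of Figure~\ref{fig:BetaFiltered}, to a knot carrying a single pole-strand double point; the product therefore lifts to a knot with $k$ pole-strand double points, which lies in $\tT_k^{/1}$.
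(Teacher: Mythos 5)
Your easy inclusion $\tT^1(\circleSkel)\subseteq\ker\beta$ and the construction of the lift $\alpha$ are both sound, and they flesh out what the paper disposes of in a single sentence (two framed knots project to the same free loop if and only if they differ by framing changes and strand--strand crossing changes, which generate $\tT^1(\circleSkel)$). However, there is a logical gap at the last step: from $\bar\beta\circ\alpha=\id_{\Cpa}$ you conclude that $\bar\beta$ is a bijection. A right inverse only gives surjectivity of $\bar\beta$ --- which you already have from Proposition~\ref{prop:BotProj} --- and says nothing about injectivity; yet injectivity of $\bar\beta$ is precisely the content of the hard inclusion $\ker\beta\subseteq\tT^1(\circleSkel)$. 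As written, that inclusion is therefore not established.

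The repair is available inside your own argument, but it has to be said explicitly: you need $\alpha$ to be a \emph{left} inverse as well, i.e.\ $\alpha(\bar\beta([K]))=[K]$ for every framed knot $K$. This follows from the well-definedness you proved: the bottom projection of $K$, equipped with $K$'s own over/under data and blackboard framing, is one admissible choice of generic immersed representative and lift in the construction of $\alpha(\beta(K))$, and independence of all such choices (crossing data, framing, immersed representative) forces $\alpha(\beta(K))=[K]$ in $\tT^{/1}(\circleSkel)$. With that one additional observation $\bar\beta$ has a two-sided inverse and the proposition follows. A minor further remark: the filteredness of $\alpha$ that you verify at the end is not required by the statement, which only asserts that $\beta$ descends to a filtered isomorphism; filteredness of the descended $\beta$ is inherited from Proposition~\ref{prop:BotProj}, as you note.
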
 
\begin{proof}
Two framed knots in $\ctT(\circleSkel)$ project to the same loop in $\Cpa$ if and only if they differ by framing changes and (strand-strand) crossing changes, which generate exactly the step 1 of the $s$-filtration, that is, $\tT^1(\circleSkel)$. 
\end{proof}

Recall that $\tA$ denotes the (degree completed) associated graded space of $\ctT$ with respect to the $t$-filtration.
described as the space of admissible chord diagrams on a circle skeleton, as in Definition~\ref{def:cdspace}. The $s$-filtration on $\C\tT$ induces a corresponding $s$-filtration on $\tA$, and
$\tA^{\geq i}(\circleSkel)$ denotes the $i$-th $s$-filtered component of $\tA(\circleSkel)$. We also denote $\tA^{/i}(\circleSkel)=\tA(\circleSkel)/\tA^{\geq i}(\circleSkel)$.

Recall from Section \ref{subsec:IntroGT} that the associated graded  vector space of $\Cpa$ is $|\As|$, where $\As = \As\langle x_1, \cdots, x_p\rangle$ denotes the free associative algebra over $\C$, and the linear quotient $|\As| = \As/[\As, \As]$
is the graded $\C$-vector space generated by cyclic words in the letters $x_1,...,x_p$. The graded Goldman bracket is a map $[-,-]_{\operatorname{gr}G}:|\As|\otimes |\As|\to |\As|$, as defined in Proposition \ref{prop:gr_bracket_def}. Denote the degree completions of $\As$ and $|\As|$ by $\glosm{hatAS}{\hAs}$ and $\glosm{hatasbAs}{|\hAs|}$.
By applying the associated graded functor to $\beta$, we identify $|\hAs|$ as follows:

\begin{prop}\label{prop:gr_beta}The associated graded map $\gr\beta: \tA(\circleSkel) \to |\hAs|$ has kernel $\tA^{\geq 1}(\circleSkel)$. Hence, $\gr\beta$ descends to an isomorphism $\gr\beta: \tA^{/1}(\circleSkel) \to |\hAs|$.
\end{prop}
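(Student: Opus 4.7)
The plan is to apply the associated graded functor to the filtered isomorphism $\beta : \tT^{/1}(\circleSkel) \xrightarrow{\sim} \Cpa$ of Proposition \ref{prop:kerbeta}, using the machinery of Lemma \ref{lem:ExaxtGr}.

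First, I would set up the short exact sequence of $t$-filtered vector spaces
\[0 \to \tT^1(\circleSkel) \hookrightarrow \ctT(\circleSkel) \twoheadrightarrow \tT^{/1}(\circleSkel) \to 0,\]
where the filtrations on the sub and quotient are induced from the $t$-filtration in the middle. By Lemma \ref{lem:ExaxtGr}, the associated graded sequence is also exact. By Definition \ref{def:filtrationQuotientNotation}, $\gr \tT^1(\circleSkel) = \tA^{\geq 1}(\circleSkel)$, which identifies $\gr \tT^{/1}(\circleSkel) \cong \tA(\circleSkel)/\tA^{\geq 1}(\circleSkel) = \tA^{/1}(\circleSkel)$. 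Combined with the fact that $\beta$ vanishes on $\tT^1(\circleSkel)$, this immediately gives $\tA^{\geq 1}(\circleSkel) \subseteq \ker \gr\beta$.

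The main step is to upgrade the filtered bijection $\beta$ to a \emph{strict} isomorphism of filtered spaces, meaning $\beta(\tT^{/1}_t(\circleSkel)) = |\calI^t|$ and not merely $\subseteq$, which is the filtered property from Proposition \ref{prop:BotProj}. For the reverse containment, I would use that $|\calI^t|$ is linearly spanned by classes of products of the form $(\gamma_{i_1}^{\epsilon_1}-1)\cdots(\gamma_{i_t}^{\epsilon_t}-1)\,u$ in $\calI^t$, and for each such product construct a framed knot whose bottom projection realizes it: traverse a loop representing $u$ and insert, at appropriate places along the loop, small pole-strand double points at poles $i_1,\ldots,i_t$, with signs determined by $\epsilon_j$. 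The resulting knot lives in $\tT^{/1}_t(\circleSkel)$ by construction. I anticipate this to be the main technical obstacle, as one must check that the ``insertion'' construction reversing the calculation in the proof of Proposition \ref{prop:BotProj} gives, in $|\C\pi|$, exactly the prescribed trace; this is essentially combinatorial but requires care with conjugation and cyclic ordering on the circle.

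With strictness in hand, applying $\gr$ to the filtered isomorphism $\beta$ yields the isomorphism $\gr\beta : \tA^{/1}(\circleSkel) \xrightarrow{\sim} \gr \Cpa = |\hAs|$, which completes the proof since $\gr\beta : \tA(\circleSkel) \to |\hAs|$ then factors as the projection $\tA(\circleSkel) \twoheadrightarrow \tA^{/1}(\circleSkel)$ followed by this isomorphism, with kernel exactly $\tA^{\geq 1}(\circleSkel)$. The resulting identification has a clean concrete description: an admissible chord diagram on $\circleSkel$ with only strand-pole chords (a representative of a class in $\tA^{/1}(\circleSkel)$) is sent to the cyclic word whose letters $x_{j_i}$ record, in cyclic order around the skeleton circle, the poles to which the chords attach. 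This matches the explicit formula in Proposition \ref{prop:BotProj}, since each strand-pole double point at pole $j$ contributes a factor $(\gamma_j^{\pm 1}-1)$, whose leading term in the associated graded is $\pm x_j$.
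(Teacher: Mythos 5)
Your proposal is correct and follows essentially the same route as the paper: apply the associated graded functor to the filtered short exact sequence $0 \to \tT^1(\circleSkel) \to \ctT(\circleSkel) \xrightarrow{\beta} \Cpa \to 0$ and invoke Lemma~\ref{lem:ExaxtGr}. The one place you go beyond the paper's (very terse) argument is in explicitly verifying strictness, i.e.\ that the $I$-adic filtration on $\Cpa$ really is the filtration induced by $\beta$ from the $t$-filtration (the paper simply asserts this); your construction of knots with prescribed pole-strand double points realizing spanning elements of $|\calI^t|$ is the right way to check it.
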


\begin{proof} The statement follows from applying the associated graded functor to the filtered short exact sequence
\begin{center}
\begin{tikzcd}
    0 \arrow{r}& \tT^1(\circleSkel) \arrow{r} & \tT(\circleSkel) \arrow[r,"\beta"] & \Cpa  \arrow{r} & 0.
\end{tikzcd}
\end{center}
The filtrations on $\tT^{1}(\circleSkel)$ and $\Cpa$ are induced from the filtration on $\tT(\circleSkel)$, as in Lemma \ref{lem:ExaxtGr}, therefore the associated graded sequence is also exact.
\end{proof}

\begin{remark}\label{rem:ChorsOnPoles}
In $ \tA^{/1}(\circleSkel)$ chord diagrams with any strand-strand chords are zero. Thus, $\tA^{/1}(\circleSkel)$ is spanned by chord diagrams on poles and a single circle strand, with strand-pole chords only: for an example see the left of Figure~\ref{fig:CycWord}. Note also that all admissible 4T relations involve a strand-strand chord, and are zero in $\tA^{/1}$. This means that chord endings on the poles commute, and there are no further relations. Such a chord diagram corresponds naturally to a cyclic word by labeling the poles with $x_1,...,x_p$ and reading along the circle skeleton, as on the right of Figure \ref{fig:CycWord}. Indeed, this is an isomorphism, and gives the explicit description of $\gr \beta$.
\end{remark}

\begin{figure}
\centering
\begin{picture}(300,120)
\put(0,0){\includegraphics[width=12cm]{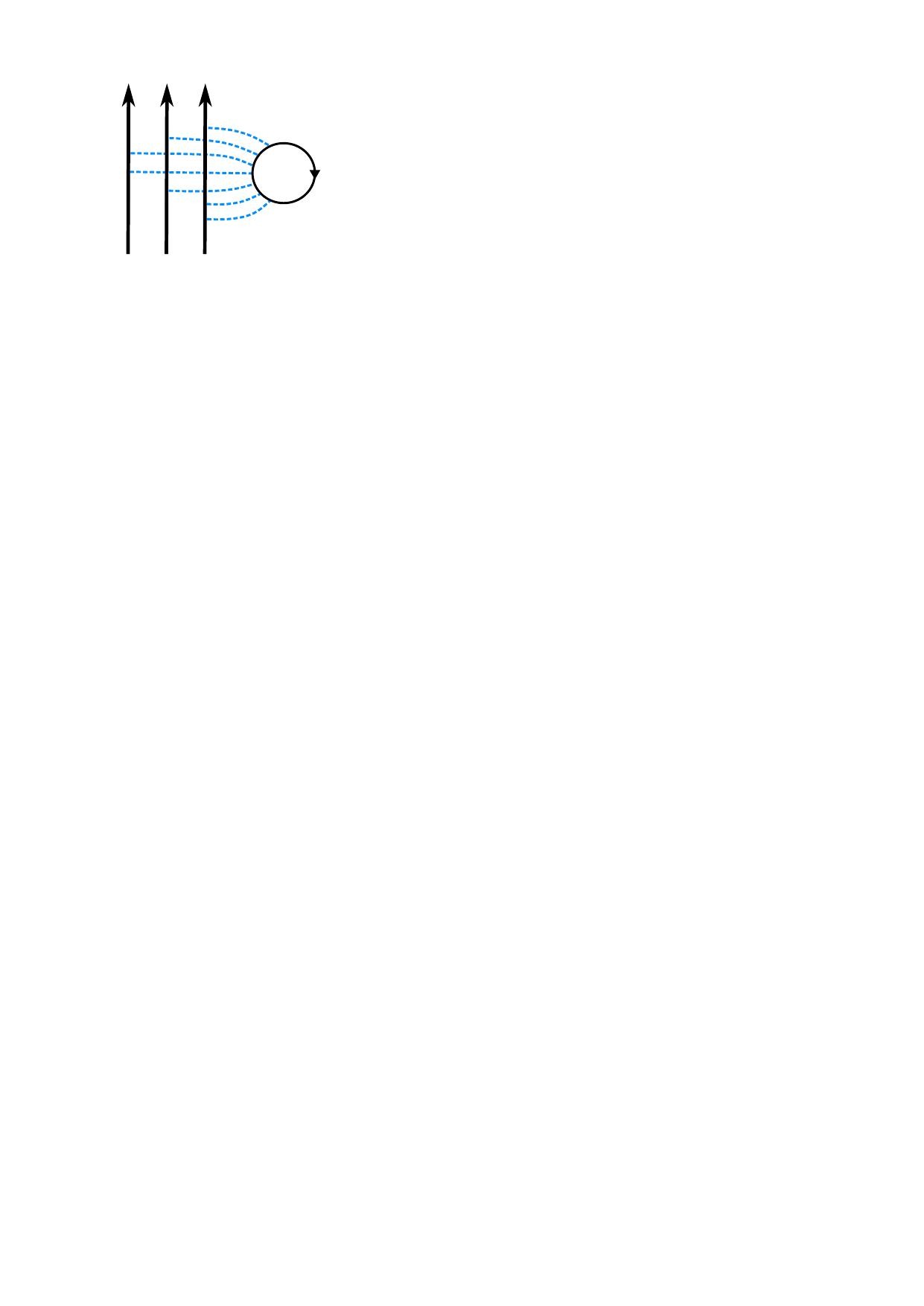}}
\put(10,5){1}
\put(30,5){2}
\put(55,5){3}
\put(130,61){\Small |} 
\put(130,60){\Large $\longrightarrow$}
\put(165, 60){\Large $|x_3^2x_2x_1^2x_2x_3|\in |\As| $}
\end{picture}
    \caption{An example demonstrating how chord diagrams with no strand-strand chords can be read as cyclic words in $|\As|$.}
    \label{fig:CycWord}
\end{figure}

Having identified the domain of the Goldman Bracket, $\Cpa \otimes \Cpa$, as $\tT^{/1}(\circleSkel)\otimes \tT^{/1}(\circleSkel)$ through the identification $\beta$, we can now show that the Goldman bracket is induced -- in the sense of Section~\ref{sec:conceptsum} -- by the stacking commutator on $\ctT$.

\begin{thm}\label{thm:bracketsnake}
Let $\glosm{lambda1}{\lambda_1}:\tTnab^{/2}(\circleSkel)\otimes \tTnab^{/2}(\circleSkel)\rightarrow \tTnab^{/2}(\circleSkel \; \circleSkel)$ denote the stacking product. 
Let $\glosm{lambda2}{\lambda_2}$ denote the opposite product given by $\lambda_2(K_1,K_2)=K_2K_1$. 
 The stacking commutator $\glosm{lambda}{\lambda}=\lambda_1-\lambda_2$ induces a unique map $\hat{\eta}: \tT^{/1}(\circleSkel) \otimes \tT^{/1}(\circleSkel) \to \tT^{/1}(\circleSkel)$, in the sense of the commutative diagram in Figure~\ref{fig:Snakeforbracket}. The map $\glosm{etahat}{\hat{\eta}}$ coincides with the Goldman bracket on $\Cpa$ via the identification $\beta:\tT^{/1}(\circleSkel)\xrightarrow{\cong}\Cpa$, that is, 
$$[-,-]_G=\beta \circ \hat{\eta} \circ (\beta^{-1}\otimes \beta^{-1}).$$
\end{thm}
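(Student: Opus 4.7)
The plan is to instantiate the connecting-homomorphism framework of Section~\ref{sec:conceptsum} (diagram \eqref{eq:inducedconnhom}): the bottom row will be the short exact sequence
\[
0 \to \tTnab^{1/2}(\circleSkel\,\circleSkel) \hookrightarrow \tTnab^{/2}(\circleSkel\,\circleSkel) \twoheadrightarrow \tT^{/1}(\circleSkel\,\circleSkel) \to 0,
\]
which is exact by Proposition~\ref{prop:nonabneeded} and the definitions of the quotients, and the top row will be
\[
K \to \tTnab^{/2}(\circleSkel) \otimes \tTnab^{/2}(\circleSkel) \twoheadrightarrow \tT^{/1}(\circleSkel) \otimes \tT^{/1}(\circleSkel) \to 0,
\]
where $K$ is the kernel of the right-hand projection (right-exactness is automatic). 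The middle vertical arrow is the stacking commutator $\lambda = \lambda_1 - \lambda_2$, whose target is naturally the two-component stratum $\tTnab^{/2}(\circleSkel\,\circleSkel)$. Once the two vanishing conditions of Section~\ref{sec:conceptsum} are checked, the general framework produces a unique induced map into $\tTnab^{1/2}(\circleSkel\,\circleSkel)$; composing with the inverse $\widecheck{b}$ of the isomorphism $\widehat{b}: \tT^{/1}(\circleSkel) \to \tTnab^{1/2}(\circleSkel\,\circleSkel)$ from Lemma~\ref{lem:mbOnCircle} yields the desired map $\hat{\eta}: \tT^{/1}(\circleSkel) \otimes \tT^{/1}(\circleSkel) \to \tT^{/1}(\circleSkel)$.

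The two vanishing conditions are straightforward to verify. First, $\lambda$ projects to zero in $\tT^{/1}(\circleSkel\,\circleSkel)$: for any tangles $T_1, T_2$, the difference $T_1T_2 - T_2T_1$ is concentrated at the strand-strand crossings that the stacking creates, and so lies in $\tT^1(\circleSkel\,\circleSkel) \subseteq \tTnab^1(\circleSkel\,\circleSkel)$, which is exactly the kernel of the projection. Second, $\lambda$ restricted to $K$ is zero in $\tTnab^{/2}(\circleSkel\,\circleSkel)$: $K$ is generated by pure tensors with at least one factor in $\tT^1(\circleSkel)$, and the stacking commutator of such a tensor immediately produces a sum of tangles with at least two strand-strand double points, hence lies in $\tTnab^2$.

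The substantive remaining task is to identify $\hat{\eta}$ with the Goldman bracket under $\beta$. Given knots $K_1, K_2$ whose bottom projections are free loops $\alpha, \beta$ meeting transversely, I will expand $K_1 K_2 - K_2 K_1$ as a telescoping sum over the intersections $q \in \alpha \cap \beta$, switching the over/under information one crossing at a time: each term is a two-component link carrying a single strand-strand double point at $q$ with sign $\varepsilon_q$, and the choices at the remaining crossings are immaterial modulo $\tTnab^2$. The Conway relation then converts each double point into $b$ times the corresponding smoothing, and the smoothing at a crossing between the two distinct components $K_1, K_2$ merges them into a single knot whose bottom projection is the concatenation $\alpha_q \beta_q$. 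Thus $\lambda(K_1, K_2) = \widehat{b}$ applied to an element of $\tT^{/1}(\circleSkel)$, confirming that $\lambda$ lands in the image of $\widehat{b}$; applying $\widecheck{b}$ and then $\beta$ recovers $-\sum_q \varepsilon_q |\alpha_q \beta_q|$, matching Definition~\ref{def:bracket}.

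The main obstacle is the sign and orientation bookkeeping in this last step: one must verify that the sign produced by the Conway relation at each crossing, combined with the orientation sign of that crossing in the diagram of $K_1 K_2$, matches the local intersection-number convention $\varepsilon_q$ of Definition~\ref{def:bracket}, and that the merged curve is the concatenation in the correct order $\alpha_q \beta_q$ rather than its reverse. Once these local signs are checked at a single transverse crossing, the global identification $[\cdot,\cdot]_G = \beta \circ \hat{\eta} \circ (\beta^{-1}\otimes\beta^{-1})$ follows by linearity.
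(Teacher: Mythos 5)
Your proposal follows essentially the same route as the paper's proof: the same pair of exact rows, the same telescoping expansion of $K_1K_2-K_2K_1$ over mixed crossings to verify the two vanishing conditions, the same application of the Conway relation to show $\lambda$ lands in the image of $\widehat{b}$, and the same identification with Definition~\ref{def:bracket}. The local sign verification you defer at the end is likewise handled only briefly in the paper (by appeal to the example in Figure~\ref{fig:combracket}), so nothing essential is missing.
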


\begin{figure}
\begin{tikzcd}[ column sep=small]
0\arrow[r]&\operatorname{Ker} \arrow[r]\arrow[d, "0",swap]  & \tTnab^{/2}(\circleSkel)\otimes \tTnab^{/2}(\circleSkel) \arrow[r] &\tT^{/1}(\circleSkel)\otimes \tT^{/1}(\circleSkel)\arrow[d,"0"] \arrow[r] \arrow[ddll, dashed, out=-140, in=0, looseness=.6 ,"\hat{\eta}"]
 \arrow[dll, swap, dashed, out=110, in=70, looseness=.7 ,"\eta"]
&0\\
0\arrow[r]& \tTnab^{1/2}(\circleSkel \text{\hspace{.75mm}} \circleSkel) \arrow[r]
& \tTnab^{/2}(\circleSkel \text{\hspace{.75mm}}\circleSkel)\arrow[from=u,"\lambda",swap,  near start, crossing over] \arrow[r]
& \tT^{/1}(\circleSkel \text{\hspace{.75mm}}\circleSkel)\arrow[r]&0\\
&\tT^{/1}(\circleSkel)\arrow[u,hook, "\widehat{b}"]
\end{tikzcd}
\caption{Recovering the Goldman bracket. The horizontal maps are the natural quotient and inclusion maps, and $\operatorname{Ker}$ denotes the kernel of the consecutive projection. The map $\widehat{b}$ denotes multiplication by $b$ (Lemma~\ref{lem:mbOnCircle}). 
}
\label{fig:Snakeforbracket} 
\end{figure}

\begin{proof}
    The vector space $\tTnab^{/2}(\circleSkel)$ is generated by the equivalence classes of knots in $M_p$. 
    For $K_1, K_2\in \tT$, we abuse notation and denote by $K_1 \otimes K_2$ the class of $K_1\otimes K_2$ in $\tTnab^{/2}(\circleSkel)\otimes \tTnab^{/2}(\circleSkel)$. The stacking commutator $\lambda(K_1\otimes K_2)=K_1K_2-K_2K_1$ is the difference between placing $K_2$ above or below $K_1$ in $D_p\times I$.
    
    We first show that the right hand square of Figure~\ref{fig:Snakeforbracket} commutes. Regularly project $K_1$, $K_2$ and their stacking products to the bottom $D_p$ to obtain knot diagrams $D_1$ and $D_2$, and link diagrams $D_1D_2$ and $D_2D_1$. A \textit{mixed crossing} of a link diagram is a crossing where the two strands belong to separate components. Notice that $D_2D_1$ is precisely $D_1D_2$ with all mixed crossings flipped. 

    Number the mixed crossings of $D_1D_2$ from 1 to $r$, and let $L_i$ denote the link diagram where the first $i$ mixed crossings have been flipped. Specifically, $L_0=D_1D_2$ and $L_r=D_2D_1$ Then $L_0-L_r=D_1D_2-D_2D_1$ can be written as a telescopic sum:
    \begin{equation}\label{eq:Telescope}
    D_1D_2-D_2D_1=(L_0-L_1)+(L_1-L_2)+...+(L_{r-1}- L_r).
    \end{equation}
    In the sum, each term in parenthesis is a two-component link with a single mixed double point, with a sign (the crossing sign of the $i$-th mixed crossing). Thus, $\lambda(K_1,K_2)\in \tTnab^{1}$, and maps to zero in $\tTnab^{/1}$. Hence, the right hand square commutes. 

    We now turn to the left square of the diagram. The kernel of the projection map 
    $$\tTnab^{/2}(\circleSkel)\otimes\tTnab^{/2}(\circleSkel)\rightarrow  \tTnab^{/1}(\circleSkel)\otimes\tTnab^{/1}(\circleSkel)$$ 
    is generated by $ \tTnab^{1/2}(\circleSkel)\otimes \tTnab^{/2}(\circleSkel)$ 
    and $ \tTnab^{/2}(\circleSkel)\otimes \tTnab^{1/2}(\circleSkel) $. Suppose that $K_1\otimes K_2$ is in $ \tTnab^{1/2}(\circleSkel)\otimes \tTnab^{/2}(\circleSkel)$; without loss of generality assume that $K_1$ is a knot with one double point.
    Then, by Equation~\ref{eq:Telescope}, $\lambda(K_1\otimes K_2)$ can be written as a telescopic sum of links with two double points each, hence it is zero in $\tTnab^{/2}(\circleSkel \, \circleSkel)$. Therefore, the left square commutes.

    As in Section~\ref{sec:conceptsum}, then $\lambda$ induces a unique map 
    $$\eta:\tT^{/1}(\circleSkel)\otimes \tT^{/1}(\circleSkel)\rightarrow \tTnab^{1/2}(\circleSkel \text{\hspace{.75mm}} \circleSkel).$$ 
    We can now identify $\eta$ as the Goldman bracket.
    The isomorphism $\beta$ gives $\tT^{/1}(\circleSkel)\cong \Cpa$ (Proposition \ref{prop:kerbeta}), and so, identifies the domain of $\eta$ with the domain of the Goldman bracket. We will argue that the image of $\eta$ also lies in $\tT^{/1}(\circleSkel)\cong \Cpa$.
    
    By Equation~\eqref{eq:Telescope}, $\lambda(K_1,K_2)$ can be written a sum of $r$ terms, each with one mixed double point. Applying the Conway relation to each of the $r$ terms of the telescopic sum by smoothing the mixed double points changes the skeletons from two circles to one circle, and introduces a factor of $b$: 
    \begin{equation}\label{eq:ConwayTel}
    \lambda(K_1\otimes K_2)=D_1D_2-D_2D_1\stackrel{\nab}=b(\epsilon_1 K_{s_1}+\epsilon_2 K_{s_2}+...+\epsilon_r K_{s_r}).
    \end{equation}
    Here $K_{s_i}$ denotes the knot obtained from $L_{i-1}-L_i$ by smoothing the mixed double point (which is obtained from the $i$-th mixed crossing), and $\epsilon_i$ is the sign of the $i$-th mixed crossing.
    That is, $\lambda(K_1,K_2) \in b\tTnab^{/2}(\circleSkel)$. In other words, $\eta$ factors through $\tT^{/1}(\circleSkel)$, which embeds in $\tTnab^{1/2}(\circleSkel \, , \, \circleSkel)$ via the multiplication by $b$ map $\widehat{b}$, by Lemma~\ref{lem:mbOnCircle}. Hence, we obtain the map $\hat{\eta}: \tT^{/1}(\circleSkel) \otimes \tT^{/1}(\circleSkel) \to \tT^{/1}(\circleSkel)$, as needed.  

    Finally, we check that $\hat{\eta}$ coincides with the Goldman bracket via the identification $\beta$. For curves $\gamma_1\otimes \gamma_2\in\tT^{/1}(\circleSkel)\otimes \tT^{/1}(\circleSkel)$, let $K_1\otimes K_2 \in \tTnab^{/2}(\circleSkel)\otimes \tTnab^{/2}(\circleSkel)$ be an arbitrary pre-image (vertical lift) of $\gamma_1\otimes \gamma_2$.
    Then 
    \[\eta(\gamma_1\otimes \gamma_2)=\frac{\lambda(K_1\otimes K_2)}{b} \in \tT^{/1}(\circleSkel),\]
    where we use the notation $\frac{1}{b}$ to mean composition with $\widecheck{b}$. The Goldman bracket (Definition~\ref{def:bracket}) is precisely a sum of smoothings of the mixed crossings of $\gamma_1$ and $\gamma_2$. The only thing to check is that the crossing signs coincide with the negative signs of the local coordinate systems in the Goldman bracket definition, which is indeed the case. See Figure~\ref{fig:combracket} for an example.
\end{proof}

    \begin{figure}
        \begin{picture}(375,80)
   \put(0,0){ \includegraphics[scale=.5]{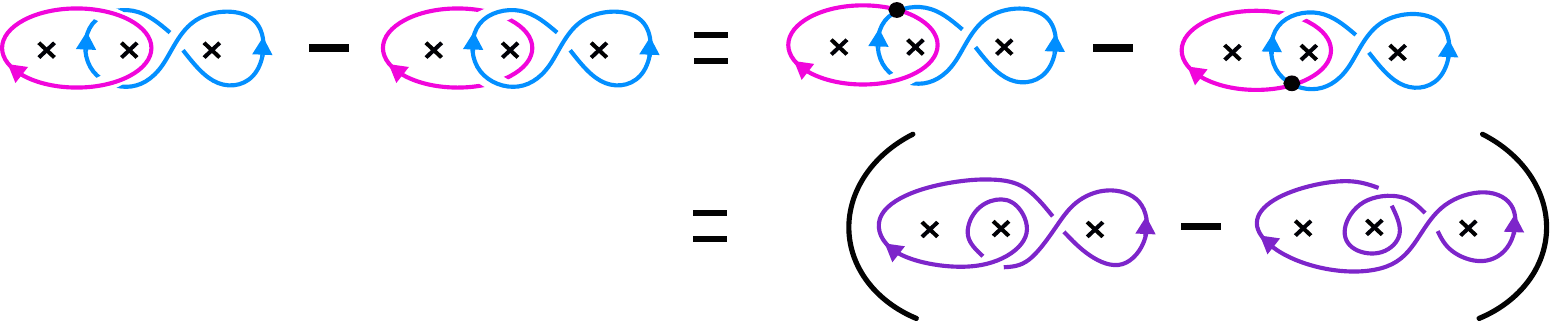}}
   \put(168,32){\Large{$\nabla$}}
   \put(190,20){\Large{$b$}}
\end{picture}
        
        \caption{Example commutator bracket computation. The first equality is true after canceling terms in a telescoping expansion of the double points. }
        \label{fig:combracket}
    \end{figure}

Recall that the graded Goldman bracket (Proposition \ref{prop:gr_bracket_def}) is a linear map $[-,-]_{\gr G}:|\As|\otimes |\As|\to |\As|$, and by Proposition \ref{prop:gr_beta} we have an identification $\gr\beta: |\As|\xrightarrow{\cong}\tA^{/1}(\circleSkel)$. Applying the associated graded functor -- with respect to the total filtration -- to the diagram in Figure \ref{fig:Snakeforbracket}, we obtain the commutative diagram in Figure \ref{fig:Snakefor_gr_bracket} and recover the graded Goldman bracket:

\begin{figure}
\begin{tikzcd}[column sep=small]
0 \arrow[r] &\operatorname{Ker} \arrow[r]\arrow[d, "0",swap]  & \tA_{\nab}^{/2}(\circleSkel)\otimes \tA_{\nab}^{/2}(\circleSkel) \arrow[r] &\tA^{/1}(\circleSkel)\otimes \tA^{/1}(\circleSkel)\arrow[d,"0"] \arrow[r] \arrow[ddll, dashed, out=-140, in=0, looseness=.6 ,"\gr \hat{\eta}"]
\arrow[dll, swap, dashed, out=110, in=70, looseness=.7 ,"\gr\eta"]
&0\\
0\arrow[r]& \tA_{\nab}^{1/2}(\circleSkel \text{\hspace{.75mm}} \circleSkel) \arrow[r]
& \tA_{\nab}^{/2}(\circleSkel \text{\hspace{.75mm}}\circleSkel)\arrow[from=u,"\gr\lambda",swap,  near start, crossing over] \arrow[r]
& \tA_{\nab}^{/1}(\circleSkel \text{\hspace{.75mm}}\circleSkel) \arrow[r] & 0\\
&\tA^{/1}(\circleSkel)\arrow[u,hook]
\end{tikzcd}
\caption{Recovering the graded Goldman bracket by applying the associated graded functor to the commutative diagram of Figure \ref{fig:Snakeforbracket}.}\label{fig:Snakefor_gr_bracket}
\end{figure}

\begin{cor}\label{cor:snakefor_gr_bracket}
    The diagram in Figure~\ref{fig:Snakefor_gr_bracket} commutes, the rows are exact, $\gr \eta$ is the induced connecting homomorphism, and $\gr\hat{\eta}$ agrees with the associated graded Goldman bracket via the identification $\gr\beta: \tA^{/1}(\circleSkel)\xrightarrow{\cong} |\As|$. In other words,
\[\gr[\cdot,\cdot]_G=\gr \beta \circ \gr \hat{\eta}\circ(\gr \beta^{-1}\otimes \gr \beta^{-1}).\]
\end{cor}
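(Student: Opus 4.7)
The proof is essentially an application of the general framework developed in Section~\ref{sec:conceptsum}, specialised to the commutative diagram of Theorem~\ref{thm:bracketsnake}. The plan is to apply the associated graded functor (with respect to the total filtration) to Figure~\ref{fig:Snakeforbracket} and to deduce each of the four claims of the corollary in turn.

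First, I would establish exactness of the rows of Figure~\ref{fig:Snakefor_gr_bracket}. The bottom row is the associated graded of the filtered short exact sequence
\[
0 \to \tTnab^{1/2}(\circleSkel \, \circleSkel) \to \tTnab^{/2}(\circleSkel \, \circleSkel) \to \tT^{/1}(\circleSkel \, \circleSkel) \to 0,
\]
and the top row is obtained from the analogous sequence by tensoring with $\tTnab^{/2}(\circleSkel)$ on one side and $\tT^{/1}(\circleSkel)$ on the other. In each case the $t$-filtrations on the outer terms are induced from the middle, because the sub- and quotient objects are defined via the $s$-filtration which is compatible with the $t$-filtration (the definition $\tT^s_t = \tT_t \cap \tT^s$ of Section~\ref{sec:s-sfiltration}). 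Hence Lemma~\ref{lem:ExaxtGr} applies and the graded rows are exact.

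Next, commutativity of the graded diagram is immediate from the functoriality of $\gr$ applied to the commuting diagram of Figure~\ref{fig:Snakeforbracket}, with the two zero vertical maps on the outer columns remaining zero after $\gr$. Corollary~\ref{cor:gr_induced_is_unique} then gives that $\gr\eta$ is the unique induced connecting homomorphism on the graded diagram. Since $\hat\eta$ is characterised as the unique factorisation of $\eta$ through the injection $\widehat{b}$ (Lemma~\ref{lem:mbOnCircle}), and since $\gr\widehat{b} = \widehat{a}$ is also injective by Proposition~\ref{rem:grdivbyb}, it follows that $\gr\hat\eta$ is the unique factorisation of $\gr\eta$ through $\widehat{a}$, as displayed in Figure~\ref{fig:Snakefor_gr_bracket}.

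Finally, to identify $\gr\hat\eta$ with the graded Goldman bracket, I would simply apply the $\gr$ functor to the identity
\[
[\,\cdot\,,\cdot\,]_G \;=\; \beta \circ \hat\eta \circ (\beta^{-1} \otimes \beta^{-1})
\]
established in Theorem~\ref{thm:bracketsnake}. Since all maps on the right are filtered isomorphisms (or compositions thereof), $\gr$ distributes over composition, giving
\[
\gr[\,\cdot\,,\cdot\,]_G \;=\; \gr\beta \circ \gr\hat\eta \circ (\gr\beta^{-1} \otimes \gr\beta^{-1}),
\]
and by definition $\gr[\,\cdot\,,\cdot\,]_G = [\,\cdot\,,\cdot\,]_{\gr G}$ as a map $|\hAs|\otimes|\hAs| \to |\hAs|$ (Proposition~\ref{prop:gr_bracket_def}). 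I expect no genuine obstacle: the only bookkeeping step is verifying that the filtrations on both the kernel and the right-hand quotient are induced from the middle, which is essentially immediate from the way the $s$-filtration is defined on tensor products of $\tTnab$-modules.
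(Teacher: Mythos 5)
Your proposal is correct and follows essentially the same route as the paper: apply Corollary~\ref{cor:gr_induced_is_unique} (hence Lemma~\ref{lem:ExaxtGr}) to the filtered diagram of Figure~\ref{fig:Snakeforbracket}, identify the graded connecting homomorphism as $\gr\eta$ by uniqueness, and obtain the formula for $\gr[\cdot,\cdot]_G$ by functoriality of $\gr$ applied to the identity from Theorem~\ref{thm:bracketsnake}. The extra detail you supply on the induced filtrations and on factoring $\gr\eta$ through $\widehat{a}$ is consistent with, and slightly more explicit than, the paper's own argument.
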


\begin{proof}
    All arrows in the diagram in Figure~\ref{fig:Snakeforbracket} are filtered maps with respect to the total filtration; the rows are exact; and the total filtrations on the left and right hand sides are induced from the total filtration in the middle. Hence, Corollary \ref{cor:gr_induced_is_unique} applies, and hence the $\gr$ functor gives a commutative diagram with exact rows, as in Figure  \ref{fig:Snakefor_gr_bracket}. By the uniqueness of the connecting homomorphism, we know that it is $\gr \eta$. By the functoriality of the associated graded, the graded Goldman bracket is given by
    \[\gr[\cdot,\cdot]_G=\gr \beta \circ \gr \hat{\eta}\circ(\gr \beta^{-1}\otimes \gr \beta^{-1}).\] 
    \end{proof}

\begin{example}\label{ex:grGoldman}
    While the Corollary~\ref{cor:snakefor_gr_bracket} follows from abstract considerations, let us demonstrate the on an example the explicit calculation of the graded bracket. Recall from Remark~\ref{rem:ChorsOnPoles} that in $\tA^{/1}$ chord endings on the poles commute. The identification $\gr \beta$ works by assigning a letter $x_1,...,x_p$ to each pole, and reading off the cyclic word given by the chords along the circle skeleton component, as in Figure~\ref{fig:CycWord}.

    We compute the graded bracket of the words |$x_1x_2^2|$ and $|x_2x_3^2|$, via $\gr \beta$. The two cyclic words correspond to chord diagrams in $\tA^{/1}(\circleSkel)$, which we then consider in (lift to) $\tA^{/2}_\nab(\circleSkel)$. The map $\gr\lambda$ is the stacking commutator of these diagrams, as shown in Figure~\ref{fig:GradedBracket}. This lies in $\tA^{1/2}_\nab (\circleSkel\, \circleSkel)$, which is easiest to see via applying a 4T relation for the letter coincidence $x_2$, as shown in Figure~\ref{fig:GradedBracket}. In turn, via an application of the Conway relation, it is easy to see that the element of $\tA^{/1}(\circleSkel)$ which maps to this via multiplication by $b$ is $|x_1^2x_2x_3^2|-|x_1^2x_3^2x_2|$. This is precisely the value of the graded Goldman bracket: compare also with Figure~\ref{fig:grbracket}.   
\end{example}

\begin{figure}
\begin{picture}(375,180)
   \put(0,0){ \includegraphics[width=13cm]{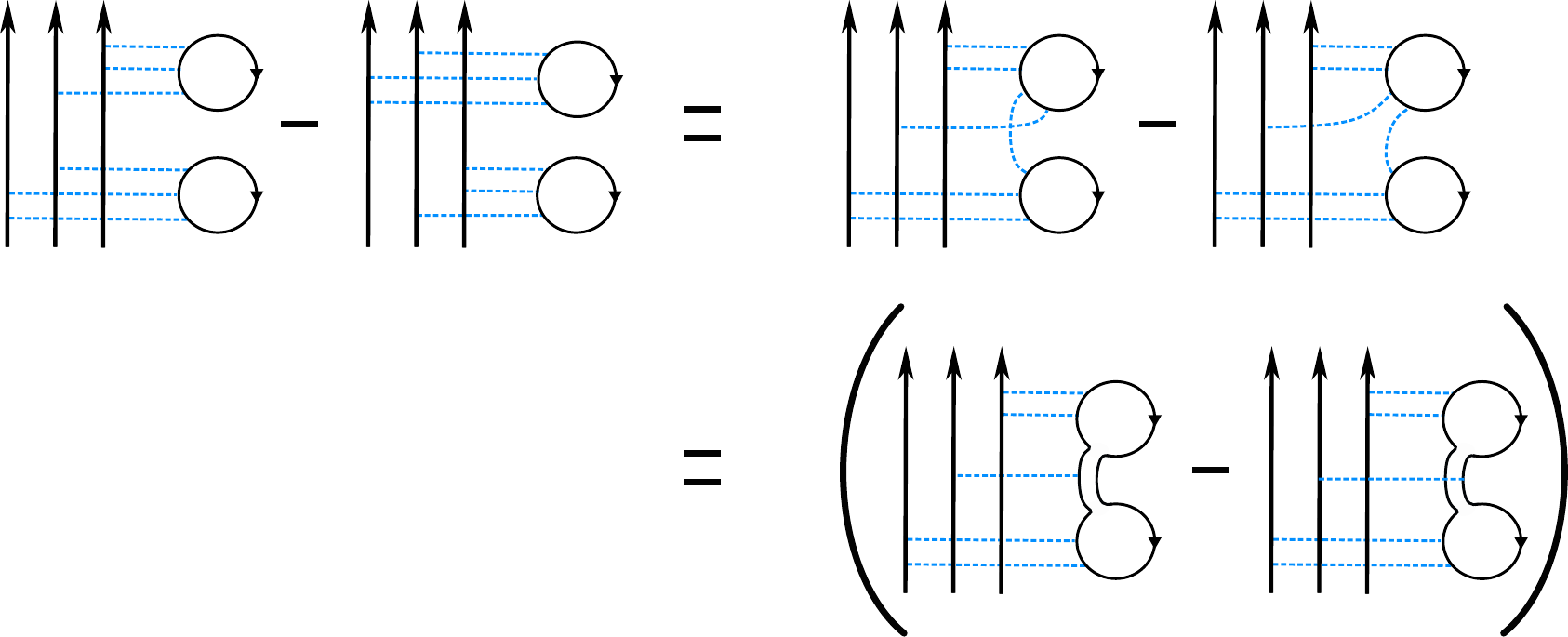}}
   \put(162,130){\large{$4T$}}
   \put(163,48){\Large{$\nabla$}}
   \put(190,35){\Large{$b$}}
\end{picture}
\caption{Example calculation for the diagrammatic realisation of the graded Goldman bracket.}\label{fig:GradedBracket}
\end{figure}

\begin{thm}\label{thm:Cube_for_bracket}
    The Kontsevich integral descends to a homomorphic expansion for the Goldman bracket,
    that is, the following diagram commutes:
   \[ \begin{tikzcd}
\Cpa\arrow[d, "Z^{/1}"] && \Cpa \otimes \Cpa \arrow[swap]{ll}{[\cdot,\cdot]_G} \arrow[d, "Z^{/1}\otimes Z^{/1}"]\\
\hAsa  &&\hAsa \otimes \hAsa  \arrow[swap]{ll}{\gr [\cdot,\cdot]_G}
\end{tikzcd}\]
    \end{thm}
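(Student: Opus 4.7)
The plan is to build the multi-cube of equation~\eqref{eq:Cube} whose back face is the commutative diagram of Figure~\ref{fig:Snakeforbracket} (which realises the Goldman bracket as the induced map $\hat{\eta}$ by Theorem~\ref{thm:bracketsnake}) and whose front face is its total-filtration associated graded, namely the diagram of Figure~\ref{fig:Snakefor_gr_bracket} (which realises $\gr[\cdot,\cdot]_G$ as $\gr\hat{\eta}$ by Corollary~\ref{cor:snakefor_gr_bracket}). Once the vertical faces all commute, Lemma~\ref{lem:Naturality} delivers commutativity of the induced square, and translating through the isomorphisms $\beta: \tT^{/1}(\circleSkel)\cong\Cpa$ (Proposition~\ref{prop:kerbeta}) and $\gr\beta:\tA^{/1}(\circleSkel)\cong|\hAs|$ (Proposition~\ref{prop:gr_beta}) yields exactly the desired square, with $Z^{/1}$ defined as the descent of the Kontsevich integral through $\beta$.

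All vertical arrows of the cube will be provided by the Kontsevich integral. For the middle column I use $Z_\nab$ of Theorem~\ref{thm:Z_conway} on $\tTnab^{/2}(\circleSkel)$, $\tTnab^{/2}(\circleSkel\,\circleSkel)$, and their tensor products; by Proposition~\ref{prop:ZrespectsS} this is filtered with respect to the $s$-filtration and so descends to the right-hand column. For the subspace $\tTnab^{1/2}(\circleSkel\,\circleSkel)$ in the left column, I take the restriction of $Z_\nab$, which further restricts along the isomorphisms $\widehat{b}\leftrightarrow \widehat{a}$ of Proposition~\ref{rem:grdivbyb} to give a map $\tT^{/1}(\circleSkel)\to \tA^{/1}(\circleSkel)$; the corresponding vertical-face square commutes by the multiplicativity of the Kontsevich integral together with $Z(b)=a$.

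The only non-bookkeeping vertical face of the cube is the central one containing the stacking commutator $\lambda$. Commutativity of this face is precisely the homomorphicity of $Z_\nab$ with respect to stacking: by Proposition~\ref{prop:Zhomom} we have $Z_\nab(K_1 K_2)=Z_\nab(K_1)\cdot Z_\nab(K_2)$, and applying this fact to the opposite product $\lambda_2(K_1\otimes K_2)=K_2 K_1$ and subtracting yields $Z_\nab\circ \lambda = \gr\lambda \circ (Z_\nab\otimes Z_\nab)$. The main obstacle is therefore not a single hard computation but the careful assembly: one must check that the Kontsevich integral is simultaneously compatible with the Conway quotient, the $s$-filtration (so that it descends through the relevant quotients and preserves the $\widehat{b}$-subspaces), and the tensor-product structure -- each of these having been individually established in the previous section, so that the verification amounts to gluing them together and invoking Lemma~\ref{lem:Naturality}.
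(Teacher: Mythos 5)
Your proposal is correct and follows essentially the same route as the paper's own proof: the middle vertical face commutes by the homomorphicity of $Z$ with respect to stacking (Proposition~\ref{prop:Zhomom}), the remaining faces are supplied by Theorem~\ref{thm:bracketsnake}, Corollary~\ref{cor:snakefor_gr_bracket} and Proposition~\ref{prop:ZrespectsS}, and Lemma~\ref{lem:Naturality} plus the identifications $\beta$, $\gr\beta$ finish the argument. The only cosmetic difference is that you make the compatibility of $Z$ with the $\widehat{b}\leftrightarrow\widehat{a}$ maps explicit via $Z(b)=a$, which the paper leaves implicit.
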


\begin{proof}
The Kontsevich integral is homomorphic with respect to the stacking product (Proposition \ref{prop:Zhomom}). Since $\lambda$, the key ingredient in our construction of $[\cdot,\cdot]_G$, is the difference between the stacking product and its opposite product, $Z$ is homomorphic with respect to $\lambda$, thus the following square commutes:
\[
    \begin{tikzcd}[column sep=tiny, row sep=tiny]
       & \tTnab^{/2}(\circleSkel)\otimes \tTnab^{/2}(\circleSkel)\arrow[dl,"\lambda"]\arrow[dd,"Z^{/2}\otimes Z^{/2}"]\\
       \tTnab^{/2}(\circleSkel \text{\hspace{.75mm}} \circleSkel)\arrow[dd, "Z^{/2}",swap]\\
       &\tA_{\nab}^{/2}(\circleSkel)\otimes \tA_{\nab}^{/2}(\circleSkel)\arrow[dl,"\gr \lambda"]\\
       \tA_{\nab}^{/2}(\circleSkel \text{\hspace{.75mm}} \circleSkel)
    \end{tikzcd}
    \]

Hence, we know that that the entire multi-cube \eqref{eq:BracketMultiCube} is commutative: all other faces follow from Theorem \ref{thm:bracketsnake}, Corollary \ref{cor:snakefor_gr_bracket}, the fact that $Z$ is a filtered map with respect to the $s$-filtration (Proposition~\ref{prop:ZrespectsS}):

\begin{equation}\label{eq:BracketMultiCube}
\begin{tikzcd}[scale cd=.8,row sep=small, column sep=small]
 & \operatorname{Ker} \arrow[dl,"0"] \arrow[rr] \arrow[dd,"Z^{1/2}\otimes Z^{1/2}",near start] & & \tTnab^{/2}(\circleSkel)\otimes \tTnab^{/2}(\circleSkel) \arrow[dl,"\lambda"]\arrow[rr] \arrow[dd,"Z^{/2} \otimes Z^{/2}",near start] && \tT^{/1}(\circleSkel)\otimes \tT^{/1}(\circleSkel)\arrow[dd,"Z^{/1}\otimes Z^{/1}"]\arrow[dl,"0"]   \\
\tTnab^{1/2}(\circleSkel\text{\hspace{.75mm}} \circleSkel) \arrow[rr, crossing over] \arrow[dd,"Z^{1/2}"] & & \tTnab^{/2}(\circleSkel \text{\hspace{.75mm}} \circleSkel)\arrow[rr,crossing over]\ &&\tT^{/1}(\circleSkel\text{\hspace{.75mm}} \circleSkel) \\
 & \operatorname{Ker} \arrow[dl,"0"] \arrow[rr] & & \tA_{\nab}^{/2}(\circleSkel) \otimes \tA_{\nab}^{/2}(\circleSkel) \arrow[dl,"\gr \lambda", near start]\arrow[rr] && \tA^{/1}(\circleSkel)\otimes \tA^{/1}(\circleSkel) \arrow[dl,"0"]\\
 \tA_{\nab}^{1/2}(\circleSkel \text{\hspace{.75mm}} \circleSkel) \arrow[rr] & & \tA_{\nab}^{/2}(\circleSkel \text{\hspace{.75mm}} \circleSkel) \arrow[rr]\arrow[from=uu, "Z^{/2}",near start, crossing over]&& \tA^{/1}(\circleSkel \text{\hspace{.75mm}} \circleSkel) \arrow[from=uu, "Z^{/1}",near start, crossing over]\\
\end{tikzcd}
\end{equation}

Hence, using the naturality of the induced map construction (Lemma~\ref{lem:Naturality} and the diagram \eqref{eq:Cube}), we then know that the middle square of \eqref{eq:EtaSquare} commutes:

\begin{equation}\label{eq:EtaSquare}
    \begin{tikzcd}[scale cd=.8,row sep=small, column sep=small]
     \tT^{/1}(\circleSkel)\arrow[d,hook]\arrow[dddd,"Z^{/1}", out=200, in=160]\\
      \tTnab^{1/2}(\circleSkel\text{\hspace{.75mm}} \circleSkel) \arrow[dd,"Z^{1/2}"] 
    & &&& &&& \tT^{/1}(\circleSkel)\otimes \tT^{/1}(\circleSkel)\arrow[dd,"Z^{/1}\otimes Z^{/1}"] \arrow[lllllll,"\eta"]  \arrow[ulllllll,"\hat{\eta}",swap]
    \\ \\
     \tA_{\nab}^{1/2}(\circleSkel \text{\hspace{.75mm}} \circleSkel)
    & &&&  &&& \tA^{/1}(\circleSkel)\otimes \tA^{/1}(\circleSkel)\arrow[lllllll,"\gr\eta",swap]\arrow[dlllllll,"\gr\hat{\eta}"] \\
      \tA^{/1}(\circleSkel)\arrow[u,hook]
     \end{tikzcd}
\end{equation}
Since all other faces of the diagram~\eqref{eq:EtaSquare} are commutative by definition, the outside square also commutes. In turn, this is the middle square of the diagram~\eqref{eq:KIntBracket}:
\begin{equation}\label{eq:KIntBracket}    
\begin{tikzcd}
\Cpa\arrow[d, "Z^{/1}"] &\tT^{/1}(\circleSkel) \arrow[l,"\cong"',"\beta"]\arrow[d, "Z^{/1}"] && \tT^{/1}(\circleSkel)\otimes \tT^{/1}(\circleSkel) \arrow{ll}{\hat{\eta}} \arrow[d, "Z^{/1}\otimes Z^{/1}"] &&\Cpa \otimes \Cpa \arrow[ll,"\cong"',"\beta^{-1}\otimes \beta^{-1}"]\arrow[bend right=20,swap]{lllll}{[\cdot,\cdot]_G} \arrow[d, "Z^{/1}\otimes Z^{/1}"]\\
\hAsa &\tA^{/1}(\circleSkel) \arrow[l,"\cong"',"\gr\beta"]&& \tA^{/1}(\circleSkel)\otimes \tA^{/1}(\circleSkel) \arrow[ll, "\gr \hat{\eta}"] &&\hAsa \otimes \hAsa \arrow[ll,"\cong"',"\gr\beta^{-1}\otimes \gr\beta^{-1}"] \arrow[bend left=20,swap]{lllll}{\gr [\cdot,\cdot]_G}
\end{tikzcd}
\end{equation}
Once again, all other faces of \eqref{eq:KIntBracket} are commutative: by Theorem~\ref{thm:bracketsnake} and Corollary~\ref{cor:snakefor_gr_bracket} at the top and bottom; and otherwise by definition. Hence, the outside square commutes, and this is the statement of the theorem.
\end{proof}

\subsection{The Turaev co-bracket}\label{sec:cobracketinCON}
In  Section~\ref{subsec:IntroGT} we reviewed the definition of the Turaev cobracket on $\Cpa$ via the map $\mu: \tCp \to \Cpa \otimes \Cp$, which required choosing a rotation number $1/2$ representative for curves in $\tCp$. The knot-theoretic version for the cobracket lifts this construction. 

We start by interpreting $\tCp$ in the context of tangles.  
Let $\glosm{botskel}{\botSkel}$ denote an interval skeleton component where both endpoints are on the bottom $D_p \times\{0\}$. Inspired by \cite{H,HM} we call a tangle with skeleton $\botSkel$ a \emph{bottom tangle}.  We mark the endpoints of the interval by $\bullet$ and $*$, as in Figure ~\ref{fig:ascending}. Furthermore, we denote by $\tT(\circleSkel^k \, \botSkel^\ell)$ tangles with $k$ circle skeleton components, and $\ell$ bottom intervals.

\begin{figure}
\centering
\begin{picture}(280,200)
    \put(0,0){\includegraphics[scale=.7]{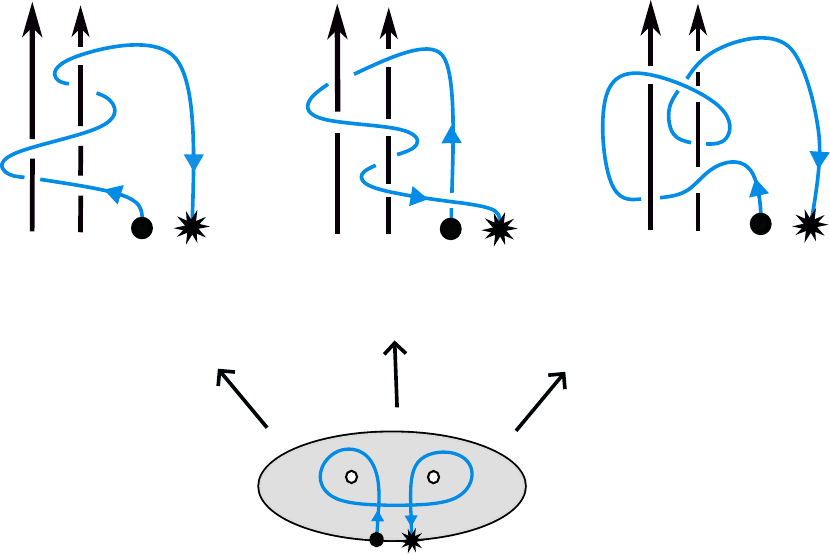}}
    \put(10,90){ascending}
    \put(110,90){descending}
    \put(220,90){neither}
\end{picture}
\caption{A curve in $\Cp$ lifted to ascending, descending, and neither ascending nor descending bottom tangles. The three tangles are equivalent in $\tT^{/1}$, but distinct in $\tT$.}\label{fig:ascending}
\end{figure}

We extend the projection map $\beta$ (Proposition ~\ref{prop:BotProj}) to such tangles to obtain an isomorphism similar to Corollary ~\ref{prop:BotProj}:

\begin{prop}\label{prop:ascispi} The natural bottom projection, post-composed with closing up open paths by concatenation with paths $\nu$ from the endpoint to the starting point along the boundary (as in Section~\ref{subsec:IntroGT}) gives a filtered linear map $$\beta: \C \tT(\circleSkel^k \, \botSkel^\ell) \to \Cpa^{\otimes k} \otimes \Cp^{\otimes \ell}$$ whose kernel is $\tT^{ 1}(\circleSkel^k \, \botSkel^\ell)$, hence it descends to a filtered isomorphism 
$$\beta: \tT^{/1}(\circleSkel^k \, \botSkel^\ell) \xrightarrow{\cong} \Cpa^{\otimes k} \otimes \Cp^{\otimes \ell}.$$
\end{prop}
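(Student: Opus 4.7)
The plan is to extend Propositions~\ref{prop:BotProj} and~\ref{prop:kerbeta} from the single-circle case to the multi-component skeleton $\circleSkel^k\,\botSkel^\ell$, using Proposition~\ref{prop:nonabneeded} ($\tTnab^{/1}\cong\tT^{/1}$) to bypass the Conway quotient on the domain side. First I would define $\beta$ on representatives in $\ctT(\circleSkel^k\,\botSkel^\ell)$: project each tangle $T$ to the bottom disc $D_p\times\{0\}$, forgetting over/under information; the $k$ circle components project to free immersed loops in $D_p$, contributing classes in $\Cpa$, while each of the $\ell$ bottom intervals projects to a path from $\bullet$ to $\ast$ which, after concatenation with $\nu$, represents an element of $\pi$. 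Ambient isotopy invariance is a direct extension of the argument in Proposition~\ref{prop:BotProj}: the framed Reidemeister moves project to homotopies of loops and paths, and the passing of tangle strands over or under bottom/top endpoints depicted in Figure~\ref{fig:BottomDiagram} is consistent with the boundary behaviour built into $\pi$ and $\Cpa$.

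To show $\beta$ descends to $\C\tTnab(\circleSkel^k\,\botSkel^\ell) = \iota(\ctT(\circleSkel^k\,\botSkel^\ell))$, I would extend $\beta$ to $\ctT\llbracket a\rrbracket$ by sending $a\mapsto 0$ and verify that the Conway relation lies in the kernel of this extension: $\pcross$ and $\ncross$ project to the same transverse intersection so $\pcross-\ncross\mapsto 0$, while $b=e^{a/2}-e^{-a/2}$ vanishes at $a=0$ so $b\,\smooth\mapsto 0$. Filteredness with respect to the $t$-filtration on the domain and the tensor product of $I$-adic filtrations on the target then follows from the same analysis as in Proposition~\ref{prop:BotProj}: strand-strand double points, strand framing changes, and the formal variable $a$ all lie in $\ker\beta$, while a strand-pole double point on pole $j$ projects to an insertion of $(\gamma_j^{\pm 1}-1)\in\calI$ in the appropriate tensor factor. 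Consequently a tangle with $n$ total double points or framing changes maps into total filtration degree $n$ of $\Cpa^{\otimes k}\otimes\Cp^{\otimes\ell}$.

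For the kernel, the inclusion $\tTnab^{\geq 1}\subseteq\ker\beta$ is immediate from the preceding observation. For the reverse inclusion, Proposition~\ref{prop:nonabneeded} identifies $\tTnab^{/1}$ with $\tT^{/1}$, reducing the claim to injectivity of the induced map $\beta:\tT^{/1}(\circleSkel^k\,\botSkel^\ell)\to\Cpa^{\otimes k}\otimes\Cp^{\otimes\ell}$. Given two framed tangles with the same image in the target, Reidemeister's theorem applied component-wise in $D_p$ shows that they differ by a sequence of strand-strand crossing changes and strand framing changes, i.e., by an element of $\tT^{\geq 1}(\circleSkel^k\,\botSkel^\ell)$, so their classes coincide in $\tT^{/1}$. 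Surjectivity is direct: any target tuple can be realised as a tangle diagram in $D_p$ with blackboard framing and arbitrary over/under choices at each crossing. The main subtlety I anticipate is the bookkeeping for the bottom-interval components---in particular, tracking the interaction of closure by $\nu$ with isotopies that modify the tangle near $\bullet$ and $\ast$, and with mixed intersections between distinct bottom intervals---but this follows the same pattern as the circle case once $\bullet$ and $\ast$ are treated as auxiliary boundary-marked basepoints in the definition of $\pi$.
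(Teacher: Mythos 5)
Your proposal is correct and follows essentially the same route as the paper, which simply declares the proof identical to that of Proposition~\ref{prop:BotProj} modulo the basepoint issue handled by concatenation with $\nu$. The extra care you take with the Conway quotient (checking the skein relation lies in $\ker\beta$ via $a\mapsto 0$, and invoking Proposition~\ref{prop:nonabneeded}) is detail the paper leaves implicit but is consistent with its setup.
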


\begin{proof} The proof is identical to the proof of Proposition~\ref{prop:BotProj}, aside from the minor issue of base points. In the bottom projection, a tangle strand from $\bullet$ to $*$ projects to a homotopy class of a path from $\bullet$ to $*$ in $D_p$. Such paths are in bijection with $\C\pi$ via composition with a path $\nu$ along the boundary from $*$ to $\bullet$.
\end{proof}

By inspection of the associated graded map, we obtain:

\begin{prop}\label{prop:gr_beta_bot_tangle} The associated graded map 
$$\gr\beta: \tA(\circleSkel^k \, \botSkel^\ell) \to |\As|^{\otimes k}\otimes \As^{\otimes \ell}$$ has kernel $\tA^{\geq 1}(\circleSkel^k \, \botSkel^\ell)$, hence, $\gr\beta$ descends to a graded isomorphism $$\gr\beta: \tA^{/1}(\circleSkel^{k} \, \botSkel^\ell) \xrightarrow{\cong} |\As|^{\otimes k}\otimes \As^{\otimes \ell}.$$ In particular, we have
$\gr\beta: \tA^{/1}(\botSkel) \xrightarrow{\cong} \As.$
\end{prop}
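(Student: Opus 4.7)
The plan is to mirror the argument used for Proposition~\ref{prop:gr_beta} almost verbatim, the only substantive new ingredient being the identification of the associated graded of $\Cp^{\otimes \ell}\otimes \Cpa^{\otimes k}$ as $|\hAs|^{\otimes k}\otimes \hAs^{\otimes \ell}$ (i.e., the Magnus theorem applied to each tensor factor).  Concretely, Proposition~\ref{prop:ascispi} already furnishes a filtered short exact sequence
\[
0 \longrightarrow \tT^{\geq 1}(\circleSkel^k\, \botSkel^\ell) \longrightarrow \C\tT(\circleSkel^k\,\botSkel^\ell) \xrightarrow{\;\beta\;} \Cpa^{\otimes k}\otimes \Cp^{\otimes \ell} \longrightarrow 0,
\]
where the filtration on the right-hand term is the tensor product of the $\calI$-adic filtrations, and on the left is the $s$-filtration inherited from the middle.

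The key observation is that both of these outer filtrations are \emph{induced} from the total filtration on the middle term.  For the left, this is by definition: $\tT^{\geq 1}$ consists of tangles with at least one strand-strand double point (or framing change), and intersecting with $\tT_n$ recovers exactly the $s$-filtration step inside $\tT_n$.  For the right, the proof of Proposition~\ref{prop:BotProj} (extended in the obvious way to circles and bottom intervals) shows that a single pole-strand double point on pole $j$ projects to a factor $(\gamma_j^{\pm 1}-1)\in \calI$, so a tangle with $n$ strand-pole double points maps into the $n$-th step of the tensor product $\calI$-adic filtration, and conversely every element of that step is hit.

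With both outer filtrations induced from the middle, Lemma~\ref{lem:ExaxtGr} applies to yield an exact associated graded sequence
\[
0 \longrightarrow \tA^{\geq 1}(\circleSkel^k\, \botSkel^\ell) \longrightarrow \tA(\circleSkel^k\, \botSkel^\ell) \xrightarrow{\gr\beta} \gr(\Cpa^{\otimes k}\otimes \Cp^{\otimes \ell}) \longrightarrow 0.
\]
Invoking Magnus's theorem (as recalled in Section~\ref{sec:gr_bialgebra}) on each tensor factor identifies the right-hand term with $|\hAs|^{\otimes k}\otimes \hAs^{\otimes \ell}$, giving the desired isomorphism $\tA^{/1}(\circleSkel^k\, \botSkel^\ell) \cong |\hAs|^{\otimes k}\otimes \hAs^{\otimes \ell}$.

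No step here is genuinely hard: the only thing to verify carefully is that the filtrations on the outer terms really are induced from the middle (the mild issue being that pole-strand double points are the only ones that survive modulo $s$-chords, and the Magnus isomorphism matches exactly this count).  As a sanity check, one can describe $\gr\beta$ diagrammatically exactly as in Remark~\ref{rem:ChorsOnPoles}: in $\tA^{/1}$ all $s$-chords vanish and all admissible $4T$ relations become trivial, so chord endings on each pole commute freely; reading labels along a circle skeleton component yields a cyclic word in $|\As|$, and reading along a bottom interval (composed with $\nu$ as in Proposition~\ref{prop:ascispi}) yields an associative word in $\As$.  The special case $\gr\beta:\tA^{/1}(\botSkel)\xrightarrow{\cong}\hAs$ is then immediate.
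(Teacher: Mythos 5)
Your proposal is correct and follows essentially the same route the paper takes: the paper derives Proposition~\ref{prop:gr_beta_bot_tangle} by the same mechanism as Proposition~\ref{prop:gr_beta} (applying $\gr$ to the filtered short exact sequence from Proposition~\ref{prop:ascispi} and invoking Lemma~\ref{lem:ExaxtGr}, with the Magnus identification of the associated graded of each tensor factor), merely leaving the details to ``inspection.'' Your write-up fills in exactly those details, including the diagrammatic description of $\gr\beta$ matching Remark~\ref{rem:ChorsOnPoles}, so there is nothing to correct.
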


We also extend the statements about multiplication and division by $b$ to the context of mixed skeleta:

\begin{prop}\label{prop:qbonbottomtangles} The map $\widehat{b}$ descends to $\C$-linear isomorphism
\[\widehat{b}:\tT^{/1}(\circleSkel \, \botSkel)\xrightarrow{\cong} \tTnab^{1/2}(\botSkel),\]
with inverse map given by $\widecheck{b}$, division by $b$.
\end{prop}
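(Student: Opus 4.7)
The plan is to adapt the proof of Proposition~\ref{prop:divbybexists} and Lemma~\ref{lem:mbOnCircle} to the skeleton $\botSkel$. By Corollary~\ref{cor:divbyb}, $\widehat{b}: \tT^{/1} \to \tTnab^{1/2}$ is already a $\C$-linear isomorphism with inverse $\widecheck{b}$, so injectivity of the restriction to $\tTnab^{/1}(\circleSkel\,\botSkel)$ is automatic. What remains is (i) to show the image of this restriction lies in $\tTnab^{1/2}(\botSkel)$; (ii) to establish surjectivity onto $\tTnab^{1/2}(\botSkel)$; and (iii) to confirm that $\widecheck{b}$ restricts appropriately to provide the inverse.

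For (i), given $T \in \tT(\circleSkel\,\botSkel)$, I would isotope $T$ so that the circle component $C$ has a segment running parallel to (and oriented the same way as) a segment of the interval component $I$. Replacing the two parallel strands in this local region by a crossing $\pcross$ joins $C$ and $I$ into a single interval strand, producing a bottom tangle $S_+ \in \tT(\botSkel)$; inserting $\ncross$ instead yields $S_- \in \tT(\botSkel)$. The oriented smoothing at the inserted crossing recovers $T$, so by the Conway relation $bT = S_+ - S_-$, and hence $bT$ represents an element of $\tTnab^1(\botSkel)$ that descends to $\tTnab^{1/2}(\botSkel)$.

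For (ii), every element of $\tTnab^{1/2}(\botSkel)$ is represented by a $\C$-linear combination of bottom tangles each carrying one strand double point or framing change (the analog of the $T_i$ terms in Proposition~\ref{prop:divbybexists}). The $bT_j'$ terms that appear in the general proof of Proposition~\ref{prop:divbybexists} do not contribute here, because after Conway simplification they land in $\C\tTnab(\circleSkel\,\botSkel)$ rather than $\C\tTnab(\botSkel)$. For each $T_i$ with a self-crossing double point, the Conway relation gives $T_i = b\,T_i^C$ where $T_i^C$ is the oriented smoothing; since $T_i$ is a single oriented interval, the oriented smoothing of a self-crossing necessarily splits a sub-arc off into a closed loop while leaving an interval with the original endpoints. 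Hence $T_i^C \in \tT(\circleSkel\,\botSkel)$ and $T_i = \widehat{b}(T_i^C)$. Framing-change generators pose no additional difficulty, being treatable via the standard identity expressing a framing change as a double point modulo higher $s$-filtration.

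The main obstacle is the orientation check in (ii): one must confirm that the oriented smoothing of every self-crossing of a bottom tangle yields exactly the skeleton $\circleSkel\,\botSkel$, never remaining $\botSkel$ or splitting further. This is the topological fact that makes the map here an isomorphism, in contrast to Lemma~\ref{lem:mbOnCircle}, where surjectivity fails precisely because smoothings of two-component links can produce three-component links outside the image. Given (ii), the inverse claim (iii) follows from Corollary~\ref{cor:divbyb}: on a bottom tangle with a single double point, $\widecheck{b}$ reduces to the oriented smoothing of that double point, which lies in $\tT(\circleSkel\,\botSkel)$; thus $\widecheck{b}$ restricts to a well-defined map $\tTnab^{1/2}(\botSkel) \to \tTnab^{/1}(\circleSkel\,\botSkel)$, and the global inverse relation from Corollary~\ref{cor:divbyb} makes it the inverse of $\widehat{b}$ on the restriction.
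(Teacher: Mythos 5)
Your proposal is correct and follows essentially the same route as the paper: both reduce to the global isomorphism of Corollary~\ref{cor:divbyb} and then track the skeleton change via the Conway relation, with the un-smoothing merging the circle and interval components for containment of the image. You additionally spell out the surjectivity step the paper leaves implicit --- that the oriented smoothing of any self-crossing of a bottom tangle yields exactly the skeleton $\circleSkel\,\botSkel$ --- which is the right key observation and strengthens the write-up.
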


    \begin{proof} 
 From Corollary \ref{cor:divbyb}, we know $\widehat{b}$ is a $\C$-linear isomorphism  $\tT^{/1}\xrightarrow{\cong} \tTnab^{1/2}$, so we only need to address  the change in skeleton. The quotient $\tT^{/1}(\circleSkel \, \botSkel)$ is generated by classes of tangle diagrams $D$ with skeleton consisting of one circle and one bottom-to-bottom interval component. After multiplication by $b$, $b\cdot D$ is equivalent via the Conway relation to a tangle with one double point in $\tT(\botSkel)$, as the un-smoothing combines the two skeleton components.
\end{proof}

Next, we recover the self intersection map $\mu$, in the context of tangles, as the connecting homomorphism induced  from the difference between two ways to lift a bottom tangle.

Let $\bullet$ and $*$ be two ``nearby'' points on the boundary of $D_p$, that is, $*$ is obtained by shifting $\bullet$ slightly forwards along the boundary orientation, as shown in Figure~\ref{fig:ascending}. We will obtain a homomorphic expansion for the Turaev cobracket by computing the Kontsevich integrals of one-strand tangles which start at $\bullet$ and end at $*$. For this purpose, we will assume that $\bullet$ and $*$ are {\em arbitrarily close}: technically, we take the limit of Kontsevich integrals as $*$ approaches $\bullet$ backwards along the boundary. We suppress this in the notation, and write simply $\glosm{ZT}{Z(T)}$ when we mean $\displaystyle{\lim_{\bullet \leftarrow *} Z(T)}$.

\begin{definition}\label{def:asc+desc}
An embedding \[T: (I, \{0,1\}) \hookrightarrow (M_p, \{\bullet, *\} )\] (representing a bottom tangle) is called \emph{ascending} if it first ascends monotonically from $\bullet$, and then goes \emph{straight} down to $*$. More precisely, if $(z, s)$ is a global coordinate system for $M_p = D_p \times I$, then $T$ is an ascending tangle if there exists $c \in (0,1)$ such that when $t\in (0,c)$, the $\frac{d}{ds}$ component of $\dot{T}$ is positive; when $t \in (c+\epsilon,1)$, $\dot{T}$ is a negative constant multiple of $\frac{d}{ds}$; and when $t\in (c, c+\epsilon)$, $T$ smoothly transitions through a maximum.

Likewise, an embedding $T$ is \emph{descending} if it first goes straight up from $\bullet$, then monotonically descends to $*$. This can also be made precise as above. 
See Figure ~\ref{fig:ascending} for examples.
\end{definition}

\begin{definition}
An \emph{ascending tangle} is a bottom tangle in $M_p$ whose ambient isotopy class has an ascending embedding. Similarly, a  \emph{descending tangle} is a bottom tangle in $M_p$ whose ambient isotopy class has a descending embedding. 
\end{definition}

In the bottom projection, an ascending embedding will traverse each of its crossings on the under strand first, and on the over strand later. A descending embedding will traverse each crossing on the over strand first.

\medskip

To recover $\mu$, we need to define framed versions of the ascending and descending lifts.
Given a curve $\gamma$ in the fundamental group $\pi=\pi_1(D_p,*)$, $\gamma$ has a unique lift $\tilde{\gamma}$ in $\tpi=\tpi_{\bullet *}$ with the property that $\tilde{\gamma}\cdot \nu$ has rotation number zero, where $\nu$ is a path along the boundary from $*$ to $\bullet$, as in Figure~\ref{fig:DP}.

Up to isotopy, the curve $\tilde{\gamma}$ has a unique framed ascending tangle with writhe $+1$. Denote this framed ascending lift by $\glosm{lambdaa}{\lambda_a(\gamma)}$. Similarly, up to isotopy there is a unique framed descending lift of $\tilde{\gamma}$ with writhe $-1$, denoted $\glosm{lamdad}{\lambda_d(\gamma)}$.

To construct a diagram of $\lambda_a(\gamma)$ in the bottom projection, turn each self intersection of $\gamma$ into a crossing so that each crossing is traversed on the under-strand first: see Figure~$\ref{fig:FramedAscDesc}$. We claim that this diagram always has the correct write, $+1$, as follows. Recall that the rotation number of $\tilde{\gamma}\nu$ is zero. By Whitney's formula \cite{Whit}, the algebraic self-intersection number of $\tilde{\gamma}$ is one more than the rotation number: in this case, it is 1. In turn, in constructing an ascending diagram we have turned positive (respectively, negative) self intersections to positive (respectively, negative) crossings, hence the writhe equals the algebraic self intersection number. Thus, the writhe is $+1$, as required.

Similarly, to construct a diagram of $\lambda_d(\gamma)$ in the bottom projection, turn each self intersection of $\gamma$ into a crossing so that each crossing is traversed on the over-strand first: see the bottom line of Figure~$\ref{fig:FramedAscDesc}$. By a similar Whitney argument, this diagram always has writhe $-1$, as required. Diagrammatically, in the bottom projection one obtains the descending lift from the ascending lift by changing all (strand-strand) crossings of the tangle diagram.

When using different projection planes such as the back wall, one must take care to preserve the framed isotopy class of the tangle: in other words, to maintain the same write. In the back projection of an ascending lift, this means that a positive kink is the only strand-strand crossing: see on the top right in Figure~\ref{fig:FramedAscDesc}. In the back wall projection of a descending lift, there is a single negative strand-strand crossing, as shown in the bottom right of the same figure.

\begin{figure}
\begin{picture}(300,190)
    \put(0,15){\includegraphics[width=10cm]{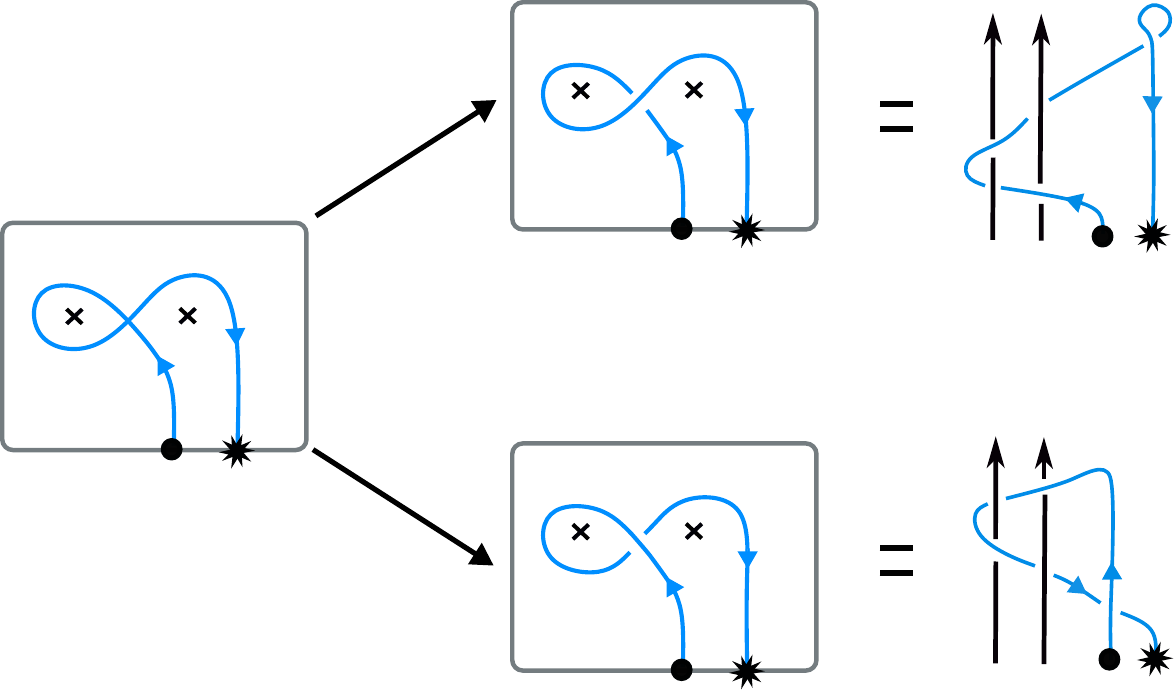}}
    \put(135,110){ascending}
    \put(135,98){writhe =1}
    \put(135,5){descending}
    \put(135,-8){writhe = -1}
    \put(60,90){$\tilde{\gamma}$}
    \put(287,40){$\lambda_d(\gamma)$}
    \put(287,150){$\lambda_a(\gamma)$}
\end{picture}
\caption{The framed ascending and descending lifts of a curve.}\label{fig:FramedAscDesc}
\end{figure}

Via the isomorphism $\beta$, $\lambda_a$ and $\lambda_d$ can be seen as maps $\tT^{/1}(\botSkel)\rightarrow \tT^{/2}(\botSkel)$.
Let $\glosm{lambdabar}{\bar{\lambda}}:\tT^{/1}(\botSkel)\rightarrow \tT^{/2}(\botSkel)$ denote the difference 
\[\bar{\lambda}(\gamma)=\lambda_a(\gamma)-\lambda_d(\gamma).\]

We are now ready to recover the self-intersection map $\mu:\Cp\to \Cpa\otimes \Cp$ (from Definition~\ref{def:mu}). Let $\glosm{q}{q}$ be the projection map from $\tT_\nab^{/2}(\botSkel)$ to $\tT^{/1}(\botSkel)$

\begin{thm}\label{thm:snake_for_mu}
    The map $\lambda=\bar{\lambda} \circ q$  induces a unique map 
    \[\hat{\eta}: \tT^{/1}(\botSkel) \to \tT^{/1}(\circleSkel\,\botSkel)\] 
    in the sense of the commutative diagram of Figure~\ref{fig:Snakeformu}. The map $\hat{\eta}$ agrees with the self-intersection map \[\mu:\Cp\to \Cpa\otimes \Cp\] under the identification $\beta: \tT^{/1}(\circleSkel \, \botSkel) \xrightarrow{\cong} \Cpa \otimes \Cp$. That is, \[\mu=\beta \circ\hat{\eta} \circ\beta^{-1}.\] 
\end{thm}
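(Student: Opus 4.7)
The plan is to follow the same strategy as the proof of Theorem~\ref{thm:bracketsnake} for the Goldman bracket, with the two-sided stacking commutator replaced by the difference between the ascending and descending lifts, and the framing-correction kink producing the extra $1\otimes\id$ term.

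First I would verify the commutativity of the two squares of Figure~\ref{fig:Snakeformu}. Commutativity of the left square is automatic: by construction $\lambda = \bar{\lambda}\circ q$ factors through $q:\tTnab^{/2}(\botSkel)\to \tT^{/1}(\botSkel)$, which kills $\tTnab^{1/2}(\botSkel)$, so $\lambda$ vanishes on the kernel of the top right projection. Commutativity of the right square amounts to showing that $\bar{\lambda}(\gamma)$ has trivial image in $\tT^{/1}(\botSkel)$ for $\gamma \in \tT^{/1}(\botSkel)$. This holds because both $\lambda_a(\gamma)$ and $\lambda_d(\gamma)$ are lifts of the same curve $\gamma\in \Cp$, so their difference lies in the kernel of $\beta$, which by Proposition~\ref{prop:ascispi} equals $\tT^{\geq 1}(\botSkel)$. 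Hence by the discussion of Section~\ref{sec:conceptsum}, $\lambda$ induces a unique map $\eta:\tT^{/1}(\botSkel)\to \tTnab^{1/2}(\botSkel)$, and composing with $\widecheck{b}:\tTnab^{1/2}(\botSkel)\xrightarrow{\cong}\tT^{/1}(\circleSkel\,\botSkel)$ from Proposition~\ref{prop:qbonbottomtangles} yields the desired map $\hat{\eta}$.

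Next I would compute $\hat{\eta}$ explicitly. As indicated in Figure~\ref{fig:asc_to_desc_by_xing_change}, the descending lift $\lambda_d(\gamma)$ differs from the ascending lift $\lambda_a(\gamma)$ by exactly one crossing change at each transverse self-intersection of $\tilde\gamma$, plus one additional ``framing-correction'' crossing change to equalise writhes. Expanding $\lambda_a(\gamma)-\lambda_d(\gamma)$ as a telescopic sum as in Equation~\eqref{eq:Telescope}, I obtain a signed sum of bottom tangles each with a single strand-strand double point, indexed by the self-intersections of $\tilde\gamma$ together with the framing kink. Applying the Conway relation to each double point introduces a factor of $b$ and smooths the crossing: at a genuine self-intersection $p$ this splits the bottom interval into a circle component (the subloop between the two passages through $p$) and a residual bottom interval, with overall sign $-\varepsilon_p$; at the framing-correction kink, the smoothing pinches off a contractible circle representing the constant loop $\mathbb{1}$, attached alongside an interval regularly isotopic to $\tilde\gamma$ itself. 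Dividing by $b$ via $\widecheck{b}$ and applying $\beta$, the self-intersection contributions reproduce $-\varepsilon_p|\tilde\gamma_{t^p_1 t^p_2}|\otimes \tilde\gamma_{0 t^p_1}\tilde\gamma_{t^p_2 1}$, matching Definition~\ref{def:mu}, while the framing term contributes $-\mathbb{1}\otimes\gamma$, which is exactly the correction accounted for by the $+\mathbb{1}\otimes\id$ on the right-hand side of the claimed identity $\mu = \beta\circ \hat{\eta}\circ \beta^{-1} + \mathbb{1}\otimes\id$.

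The main obstacle will be the careful bookkeeping of signs, since these must match the sign conventions in Definition~\ref{def:mu}: one must check that the sign of the crossing change used in the telescopic expansion aligns with the local self-intersection number $\varepsilon_p$, and that the framing-correction kink yields $-\mathbb{1}\otimes\gamma$ (and not $+\mathbb{1}\otimes\gamma$) so that it genuinely cancels the $+\mathbb{1}\otimes\id$ appearing in the statement. A minor secondary point is verifying that after smoothing the framing kink one can indeed regularly isotope the remaining interval to the ascending lift of $\gamma$ without changing its class in $\tT^{/1}(\botSkel)$; this follows because the smoothed diagram differs from $\lambda_a(\gamma)$ only by further strand-strand crossing changes, which are trivial modulo $\tT^{\geq 1}$.
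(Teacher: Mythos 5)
Your proposal is correct and follows essentially the same route as the paper's proof: verify the two squares of Figure~\ref{fig:Snakeformu} commute (the left by exactness of the top row, the right because both lifts project back to $\gamma$), obtain $\hat{\eta}=\widecheck{b}\circ\eta$ via Proposition~\ref{prop:qbonbottomtangles}, and then identify $\hat{\eta}$ with $\mu$ by a telescopic expansion over the crossing changes relating $\lambda_a(\gamma)$ to $\lambda_d(\gamma)$, applying the Conway relation at each resulting double point, with the framing-correction kink supplying the $-1\otimes\gamma$ term cancelled by $+1\otimes\id$. The sign bookkeeping you flag as the main remaining task is exactly the point the paper also dispatches as "a straightforward check."
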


\begin{figure}
\begin{tikzcd}
&\tTnab^{1/2}(\botSkel) \arrow[r]\arrow[d, "0",swap]  & \tTnab^{/2}(\botSkel) \arrow[r,"q"] & \tT^{/1}(\botSkel)\arrow[d,"0"] \arrow[r]\arrow[dl,"\bar{\lambda}", swap] \arrow[ddll,near end, dashed, out=-120, in=0, looseness=.6 ,"\hat{\eta}"] \arrow[dll,swap, dashed, out=110, in=60, looseness=1 ,"\eta"]
&0\\
0\arrow[r]& \tTnab^{1/2}(\botSkel ) \arrow[r] \arrow[d,"\widecheck{b}"] 
& \tTnab^{/2}(\botSkel )\arrow[from=u,"\lambda=\bar{\lambda}\circ q",swap,  crossing over] \arrow[r]
& \tT^{/1}(\botSkel ) \\
&\tT^{/1}(\circleSkel \, \botSkel)
\end{tikzcd}
\caption{The nontrivial horizontal maps are the respective quotient maps, and $q$ is one such quotient map.}\label{fig:Snakeformu}
\end{figure}

\begin{proof}
We first show that the diagram of Figure~\ref{fig:Snakeformu} commutes. The commutativity of the left square is immediate from the exactness of the top row. The right side square is split into two triangles: of these the top one commutes by definition. The commutativity of the bottom triangle, that is, the fact that the post-composition of $\bar{\lambda}$ with the projection to $\tT^{/1}(\botSkel)$ is the zero map, is also straightforward, as follows. For a curve $\gamma$ in  $\tT^{/1}(\botSkel)\cong \Cp$, the map $\bar{\lambda}$ is the difference of two lifts of $\gamma$ to bottom tangles. When these lifts are subsequently projected back to $\tT^{/1}(\botSkel)$, they both project to $\gamma$, hence, their difference is 0. 

 Thus, as in Section~\ref{sec:conceptsum}, diagram \eqref{eq:inducedconnhom}, $\lambda$ induces a unique well defined  homomorphism $\eta:\tT^{/1}(\botSkel)\rightarrow \tTnab^{1/2}(\botSkel)$. By Proposition~\ref{prop:qbonbottomtangles}, the division by $b$ map $\widecheck{b}$ restricts to an isomorphism $\widecheck{b}: \tTnab^{1/2}(\botSkel) \to \tT^{/1}(\circleSkel \, \botSkel)$. The map $\hat{\eta}:\tT^{/1}(\botSkel)\rightarrow \tT^{/1}(\circleSkel \, \botSkel)$ is the composition $\hat{\eta}=\widecheck{b}\circ \eta$.

It remains to show that $\mu=\beta\circ \hat{\eta} \circ \beta^{-1}$. Given a curve $\gamma \in \Cpa$, the value of $\beta\circ \hat{\eta} \circ \beta^{-1}$ is calculated as follows: $\gamma$ is lifted to a curve of rotation number zero in $\tpi=\tpi_{\bullet *}$, and, in turn, interpreted as an element in $\tT^{/1}(\botSkel)$. The map $\bar{\lambda}$ is applied to this framed curve, to obtain a difference of tangles in $\tTnab^{1/2}(\botSkel)$. This value is divided by $b$, and interpreted as a loop and a curve in $\Cpa \otimes \Cp$.

Let $\gamma$ be a curve in $\tT^{/1}(\botSkel)\cong \Cp$, and let $\lambda_a(\gamma)=T_a$ be the framed ascending lift of $\gamma$  and $\lambda_d(\gamma)=T_d$ the framed descending lift. Then $\bar{\lambda}(\gamma)=T_a-T_d$. Denote the bottom projection of $T_a$ by $D_a$, and the bottom projection of $T_d$ by $D_d$. In particular, $D_d$ is obtained from $D_a$ by flipping all crossings.

As in the proof of Theorem~\ref{thm:bracketsnake}, number the crossings to be flipped from 1 to $r$. Let $D_i$ denote the link diagram where the first $i$ of the crossings have been flipped. Specifically, $D_0=D_a$ and $D_r=D_d$. Then $D_0-D_r$ can be written as a telescopic sum:
    \begin{equation}\label{eq:Telescope2}
    D_0-D_r=(D_0-D_1)+(D_1-D_2)+...+(D_{r-1}- D_r).
    \end{equation}

In the sum \eqref{eq:Telescope2} each term $(D_i-D_{i+1})$ contains one (signed) double point corresponding to a self-intersection of $\gamma$. We apply the Conway relation ($\doublepoint=b \cdot \upupsmoothing$) at these double points. A straightforward check shows that the sign arising from the crossing signs matches the sign $-\varepsilon_p$ in the definition of $\mu$ (Definition~\ref{def:mu}).  Thus, dividing by $b$ and reinterpreting via $\beta$ coincides with the value of $\mu(\gamma)$, as required. See Figure~\ref{fig:CobracketCalc} for an example. 
\end{proof}

\begin{figure}
    \centering
    \begin{picture}(380,150)
   \put(0,0){ \includegraphics[width=13cm]{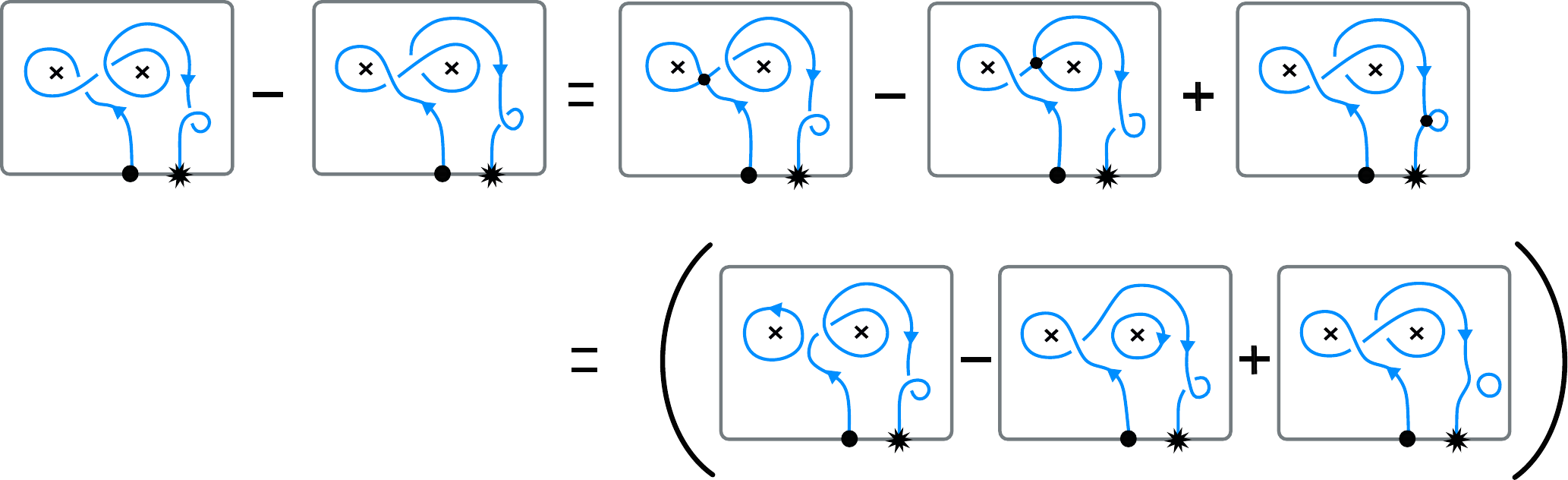}}
   \put(135,36){\Large{$\nabla$}}
   \put(150,25){\Large{$b$}}
\end{picture}
    \caption{An example computation of the map $\hat{\eta}$: at the top left is the difference of the writhe 1 ascending lift and the writhe -1 descending lift of a curve.}
    \label{fig:CobracketCalc}
\end{figure}

As with the Goldman bracket, the associated graded version of Theorem~\ref{thm:snake_for_mu} follows:

\begin{cor}\label{cor:grmu}
The diagram in Figure~\ref{fig:Snakefor_gr_cobracket} commutes, the rows are exact, $\gr \eta$ is the induced homomorphism, and $\gr \mu =\gr \beta \circ \gr \hat{\eta}\circ (\gr \beta)^{-1}$.
\end{cor}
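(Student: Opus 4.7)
The plan is to mirror the proof of Corollary~\ref{cor:snakefor_gr_bracket}, applying the associated graded functor (with respect to the total $t$-filtration) to the commutative diagram of Theorem~\ref{thm:snake_for_mu}, and then handling the single new wrinkle: the framing correction in the equality $\mu = \beta \circ \hat{\eta} \circ \beta^{-1} + 1\otimes \id$.

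First I would verify the hypotheses of Corollary~\ref{cor:gr_induced_is_unique} for the diagram in Figure~\ref{fig:Snakeformu}: all arrows are $t$-filtered, the rows are exact, and the $t$-filtrations on $\tTnab^{1/2}(\botSkel)$ and $\tT^{/1}(\botSkel)$ are induced from that on $\tTnab^{/2}(\botSkel)$. The horizontal maps are natural projection/inclusion maps coming from the $s$-filtration, which is compatible with the $t$-filtration on $\ctT_\nab$; the difference map $\bar\lambda$ preserves the $t$-filtration since each of the framed lifts $\lambda_a, \lambda_d$ introduces no double points or framing changes; and the division-by-$b$ map $\widecheck{b}$ is a $t$-filtered isomorphism whose associated graded is the division-by-$a$ isomorphism $\widecheck{a}$ by Proposition~\ref{rem:grdivbyb}. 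Corollary~\ref{cor:gr_induced_is_unique} then gives exactness of the rows of the associated graded diagram in Figure~\ref{fig:Snakefor_gr_cobracket} and identifies the induced connecting homomorphism as $\gr\eta$. By functoriality of $\gr$ applied to the factorisation $\hat\eta = \widecheck{b}\circ \eta$, we obtain $\gr\hat\eta = \widecheck{a}\circ \gr\eta$, which in particular is the unique induced map of the graded diagram.

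For the identification $\gr\mu = \gr\beta\circ \gr\hat\eta \circ (\gr\beta)^{-1}$, I would invoke the equality $\mu = \beta\circ \hat\eta \circ \beta^{-1} + 1\otimes \id$ from Theorem~\ref{thm:snake_for_mu}. As recalled in Section~\ref{sec:gr_bialgebra}, $\mu$ is filtered of degree $-1$, whereas the framing term $1\otimes \id$ is filtration-preserving: if $\gamma \in \calI^k$ then $1\otimes \gamma \in |\calI^0|\otimes \calI^k$, of total filtration degree $k$ rather than $k-1$. Hence the framing term lies strictly in higher filtration order than the main term $\beta\circ \hat\eta\circ \beta^{-1}$, and vanishes in the associated graded. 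Combining functoriality of $\gr$ with the identification $\gr\beta$ of Proposition~\ref{prop:gr_beta_bot_tangle} yields the claimed equality. The only subtlety here is the careful bookkeeping of filtration degrees for the framing correction — essentially the same reason the analogous $|\gamma|\wedge 1$ term in the enhanced cobracket does not appear in the graded formulas of Propositions~\ref{prop:gr_mu} and \ref{prop:gr_del} — and everything else is a formal consequence of the naturality framework of Section~\ref{sec:conceptsum}.
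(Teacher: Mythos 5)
Your proposal is correct and follows essentially the same route as the paper: the commutativity and exactness are obtained by applying Corollary~\ref{cor:gr_induced_is_unique} exactly as in Corollary~\ref{cor:snakefor_gr_bracket}, and the identification of $\gr\mu$ reduces to observing that the framing term $1\otimes\id$ is of filtered degree $0$ (while $\mu$ shifts degree by $-1$) and therefore vanishes in the associated graded. Your more explicit bookkeeping of the filtration degrees of the framing correction is just an expanded version of the paper's one-line justification.
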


\begin{proof}
The commutativity of the diagram (Figure~\ref{fig:Snakefor_gr_cobracket}) and the exactness of the rows follow from general principles in exactly the same way as Corollary~\ref{cor:snakefor_gr_bracket}. The rest is immediate.
\end{proof}

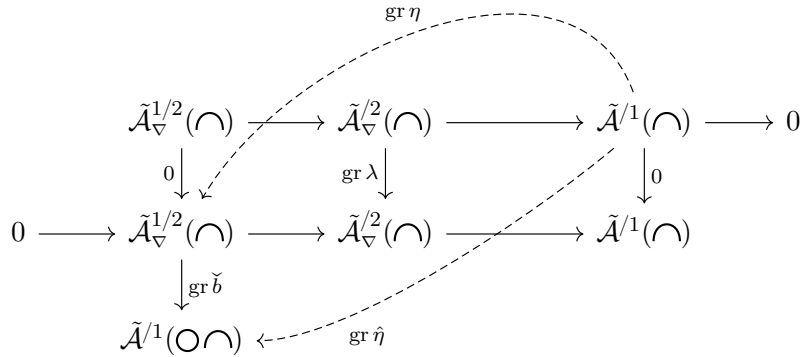
\begin{figure}
\begin{tikzcd}[]
&\tA_{\nab}^{1/2}(\botSkel) \arrow[r]\arrow[d, "0",swap]  & \tA_{\nab}^{/2}(\botSkel) \arrow[rr] & &\tA^{/1}(\botSkel)\arrow[d,"0"] \arrow[ddlll, dashed, out=-140, in=0, looseness=.6 ,"\gr\hat{\eta}", near end]\arrow[dlll,swap, dashed, out=110, in=60, looseness=1 ,"\gr\eta"] \arrow[r]& 0\\
0\arrow[r]& \tA_{\nab}^{1/2}(\botSkel ) \arrow[r] \arrow[d,"\gr \widecheck{b}"] 
& \tA_{\nab}^{/2}(\botSkel )\arrow[from=u,"\gr\lambda",swap,  crossing over] \arrow[rr]
&& \tA^{/1}(\botSkel )\\
&\tA^{/1}(\circleSkel \, \botSkel) \\
\end{tikzcd}
\caption{Associated graded diagram constructing the graded self-intersection map.}\label{fig:Snakefor_gr_cobracket}
\end{figure}

It is necessary for proving the formality statement -- that is, the compatibility of the Kontsevich integral with the bracket and cobracket -- to also have a concrete understanding of the associated graded map of $\lambda$. Recall from Remark  \ref{rem:ChorsOnPoles} and Proposition~\ref{prop:gr_beta_bot_tangle} that in $\tA^{/1}$ chord endings commute on the poles: this gives the isomorphism $\gr \beta: \tA^{/1}(\botSkel)\to \As$.

\begin{lem}\label{lem:GradedAscDesc}
Given a chord diagram $D\in \tA^{/1}(\botSkel)$, the map $\gr \lambda_a: \tA^{/1} \to \tA^{/2}_\nab$ orders the chord endings of $D$ in an ascending order along the poles, that is, the ordering along the poles match the ordering along the strand. Similarly, $\gr \lambda_d: \tA^{/1} \to \tA^{/2}_\nab$ orders the chord endings of $D$ along the poles in a descending order, that is, opposite to the ordering along the strand.
\end{lem}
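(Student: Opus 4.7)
The plan is to exhibit, for each chord diagram $D \in \tA^{/1}(\botSkel)$, an explicit ascending tangle representative $T_D^{\mathrm{asc}}$, and then verify that $\lambda_a \circ \beta$ reconstructs it. The ascending geometry of $T_D^{\mathrm{asc}}$ will then make the claimed pole ordering visible directly. Let $D$ correspond under $\gr\beta$ (Proposition~\ref{prop:gr_beta_bot_tangle}) to the word $w = x_{i_1} \cdots x_{i_n}$, with chord endings labelled in strand order from $\bullet$ to $*$. Define $T_D^{\mathrm{asc}}$ to be the ascending bottom tangle whose strand rises monotonically from $\bullet$ with $n$ pole-strand double points $\dpcross$: the $j$th double point sits at height $h_j$ on pole $p_{i_j}$ with $h_1 < h_2 < \cdots < h_n$, after which the strand descends straight to $*$. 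By construction $T_D^{\mathrm{asc}} \in \tT_n(\botSkel)$, the class $[T_D^{\mathrm{asc}}]$ equals $D$ in $\tA^{/1}(\botSkel)$, and chord endings on each pole sit at ascending heights matching the strand order.

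Expanding $\beta$ over the $n$ pole-strand double points as in the proof of Proposition~\ref{prop:BotProj} yields
\[
\beta(T_D^{\mathrm{asc}}) = (\gamma_{i_1} - 1)(\gamma_{i_2} - 1) \cdots (\gamma_{i_n} - 1) \in \calI^n \subseteq \C\pi.
\]
By $\C$-linearity of $\lambda_a$,
\[
\lambda_a\bigl(\beta(T_D^{\mathrm{asc}})\bigr) = \sum_{S \subseteq \{1, \ldots, n\}} (-1)^{n - |S|}\, \lambda_a\!\left(\prod_{j \in S} \gamma_{i_j}\right),
\]
and each summand is, by definition, the framed ascending tangle that winds monotonically around the poles $p_{i_j}$ for $j \in S$ in strand order before descending straight to $*$. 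This summand is exactly the resolution of $T_D^{\mathrm{asc}}$ obtained by taking the ``winding'' side of $\dpcross = \pcross - \ncross$ at positions $j \in S$ and the ``non-winding'' side elsewhere; summing with the signs $(-1)^{n-|S|}$ reassembles $T_D^{\mathrm{asc}}$. Hence $\lambda_a(\beta(T_D^{\mathrm{asc}})) = T_D^{\mathrm{asc}}$ in $\tTnab^{/2}(\botSkel)$, so $\gr\lambda_a(D) = [T_D^{\mathrm{asc}}]$ in $\tA_{\nab}^{/2}(\botSkel)$, which by construction has chord endings on each pole in ascending order matching the strand order.

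The argument for $\gr\lambda_d$ is identical after replacing $T_D^{\mathrm{asc}}$ by its descending counterpart $T_D^{\mathrm{desc}}$: the strand first rises straight from $\bullet$, then descends while meeting pole $p_{i_j}$ (in strand order) with a double point at height $h_j$, now with $h_1 > h_2 > \cdots > h_n$, so that chord endings on each pole sit in descending order relative to the strand. The same expansion-and-reassembly computation gives $\gr\lambda_d(D) = [T_D^{\mathrm{desc}}]$, proving the descending ordering claim.

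The main subtlety lies in bookkeeping of framings. Both $\lambda_a$ and $\lambda_d$ normalize their outputs to writhe $1$ via framing kinks (Figure~\ref{fig:FramedAscDesc}), which in chord-diagram terms contribute strand-strand chord terms living in $\tA_{\nab}^{1/2}$. Because the writhe is fixed across all summands $\lambda_a(\prod_{j \in S} \gamma_{i_j})$, these contributions are constant in $S$, factor out of the alternating sum $\sum_S (-1)^{n - |S|}$, and vanish for every $n \geq 1$; hence they do not perturb the pole-ordering statement of the lemma.
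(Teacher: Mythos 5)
Your proof is correct and takes essentially the same route as the paper's, which is the one-line argument ``choose a singular tangle $T_D$ representing $D$ and inspect the chord diagram of $\lambda_a(T_D)$'': your choice of an already-ascending singular representative $T_D^{\mathrm{asc}}$, so that $\lambda_a\circ\beta$ reproduces it, is that same observation made explicit via inclusion--exclusion over resolutions. One quibble with your final paragraph: the framing corrections are \emph{not} constant in $S$ --- each summand $\lambda_a(\prod_{j\in S}\gamma_{i_j})$ differs from the bare resolution by a term of the form (that resolution)$\,\cdot\tfrac{t}{2}$, which depends on $S$ --- so they do not cancel out of the alternating sum as you claim. They are harmless for a different reason: each such correction carries an extra strand--strand chord, hence lives in total degree $n+1$ and does not contribute to $\gr\lambda_a(D)$, which is the degree-$n$ part of $[\lambda_a(T_D^{\mathrm{asc}})]$; and in any case a strand--strand chord cannot affect the ordering of chord endings on the poles, which is all the lemma asserts.
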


\begin{proof}
    This is immediate from the definition of the associated graded map, by choosing a singular tangle $T_D$ representing $D$, and inspecting the chord diagram representing $\lambda_a(T_D)$ (respectively, $\lambda_d(T_D)$).
\end{proof}

Recall from Section~\ref{subsec:IntroGT} that the Turaev cobracket $\delta: \Cpa \to \Cpa \otimes \Cpa$ is constructed from $\mu: \Cp \to \Cpa \otimes \Cp$ by post-composing $\mu$ with the trace map $\Cp \to \Cpa$ in the second component and antisymmetrising (using $Alt(x\otimes y)=x\otimes y - y\otimes x$). The composition \[\tilde{\delta}=Alt\circ (1 \otimes |\cdot|) \circ \mu :\Cp \to \Cpa \otimes \Cpa\] descends to the Turaev cobracket $\delta: \Cpa \to \Cpa \otimes \Cpa $, as in Definition~\ref{def:cobrac}.

We mimic this construction in the context of tangle diagrams  by post-composing $\hat{\eta}$ with the closure map $\glosm{cl}{cl}:\tT(\botSkel)\rightarrow \tT(\circleSkel)$ on the open component, followed by anti-symmetrising, as shown in the diagram \eqref{eq:CobracketSteps}. 
\begin{equation}\label{eq:CobracketSteps}
\begin{tikzcd}[]
\tT^{/1}(\botSkel) \arrow[rr, "\hat{\eta}"]\arrow[rrd,"\hat{\zeta}"]\arrow[rrdd,"\tilde{\zeta}",swap]\arrow[dd, "cl",swap]  & & \tT^{/1}(\circleSkel \, \botSkel) \arrow[d, "cl"]\\
& &\tT^{/1}(\circleSkel)\otimes \tT^{/1}(\circleSkel)\arrow[d,"Alt"] \\
\tT^{/1}(\circleSkel)\arrow[rr,dashed, "\zeta"]& & \tT^{/1}(\circleSkel)\otimes \tT^{/1}(\circleSkel)\\
\end{tikzcd}
\end{equation}
The closure map connects the endpoints of the bottom tangle, $\bullet$ and $*$, by the path $\nu$ connecting $*$ to $\bullet$ along the bottom boundary $\partial D_p$, following the orientation (as in Figure~\ref{fig:DP}). We denote the map $cl \circ \hat{\eta}=:\glosm{zetahat}{\hat{\zeta}}$, and after antisymmetrisation $Alt \circ \hat{\zeta}=:\glosm{zetatilde}{\tilde{\zeta}}$. We will show that $\tilde{\zeta}$ descends to a map $\glosm{zeta}{\zeta}: \tT^{/1}(\circleSkel) \to \tT^{/1}(\circleSkel) \otimes \tT^{/1}(\circleSkel)$, which realises the Turaev cobracket via the identification $\beta$.

\begin{prop}\label{prop:zeta_to_del} The map $\zeta$ realises the Turaev cobracket $\delta$ via the identifications $\beta$, in the sense that the diagram in Figure~\ref{fig:Identifications} commutes.
\end{prop}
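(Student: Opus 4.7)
The plan is to verify the square in Figure~\ref{fig:Identifications} by tracing through the construction of $\zeta$ step by step and comparing it with the construction of $\delta$ from $\mu$ in Definition~\ref{def:cobrac}, using Theorem~\ref{thm:snake_for_mu} as the starting point. The framing term in that theorem will turn out to supply exactly the framing correction in the definition of $\delta$.

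First, I would check that the tangle-level closure map $cl$ corresponds under $\beta$ to the algebraic trace map $|\cdot|$. Indeed, $\beta$ is defined (Proposition~\ref{prop:ascispi}) by first concatenating any open strand with the boundary path $\nu$ and then passing to (free) homotopy classes; applying $cl$ before $\beta$ has precisely the same effect as applying $|\cdot|$ after $\beta$, so $|\cdot|\circ \beta = \beta\circ cl$ at the level of the interval component. Applied componentwise in the codomain of $\hat{\eta}$, this gives
\[
(\beta\otimes\beta)\circ \hat{\zeta} \;=\; (1\otimes |\cdot|)\circ (\beta\otimes\beta)\circ \hat{\eta}.
\]
Substituting the identity $(\beta\otimes\beta)\circ \hat{\eta} = (\mu - 1\otimes \id)\circ \beta$ from Theorem~\ref{thm:snake_for_mu} and then applying the antisymmetriser $\Alt$, I would obtain, for $\gamma\in \Cp$,
\[
(\beta\otimes\beta)\circ \tilde{\zeta}\circ \beta^{-1}(\gamma) \;=\; \Alt\circ (1\otimes |\cdot|)\circ \mu(\gamma) \;-\; \Alt(1\otimes |\gamma|).
\]
Since $-\Alt(1\otimes |\gamma|) = -(1\otimes |\gamma| - |\gamma|\otimes 1) = |\gamma|\wedge 1$, the right-hand side is exactly $\tilde{\delta}(\gamma)$ by Definition~\ref{def:cobrac}.

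Finally, since $\tilde{\delta}$ descends through $|\cdot|:\Cp\to \Cpa$ to the Turaev cobracket $\delta$ by definition, and $|\cdot|\circ \beta = \beta\circ cl$, the map $\tilde{\zeta}$ must factor uniquely through $cl$; this factorisation is the map $\zeta$, and by construction $(\beta\otimes\beta)\circ \zeta = \delta\circ \beta$, which is the commutative square claimed in Figure~\ref{fig:Identifications}. The main obstacle is minor and purely bookkeeping: one has to confirm that the framing term $-1\otimes |\gamma|$ carried by $\hat{\eta}$ reassembles, after antisymmetrisation, into exactly the $+|\gamma|\wedge 1$ framing correction of Definition~\ref{def:cobrac}. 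Once this matches (which it does, almost by design, via the sign conventions in Section~\ref{subsec:IntroGT}), both the existence of $\zeta$ and the commutativity of the diagram are automatic.
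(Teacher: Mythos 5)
Your proposal is correct and follows essentially the same route as the paper: the diagram is verified square by square, with Theorem~\ref{thm:snake_for_mu} supplying the key square, the closure/trace compatibility supplying the others, and the sign check $-\Alt(1\otimes|\gamma|)=|\gamma|\wedge 1$ showing that the framing term of $\hat{\eta}$ reassembles into the framing correction in $\tilde{\delta}$ before descending to $\delta$. Your write-up is in fact slightly more explicit than the paper's about that framing-term bookkeeping, which the paper leaves implicit in the phrase ``the maps $\tilde{\zeta}$ and $\tilde{\delta}$ are defined as the compositions shown.''
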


\begin{figure}
\begin{tikzcd}[]
\tT^{/1}(\circleSkel)\arrow[rrrrd,dashed, "\zeta", out=0, in=110, looseness=.6 ]\arrow[ddd,"\beta","="']\arrow[rdd, phantom,"(1)" description, out=-50, in=210, looseness=.6]&&&&\\
& \tT^{/1}(\botSkel)\arrow[lu,"cl"]\arrow[d,"\beta","="']\arrow[r,"\hat{\eta}"]\arrow[rrr,"\tilde{\zeta}",out=45, in=135, looseness=.3] \arrow[rd,phantom,"(2)" description]& \tT^{/1}(\circleSkel \, \botSkel)\arrow[r,"cl"]\arrow[d,"\beta","="']\arrow[rd,phantom,"(3)" description]&\tT^{/1}(\circleSkel)\otimes \tT^{/1}(\circleSkel)\arrow[r,"Alt"]\arrow[d,"\beta\otimes \beta","="']\arrow[rd,phantom,"(4)" description]&\tT^{/1}(\circleSkel)\otimes \tT^{/1}(\circleSkel)\arrow[d,"\beta\otimes \beta","="']\\
&\Cp \arrow[ld,"|\cdot |"]\arrow[r,swap, "\mu"]\arrow[rrr,"\tilde{\delta}",out=-45, in=-135, looseness=.3] &\Cpa\otimes \Cp \arrow[r,swap,"|\cdot |"]&\Cpa\otimes \Cpa \arrow[r,swap,"Alt"]&\Cpa\otimes \Cpa\\
\Cpa\arrow[rrrru,dashed,swap, "\delta",out=0, in=-110, looseness=.6 ]&&&&\\
\end{tikzcd}
\caption{The map $\zeta$ realises the Turaev cobracket $\delta$.}\label{fig:Identifications}
\end{figure}

\begin{proof}
 The only substantial statement is the commutativity of the square (2): this is Theorem~\ref{thm:snake_for_mu}. Squares (1) and (3) are the same: the closure map corresponds to the trace $\Cp\to \Cpa$. Square (4) is tautological. The maps $\tilde{\zeta}$ and $\tilde{\delta}$ are defined as the compositions shown: in the case of $\tilde{\delta}$ this is Definition~\ref{def:cobrac}. In particular, we have
 $\tilde{\delta}=(\beta\otimes \beta)\circ\tilde{\zeta}\circ\beta^{-1}.$ 
 The fact that $\tilde{\delta}$ descends to $\delta$ is Proposition~5.10 of \cite{akkn_g0}. The fact that $\tilde{\zeta}$ descends to $\zeta$ is immediate from the canonical identifications.
\end{proof}

\begin{cor}
The corresponding statement is true for the associated graded cobracket:
$$\gr\delta=(\gr\beta\otimes \gr\beta)\circ\gr{\zeta}\circ\gr\beta^{-1}.$$ 
\end{cor}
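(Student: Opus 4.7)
The plan is to apply the associated graded functor (with respect to the total filtration) to the entire commutative diagram in Figure~\ref{fig:Identifications}, and observe that each square remains commutative and each square retains its interpretation. All maps in the diagram are filtered, so their associated graded counterparts are well-defined; the Turaev cobracket and self-intersection map shift filtered degree by $-1$ (see Section~\ref{sec:gr_bialgebra}), but this is accommodated by the standard grading conventions and does not affect the commutativity of the diagram.

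First, I would establish commutativity of the graded analogue of square (2), i.e.\ the equality
\[
\gr\mu \;=\; \gr\beta \circ \gr\hat\eta \circ (\gr\beta)^{-1},
\]
which is exactly the content of Corollary~\ref{cor:grmu}. The framing term $1\otimes \id$ contributes only in filtered degree zero, hence drops out of $\gr\mu$, matching the fact that in $\gr\hat\eta$ the framing correction is built into the writhe-$1$ convention for $\lambda_a$ and $\lambda_d$ and therefore washes out in the associated graded picture (cf.~the parenthetical remark preceding Proposition~\ref{prop:gr_mu}). Next, squares (1), (3), and (4) become, respectively, the graded versions of: closure corresponding to the trace, closure corresponding to the trace in the first tensor factor, and antisymmetrisation commuting with $\gr\beta \otimes \gr\beta$. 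Each of these is tautological upon applying $\gr$, since $cl$, $|\cdot|$, $Alt$, and $\beta$ are all filtered and the identifications in Propositions~\ref{prop:gr_beta} and~\ref{prop:gr_beta_bot_tangle} are graded isomorphisms.

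It remains to see that $\gr\tilde\zeta$ descends to $\gr\zeta$ on the bottom-left edge of the graded diagram, mirroring the way $\tilde\zeta$ descends to $\zeta$ in Proposition~\ref{prop:zeta_to_del}. Equivalently, one must see that $\gr\tilde\delta$ descends to $\gr\delta$. Since the descent of $\tilde\delta$ to $\delta$ is the content of \cite[Proposition 5.10]{akkn_g0} and is compatible with the filtrations on both sides (the trace map $|\cdot|\colon \Cp \to \Cpa$ is a filtered surjection whose filtration on the target is induced from the filtration on the source), Lemma~\ref{lem:ExaxtGr} and the functoriality of $\gr$ guarantee that $\gr\delta$ is the unique induced map making the associated graded counterpart of the diagram in Definition~\ref{def:cobrac} commute. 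Chasing the identifications $\gr\beta$ then yields
\[
\gr\delta \;=\; (\gr\beta \otimes \gr\beta) \circ \gr\zeta \circ (\gr\beta)^{-1},
\]
as required.

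The only nontrivial bookkeeping, which I anticipate being the main (minor) obstacle, is checking that the degree shift in $\delta$ and $\mu$ is compatible with the descent statement for $\tilde\delta \leadsto \delta$ when passing to $\gr$. This amounts to verifying that the quotient by the constant loop $\C\cl$ and the filtration on $\Cpa$ interact so that $\gr\delta$ is well-defined on $\gr\Cpa = |\hAs|$ with values in $|\hAs|\wedge |\hAs|$; this follows immediately because $\cl \in |\calI^0|$ while the image of the framing correction lies in $|\calI^0|\wedge|\calI^0|$, and both contribute trivially to the graded pieces in positive filtered degree.
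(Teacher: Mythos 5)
Your proposal is correct and follows exactly the route the paper intends: the corollary is stated there without an explicit proof, being an immediate consequence of applying the associated graded functor to the commutative diagram of Proposition~\ref{prop:zeta_to_del}, with Corollary~\ref{cor:grmu} supplying the graded version of the one substantial square and the framing term dropping out because it sits in filtered degree zero. Your writeup merely makes explicit the bookkeeping (all maps filtered, target filtrations induced from source, Lemma~\ref{lem:ExaxtGr}) that the paper leaves to the reader, and it does so correctly.
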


The key result left to prove is that $\zeta$ is homomorphic with respect to the Kontsevich integral $Z$: $\gr \zeta \circ Z= Z \circ \zeta$, and hence the Kontsevich integral descends to a homomorphic expansion for $\delta$. The subtlety involved is that $Z$ does {\em not} respect the map $\hat{\eta}$ -- there is an error term -- but it cancels after applying the closure and alternation.

The proof is based on the naturality of the induced homomorphisms, as outlined in Section~\ref{sec:conceptsum} and demonstrated in Section~\ref{sec:identifybracketinCON} for the Goldman bracket. The naive version of this idea would be to prove that all faces of a multi-cube similar to \eqref{eq:BracketMultiCube} commute. As before, the only non-trivial part of this statement is the commutativity of the middle square involving the map $\lambda$; unfortunately, in the case of the self-intersection map, this square fails to commute:   

\begin{equation}\label{eq:FailToCommute}
    \begin{tikzcd}[column sep=tiny, row sep=tiny]
       & \tTnab^{/2}(\botSkel)\arrow[dl,"\lambda"]\arrow[dd,"Z^{/2}"]\\
       \tTnab^{/2}(\botSkel)\arrow[dd, "Z^{/2}",swap]\arrow[rd, phantom,"\dncom" ]\\
       &\tA_{\nab}^{/2}(\botSkel)\arrow[dl,"\gr \lambda"]\\
       \tA_{\nab}^{/2}(\botSkel )
    \end{tikzcd}
 \end{equation}
This failure is mirrored in the setting of the Goldman--Turaev Lie bialgebra by the fact the the self-intersection map $\mu$ is {\em not formal}, only the Turaev cobracket obtained from it is. This is also observed in \cite[Theorem 7.1, Corollary 7.3]{Mas} -- where the correction term (there denoted $q$) is expressed in terms of the Gamma function of the associator underlying 
the Kontsevich integral -- as well as \cite{akkn_g0}. 

The resolution of this issue comes down to two observations:
\begin{enumerate}
    \item The square \eqref{eq:FailToCommute} fails to commute by a controlled error; and
    \item after applying the closure map and alternating to pass to the Turaev cobracket, this error vanishes.
\end{enumerate}

In order to proceed we need to define an operation on $\tT$, which will help relate the ascending and descending lifts. The {\em vertical flip}, or {\em flip} for short. This is a composition of a vertical mirror image (mirror image to the ceiling), with orientation reversal of each pole. In other words, the flip  of a tangle is its vertical mirror image but with poles still  ascending. The flip of a tangle $T$ is denoted $T^\sharp$. The flip operation is also well-defined on the Conway quotient $\tTnab$ by setting $b^\sharp=-b$ for the variable $b$.

The associated graded {\em vertical flip}, or simply {\em flip}, of a chord diagram $D\in \tA$, denoted $D^{\sharp}$, is the vertical mirror image of $D$ with ascending poles, multiplied by $(-1)^s$, where $s$ is the $s$-degree of $D$. This is because the mirror image reverses the signs of all crossings, then pole reversals reverse back the signs of all pole-strand crossings. Thus, only the signs of strand-strand crossings are reversed by the composite. On the Conway quotient $\tA_\nab$, the associated graded flip is given by setting $a^\sharp=-a$.

\begin{lem}\label{lem:CDflip}
  The Kontsevich integral respects flips:  
  $$Z(T^\sharp)= (Z(T))^\sharp$$
  for any $T\in \tT$ or $T\in \tTnab$.
\end{lem}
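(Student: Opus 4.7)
The plan is to verify $Z(T^\sharp) = (Z(T))^\sharp$ directly from the classical integral formula for the Kontsevich integral. Recall that
\[
Z(T) = \sum_{m\geq 0} \frac{1}{(2\pi i)^m} \sum_P (-1)^{\downarrow_T P}\, \mathcal{I}^T_P \cdot D_P,
\]
where $P$ ranges over pairings that place $m$ chord endpoints at heights $t_1 < \cdots < t_m$, $\mathcal{I}^T_P$ is the integral over the simplex $\{0 < t_1 < \cdots < t_m < 1\}$ of the wedge of the Kontsevich $1$-forms $\omega(t) = \frac{dp_1(t) - dp_2(t)}{p_1(t) - p_2(t)}$ attached to each pair, $D_P$ is the resulting chord diagram, and $\downarrow_T P$ counts chord endpoints lying on strand segments whose tangent vector points downward. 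The flip is implemented geometrically by $(z,t)\mapsto(z,1-t)$, with pole orientations subsequently restored to be ascending.

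First I will track how $\mathcal{I}^T_P$ transforms. Since horizontal positions satisfy $p^{T^\sharp}(t) = p^T(1-t)$, the substitution $s_j = 1 - t_{m+1-j}$ maps the simplex to itself while reversing the chord ordering. Each of the $m$ Kontsevich $1$-forms picks up a single sign (from $\frac{d}{dt}p^T(1-t) = -(p^T)'(1-t)$ combined with $dt = -ds$), giving $\mathcal{I}^{T^\sharp}_P = (-1)^m\, \mathcal{I}^T_{P'}$, where $P'$ denotes the induced pairing on $T$. Next, the flip reverses the vertical tangent component of every tangle strand but not of the poles (whose orientations are restored), so each chord endpoint lying on a tangle strand flips its $\downarrow$-contribution while pole endpoints are unaffected. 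Hence $(-1)^{\downarrow_{T^\sharp} P - \downarrow_T P'} = (-1)^{2s+p}$, where $s$ and $p$ denote the numbers of strand-strand and strand-pole chords of $D_P$ (no pole-pole chords by admissibility). Combining with the $(-1)^m = (-1)^{s+p}$ from the integrals, the total sign change is $(-1)^{(s+p)+(2s+p)} \equiv (-1)^s \pmod 2$, which matches exactly the $(-1)^s$ in the definition of the chord diagram flip on $\tA$. Since $D_P$ on $T^\sharp$ is geometrically the vertical mirror of $D_{P'}$ on $T$ with poles re-oriented upward, this proves $Z(T^\sharp) = (Z(T))^\sharp$ on $\tT$.

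To extend to $\tTnab$, I will check that the flip is compatible with the Conway skein relation; this automatically ensures that $Z$ descends to $Z_\nab$ and respects $\sharp$ on the quotient. The key observation is that vertical flipping reverses the sign of strand-strand crossings (the determinant of tangent vectors flips sign when one coordinate is negated), so $(\pcross - \ncross)^\sharp$ evaluates to $-(\pcross - \ncross)$ interpreted on the flipped skeleton. This matches $(b\,\smooth)^\sharp = -b\cdot\smooth^\sharp$ using $b^\sharp = -b$ and the vertical symmetry $\smooth^\sharp = \smooth$, confirming the skein relation is preserved. The parallel check on $\tA_\nab$ with $a^\sharp = -a$ is identical. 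The main obstacle is the careful sign bookkeeping in the second paragraph: the clean output $(-1)^s$ emerges from a cancellation between the integration Jacobian, the downward-arrow sign in the Kontsevich formula, and the asymmetric treatment of tangle strands (whose orientations reverse under flip) versus poles (whose orientations are restored to ascending).
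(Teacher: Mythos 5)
Your proof is correct, but it takes a genuinely more explicit route than the paper. The paper disposes of this lemma in one line: the flip is by definition the composition of a horizontal-plane mirror image with an orientation reversal of each pole, and the Kontsevich integral is well known to intertwine both of these standard operations with their diagrammatic counterparts (the mirror contributing $(-1)^m$ in degree $m$, the pole reversals contributing $(-1)^{\#\{\text{chord endpoints on poles}\}}=(-1)^p$, for a net $(-1)^{s+2p}=(-1)^s$). You instead unwind the integral formula and track the three sign sources directly --- the simplex substitution, the $(-1)^{\downarrow}$ count, and the asymmetric treatment of poles versus strands --- arriving at the same $(-1)^s$, and you additionally verify compatibility of $\sharp$ with the Conway relation so that the identity descends to $\tTnab$ with $b^\sharp=-b$ (the paper leaves this implicit in its definition of the flip on the quotient). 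What your version buys is a self-contained derivation of exactly where the $(-1)^s$ in the definition of the chord-diagram flip comes from; what it costs is length and one spot that should be tightened: the parenthetical justifying that each $1$-form ``picks up a single sign'' cites two minus signs (the chain rule and $dt=-ds$) which, if multiplied, would cancel. The clean statement is that expressing each form in the $t$-coordinate yields one factor of $-1$ per chord, after which the substitution $s_j=1-t_{m+1-j}$ is a measure-preserving bijection of the simplex contributing no further sign, so the net factor is indeed $(-1)^m$; your conclusion is unaffected.
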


\begin{proof}
    The Kontsevich integral is well known to respect mirror images and orientations switches, hence, it respects the composition. 
\end{proof}

The next lemma addresses the first of the two steps outlined above by modifying the bottom arrow of \eqref{eq:FailToCommute} to correct the error:

\begin{lem}\label{lem:LambdaAlg}
    There exists a map $\glosm{lambdaalg}{\lambda^{alg}}: \tA_{\nab}^{/2}(\botSkel) \to \tA_{\nab}^{/2}(\botSkel)$ so that the diagram \eqref{eq:FixedSquare} commutes\footnote{Of course, $\lambda^{alg}\neq \gr \lambda$.}.
\end{lem}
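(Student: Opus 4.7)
The plan is to construct $\lambda^{\mathrm{alg}}$ by mimicking at the chord diagram level the factorisation $\lambda = \bar\lambda \circ q$ of $\lambda$ into a projection to curves followed by an ``ascending minus descending'' lift.

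Specifically, I would define
\[
\lambda^{\mathrm{alg}} := (\gr\lambda_a - \gr\lambda_d) \circ \mathrm{pr},
\]
where $\mathrm{pr}\colon \tA_\nab^{/2}(\botSkel) \to \tA^{/1}(\botSkel)$ is the natural $s$-projection and $\gr\lambda_a,\gr\lambda_d$ are the ascending/descending ordering maps of Lemma~\ref{lem:GradedAscDesc}. Since $Z$ is $s$-filtered by Proposition~\ref{prop:ZrespectsS}, the commutativity of \eqref{eq:FixedSquare} then reduces to showing
\[
Z^{/2}\bigl(\lambda_a(\gamma)-\lambda_d(\gamma)\bigr) \;=\; (\gr\lambda_a - \gr\lambda_d)\bigl(Z^{/1}(\gamma)\bigr)
\]
for every curve $\gamma \in \tT^{/1}(\botSkel)$, since the two occurrences of $\mathrm{pr}\circ Z^{/2}$ on each side agree with $Z^{/1} \circ q$.

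To verify this identity, I would first observe that both sides lie in $\tA_\nab^{1/2}(\botSkel)$: the left because $\bar\lambda$ has image in $\tT_\nab^{\geq 1}$ (each flip from the ascending to the descending representative contributes a double point, as in Figure~\ref{fig:asc_to_desc_by_xing_change}), and the right because $\gr\lambda_a$ and $\gr\lambda_d$ agree on the pole-only ($s^0$) part of their input and differ only by 4T-reorderings that involve a strand-strand chord. Via the division-by-$b$ isomorphism $\widecheck{b}$ of Proposition~\ref{prop:qbonbottomtangles}, both sides then transfer into $\tA^{/1}(\circleSkel\,\botSkel) \cong \hAsa \otimes \hAs$. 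There the telescopic-Conway argument from the proof of Theorem~\ref{thm:snake_for_mu} rewrites $\lambda_a(\gamma) - \lambda_d(\gamma)$ as a $b$-multiple of a sum, indexed by the self-crossings of $\gamma$, of the tangles obtained by smoothing one self-crossing at a time. The Kontsevich integral of each such smoothed piece produces a chord diagram whose pole-strand chord endings on the two sides of the smoothing are time-ordered in exactly the ascending (respectively descending) pattern prescribed by $\gr\lambda_a - \gr\lambda_d$, and summing reproduces the right-hand side.

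The hard part will be the Kontsevich integral bookkeeping: one must verify that the associator-type corrections appearing in $Z$ near the turn-around of an ascending (or descending) lift all lie in $\tA_\nab^{\geq 2}$, so that only the clean time-ordering survives in $\tA_\nab^{/2}$. This should follow because any such correction carries at least two additional strand-strand chord endings, but the alignment between the Kontsevich integral's multiplicativity under vertical stacking and the $s$-filtration needs to be spelt out carefully, likely by factoring the ascending/descending tangles as compositions of a strictly monotone piece and a local turn-around and analysing each factor separately.
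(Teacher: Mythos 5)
Your proposal sets $\lambda^{\mathrm{alg}} = (\gr\lambda_a - \gr\lambda_d)\circ \mathrm{pr}$, which is exactly $\gr\lambda$ — but the lemma's own footnote warns that $\lambda^{alg}\neq\gr\lambda$, and the diagram \eqref{eq:FailToCommute} immediately preceding the lemma records precisely that the square with $\gr\lambda$ does \emph{not} commute. The identity you reduce to, $Z^{/2}\bigl(\lambda_a(\gamma)-\lambda_d(\gamma)\bigr) = (\gr\lambda_a-\gr\lambda_d)\bigl(Z^{/1}(\gamma)\bigr)$, is false. The point of failure is the step you flag as the ``hard part'': the associator-type corrections do \emph{not} lie in $\tA_\nab^{\geq 2}$. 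Factoring the ascending lift as $\Phi^{-1}\beta\Phi R C$, the paper shows $Z^{/2}(\Phi)=1+\varphi$ with $\varphi\in\tA^{1/2}$ — that is, $s$-degree exactly $1$ at lowest order, since $\varphi$ contains a single strand-strand chord (two chord endings make \emph{one} chord, hence $s$-degree $1$, not $2$). Likewise the writhe-correction kink contributes $Z^{/2}(R)=1+\tfrac{t}{2}$ with $t$ a single strand-strand chord. Both survive in $\tA_\nab^{/2}$, producing the genuinely nonzero error terms
\[
\left[\gr\lambda_a(D),\varphi\right]-\left[\varphi^\sharp,\gr\lambda_d(D)\right]^{2,1}
\quad\text{and}\quad
\gr\lambda_a(D)\tfrac{t}{2}-\tfrac{t}{2}\gr\lambda_d(D),
\]
which the paper must build into the definition of $\lambda^{\mathrm{alg}}$ (Proposition~\ref{prop:LambdaAlg}). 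These errors only cancel \emph{later}, after post-composing with closure and alternation in the proof of Theorem~\ref{thm:cobrackethomomorphic} — a cancellation that mirrors the fact that the self-intersection map $\mu$ itself is not formal, only the cobracket obtained from it is. So the correct statement is not that the errors vanish in $\tA_\nab^{/2}$, but that a corrected map exists; your construction proves a stronger claim that is false.

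A secondary issue: your telescopic-Conway argument computes the difference $\lambda_a(\gamma)-\lambda_d(\gamma)$ crossing by crossing, which is the right tool for identifying $\hat\eta$ with $\mu$ (Theorem~\ref{thm:snake_for_mu}), but it does not track the Kontsevich integral of each smoothed term to $s$-degree $1$ accuracy; the time-ordering statement of Lemma~\ref{lem:GradedAscDesc} only controls the $s$-degree-$0$ part ($Z^{/1}$), and the $s$-degree-$1$ discrepancy is exactly where $\varphi$ and $t$ enter. To repair the proof you would need to carry out the factorisation $\lambda_a(\gamma)=\Phi^{-1}\beta\Phi R C$ (and its flipped analogue for $\lambda_d$), use multiplicativity of $Z$ and the locality of chords on distant unions, and absorb the resulting degree-$1$ terms into the definition of $\lambda^{\mathrm{alg}}$ rather than arguing they vanish.
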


\begin{equation}\label{eq:FixedSquare}
    \begin{tikzcd}[]
       \tTnab^{/2}(\botSkel)\arrow[d, "Z^{/2}",swap] & \tTnab^{/2}(\botSkel)\arrow[l,"\lambda"]\arrow[d,"Z^{/2}"]
      \\
       \tA_{\nab}^{/2}(\botSkel ) &\tA_{\nab}^{/2}(\botSkel)\arrow[l,"\lambda^{alg}"]
    \end{tikzcd}
 \end{equation}

\begin{proof}
    Recall that by definition, $\lambda=(\lambda_a-\lambda_d)\circ q$, where $q$ is the projection $\tT^{/2}_\nab(\botSkel)\to\tT^{/1}(\botSkel)$, and $\lambda_a$ and $\lambda_d$ are the ascending and descending lifts. Since the Kontsevich integral is compatible with the $s$-filtration and hence with $q$, it is enough to show that the analogous statements are true for $\lambda_a$ and $\lambda_d$ separately. Namely, we show that there exist maps $\lambda_a^{alg}$ and $\lambda_d^{alg}$ making the following squares commute:
    
\begin{equation}\label{eq:twoFixedSquare}
    \begin{tikzcd}[]
       \tTnab^{/2}(\botSkel)\arrow[d, "Z^{/2}",swap] & \tT^{/1}(\botSkel)\arrow[l,"\lambda_a"]\arrow[d,"Z^{/1}"] &&
       \tTnab^{/2}(\botSkel)\arrow[d, "Z^{/2}",swap] & \tT^{/1}(\botSkel)\arrow[l,"\lambda_d"]\arrow[d,"Z^{/1}"]
      \\
       \tA_{\nab}^{/2}(\botSkel ) &\tA^{/1}(\botSkel)\arrow[l,"\lambda_a^{alg}"] &&
        \tA_{\nab}^{/2}(\botSkel ) &\tA^{/1}(\botSkel)\arrow[l,"\lambda_d^{alg}"]
    \end{tikzcd}
 \end{equation}

 Let $\gamma$ be a curve in $\tT^{/1}(\botSkel)$. To find $\lambda_a^{alg}$, we need to express $Z^{/2}(\lambda_a(\gamma))$ in terms of $Z^{/1}(\gamma)$. Since the Kontsevich integral is compatible with the $s$-filtration, $Z^{/1}(\gamma)=Z^{/1}(\lambda_a(\gamma))$.
 
 The proof thus depends on understanding the Kontsevich integral of $\lambda_a(\gamma)$: see Figure~\ref{fig:AscDecomp} for an expression of $\lambda_a(\gamma)$ as the tangle composition\footnote{We use notation inspired by Drinfel'd associators, but emphasise that these are classical Kontsevich integral calculations, not using the combinatorial construction in terms of the KZ associator. In particular, our definition of $Z$ is in terms of limits (``infinitely near'' and ``infinitely far''), rather than parenthetisations.}
 \begin{equation}\label{eq:Asc}
     \lambda_a(\gamma)=\Phi^{-1}  \kappa \Phi R C.
 \end{equation}
\begin{figure}
\begin{center}
\begin{picture}(300,180)
    \put(0,0){\includegraphics[width=10cm]{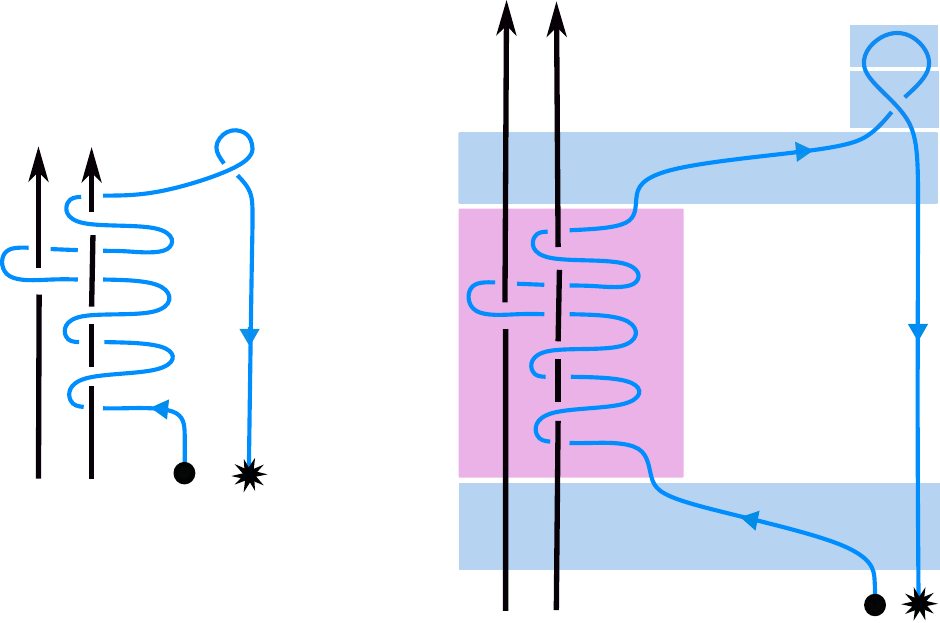}}
    \put(100,80){\Huge =}
    \put(287,170){$C$}
    \put(287,154){R}
    \put(287,133){$\Phi$}
    \put(287,85){$\kappa$}
    \put(287, 25){$\Phi^{-1}$}
\end{picture}
\end{center}
    \caption{Tangle decomposition of the ascending lift $\lambda_a(\gamma)$.}\label{fig:AscDecomp}
\end{figure}
 
 Since the Kontsevich integral is multiplicative with respect to tangle composition,
  \begin{equation}\label{eq:ZAsc}
     Z^{/2}(\lambda_a(\gamma))=Z^{/2}(\Phi^{-1}) Z^{/2}(\kappa) Z^{/2}(\Phi)Z^{/2}(R)Z^{/2}(C).
 \end{equation}
 Since the Kontsevich integral asympototically commutes with ``distant disjoint unions'' \cite[Chapter 8]{CDM_2012}, the values of $C$, $R$ and $\kappa$ include only chords fully contained in the highlighted areas of Figure~\ref{fig:AscDecomp}. In particular, the value of the cap in $C$ includes strand-strand chords only, and has no degree one term \cite{BN2}, thus, $Z^{/2}(C)=1$. The value $Z^{/2}(R)$ can be explicitly computed and is well known to be $1+\frac{t}{2}$, where $t$ denotes a single chord.

 As for $\kappa$, the value $Z^{/2}(\kappa)$ has no strand-strand chords, and the strand-pole chords follow the strand in ascending order. Thus, by Lemma~\ref{lem:GradedAscDesc}, 
 \begin{equation}\label{eq:Zbeta}
 Z^{/2}(\kappa)=\gr\lambda_a(Z^{/1}(\gamma)).
 \end{equation}
In summary, we have 
\begin{equation}\label{eq:ZAsc2}
Z^{/2}(\lambda_a(\gamma))=Z^{/2}(\Phi^{-1}) \gr\lambda_a(Z^{/1}(\gamma)) Z^{/2}(\Phi)\left(1+\frac{t}{2}\right)
\end{equation}
The formula can be further simplified by understanding $Z(\Phi).$ Since values of the Kontsevich integral are group-like, $Z(\Phi)=\exp(\varphi)$ for some primitive $\varphi\in \tA(\uparrow \uparrow \uparrow \downarrow)$. Since deleting either of the non-pole strands of $Z(\Phi)$ simplifies to 1, we have that $(Z(\Phi)-1) \in \tA^{1}$, and in particular $\varphi \in \tA^1$. Thus, 
\begin{equation}\label{eq:ZPhi}
Z^{/2}(\Phi)=1+\varphi \quad \text{with } \varphi\in \tA^{1/2}
\end{equation}
Consequently, $Z^{/2}(\Phi^{-1})=1-\varphi.$ Substituting these values into \eqref{eq:ZAsc2}, and expanding, we obtain
\begin{equation}\label{eq:ZAsc3}
 Z^{/2}(\lambda_a(\gamma)) =  \gr\lambda_a(Z^{/1}(\gamma))+ \left[\gr\lambda_a(Z^{/1}(\gamma)),\varphi\right] + \gr\lambda_a(Z^{/1}(\gamma))\frac{t}{2},
\end{equation}
where the square brackets denote the algebra commutator in $\tA^{/2}(\uparrow \uparrow \uparrow \downarrow)$.

In summary, for a diagram $D\in\tA^{/1}(\botSkel)$, the map given by the formula
\begin{equation}\label{eq:AscAlg}
    \yellowm{\lambda_a^{alg}}(D)= \gr\lambda_a(D)+ \left[\gr\lambda_a(D),\varphi\right] + \gr\lambda_a(D)\frac{t}{2}
\end{equation}
completes the commutative diagram \eqref{eq:FixedSquare} for $\lambda_a$, as required. Note that $\varphi$ does not depend on $D$.

Similarly for $\lambda_d(\gamma)$, Figure~\ref{fig:DescDecomp} shows that
\begin{equation}\label{eq:ZDesc}
Z^{/2}(\lambda_d(\gamma))=Z^{/2}(R^{-1})\left(Z^{/2}(\Phi^{\sharp})Z^{/2}(\kappa^{\sharp})Z^{/2}(\Phi^\sharp)^{-1}\right)^{2,1},
\end{equation}
where $\kappa^{\sharp}$ and $\Phi^\sharp$ are the flip of $\beta$, and $\Phi$, respectively, and the superscript ``\glosm{super2,1}{$2,1$}'' indicates that the strands of these components are swapped.
\begin{figure}
\begin{center}
\begin{picture}(300,160)
    \put(0,0){\includegraphics[width=10cm]{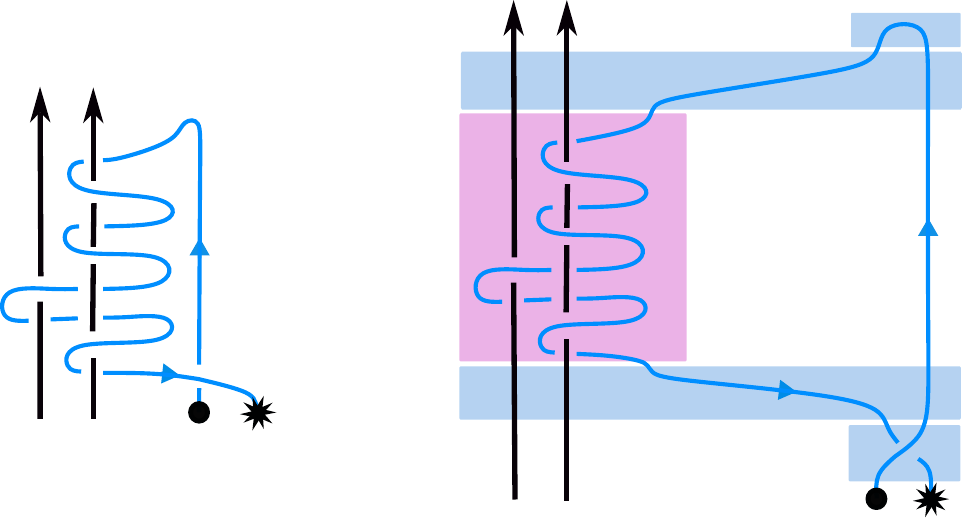}}
    \put(95,75){\Huge =}
    \put(286,140){$C$}
    \put(286,125){$(\Phi^\sharp)^{-1}$}
    \put(286,85){$\kappa^{\sharp}$}
    \put(286, 34){$\Phi^{\sharp}$}
    \put(286, 15){$R^{-1}$}
\end{picture}
\end{center}
    \caption{Tangle decomposition of the descending lift $\lambda_d(\gamma)$.}\label{fig:DescDecomp}
\end{figure}

Since $Z^{/2}(\kappa^{\sharp})$ involves only strand-pole chords, and since the strand is descending, we have by Lemma~\ref{lem:GradedAscDesc}: 
 \begin{equation}\label{eq:Zbetasharp}
 Z^{/2}(\kappa^{\sharp})=\gr\lambda_d(Z^{/1}(\gamma)).
 \end{equation}

By Lemma~\ref{lem:CDflip} we have 
\begin{equation}\label{eq:PhiSharp}
Z^{/2}(\Phi^\sharp)= (Z^{/2}(\Phi))^\sharp= (1+\varphi)^\sharp=1+\varphi^\sharp
\end{equation}
In turn, $\Z^{/2}((\Phi^\sharp)^{-1})=1-\varphi^\sharp$ and $Z^{/2}(R^{-1})= (1-\frac{t}{2})$. Substituting into \eqref{eq:ZDesc}, we obtain:
\begin{equation}\label{eq:ZDesc3}
    Z^{/2}(\lambda_d(\gamma))=\left(1-\frac{t}{2}\right)\left((1+\varphi^\sharp)\gr \lambda_d(Z^{/1}(\gamma))(1-\varphi^\sharp)\right)^{2,1}
\end{equation}
Expanded in $\tA^{/2}$, using that $t,\varphi \in \tA^{1/2}$ this gives
\begin{equation}\label{eq:ZDesc2}
    Z^{/2}(\lambda_d(\gamma))=\gr \lambda_d(Z^{/1}(\gamma)) + \left[\varphi^\sharp , \gr \lambda_d(Z^{/1}(\gamma))\right]^{2,1} - \frac{t}{2}\gr \lambda_d(Z^{/1}(\gamma))
\end{equation}

Therefore, we define
\begin{equation}\label{eq:DescAlg}
    \yellowm{\lambda_d^{alg}}(D)=\gr \lambda_d(D) + \left[\varphi^\sharp, \gr \lambda_d(D)\right]^{2,1}-\frac{t}{2}\gr \lambda_d(D)
\end{equation}
which completes the commutative diagram \eqref{eq:FixedSquare} for $\lambda_d$, as required.

Define $\glosm{lambdaaldbar}{\overline{\lambda}^{alg}}=\lambda_a^{alg}-\lambda_d^{alg}$, and $\lambda^{alg}=\overline{\lambda}^{alg}\circ q$, where $q$ is the projection $\tA_{\nab}^{/2} \to \tA_{\nab}^{/1}$. Then, by definition, $\lambda^{alg}$ makes the diagram \eqref{eq:FixedSquare} commute, completing the proof.
\end{proof}

Since the formula for $\lambda^{alg}$ will be important, we restate it as a proposition:

\begin{prop}\label{prop:LambdaAlg}
The map $\lambda^{alg}$ is defined by $\lambda^{alg}=(\lambda^{alg}_a-\lambda^{alg}_d)\circ q$, where
 \begin{align*}
 \lambda_a^{alg}(D)= & \gr\lambda_a(D)+ \left[\gr\lambda_a(D),\varphi\right] + \gr\lambda_a(D)\frac{t}{2}\\
   \lambda_d^{alg}(D)= &\gr \lambda_d(D) + \left[\varphi^\sharp, \gr \lambda_d(D)\right]^{2,1}-\frac{t}{2}\gr \lambda_d(D).
  \end{align*}  \qed
\end{prop}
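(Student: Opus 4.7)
The proof is essentially an extraction of formulas already derived in the proof of Lemma~\ref{lem:LambdaAlg}, so my plan is to structure it as a clean summary that makes explicit the formulas that were computed in the course of establishing commutativity of the two squares in \eqref{eq:twoFixedSquare}. The starting point is the definition $\lambda = (\lambda_a - \lambda_d)\circ q$ from Section~\ref{sec:cobracketinCON}, together with the fact that the Kontsevich integral is compatible with the $s$-filtration (Proposition~\ref{prop:ZrespectsS}), so it is enough to track $Z^{/2}\circ \lambda_a$ and $Z^{/2}\circ \lambda_d$ separately in terms of $Z^{/1}$ on $\tT^{/1}(\botSkel)$.

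The main work, already done in the preceding lemma, is the decomposition of $\lambda_a(\gamma)$ as $\Phi^{-1}\beta\Phi R C$ (Figure~\ref{fig:AscDecomp}) and of $\lambda_d(\gamma)$ via the flipped version of the same picture (Figure~\ref{fig:DescDecomp}, with strand swap indicated by the $(2,1)$ superscript). First I would recall the three key local evaluations: $Z^{/2}(C) = 1$ (the cap contributes only strand--strand chords and has no degree one term), $Z^{/2}(R) = 1 + \tfrac{t}{2}$, and the identification $Z^{/2}(\beta) = \gr \lambda_a(Z^{/1}(\gamma))$, $Z^{/2}(\beta^\sharp) = \gr \lambda_d(Z^{/1}(\gamma))$, which follow from Lemma~\ref{lem:GradedAscDesc} together with the fact that the chord endings of $\beta$ lie on pole--strand pairs only.

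Next I would recall the key structural observation: writing $Z(\Phi) = \exp(\varphi)$ with $\varphi$ primitive, deleting any non-pole strand of $\Phi$ simplifies $Z(\Phi)$ to $1$, so $\varphi \in \tA^{\geq 1}$; thus in the relevant quotient $Z^{/2}(\Phi) = 1 + \varphi$ with $\varphi \in \tA^{1/2}$, and $Z^{/2}(\Phi^{-1}) = 1 - \varphi$. For the descending case, Lemma~\ref{lem:CDflip} gives $Z^{/2}(\Phi^\sharp) = 1 + \varphi^\sharp$. Substituting these into the multiplicative decompositions and expanding modulo $\tA^{\geq 2}$ — keeping only terms of total $s$-degree at most $2$ — yields
\[
Z^{/2}(\lambda_a(\gamma)) = \gr\lambda_a(Z^{/1}(\gamma)) + \bigl[\gr\lambda_a(Z^{/1}(\gamma)),\varphi\bigr] + \gr\lambda_a(Z^{/1}(\gamma))\tfrac{t}{2},
\]
\[
Z^{/2}(\lambda_d(\gamma)) = \gr\lambda_d(Z^{/1}(\gamma)) + \bigl[\varphi^\sharp,\gr\lambda_d(Z^{/1}(\gamma))\bigr]^{2,1} + \tfrac{t}{2}\,\gr\lambda_d(Z^{/1}(\gamma)).
\]
These formulas are precisely the claimed expressions for $\lambda_a^{alg}$ and $\lambda_d^{alg}$, so defining $\lambda^{alg} = (\lambda_a^{alg} - \lambda_d^{alg}) \circ q$ makes the square \eqref{eq:FixedSquare} commute by construction.

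The only bookkeeping subtlety to be careful about — the point I would regard as the main potential obstacle — is ensuring the expansion is truncated consistently in the $s$-filtered degree: because $\varphi, t \in \tA^{1/2}$, products of two or more such elements vanish in $\tA^{/2}_\nab$, so all cross terms of the form $\varphi \cdot \gr\lambda_a(Z^{/1}(\gamma)) \cdot \tfrac{t}{2}$ and similar drop out, leaving exactly the stated three-term formulas. Once this truncation is verified, the proposition follows directly from the calculations in the proof of Lemma~\ref{lem:LambdaAlg}. \qed
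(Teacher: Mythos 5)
Your proposal is correct and follows essentially the same route as the paper: the paper treats this proposition as a direct restatement of the formulas \eqref{eq:AscAlg} and \eqref{eq:DescAlg} obtained in the proof of Lemma~\ref{lem:LambdaAlg} (hence the \qed in the statement itself), and your argument re-traces exactly that derivation — the decomposition $\Phi^{-1}\beta\Phi R C$, the evaluations $Z^{/2}(C)=1$, $Z^{/2}(R)=1+\tfrac{t}{2}$, $Z^{/2}(\beta)=\gr\lambda_a(Z^{/1}(\gamma))$, the group-likeness argument giving $\varphi\in\tA^{1/2}$, and the truncation killing all products of two or more elements of $\tA^{1/2}$. Your emphasis on the consistency of the $s$-degree truncation is the right point to flag, and it matches the paper's use of $t,\varphi\in\tA^{1/2}$ in passing from \eqref{eq:ZDesc3} to \eqref{eq:ZDesc2}.
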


\begin{lem}\label{lem:lambda_alg_diagram}
The map $\lambda^{alg}$ fits into the commutative diagram of Figure~\ref{fig:AdjustedSnakefor_gr_cobracket} (solid arrows). 
\end{lem}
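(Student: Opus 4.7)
The plan is to verify the two square commutativities in the diagram, using the explicit formula for $\lambda^{alg}$ given in Proposition~\ref{prop:LambdaAlg}. The structure of the argument mirrors Corollary~\ref{cor:grmu}: we must check (i) that $\lambda^{alg}$ vanishes on the image of $\tA_{\nab}^{1/2}(\botSkel)$ in $\tA_{\nab}^{/2}(\botSkel)$ (left square, matching the zero left-vertical), and (ii) that $\lambda^{alg}$ lands inside $\tA_{\nab}^{1/2}(\botSkel)\subseteq \tA_{\nab}^{/2}(\botSkel)$ (right square, matching the zero right-vertical). Both will follow from inspection of the formula; no further analysis of the Kontsevich integral is needed, since that computation has already been carried out in the proof of Lemma~\ref{lem:LambdaAlg}.

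For the left square, recall that $\lambda^{alg} = \overline{\lambda}^{alg}\circ q$, where $q$ denotes the projection $\tA_{\nab}^{/2}(\botSkel)\to \tA^{/1}(\botSkel)$. The inclusion $\iota\colon \tA_{\nab}^{1/2}(\botSkel)\hookrightarrow \tA_{\nab}^{/2}(\botSkel)$ lands in the kernel of $q$ by definition of the $s$-filtration, so $\lambda^{alg}\circ \iota = \overline{\lambda}^{alg}\circ (q\circ \iota) = 0$, which agrees with the zero map obtained by first going down the left vertical. So the left square commutes.

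For the right square, I need to show that $\lambda^{alg}(D)$ lies in $\tA_{\nab}^{1/2}(\botSkel)$ for every $D\in\tA^{/1}(\botSkel)$. Using the formulas of Proposition~\ref{prop:LambdaAlg} and rearranging,
\begin{align*}
\lambda^{alg}(D) &= \bigl(\gr\lambda_a(D) - \gr\lambda_d(D)\bigr) \;+\; [\gr\lambda_a(D),\varphi] \;-\; [\varphi^\sharp,\gr\lambda_d(D)]^{2,1}\\
&\quad +\; \tfrac{t}{2}\bigl(\gr\lambda_a(D) - \gr\lambda_d(D)\bigr).
\end{align*}
The first summand equals $\gr\lambda(D)$ (after the implicit action of $q$), which already lies in $\tA_{\nab}^{1/2}(\botSkel)$ by Corollary~\ref{cor:grmu}. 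The commutator terms contain a factor of $\varphi$ (or $\varphi^\sharp$), which belongs to $\tA^{1/2}$ by construction (see equation~\eqref{eq:ZPhi}), so these summands lie in $\tA_{\nab}^{1/2}(\botSkel)$. The last term carries an explicit factor of the strand-strand chord $t$, so it too lies in $\tA_{\nab}^{1/2}(\botSkel)$. Hence $\pi\circ\lambda^{alg} = 0$, matching the composition down the right vertical and then along the bottom, and the right square commutes.

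The only subtle point (and the ``main obstacle'' insofar as there is one) is the identification of the first summand with $\gr\lambda(D)$: one must observe that $\gr\lambda_a(D)$ and $\gr\lambda_d(D)$ differ only in the orderings of chord endings on each pole, and that swapping such orderings produces, via admissible 4T relations, precisely a strand-strand chord correction. This is the content of Lemma~\ref{lem:GradedAscDesc} together with the identity $\gr\lambda = (\gr\lambda_a - \gr\lambda_d)\circ q$ that was implicit in the construction of $\gr\lambda$ in the first place. With this observation in hand, the lemma follows.
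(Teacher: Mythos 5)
Your proposal is correct and follows essentially the same route as the paper: the left square is trivial because $\lambda^{alg}$ factors through the projection $q$, and the right square holds because the correction terms in $\lambda^{alg}$ all carry a factor of $\varphi$, $\varphi^\sharp$, or $t$ and hence lie in $s$-degree $\geq 1$, while the leading term $\gr\lambda_a - \gr\lambda_d$ already lands in $\tA_{\nab}^{1/2}(\botSkel)$ by Corollary~\ref{cor:grmu}. (Only a cosmetic remark: near the end you switch from $q$ to $\pi$ for the projection, which clashes with the paper's use of $\pi$ for the fundamental group.)
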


\begin{proof}
The only non-empty part of this statement is that $q\circ\lambda^{alg}=0$, that is, the composition of $\lambda^{alg}$ with the projection to $\tA^{/1}(\botSkel)$ is zero. We have seen before that this is true for $\gr\lambda$, and it is shown in \eqref{eq:AscAlg} and \eqref{eq:DescAlg} that $\lambda^{alg}$ differs from $\gr\lambda$ in some correction terms. However, all of these correction terms are in $s$-degree 1 or higher (multiplicatively, 1 in $s$-degree 0). Hence, $q\circ\lambda^{alg}=q\circ \gr\lambda$.
\end{proof}

We denote the induced map of $\lambda^{alg}$ by $\glosm{etaalg}{\eta^{alg}}$ and the composition of $\eta^{alg}$ with $\gr \widecheck{b}$ by $\glosm{etahatalg}{\hat{\eta}^{alg}}$, as shown in Figure~\ref{fig:AdjustedSnakefor_gr_cobracket}.

\begin{figure}
\begin{tikzcd}[]
&\tA_{\nab}^{1/2}(\botSkel) \arrow[r]\arrow[d, "0",swap]  & \tA_{\nab}^{/2}(\botSkel) \arrow[rr] & &\tA^{/1}(\botSkel)\arrow[d,"0"] \arrow[ddlll, dashed, out=-140, in=0, looseness=.6 ,"\hat{\eta}^{alg}", near end]\arrow[dlll,swap, dashed, out=110, in=60, looseness=1 ,"\eta^{alg}"] \arrow[r]& 0\\
0\arrow[r]& \tA_{\nab}^{1/2}(\botSkel ) \arrow[r] \arrow[d,"\gr \widecheck{b}"] 
& \tA_{\nab}^{/2}(\botSkel )\arrow[from=u,"\lambda^{alg}",swap,  crossing over] \arrow[rr]
&& \tA^{/1}(\botSkel )\\
&\tA^{/1}(\circleSkel \, \botSkel) \\
\end{tikzcd}
\caption{The diagram for the self-intersection map, with corrected associated graded maps.}\label{fig:AdjustedSnakefor_gr_cobracket}
\end{figure}

\begin{cor}\label{cor:ZEtaAlg}
    The Kontsevich integral is compatible with the map $\eta$ and its corrected algebraic counterpart $\eta^{alg}$:
    \begin{equation}
        \eta^{alg}\circ Z^{/1}=Z^{/2}\circ \eta, 
    \end{equation}
    and consequently,
    \begin{equation}
        \hat{\eta}^{alg}\circ Z^{/1}=Z^{/1}\circ \hat{\eta}.
    \end{equation}
\end{cor}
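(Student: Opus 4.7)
The plan is to mirror the multi-cube strategy used in the proof of Theorem~\ref{thm:Cube_for_bracket}, replacing the middle commutative square (which there was immediate from multiplicativity of $Z$ with respect to stacking) with the algebraically corrected square provided by Lemma~\ref{lem:LambdaAlg}. First I would assemble a multi-cube whose top face is the defining diagram for $\eta$ in Figure~\ref{fig:Snakeformu}, whose bottom face is the defining diagram for $\eta^{alg}$ in Figure~\ref{fig:AdjustedSnakefor_gr_cobracket}, and whose vertical arrows are the various components of the Kontsevich integral $Z^{/1}$, $Z^{/2}$, and $Z^{1/2}$, which are well-defined filtered maps by Proposition~\ref{prop:ZrespectsS} and Theorem~\ref{thm:Z_conway}.

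Next I would verify that every vertical face of this multi-cube commutes. The faces bordering the outer zero maps commute trivially. The faces involving only the quotient maps $\tTnab^{/2}(\botSkel)\to \tT^{/1}(\botSkel)$ and the inclusion $\tTnab^{1/2}(\botSkel)\hookrightarrow \tTnab^{/2}(\botSkel)$ commute because $Z$ is filtered with respect to both the total and the strand filtrations and because $Z_\nab$ is well-defined on the Conway quotient. The single non-trivial face is the middle one containing $\lambda$ on top and $\lambda^{alg}$ on the bottom; this is exactly the content of Lemma~\ref{lem:LambdaAlg}. With all vertical faces commutative, Lemma~\ref{lem:Naturality} applied to this multi-cube yields the first equality $\eta^{alg}\circ Z^{/1}=Z^{/2}\circ \eta$ by the uniqueness of the induced connecting homomorphism.

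For the second equality, I would post-compose with the ``division by $b$'' isomorphism. Since $Z_\nab$ sends the formal variable $b$ to $a$ (as these play identical roles in the Conway relations on each side), the Kontsevich integral intertwines $\widecheck{b}$ with its associated graded counterpart $\gr\widecheck{b}=\widecheck{a}$; in symbols, $\gr\widecheck{b}\circ Z^{1/2}=Z^{/1}\circ \widecheck{b}$ on $\tTnab^{1/2}(\botSkel)$. Recalling $\hat{\eta}=\widecheck{b}\circ\eta$ and $\hat{\eta}^{alg}=\gr\widecheck{b}\circ\eta^{alg}$, composing both sides of the first equality with these division maps and using this compatibility gives $\hat{\eta}^{alg}\circ Z^{/1}=Z^{/1}\circ \hat{\eta}$, as required.

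The main obstacle is the middle square, which is already dealt with by Lemma~\ref{lem:LambdaAlg}; consequently the corollary itself should reduce to a clean diagram-chase. The only small subtlety to check carefully is the compatibility of $Z$ with $\widecheck{b}$: this requires observing that $\widecheck{b}$ was defined as a one-sided inverse to multiplication by $b$ and that both $b$ and $a$ are primitive degree-one variables which correspond under $Z_\nab$, so the compatibility transfers to the restrictions between $\tTnab^{1/2}$ and $\tT^{/1}$ used in Figure~\ref{fig:AdjustedSnakefor_gr_cobracket}.
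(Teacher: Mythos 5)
Your proposal is correct and follows essentially the same route as the paper: the paper's proof likewise assembles the multi-cube with the $\eta$-diagram on top and the $\eta^{alg}$-diagram on the bottom, observes that the only non-trivial vertical face is the $\lambda$/$\lambda^{alg}$ square handled by Lemma~\ref{lem:LambdaAlg}, and deduces the first equality from uniqueness of the induced map. The second equality is likewise obtained by composing with $\widecheck{b}$ (resp.\ $\gr\widecheck{b}$), justified in the paper by the compatibility of $Z$ with the $s$-filtration, which is the same observation you make about $b$ corresponding to $a$ under $Z_\nab$.
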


\begin{proof}
If all faces of the multi-cube of Figure~\ref{fig:Cube_for_cobracket} commute, then this implies the statement, which itself is the commutativity of the diagonal (red) square. Indeed, all the faces of the multi-cube commute: the only non-trivial statement is the commutativity of the middle face, which was established in Lemma~\ref{lem:LambdaAlg}. Recall that $\hat{\eta}$ (respectively $\hat{\eta}^{alg}$) is obtained from $\eta$ (repsectively, $\eta^{alg}$) through division by $b$ (respectively, division by $a$). Therefore, the second equality follows from the fact that the Kontsevich integral respects the $s$-filtration (Proposition~\ref{prop:ZrespectsS}).
\begin{figure}[ht]
\begin{tikzcd}[row sep=scriptsize, column sep=small]
  & \tTnab^{1/2}(\botSkel) \arrow[dl,"0"] \arrow[rr] \arrow[dd,"Z^{/2}",near start] & & \tTnab^{/2}(\botSkel)\arrow[dl,"\lambda"]\arrow[rr] \arrow[dd,"Z^{/2}",near start] && \tT^{/1}(\botSkel)\arrow[dd,red,"Z^{/1}"]\arrow[dl,"0"] \arrow[dlllll,red, "{\eta}",swap, out=110, in=90,looseness=.6]  \\
\tT^{1/2}(\botSkel)\arrow[rr, crossing over, "\rho", near start] \arrow[dd,red,"Z^{/2}"] & & \tTnab^{/2}(\botSkel)\arrow[rr,crossing over]\ &&\tT^{/1}(\botSkel) \\
 & \tA_{\nab}^{1/2}(\botSkel) \arrow[dl,"0"] \arrow[rr] & & \tA_{\nab}^{/2}(\botSkel) \arrow[dl,"\lambda^{alg}", near start]\arrow[rr] && \tA^{/1}(\botSkel) \arrow[dl,"0"]\arrow[dlllll,red, "\eta^{alg}", out=-110,in=-50 , ,looseness=0.6]\\
\tA^{1/2}(\botSkel) \arrow[rr,] & & \tA_{\nab}^{/2}(\botSkel) \arrow[rr]\arrow[from=uu, "Z^{/2}",near start, crossing over]&& \tA^{/1}(\botSkel ) \arrow[from=uu, "Z^{/1}",near start, crossing over]\\
\end{tikzcd}
\caption{Commutative cube showing the compatibility of $\eta$ and $\eta^{alg}$ with the Kontsevich integral.}\label{fig:Cube_for_cobracket}
\end{figure}
\end{proof}

We have now established that the square \eqref{eq:FailToCommute} commutes up to a controlled error: namely, it commutes if $\gr \lambda$ is replaced by $\lambda^{alg}$, and furthermore, $\lambda^{alg}$ is expressed explicitly in terms of $\gr \lambda_a$ and $\gr \lambda_d$ in Proposition~\ref{prop:LambdaAlg} (and Equations \eqref{eq:AscAlg} and \eqref{eq:DescAlg}). In other words, we achieved the goal (1) stated above Lemma~\ref{lem:LambdaAlg}. Moving on to goal (2), we need to show that the error vanishes after passing to the Turaev cobracket $\delta$. Recall (Figure~\ref{fig:Identifications}) that $\delta:\Cpa \to \Cpa \otimes \Cpa$  descends from $\tilde{\delta}: \Cp \to \Cpa \otimes \Cpa$. In turn, $\tilde{\delta}=Alt \circ (\id \otimes |\cdot |)\circ\mu$ is identified via the isomorphisms $\beta$ with the composition $Alt \circ cl \circ \left(\widecheck{b} \circ \eta\right)=Alt\circ cl \circ \hat{\eta}$ by Theorem~\ref{thm:snake_for_mu} and Proposition~\ref{prop:zeta_to_del}.

\begin{thm}\label{thm:cobrackethomomorphic}
    The Kontsevich integral descends to a homomorphic expansion for the Turaev cobracket.
    Namely, 
    $(Z^{/1}\otimes Z^{/1})\circ \tilde{\delta}=\gr \tilde{\delta} \circ Z^{/1},$
as shown in the diagram below, and consequently, $(Z^{/1}\otimes Z^{/1})\circ {\delta}=\gr {\delta} \circ Z^{/1}.$
\begin{equation}\label{eq:Zdelta}
\begin{tikzcd}
\Cpa\otimes \Cpa \arrow[d, "Z^{/1}\otimes Z^{/1}"] &&\tT^{/1}(\circleSkel)\otimes \tT^{/1}(\circleSkel)  \arrow[ll,"\cong"',"\beta\otimes \beta"]\arrow[d, "Z^{/1}\otimes Z^{/1}"] && \tT^{/1}(\botSkel) \arrow{ll}{Alt\circ cl \circ \check{\eta}}
\arrow[d, "Z^{/1}"] &&\Cp  \arrow[ll,"\cong"',"\beta^{-1}"]\arrow[bend right=20]{llllll}{\tilde{\delta}} \arrow[d, "Z^{/1}"]\\
\hAs \otimes \hAs &&\A^{/1}(\circleSkel)\otimes \A^{/1}(\circleSkel)  \arrow[ll,"\cong"',"\gr\beta \otimes \gr\beta"]&&\A^{/1}(\botSkel)  \arrow[ll, "Alt \circ cl \circ\gr \hat{\eta}"] &&\hAs  \arrow[ll,"\cong"',"\gr\beta^{-1}"] \arrow[bend left=20,swap]{llllll}{\gr \hat{\delta}}
\end{tikzcd}
\end{equation}
    \end{thm}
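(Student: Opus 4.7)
The plan is to combine the three main ingredients already assembled: the identification of $\tilde{\zeta} = \operatorname{Alt} \circ cl \circ \hat{\eta}$ with $\tilde{\delta}$ via $\beta$ (Proposition~\ref{prop:zeta_to_del}); the corrected homomorphicity $Z^{/1}\circ \hat{\eta}=\hat{\eta}^{alg}\circ Z^{/1}$ (Corollary~\ref{cor:ZEtaAlg}); and the explicit formulas for $\lambda^{alg}-\gr\lambda$ from Proposition~\ref{prop:LambdaAlg}. The crux is to show that the error terms measuring the difference between $\hat{\eta}^{alg}$ and $\gr \hat{\eta}$ become invisible after applying $\operatorname{Alt}\circ cl$.

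\textbf{Reduction.} First I would invoke Proposition~\ref{prop:zeta_to_del} to replace $\tilde{\delta}$ and $\gr\tilde{\delta}$ by their tangle-theoretic incarnations $\tilde{\zeta}=\operatorname{Alt}\circ cl \circ \hat{\eta}$ and $\gr\tilde{\zeta}$, so the statement becomes
\[
(Z^{/1}\otimes Z^{/1})\circ \operatorname{Alt}\circ cl \circ \hat{\eta} \;=\; \operatorname{Alt}\circ cl \circ \gr\hat{\eta} \circ Z^{/1}.
\]
Since the Kontsevich integral is compatible with disjoint unions, with closures (which are particular tangle compositions), and with the formal alternation on tensor products, I can push $Z^{/1}$ past both $cl$ and $\operatorname{Alt}$, reducing the LHS to $\operatorname{Alt}\circ cl \circ (Z^{/1}\circ \hat{\eta})$. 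Then Corollary~\ref{cor:ZEtaAlg} rewrites this as $\operatorname{Alt}\circ cl \circ \hat{\eta}^{alg}\circ Z^{/1}$. Thus the theorem reduces to the identity
\[
\operatorname{Alt}\circ cl \circ \hat{\eta}^{alg} \;=\; \operatorname{Alt}\circ cl \circ \gr\hat{\eta},
\]
that is, to showing $\operatorname{Alt}\circ cl \circ (\hat{\eta}^{alg}-\gr\hat{\eta})=0$.

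\textbf{Analyzing the error.} Next I would use the explicit formulas from Proposition~\ref{prop:LambdaAlg} together with the induced-map construction (and division by $a$) to describe the error. The difference $\lambda^{alg}-\gr \lambda$ decomposes into two families:
\begin{itemize}
\item \emph{Commutator corrections} $\bigl[\gr\lambda_a(D),\varphi\bigr]$ and $\bigl[\varphi^\sharp,\gr\lambda_d(D)\bigr]^{2,1}$.
\item \emph{Framing corrections} $\gr\lambda_a(D)\cdot \tfrac{t}{2}$ and $\tfrac{t}{2}\cdot\gr\lambda_d(D)$.
\end{itemize}
For the commutator corrections, the key point is that the stacking product along the open strand becomes a cyclic trace after $cl$: closing $[X,\varphi]$ along the interval skeleton produces $|X\varphi|-|\varphi X|$, which vanishes by cyclic symmetry (equivalently, commutators lie in $[\hAs,\hAs]$ and so are killed by $|\cdot|$). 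This handles both commutator terms in one stroke. For the framing corrections, the chord $t$ lives between the two halves of the bottom tangle inherited from the $R$-factor in the decompositions of $\lambda_a$ and $\lambda_d$; after dividing by $a$ (via $\gr\widecheck{b}$) the Conway relation turns this strand-strand chord into a smoothing that detaches a small loop. The contributions from $\lambda_a^{alg}$ and $\lambda_d^{alg}$ produce the same framing-type expression (with signs governed by the writhe-$1$ normalization of the ascending and descending lifts), so $\operatorname{Alt}$ antisymmetrizes them to zero.

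\textbf{Main obstacle.} The genuinely subtle step is the framing-correction cancellation. It depends on the convention that both $\lambda_a$ and $\lambda_d$ are writhe $+1$, which forced the addition of correction kinks in the descending lift — exactly the kinks whose Kontsevich integral contributes the $t/2$-factor that survives in $\lambda^{alg}$. One must carefully track how these $t/2$ contributions from the two sides combine after division by $a$, after $cl$ (which joins the loop detached by Conway with the original closed strand), and finally after $\operatorname{Alt}$, to check that they pair to a symmetric tensor (and hence are annihilated by the alternation). The commutator step is conceptually clean but also requires attention, since $\gr\lambda_a(D)$ carries chord endings on both the pole skeleta and the strand; one needs to verify that the commutator is taken in the strand-stacking algebra (so that $cl$ really acts as a cyclic trace on the relevant factor) and that pole-strand chord endings do not obstruct the cyclic-symmetry argument. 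Once these two verifications are in hand, the theorem follows, and then the descent to $\delta$ is automatic via the quotient $\Cp\twoheadrightarrow \Cpa$ used in Definition~\ref{def:cobrac}.
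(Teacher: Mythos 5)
Your reduction is exactly the paper's: use Proposition~\ref{prop:zeta_to_del} to replace $\tilde\delta$ by $\operatorname{Alt}\circ cl\circ\hat\eta$, push $Z$ through $cl$ and $\operatorname{Alt}$, apply Corollary~\ref{cor:ZEtaAlg}, and reduce to $\operatorname{Alt}\circ cl\circ(\hat\eta^{alg}-\gr\hat\eta)=0$; your split of the error into commutator corrections and framing corrections matches the paper's $\varepsilon_1$ and $\varepsilon_2$, and your account of the framing term (the two contributions assemble into a symmetric tensor killed by $\operatorname{Alt}$) is essentially the paper's computation $\check a(B\tfrac t2-\tfrac t2 B)\mapsto |1|\otimes B+|B|\otimes 1$. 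However, your treatment of the commutator corrections has a genuine gap. You claim that closing $[X,\varphi]$ along the interval skeleton produces $|X\varphi|-|\varphi X|$, which dies by cyclic symmetry of the trace, "handling both commutator terms in one stroke." This ignores that the division by $a$ (the map $\gr\widecheck b=\check a$) is applied \emph{before} the closure, and every term $X=vtw$ of $\varphi$ lies in $\tA^{1/2}$, so it contains a strand-strand chord $t$ that $\check a$ smooths via the Conway relation. The smoothing splits the bottom-tangle skeleton into a circle plus an interval, so after $cl$ one lands in a \emph{tensor product} of two traced factors, not in a single cyclic trace. Concretely, $cl(\check a([B,X]))=|w|\otimes|Bv|-|wB|\otimes|v|$, which is not zero (take $B=x_1$, $v=x_2$, $w=x_3$), and it is not killed by $\operatorname{Alt}$ alone either.

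The actual cancellation in the paper requires both commutator terms together with the precise form of the second one: $[\varphi^\sharp,\gr\lambda_d(D)]^{2,1}$, where the flip sends $X=vtw$ to $-wtv$ (Lemma~\ref{lem:CDflip} plus the sign convention $a^\sharp=-a$) and the superscript $(2,1)$ swaps the two output factors. Only then do the four terms produced by $\check a$ and $cl$ pair up as $|w|\otimes|Bv|+|Bv|\otimes|w|$ and $-\bigl(|Bw|\otimes|v|+|v|\otimes|Bw|\bigr)$, each symmetric and hence annihilated by $\operatorname{Alt}$. (A further small point the paper needs and you do not mention: since the numerators lie in $\tA^{1/2}$, chord endings commute on the poles, so $\gr\lambda_a(D)$ and $\gr\lambda_d(D)$ may both be replaced by a common $B$ before this pairing is possible.) So the issue you flag in your "main obstacle" paragraph --- whether pole-strand chord endings obstruct a cyclic-symmetry argument --- is not the real obstruction; the real obstruction is that there is no single trace in which the commutator sits after division by $a$, and the vanishing genuinely uses the interplay of the flip, the strand swap, and the alternation.
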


\begin{proof}
   If $\gr \hat{\eta}$ were replaced with $\hat{\eta}^{alg}$ in the middle of the bottom row of \eqref{eq:Zdelta}, then this would be true by Corollary~\ref{cor:ZEtaAlg}. Thus, it is enough to show that 
   \begin{equation}\label{eq:Goal}
   Alt \circ cl \circ \hat{\eta}^{alg}=Alt \circ cl \circ \gr \hat{\eta}.\end{equation}
   Recall (from Figure~\ref{fig:AdjustedSnakefor_gr_cobracket}) that 
   \[\hat{\eta}^{alg}= \gr \check{\beta} \circ \left(\lambda^{alg}_a-\lambda^{alg}_d\right)=\frac{\lambda^{alg}_a-\lambda^{alg}_d}{a},\]
   where division by $a$ stands for applying the isomorphism $\gr \check{b}=\glosm{acheck}{\check{a}}:\tA^{1/2}\to \tA^{/1}$. Substituting the formulas of Proposition~\ref{prop:LambdaAlg}, we have
   \begin{multline*}\hat{\eta}^{alg}(D)=\frac{(\gr\lambda_a-  \gr \lambda_d)(D)}{a} \\
   +\frac{\left[\gr\lambda_a(D),\varphi\right]  - \left[\varphi^\sharp, \gr \lambda_d(D)\right]^{2,1}+ \gr\lambda_a(D)\frac{t}{2}+\frac{t}{2}\gr \lambda_d(D)}{a}.
   \end{multline*}
   By definition, we have that $\frac{(\gr\lambda_a-  \gr \lambda_d)(D)}{a}=\gr \hat{\eta}(D)$, hence, to prove \eqref{eq:Goal} it is enough to show that the second line of the formula above vanishes after closure and alternation. Namely, set 
   \begin{align}
   \yellowm{\varepsilon_1(D)}=&\frac{\left[\gr\lambda_a(D),\varphi\right]  - \left[\varphi^\sharp, \gr \lambda_d(D)\right]^{2,1}}{a},\label{eq:epi1}\\
\yellowm{\varepsilon_2(D)}=&\frac{\gr\lambda_a(D)\frac{t}{2}+\frac{t}{2}\gr \lambda_d(D)}{a}.\label{eq:epi2}
   \end{align}

   It is sufficient to prove that $Alt(cl(\varepsilon_1(D)))=0$ and $Alt(cl(\varepsilon_2(D)))=0$ for any $D\in \tA^{/1}(\botSkel)$. First note that the only difference between $\gr \lambda_a (D)$ and $\gr \lambda_d (D)$ is the order of chord attachement along the poles. However, in $\tA^{1/2}$ chord endings commute on the poles, by the same argument as Remark~\ref{rem:ChorsOnPoles}. Therefore, as the numerators of both expressions $\varepsilon_1$ and $\varepsilon_2$ are in $\tA^{1/2}$, the quantities $\gr \lambda_a(D)$ and $\gr \lambda_d(D)$ can be interchanged. We replace both quantities by the shorthand $B$. 

   For $\varepsilon_1$, recall from the proof of Lemma~\ref{lem:LambdaAlg} that $\varphi=Z^{/2}(\Phi)-1 \in \tA^{1/2}$. The value $\varphi$ is an infinite series graded by total degree; let $X$ denote a term in $\varphi$. Since $X\in \tA^{1/2}$, we can write $X=vtw$, as shown in Figure~\ref{fig:XFlip}. Here $v$ and $w$ are words in $p$ letters (where $p$ is the number of poles, and words are read in the direction of the strand), and $t$ is the strand-strand chord. By Lemma~\ref{lem:CDflip}, we have $X^\sharp= - {w}t{v}$, as shown in Figure~\ref{fig:XFlip}.

\begin{figure}
\begin{center}
\begin{picture}(210,60)
    \put(0,0){\includegraphics[width=8cm]{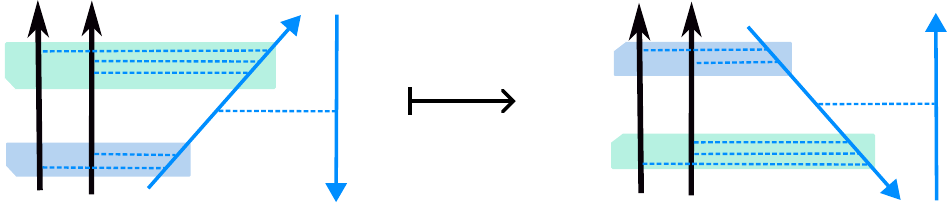}}
    \put(-10,8){ $v$}
    \put(-12,33){ $w$}
    \put(65,14){$t$}
    \put(100,30){ $\#$}
    \put(129,22){\large $-$}
    \put(134,9){ $w$}
    \put(136,31){ $v$}
    \put(208,26){$t$}
\end{picture}
\end{center}
\caption{Flipping the term $X$. We interpret $v$ and $w$ as words in two letters, read in the direction of the middle strand: for example, on both sides, $w=yyx$.}\label{fig:XFlip}
\end{figure}

Now we compute the term of 
$Alt(cl(\varepsilon_1(D)))$ corresponding to the term $X$ in $\varphi$, see Figure~\ref{fig:Epsilon1} for a diagrammatic depiction of this calculation.

\begin{align*}
Alt  (cl &\left(\check{a}\left([B,X]-[X^\sharp,B]^{2,1}\right)\right)) = \\
&=Alt\left(cl\left(\check{a}\left((Bvtw-vtwB)+(wtvB-Bwtv)^{2,1}\right)\right)\right)\\
&=Alt\left(cl\left(|w|\otimes Bv - |wB|\otimes v + |vB| \otimes w -|Bw|\otimes v \right) \right) \\
&= Alt\left(|w|\otimes |Bv| -|Bw|\otimes |v| +|Bv|\otimes |w| -|v|\otimes |Bw| \right)=0.
\end{align*}

\begin{figure}
\begin{center}
\begin{picture}(400,380)
    \put(-10,70){\includegraphics[width=14.5cm]{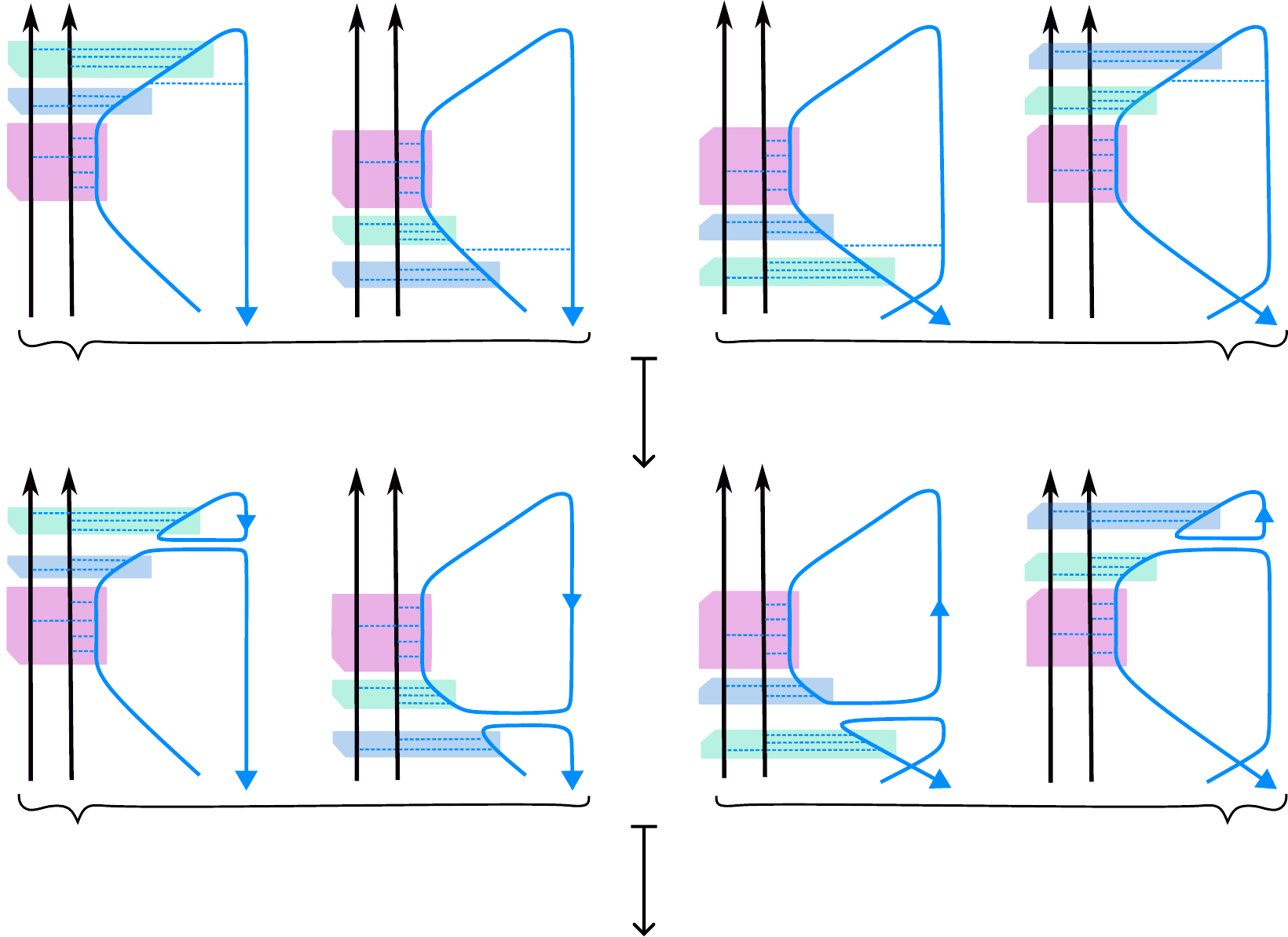}}
    \put(-15,0){\includegraphics[width=15cm]{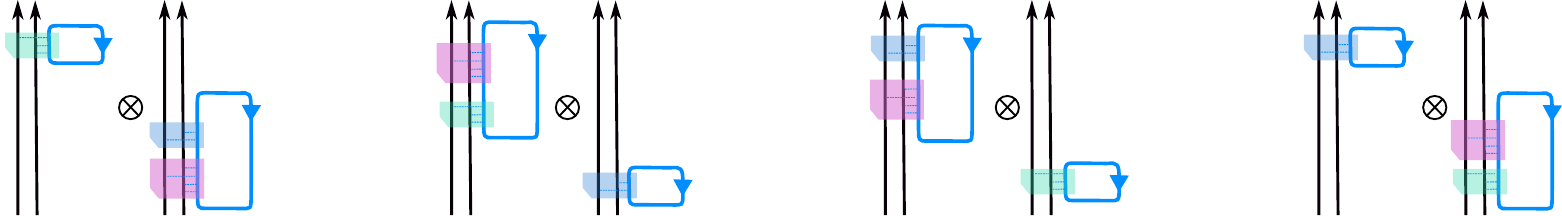}}
    \put(-20,350){ $w$}
    \put(-20,336){ $v$}
    \put(-22,315){ $B$}
    \put(70,305){ \Large $-$}
    \put(82,315){ $B$}
    \put(84,295){ $w$}
    \put(85,280){ $v$}
    \put(185,305){ \Large $+$}
    \put(200,315){ $B$}
    \put(202,295){ $v$}
    \put(202,280){ $w$}
    \put(295,305){ \Large $-$}
    \put(307,350){ $v$}
    \put(305,336){ $w$}
    \put(305,315){ $B$}
    \put(-5,240){ $[B,X]$}
    \put(204 ,235){$\check{a}$}
    \put(350,240){ $[-X^\sharp, B]^{2,1}$}
    \put(-20,202){ $w$}
    \put(-20,185){ $v$}
    \put(-22,168){ $B$}
    \put(70,150){ \Large $-$}
    \put(82,165){ $B$}
    \put(83,145){ $w$}
    \put(85,130){ $v$}
    \put(185,150){ \Large $+$}
    \put(200,165){ $B$}
    \put(203,145){ $v$}
    \put(200,130){ $w$}
    \put(295,150){ \Large $-$}
    \put(307,202){ $v$}
    \put(305,185){ $w$}
    \put(305,165){ $B$}
    \put(-5,90){ $[B,X]$}
    \put(202,85){ $cl$}
    \put(350,90){ $[-X^\sharp, B]^{2,1}$}
    \put(-23,45){\Small  $w$}
    \put(42,20){\Small  $v$}
    \put(38,8){ \Small $B$}
    \put(70,25){ \Large $-$}
    \put(96,40){\Small $B$}
    \put(97,25){\Small $w$}
    \put(160,6){\Small $v$}
    \put(185,25){ \Large $+$}
    \put(216,44){\Small $v$}
    \put(213,30){\Small $B$}
    \put(279,6){\Small $w$}
    \put(295,25){ \Large $-$}
    \put(335,45){\Small $v$}
    \put(397,18){\Small $B$} 
    \put(397,6){\Small $w$}
\end{picture}
\end{center}
\caption{Calculation of the term in $\varepsilon_1(D)$ corresponding to $X$.}\label{fig:Epsilon1}
\end{figure}

The second equality follows the convention that the cyclic word is placed in the first tensor component. 
Since for every term $X$ of $\varphi$, the corresponding term of $Alt(cl(\epsilon_1(D)))$ vanishes, it follows that $Alt(cl(\varepsilon_1(D))=0$, as required.

Finally, for $\varepsilon_2(D)$, we compute:
\begin{align*}Alt(cl(\varepsilon_2(D)))&=Alt\left(cl\left(\check{a}\left(B\frac{t}{2}+\frac{t}{2}B\right)\right)\right)
\\
&= Alt(cl(|1| \otimes B + |B| \otimes 1))=Alt(1 \otimes |B|+|B|\otimes 1)=0. 
\end{align*}
Therefore, both components of the error vanish after closure and alternation, completing the proof.
\end{proof}

\medskip

\newpage

\section{Glossary of notation} \label{sec:glossary}

\noindent
{\small\begin{multicols}{2}

\raggedright\begin{list}{}{
  \renewcommand{\makelabel}[1]{#1\hfil}
}

\glosi{a}{$a$}{formal variable of $\C\tTnab$}{Def.~\ref{def:conway}}
\glosi{acheck}{$\check{a}$}{division by $a$ map}{Thm. \ref{thm:cobrackethomomorphic}}
\glosi{tA}{$\tA$}{associated graded $\ctT$}{Sec.~\ref{sec:t-filtration}}
\glosi{tAnab}{$\tA_\nab$}{associated graded $\ctT_\nab$, isomorphic to $\D$}{Sec.~\ref{sec:Conway}}
\glosi{tAnabS}{$\tA_\nab(S)$}{the image $\gr\iota(\tA(S))$}{Def.~\ref{def:Anot}}
\glosi{tAt}{$\tA_t$}{degree $t$ component of $\tA$}{Sec.~\ref{sec:t-filtration}}
 \glosi{tAtgeqs}{$\tA^{\geq s}$}{$s$-filtered component of $\tA$}{Def.~\ref{def:filtrationQuotientNotation}}
\glosi{tAS}{$\tA(S)$}{admissible chord diagrams on the skeleton $S$}{Sec.~\ref{sec:t-filtration}}
\glosi{tAnabt}{$\tA_{\nab,t}$}{degree $t$ component of $\tA_{\nab}$}{Sec.~\ref{sec:Conway}}
\glosi{tAnabs}{$\tA_{\nab}^s$}{degree $s$ component of $\tA_{\nab}$}{Sec.~\ref{sec:Conway}}
 \glosi{tAnabts}{$\tA_{\nab,t}^s$}{$\tA_{\nab,t}\cap \tA_{\nab}^s$}{Sec.~\ref{sec:Conway}}

 \glosi{Aslashs}{$\tA^{/s}$}{$\tA/\tA^{\geq s}$}{Def.~\ref{def:Anot}} 
\glosi{Anabslashs}{$\tA_{\nab}^{/s}$}{$\tA_{\nab}/\tA_{\nab}^{\geq s}$}{Def.~\ref{def:Anot}} 

\glosi{alt}{Alt}{alternating map}{Def.~\ref{def:cobrac}}

\glosi{b}{$b$}{$e^{\frac{a}{2}}-e^{-\frac{a}{2}}$}{Def.~\ref{def:conway}}
\glosi{bhat}{$\widehat{b}$}{multiplication by $b$ map}{Prop.~\ref{prop:divbybexists}}
\glosi{bcheck}{$\widecheck{b}$}{division by $b$ map}{Prop.~\ref{prop:divbybexists}}
\glosi{beta}{$\beta$}{bottom projection map}{Prop.~\ref{prop:BotProj}}
\glosi{botskel}{$\botSkel$}{bottom tangle}{Sec.~\ref{sec:cobracketinCON}}

\glosi{cl}{$cl$}{closure map}{Sec.~\ref{sec:cobracketinCON}}
\glosi{Cpi}{$\C\pi$}{group algebra of $\pi$}{Sec.~\ref{subsec:IntroGT}}
\glosi{aCpi}{$|\C\pi|$}{homotopy classes of free loops in $D_p$}{Sec.~\ref{subsec:IntroGT}}
\glosi{Ctpi}{$\C\tpi$}{group algebra of $\tpi$}{Sec.~\ref{subsec:IntroGT}}
\glosi{absCtpi}{$|\C\tpi|$}{homotopy classes of immersed free loops in $D_p$}{Sec.~\ref{subsec:IntroGT}}
\glosi{cpba}{$\Cpba$}{$|\Cp|/\C\cl$}{Sec.~\ref{subsec:IntroGT}}
\glosi{ctT}{$\ctT$}{formal linear combinations of oriented tangles in $M_p$}{Sec.~\ref{sec:framed_tangles}}
\glosi{Ctnab}{$\C\tTnab$}{Conway quotient of $\ctT$}{Def.~\ref{def:conway}}
\glosi{CtnabS}{$\C\tTnab(S)$}{the image $\iota(\ctT(S))$}{Def.~\ref{def:conway_skel}}

\glosi{barD}{$\widebar{D}$}{vertical flip of diagram $D$}{Sec.~\ref{sec:cobracketinCON}}
\glosi{D}{$\D$}{$\sqcup_S\D(S)$}{Def.~\ref{def:cdspace}}
\glosi{Dnab}{$\D_\nab$}{Conway quotient of $\D$}{Def.~\ref{def:D_con}}

\glosi{Dt}{$\D_t$}{$\sqcup_S\D_t(S)$}{Def.~\ref{def:cdspace}}
\glosi{Ds}{$\D(S)$}{space of admissible chord diagrams on a skeleton $S$}{Def.~\ref{def:cdspace}}
\glosi{Dst}{$\D_t(S)$}{degree $t$ component of $\D(S)$}{Def.~\ref{def:cdspace}}
\glosi{del}{$\delta$}{Turaev Cobracket}{Def.~\ref{def:cobrac}}
\glosi{grdel}{$\delta_{\gr}$}{graded Turaev Cobracket}{Prop.~\ref{prop:gr_del}}
\glosi{deltilde}{$\tilde{\delta}$}{$cl\circ \mu$}{Def.~\ref{def:cobrac}}
\glosi{Dp}{$D_p$}{$p$-punctured disk}{Sec.~\ref{subsec:IntroGT}}

\glosi{epi1}{$\varepsilon_1,\varepsilon_2$ }{}{Eq. \ref{eq:epi1}, Eq. \ref{eq:epi2}}
\glosi{etahat}{$\eta,\hat{\eta}$}{general notation for induced map from ``$\lambda$''}{Thm.~\ref{thm:bracketsnake} and also Thm~\ref{thm:snake_for_mu}}
\glosi{etaalg}{$\eta^{alg},\hat{\eta}^{alg}$}{}{Fig.~\ref{fig:AdjustedSnakefor_gr_cobracket}}

\glosi{FA}{$\As$}{degree completed free algebra $\As\langle x_1,\cdots, x_p\rangle$}{Sec.~\ref{sec:gr_bialgebra}}
\glosi{absFA}{$|\As|$}{$\As/[\As,\As]$}{Sec.~\ref{sec:gr_bialgebra}}
\glosi{hatFA}{$\widehat{\As}$}{degree completion of $\As$}{Sec.~\ref{sec:identifybracketinCON}}
\glosi{hatabsFA}{$|\widehat{\As}|$}{degree completion of $|\As|$}{Sec.~\ref{sec:identifybracketinCON}}
\glosi{flip}{$(-)^\sharp$}{flip operation}{Sec.~\ref{sec:opsonT}}

\glosi{Gbrac}{$[\cdot,\cdot]_G$}{Goldman Bracket}{Def.~\ref{def:bracket}}
\glosi{grCpi}{$\gr\C\pi$}{graded $\C\pi$}{Sec.~\ref{sec:gr_bialgebra}}
\glosi{grGbrac}{$[\cdot,\cdot]_{\gr G}$}{graded Goldman Bracket}{Prop.~\ref{prop:gr_bracket_def}}

\glosi{I}{$\calI$}{augmentation ideal}{Sec.~\ref{sec:gr_bialgebra}}
\glosi{i}{$\iota$}{ }{Sec.~\ref{sec:Conway}}

\glosi{caLK}{$\calK$}{links or tangles in $\R^3$}{Sec. \ref{sec:hom_exp}}
\glosi{tcaLK}{$\tcalK$}{framed links $\R^3$}{Sec. \ref{subsubsec:Framing}}
\glosi{tcaLKi}{$\tcalK_i$}{filtered component of $\tcalK$}{Sec. \ref{subsubsec:Framing}}

\glosi{lambda}{$\lambda$}{general notation for a difference of two maps, $\lambda= \lambda_1-\lambda_2$ in Thm.~\ref{thm:bracketsnake}, $\lambda=(\lambda_a-\lambda_d)\circ q$ in Thm.~\ref{thm:snake_for_mu}}{}
\glosi{lambda12}{$\lambda_1,\lambda_2$}{stacking products}{Thm.~\ref{thm:bracketsnake}}
\glosi{lambdaa}{$\lambda_a$}{framed ascending lift}{Sec.~\ref{sec:cobracketinCON}}
\glosi{lambdad}{$\lambda_d$}{framed descending lift}{Sec.~\ref{sec:cobracketinCON}}
\glosi{lambdabar}{$\bar{\lambda}$}{$\lambda_a-\lambda_d$}{Sec.~\ref{sec:cobracketinCON}}
\glosi{lambdalg}{$\lambda^{alg}$}{}{Lem.~\ref{lem:LambdaAlg}}
\glosi{lambdaaalg}{$\lambda_a^{alg}$}{}{Eq.~\ref{eq:AscAlg}}
\glosi{lambdadalg}{$\lambda_d^{alg}$}{}{Eq.~\ref{eq:DescAlg}}
\glosi{lambdaaldbar}{$\overline{\lambda}^{alg}$}{$\lambda_a^{alg}-\lambda_d^{alg}$}{Sec.~\ref{sec:cobracketinCON}}

\glosi{Mp}{$M_p$}{$D_p\times I$}{Sec.~\ref{sec:framed_tangles}}
\glosi{mu}{$\mu$}{ self intersection map}{Def.~\ref{def:mu}}
\glosi{grmu}{$\mu_{\gr}$}{graded self intersection map}{Prop.~\ref{prop:gr_mu}}

\glosi{nu}{$\nu$}{path from $*$ to $\bullet$}{Sec.~\ref{subsec:IntroGT}}

\glosi{psi}{$\psi$}{chord contraction map}{Lem.~\ref{lem:psi}}
\glosi{pi}{$\pi$}{$\pi_1(D_p,*)$}{Sec.~\ref{subsec:IntroGT}}
\glosi{tpi}{$\tpi$}{regular homotopy classes of immersed curves in $D_p$}{Sec.~\ref{subsec:IntroGT}}

\glosi{q}{$q$}{general notation of a projection map }{Thm.~\ref{thm:snake_for_mu} and Lem~\ref{lem:lambda_alg_diagram}}

\glosi{super2,1}{$(-)^{2,1}$ }{strands of the 1 and 2 components are swapped}{Lem. \ref{lem:LambdaAlg}}

\glosi{tT}{$\tT$}{framed tangles in $M_p$}{Def. \ref{def:framed_tangles}}
\glosi{tTs}{$\tT^s$}{$s$-filtered component of $\C\tT$}{Sec. \ref{sec:s-sfiltration}}
\glosi{tTst}{$\tT_t^s$}{$\tT_t \cap \tT^s$}{Sec. \ref{sec:s-sfiltration}}
\glosi{tTt}{$\tT_t$}{$t$ filtered component of $\C\tT$}{Sec. \ref{sec:t-filtration}}
\glosi{tTnabs}{$\tT_{\nab}^s$}{$s$-filtered component of $\ctT_\nab$}{Sec.~\ref{sec:Conway}}
\glosi{tTnabt}{$\tT_{\nab, t}$}{$t$-filtered component of $\ctT_\nab$}{Sec.~\ref{sec:Conway}}
\glosi{tTn}{$\tT^{/n}$}{$\tT/ \tT^n $}{Sec.~\ref{sec:Conway}}
\glosi{tTnabn}{$\tT_\nab^{/n}$}{$\ctT_\nab/ \tT_\nab^n $}{Sec.~\ref{sec:Conway}}
\glosi{tT1/2}{$\tT^{1/2}$}{$\tT^1/ \tT^2 $}{Sec.~\ref{sec:Conway}}
\glosi{tTnab1/2}{$\tT_\nab^{1/2}$}{$\tT_\nab^1/ \tT_\nab^2 $}{Sec.~\ref{sec:Conway}}

\glosi{xi}{$\xi$}{inward normal vector}{Sec.~\ref{subsec:IntroGT}}


\glosi{Z}{$Z$}{Kontsevich Integral}{}
\glosi{Znab}{$Z_\nab$}{Kontsevich Integral on $\C\tT_\nab$}{Thm.~\ref{thm:Z_conway}}
\glosi{ZT}{$Z(T)$}{Kontsevich Integral of one-stranded tangles}{Sec.~\ref{sec:cobracketinCON}}
\glosi{zeta}{$\zeta$}{map descending from $\tilde{\zeta}$}{Sec.~\ref{sec:cobracketinCON}}
\glosi{zetatilde}{$\tilde{\zeta}$}{$Alt\circ\hat{\zeta}$}{Sec.~\ref{sec:cobracketinCON}}
\glosi{zetahat}{$\hat{\zeta}$}{$cl\circ\hat{\eta}$}{Sec.~\ref{sec:cobracketinCON}}

\end{list}
\end{multicols}}

\bibliographystyle{alpha}
\bibliography{main.bib}{}
\end{document}